\numberwithin{equation}{section}
\newtheorem{theorem}{Theorem}[section]
\newtheorem{definition}[theorem]{Definition}
\newtheorem{lemma}[theorem]{Lemma}
\newtheorem{proposition}[theorem]{Proposition}
\newtheorem{Remark}[theorem]{Remark}
\newcommand{\divx}{\mathop{\mathrm{div}}}
\newcommand{\argmin}{\mathop{\mathrm{arg~min}}}
\newcommand{\esssup}{\mathop{\mathrm{ess~sup}}}
\newcommand{\essinf}{\mathop{\mathrm{ess~inf}}}
\newcommand{\dist}{\mathop{\mathrm{dist}}}
\title{A weak solution to a perturbed one-Laplace system by $p$-Laplacian is continuously differentiable}
\author{Shuntaro Tsubouchi\footnote{Graduate School of Mathematical Sciences, The University of Tokyo, Japan. The author was partly supported by Grant-in-Aid for JSPS Fellows (No. 22J12394). \textit{Email}: \texttt{tsubos@ms.u-tokyo.ac.jp}}}
\date{}
\begin{document}
\maketitle
\begin{abstract}
In this paper we aim to show continuous differentiability of weak solutions to a one-Laplace system perturbed by $p$-Laplacian with $1<p<\infty$. The main difficulty on this equation is that uniform ellipticity breaks near a facet, the place where a gradient vanishes. We would like to prove that derivatives of weak solutions are continuous even across the facets. This is possible by estimating H\"{o}lder continuity of Jacobian matrices multiplied with its modulus truncated near zero. To show this estimate, we consider an approximated system, and use standard methods including De Giorgi's truncation and freezing coefficient arguments.
\end{abstract}
\bigbreak
\textbf{Mathematics Subject Classification (2020)} 35B65, 35J47, 35J92
\bigbreak
\textbf{Keywords} $C^{1}$-regularity, De Giorgi's truncation, freezing coefficient method

\section{Introduction}\label{Section: Introduction}
In this paper, we consider a very singular elliptic system given by
\[-b\divx \left(\lvert Du\rvert^{-1}\nabla u^{i}\right)-\divx\left(\lvert Du\rvert^{p-2}\nabla u^{i} \right)=f^{i}\quad \textrm{for }i\in\{\,1,\,\dots\,,\,N\,\}\]
in a fixed domain $\Omega\subset {\mathbb R}^{n}$ with a Lipschitz boundary.
Here \(b\in(0,\,\infty),\,p\in(1,\,\infty)\) are fixed constants, and the dimensions \(n,\,N\) satisfy \(n\ge 2,\,N\ge 1\). For each $i\in\{\,1,\,\dots\,,\,N\,\}$, the scalar functions \(u^{i}=u^{i}(x_{1},\,\dots\,,\,x_{n})\) and \(f^{i}=f^{i}(x_{1},\,\dots\,,\,x_{n})\) are respectively unknown and given in \(\Omega\).
The vector $\nabla u^{i}=(\partial_{x_{1}}u^{i},\,\dots\,,\,\partial_{x_{n}}u^{i})$ denotes the gradient of the scalar function $u^{i}$ with $\partial_{x_{\alpha}}u^{i}=\partial u^{i}/\partial x_{\alpha}$ for $i\in\{\,1,\,\dots\,,\,N\,\},\,\alpha\in\{\,1,\,\dots\,,\,n\,\}$. 
The matrix $Du=(\partial_{x_{\alpha}}u^{i})_{i,\,\alpha}=(\nabla u^{i})_{i}$ denotes the $N\times n$ Jacobian matrix of the mapping $u=(u^{1},\,\dots\,,\,u^{n})$.
Also, the divergence operator $\divx X=\sum_{j=1}^{n}\partial_{x_{j}}X^{j}$ is defined for an ${\mathbb R}^{n}$-valued vector field $X=(X^{1},\,\dots\,,\,X^{n})$ with $X^{j}=X^{j}(x_{1},\,\dots\,,\,x_{n})$ for $j\in\{\,1,\,\dots\,,\,n\,\}$. 
This elliptic system is denoted by
\begin{equation}\label{Eq (Section 1) 1+p-Laplace}
L_{b,\,p}u\coloneqq -b\divx \mleft(\lvert Du\rvert^{-1}Du\mright)-\divx\mleft(\lvert Du\rvert^{p-2}Du\mright)=f\quad\textrm{in }\Omega
\end{equation}
with $f\coloneqq (f^{1},\,\dots\,,\,f^{N})$, or more simply by $-b\Delta_{1}u-\Delta_{p}u=f$.
Here the divergence operators \(\Delta_{1}\) and \(\Delta_{p}\), often called one-Laplacian and $p$-Laplacian respectively, are given by
\[\Delta_{1}u\coloneqq \mathrm{div}\,\mleft(\lvert Du\rvert^{-1}Du\mright),\quad\Delta_{p}u\coloneqq \mathrm{div}\,\mleft(\lvert Du\rvert^{p-2}Du\mright).\]

In the case $b=0$, the system (\ref{Eq (Section 1) 1+p-Laplace}) becomes so called the $p$-Poisson system. For this problem, H\"{o}lder regularity of $Du$ is well-established both in scalar and system cases \cite{MR709038}, \cite{MR672713}, \cite{MR721568}, \cite{MR727034}, \cite{MR474389}, \cite{MR0244628} (see also \cite{MR1230384}, \cite{MR743967}, \cite{MR814022}, \cite{MR783531} for parabolic $p$-Laplace problems). In the limiting case $p=1$, H\"{o}lder continuity of $Du$ generally fails even if $f$ is sufficiently smooth. In fact, even in the simplest case $n=N=1$, any absolutely continuous non-decreasing function of one variable $u=u(x_{1})$ satisfies $-\Delta_{1}u=0$. In particular, even for the one-Lapalce equation, it seems impossible in general to show continuous differentiability ($C^{1}$-regularity) of weak solutions. This problem is substantially because ellipticity of one-Laplacian degenerates in the direction of $Du$, which differs from $p$-Laplacian. Also, in the multi-dimensional case $n\ge 2$, diffusivity of $\Delta_{1}u$ is non-degenerate in directions that are orthogonal to $Du$. It should be mentioned that this ellipticity becomes singular in a facet $\{Du=0\}$, the place where a gradient vanishes.
Since one-Laplacian $\Delta_{1}$ contains anisotropic diffusivity, consisting of degenerate and singular ellipticity, this operator seems analytically difficult to handle in existing elliptic regularity theory.

Therefore, it is a non-trivial problem whether a solution to (\ref{Eq (Section 1) 1+p-Laplace}), which is a one-Laplace system perturbed by $p$-Laplacian, is in $C^{1}$. In the special case where a solution is both scalar-valued and convex, Giga and the author answered this problem affirmatively in a paper \cite{giga2021continuity}. Most of the arguments therein, based on convex analysis and a strong maximum principle, are rather elementary, although the strategy may not work for the system case.
After that work was completed, the author have found it possible to show $C^{1}$-regularity of weak solutions without convexity of solutions. Instead, we would like to use standard methods in elliptic regularity theory, including a freezing coefficient argument and De Giorgi's truncation. This approach is valid even for the system problem, and the main purpose of this paper is to establish $C^{1}$-regularity results in the vectorial case $N\ge 2$.

It is worth mentioning that for the scalar case $N=1$, $C^{1}$-regularity results have been established in the author's recent work \cite{T-scalar}, where a generalization of the operator $\Delta_{1}$ is also discussed. Although the basic strategy in this paper are the same with \cite{T-scalar}, some computations in this paper are rather simpler, since the diffusion operator is assumed to have a symmetric structure, often called the Uhlenbeck structure. It should be recalled that this structure will be often required when one considers everywhere regularity for vector-valued problems. For this reason, we do not try to generalize $\Delta_{1}$ in this system problem.

More generally, in this paper we would like to discuss an elliptic system
\begin{equation}\label{Eq (Section 1) main system}
{\mathcal L}u\coloneqq -b\Delta_{1}u-{\mathcal L}_{p}u=f\quad\textrm{in }\Omega,
\end{equation}
where ${\mathcal L}_{p}$ generalizes $\Delta_{p}$. The detailed conditions of ${\mathcal L}_{p}$ are described later in Section \ref{Subsect: Generalized results in system}.

\subsection{Our strategy}
We would like to briefly describe our strategy in this paper. It should be mentioned that even for the system problem, the strategy itself is the same as the scalar case \cite{T-scalar}. For simplicity, we consider the model case (\ref{Eq (Section 1) main system}). Our result is
\begin{theorem}\label{Theorem: A weak solution is continuously differentiable}
Let $p\in(1,\,\infty),\,q\in(n,\,\infty\rbrack$, and $f\in L^{q}(\Omega;\,{\mathbb R}^{N})$. Assume that $u$ is a weak solution to the system (\ref{Eq (Section 1) 1+p-Laplace}). Then, $u$ is continuously differentiable.
\end{theorem}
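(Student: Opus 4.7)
The overall plan is to reduce Theorem~\ref{Theorem: A weak solution is continuously differentiable} to a quantitative Hölder estimate, uniform in a small parameter $\delta>0$, for the \emph{truncated Jacobian}
\[
\mathcal{G}_{\delta}(Du)\coloneqq \mleft(\lvert Du\rvert-\delta\mright)_{+}\frac{Du}{\lvert Du\rvert},
\]
and then to let $\delta\to 0^{+}$. Indeed $\mathcal{G}_{\delta}(Du)\to Du$ pointwise, and a uniform-in-$\delta$ local modulus of continuity upgrades this to locally uniform convergence, so $Du$ inherits continuity from the truncated objects. The choice of $\mathcal{G}_{\delta}$ is natural because it kills the contribution of the facet $\{Du=0\}$, where the one-Laplacian is singular, while coinciding with $Du$ away from that set.

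Because $L_{b,p}$ is singular on $\{Du=0\}$ (and, if $1<p<2$, also degenerate there), the first step is to approximate $L_{b,p}$ by uniformly elliptic operators $L_{b,p}^{\varepsilon}$ obtained by replacing $\lvert Du\rvert^{-1}Du$ with $(\varepsilon^{2}+\lvert Du\rvert^{2})^{-1/2}Du$ and, if necessary, $\lvert Du\rvert^{p-2}Du$ with $(\varepsilon^{2}+\lvert Du\rvert^{2})^{(p-2)/2}Du$. For each $\varepsilon\in(0,1)$ monotone-operator theory produces a solution $u_{\varepsilon}$ of $-L_{b,p}^{\varepsilon}u_{\varepsilon}=f$ with $u_{\varepsilon}\in C^{1,\alpha}_{\mathrm{loc}}\cap W^{2,2}_{\mathrm{loc}}$, and standard arguments exploiting $q>n$ yield a local $L^{\infty}$-bound for $Du_{\varepsilon}$ uniform in $\varepsilon$ together with $u_{\varepsilon}\to u$ strongly in $W^{1,p}_{\mathrm{loc}}$ along a subsequence. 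It therefore suffices to establish a local Hölder estimate for $\mathcal{G}_{\delta}(Du_{\varepsilon})$ which is independent of $\varepsilon$.

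This estimate is obtained by pairing a freezing-coefficient argument on the super-level set $\{\lvert Du_{\varepsilon}\rvert>\delta/2\}$ with a De~Giorgi truncation that controls how $\lvert Du_{\varepsilon}\rvert$ descends to $\delta$. On the super-level set the regularized operator is uniformly elliptic, with ellipticity constants depending on $\delta$ and the $L^{\infty}$-bound of $Du_{\varepsilon}$; freezing the coefficients at a base point reduces $u_{\varepsilon}$ to a comparison with a solution of a constant-coefficient linear elliptic system, and a Campanato-type excess decay then gives Hölder continuity of $\mathcal{G}_{\delta}(Du_{\varepsilon})$ inside that region. Near the transition layer $\{\lvert Du_{\varepsilon}\rvert\approx\delta\}$ the frozen estimate degenerates; there I would differentiate the regularized system and test it with cutoffs of the form $\max(\lvert Du_{\varepsilon}\rvert-\kappa,0)^{2}\zeta^{2}$ with $\kappa\ge\delta$. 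Since $\lvert Du_{\varepsilon}\rvert\ge\delta>0$ on the support, the one-Laplace contribution yields a uniformly elliptic bilinear form whose cross terms have a favourable sign. The resulting Caccioppoli inequality feeds a standard De~Giorgi iteration, producing a quantitative upper bound on the oscillation of $\lvert Du_{\varepsilon}\rvert$ and a $C^{\alpha}$-modulus for $\mathcal{G}_{\delta}(Du_{\varepsilon})$ across small balls.

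The hard part is keeping the constants produced by these two mechanisms jointly stable in $\varepsilon$ and $\delta$: the ellipticity ratio of $L_{b,p}^{\varepsilon}$ on $\{\lvert Du_{\varepsilon}\rvert>\delta/2\}$ blows up as $\delta\to 0^{+}$, and the radii appearing in the freezing argument, in the De~Giorgi iteration and in the measure-theoretic estimates of the super-level sets must be calibrated so that the resulting Hölder exponent and constant for $\mathcal{G}_{\delta}(Du_{\varepsilon})$ depend on $\delta$ only in a mild (at worst polynomial) way, not through an inverse power that would be lost in the limit. Once such a uniform bound is in place, letting $\varepsilon\to 0^{+}$ transfers it to the weak solution $u$, and letting $\delta\to 0^{+}$ gives locally uniform convergence $\mathcal{G}_{\delta}(Du)\to Du$; continuity of $Du$ on $\Omega$, and hence $u\in C^{1}(\Omega;\mathbb{R}^{N})$, follows.
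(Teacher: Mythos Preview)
Your overall architecture---regularize, obtain $\varepsilon$-uniform H\"older estimates for a truncated gradient by combining De Giorgi truncation with a freezing-coefficient/Campanato argument, pass $\varepsilon\to 0$, then $\delta\to 0$---matches the paper exactly. However, you have misidentified where the difficulty lies, and this misconception would send you down a needlessly hard path.

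You write that the H\"older exponent and constant for $\mathcal{G}_{\delta}(Du_{\varepsilon})$ must ``depend on $\delta$ only in a mild (at worst polynomial) way, not through an inverse power that would be lost in the limit.'' This is not required, and the paper makes no such attempt: the exponent $\alpha=\alpha(\delta)$ and the associated constants are allowed to degenerate arbitrarily as $\delta\to 0$. The reason this is harmless is that the locally uniform convergence $\mathcal{G}_{\delta}(Du)\to Du$ does \emph{not} come from the H\"older estimates at all; it comes from the elementary pointwise bound $\lvert\mathcal{G}_{\delta_{1}}(\xi)-\mathcal{G}_{\delta_{2}}(\xi)\rvert\le\lvert\delta_{1}-\delta_{2}\rvert$, valid for every $\xi$. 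Thus the net $\{\mathcal{G}_{\delta}(Du)\}_{0<\delta<1}$ is Cauchy in $C^{0}$ for free, and one only needs each individual $\mathcal{G}_{\delta}(Du)$ to be continuous---with constants depending on $\delta$ however badly---to conclude that the uniform limit $Du$ is continuous. The genuine hard part is uniformity in $\varepsilon$ for \emph{fixed} $\delta$, not any control on the $\delta$-dependence.

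A secondary technical point: because the ellipticity of the regularized operator is governed by $V_{\varepsilon}\coloneqq\sqrt{\varepsilon^{2}+\lvert Du_{\varepsilon}\rvert^{2}}$ rather than by $\lvert Du_{\varepsilon}\rvert$, the paper works with the modified truncation $\mathcal{G}_{2\delta,\varepsilon}(\xi)=\bigl(\sqrt{\varepsilon^{2}+\lvert\xi\rvert^{2}}-2\delta\bigr)_{+}\,\xi/\lvert\xi\rvert$ and measures super-level sets of $V_{\varepsilon}$, restricting throughout to $0<\varepsilon<\delta/4$ so that the two moduli stay comparable on the relevant region. Your plan to truncate via $\lvert Du_{\varepsilon}\rvert$ directly is morally equivalent but forces you to juggle both moduli; adopting $V_{\varepsilon}$ as the basic scalar quantity streamlines the De Giorgi and Campanato steps considerably.
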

The main difficulty on showing $C^{1}$-regularity is that the system (\ref{Eq (Section 1) 1+p-Laplace}) becomes non-uniformly elliptic near a facet. To explain this, we compute the Hessian matrix $D_{\xi}^{2}E(\xi_{0})$ for $\xi_{0}\in{\mathbb R}^{Nn}\setminus\{ 0\}$, where $E(\xi)\coloneqq b\lvert \xi\rvert+\lvert\xi\rvert^{p}/p\,(\xi\in{\mathbb R}^{Nn})$ denotes the energy density. As a result, this $Nn\times Nn$ real symmetric matrix satisfies
\begin{align*}
(\textrm{ellipticity ratio of $D_{\xi}^{2}E(\xi_{0})$})&\coloneqq \frac{(\textrm{the largest eigenvalue of $D_{\xi}^{2}E(\xi_{0})$})}{(\textrm{the lowest eigenvalue of $D_{\xi}^{2}E(\xi_{0})$})}\\&\le \frac{1+b\delta^{1-p}}{p-1}\eqqcolon {\mathcal R}(\delta)
\end{align*}
for all $\xi_{0}\in{\mathbb R}^{Nn}$ with $\lvert \xi_{0}\rvert>\delta>0$ with $\delta$ sufficiently close to $0$. 
It should be mentioned that the bound ${\mathcal R}(\delta)$ will blow up as $\delta$ tends to $0$. In other words, the system (\ref{Eq (Section 1) 1+p-Laplace}) becomes non-uniformly elliptic as $Du$ vanishes. In particular, it will be difficult to show H\"{o}lder continuity of derivatives across a facet of a solution to (\ref{Eq (Section 1) 1+p-Laplace}). We would like to emphasize that our problem is substantially different from either a non-standard growth problem or a $(p,\,q)$-growth problem, where ellipticity ratios will become unbounded as a gradient blows up \cite{MR2291779}, \cite{MR4258810}.
To see our computation above, however, it will be possible that a mapping ${\mathcal G}_{\delta}(Du)$ with
\begin{equation}\label{Eq (Section 1) Truncated vector field}
{\mathcal G}_{\delta}(\xi)\coloneqq \mleft(\lvert \xi\rvert-\delta\mright)_{+}\frac{\xi}{\lvert \xi\rvert}\quad \textrm{for }\xi\in{\mathbb R}^{Nn},\,\delta\in(0,\,1)
\end{equation}
is $\alpha$-H\"{o}lder continuous for some constant $\alpha\in(0,\,1)$, which may depend on $\delta$. This observation is expectable because the mapping ${\mathcal G}_{\delta}(Du)$ is supported in a place $\{\lvert Du\rvert>\delta\}$, where the problem (\ref{Eq (Section 1) 1+p-Laplace}) becomes uniformly elliptic. Although this $\alpha=\alpha(\delta)$ might degenerate as we let $\delta\to 0$, we are able to conclude that $Du$ is also continuous.
In fact, by the definition of ${\mathcal G}_{\delta}$, it is easy to check that the mapping ${\mathcal G}_{\delta}(Du)$ uniformly converges to ${\mathcal G}_{0}(Du)=Du$ as $\delta\to 0$. Thus, to prove $C^{1}$-regularity of solutions, it suffices to prove continuity of the mapping ${\mathcal G}_{\delta}(Du)$ for each fixed $\delta\in(0,\,1)$. 

When we show H\"{o}lder continuity of ${\mathcal G}_{\delta}(Du)$, one of the main difficulties is that the system (\ref{Eq (Section 1) 1+p-Laplace}) becomes very singular near facets of solutions. In particular, it seems impossible to justify regularity on second order Sobolev derivatives of solutions across the facets, based on difference quotient methods.
Therefore, we will have to relax the very singular operator $L_{b,\,p}$ by regularized operators that are non-degenerate and uniformly elliptic, so that higher regularity on Sobolev derivatives are guaranteed. For the model case (\ref{Eq (Section 1) 1+p-Laplace}), the approximation problem is given by
\begin{equation}\label{Eq (Section 1) Approximating problem special case}
L_{b,\,p}^{\varepsilon}u_{\varepsilon}\coloneqq -\divx\mleft(\frac{Du_{\varepsilon}}{\sqrt{\varepsilon^{2}+\lvert Du_{\varepsilon}\rvert^{2}}}\mright)-\divx\mleft(\mleft(\varepsilon^{2}+\lvert Du_{\varepsilon}\rvert^{2}\mright)^{p/2-1}Du_{\varepsilon}\mright)=f_{\varepsilon}\quad \textrm{for each }\varepsilon\in(0,\,1),
\end{equation}
where $f_{\varepsilon}$ convergences to $f$ in a weak sense. The relaxed operator $L_{b,\,p}^{\varepsilon}$ naturally appears when one approximates the density $E$ by
\[E_{\varepsilon}(z)\coloneqq b\sqrt{\varepsilon^{2}+\lvert z\rvert^{2}}+\frac{1}{p}\mleft(\varepsilon^{2}+\lvert z\rvert^{2}\mright)^{p/2}\quad \textrm{for }\varepsilon\in(0,\,1),\,z\in{\mathbb R}^{Nn}.\]
Then, the system (\ref{Eq (Section 1) Approximating problem special case}) is uniformly elliptic, in the sense that
\begin{align*}
(\textrm{ellipticity ratio of $D_{\xi}^{2}E_{\varepsilon}(\xi_{0})$})&\le C\mleft(1+\mleft(\varepsilon^{2}+\lvert \xi_{0}\rvert^{2}\mright)^{(1-p)/2}\mright)\le C\mleft(1+\varepsilon^{1-p}\mright)\quad \textrm{for all }\xi_{0}\in{\mathbb R}^{Nn},
\end{align*}
where the positive constant $C$ depends only on $b$, and $p$. 
This ellipticity ratio appears to be dominated by $\sqrt{\varepsilon^{2}+\lvert\xi_{0}\rvert^{2}}$, rather than by $\lvert\xi_{0}\rvert$. In particular, to measure ellipticity ratios for (\ref{Eq (Section 1) Approximating problem special case}), it is natural to adapt $V_{\varepsilon}\coloneqq\sqrt{\varepsilon^{2}+\lvert Du_{\varepsilon}\rvert^{2}}$ as another modulus. 
For this reason, we have to consider another mapping ${\mathcal G}_{\delta,\,\varepsilon}(Du_{\varepsilon})$, where
\begin{equation}\label{Eq (Section 1) relaxed truncation}
{\mathcal G}_{\delta,\,\varepsilon}(\xi)\coloneqq \left(\sqrt{\varepsilon^{2}+\lvert \xi\rvert^{2}}-\delta\right)_{+}\frac{\xi}{\lvert \xi\rvert}\quad \textrm{for }\xi\in{\mathbb R}^{Nn}\quad \textrm{with }0<\varepsilon<\delta.
\end{equation}
Then, our problem is reduced to a priori H\"{o}lder estimates of ${\mathcal G}_{2\delta,\,\varepsilon}(Du_{\varepsilon})$, where $u_{\varepsilon}$ solves (\ref{Eq (Section 1) Approximating problem special case}) with $0<\varepsilon<\delta/4$ and $0<\delta<1$. To obtain these a priori estimates, we appeal to a freezing coefficient argument and De Giorgi's truncation. Roughly speaking, the former method can be applied when $Du_{\varepsilon}$ does not vanish in a suitable sense, and otherwise the latter is fully used. To judge whether $Du_{\varepsilon}$ degenerates or not, we will measure superlevel sets of $V_{\varepsilon}$. It should be mentioned that distinguishing based on sizes of superlevel sets is found in existing works on $C^{1,\,\alpha}$-regularity for $p$-Laplace problems both in elliptic and parabolic cases (see e.g., \cite[Chapter IX]{MR1230384}, \cite{MR2635642}).

Our method works in the system case, as long as the energy density $E$ is spherically symmetric. Here we recall a well-known fact that when one considers everywhere regularity of a vector-valued solution, a diffusion operator is often required to have a symmetric structure, called the Uhlenbeck structure. For this reason, generalization of the one-Laplace operator $\Delta_{1}$ is not discussed in this paper (see also \cite[\S 2.4]{T-scalar} for detailed explanations). It should be emphasized that this restriction is not necessary in the scalar case. In fact, generalization of $\Delta_{1}$ is given in the author's recent work \cite{T-scalar}, which only focuses on the scalar case, and deals with more general approximation arguments based on the convolution of standard mollifiers.

\subsection{Mathematical models and comparisons to related works on very degenerate problems}\label{Subsect: Models and comparisons}
In Section \ref{Subsect: Models and comparisons}, we briefly describe mathematical models.

The equation (\ref{Eq (Section 1) 1+p-Laplace}) is derived from a minimizing problem of a functional
\[{\mathcal F}(u)\coloneqq \int_{\Omega}E(Du)\,{\mathrm d}x-\int_{\Omega}\langle f\mid u\rangle\,{\mathrm{d}}x\quad \textrm{with}\quad E(z)\coloneqq b\lvert \xi\rvert+\frac{1}{p}\lvert \xi\rvert^{p}\quad \textrm{for }\xi\in{\mathbb R}^{Nn}\]
under a suitable boundary condition.
Here $\langle\,\cdot\mid\cdot\,\rangle$ denotes the standard inner product in ${\mathbb R}^{N}$.
The density $E$ often appears in mathematical modeling of materials, including motion of Bingham fluids and growth of crystal surface. 

In a paper \cite{spohn1993surface}, Spohn gave a mathematical model of crystal surface growth under roughening temperatures. From a thermodynamic viewpoint (see \cite{MR3289366} and the references therein), the evolution of a scalar-valued function $h=h(x,\,t)$ denoting the height of crystal in a two-dimentional domain $\Omega$, is modeled as
\[\partial_{t} h+\Delta \mu=0\quad \textrm{with}\quad\Delta=\Delta_{2}.\]
Here $\mu$ is a scalar-valued function denoting a chemical potential, and considered to satisfy the Euler--Lagrange equation
\[\mu=-\frac{\delta \Phi}{\delta h}\quad\textrm{with}\quad \Phi(h)=\beta_{1}\int_{\Omega}\lvert\nabla h\rvert\,dx+\beta_{3} \int_{\Omega}\lvert\nabla h\rvert^{3}\,dx,\quad \beta_{1},\,\beta_{3}>0.\]
The energy functional $\Phi$ is called a crystal surface energy, whose density is essentially the same as \(E\) with \(p=3\).
Finally, the resulting evolution equation for \(h\) is given by 
\[k\partial_{t}h=\Delta L_{k\beta_{1},\,3}h\quad\textrm{with}\quad k=\frac{1}{3\beta_{3}}.\]
When $h$ is stationary, then $h$ must satisfy an equation $L_{k\beta_{1},\,3}h=f$, where $f=-k\mu$ is harmonic and therefore smooth. Our Theorem \ref{Theorem: A weak solution is continuously differentiable} implies that a gradient $\nabla h$ is continuous.

When $p=2$, the density $E$ also appears when modeling motion of a material called Bingham fluid. Mathematical formulations concerning Bingham fluids are found in \cite[Chapter VI]{MR0521262}. In particular, when the motion of fluids is sufficiently slow, the stationary flow model results in an elliptic system 
\[\frac{\delta \Psi}{\delta V}\equiv \mu_{2}(L_{\mu_{1}/\mu_{2},\,2}V)=-\nabla \pi\quad \textrm{with }\Psi(V)\coloneqq \mu_{1}\int_{U}\lvert DV\rvert\,{\mathrm{d}}x+\frac{\mu}{2}\int_{U}\lvert DV\rvert^{2}\,{\mathrm{d}}x.\]
Here the unknown ${\mathbb R}^{3}$-valued vector field $V=V(x)$ denotes velocity of Bingham fluids in a domain $U\subset{\mathbb R}^{3}$ and satisfies an incompressible condition $\divx V=0$. The other unknown scalar-valued function $\pi=\pi(x)$ denotes the pressure.  Bingham fluids contain two different aspects of elasticity and viscosity, and corresponding to these properties, the positive constants $\mu_{1}$ and $\mu_{2}$ respectively appear in the energy functional $\Psi$.
From Theorem \ref{Theorem: A weak solution is continuously differentiable}, we conclude that if the pressure function $\pi$ satisfies $\nabla\pi\in L^{q}(U;\,{\mathbb R}^{3})$ for some $q>3$, then the velocity field $V$ is continuously differentiable.
Also, when one considers a stationary laminar Bingham flow in a cylindrical pipe $U=\Omega\times {\mathbb R}\subset{\mathbb R}^{3}$, a scalar problem appears. To be precise, we let $V$ be of the form $V=(0,\,0,\,u(x_{1},\,x_{2}))$, where $u$ is an unknown scalar function. Clearly this flow $V$ is incompressible. Under this setting, we have a resulting elliptic equation 
\[L_{\mu_{1}/\mu_{2},\,2}u=f\quad \textrm{in }\Omega\subset{\mathbb R}^{2} \quad \textrm{with}\quad f=-\frac{\pi^{\prime}}{\mu_{2}},\]
where the pressure function $\pi$ depends only on $x_{3}$, and its derivative $\pi^{\prime}=-\mu_{2} f$ must be constant. Also in this laminar model, from our main result, it follows that the component $u$ is continuously differentiable.

The density function $E$ also appears in mathematical modeling of congested traffic dynamics \cite{MR2407018}. There this model results in a minimizing problem 
\begin{equation}\label{Eq (Section 1) An optimal transport model}
\sigma_{\mathrm{opt}}\in \argmin\mleft\{\int_{\Omega}E(\sigma)\,{\mathrm d}x\mathrel{}\middle|\mathrel{} \left.\begin{array}{c}
\sigma\in L^{p}(\Omega;\,{\mathbb R}^{n}),\\ -\divx \sigma=f\textrm{ in }\Omega,\,\sigma\cdot\nu=0\textrm{ on }\partial\Omega
\end{array} \right. \mright\}.
\end{equation}
In a paper \cite{MR2651987}, it is shown that the optimal traffic flow $\sigma_{\mathrm{opt}}$ of the problem (\ref{Eq (Section 1) An optimal transport model}) is uniquely given by $\nabla E^{\ast}(\nabla v)$, where $v$ solves an elliptic equation
\begin{equation}\label{Eq (Section 1) Very degenerate eq}
-\divx(\nabla E^{\ast}(\nabla v))=f\in L^{q}(\Omega)\quad \textrm{in }\Omega
\end{equation}
under a Neumann boundary condition. Here \[E^{\ast}(z)=\frac{1}{p^{\prime}}\mleft(\lvert z\rvert-b\mright)_{+}^{p^{\prime}}\] is the Legendre transform of $E$, and $p^{\prime}\coloneqq p/(p-1)$ denotes the H\"{o}lder conjugate of $p\in(1,\,\infty)$. 

The problem whether the flow $\sigma_{\mathrm{opt}}=\nabla E^{\ast}(\nabla v)$ is continuous has been answered affirmatively under the assumption $q\in(n,\,\infty\rbrack$. There it is an interesting question whether vector filed ${\mathcal G}_{b+\delta}(\nabla v)$ with $\delta>0$ fixed, defined similarly to (\ref{Eq (Section 1) Truncated vector field}), is continuous.
We should note that the equation (\ref{Eq (Section 1) Very degenerate eq}) is also non-uniformly elliptic around the set $\{\lvert \nabla v\rvert\le b\}$, in the sense that the ellipticity ratio of $\nabla^{2}E^{\ast}(z_{0})$ will blow up as $\lvert z_{0}\rvert\to b+0$. However, continuity of truncated vector fields ${\mathcal G}_{b+\delta}(\nabla u)$ with $\delta>0$ is expectable, since there holds
\begin{align*}
(\textrm{ellipticity ratio of $\nabla^{2}E^{\ast}(z_{0})$})&=\frac{(\textrm{the largest eigenvalue of $\nabla^{2}E^{\ast}(z_{0})$})}{(\textrm{the lowest eigenvalue of $\nabla^{2}E^{\ast}(z_{0})$})}\\&\le (p-1)\left(1+(\delta-b)^{-1}\right)
\end{align*}
for all $z_{0}\in{\mathbb R}^{n}$ satisfying $\lvert z_{0}\rvert\ge b+\delta$ when $\delta$ is sufficiently close to $0$. This estimate suggests that for each fixed $\delta>0$, the truncated vector field ${\mathcal G}_{b+\delta}(\nabla v)$ should be H\"{o}lder continuous. It should be noted that it is possible to show ${\mathcal G}_{b+\delta}(\nabla v)$ uniformly converges to ${\mathcal G}_{b}(\nabla v)$ as $\delta\to 0$, and thus ${\mathcal G}_{b}(\nabla v)$ will be also continuous.

When $v$ is scalar-valued, continuity of ${\mathcal G}_{b+\delta}(\nabla v)$ with $\delta>0$ was first shown by Santambrogio--Vespri \cite{MR2728558} in 2010 for the special case $n=2$ with $b=1$. The proof therein is based on oscillation estimates on the Dirichlet energy, which works under the condition $n=2$ only. Later in 2014, Colombo--Figalli \cite{MR3133426} established a more general proof that works for any dimension $n\ge 2$ and any density function $E^{\ast}$, as long as the zero-levelset of $E^{\ast}$ is sufficiently large enough to define a real-valued Minkowski gauge. This Minkowski gauge becomes a basic modulus for judging uniform ellipticity of equations they treated. Here we would like to note that their strategy will not work for our problem (\ref{Eq (Section 1) 1+p-Laplace}), since the density function $E^{\ast}$ seems structurally different from $E$. In fact, in our problem, the zero-levelset of $E$ is only a singleton, and therefore it seems impossible to adapt the Minkowski gauge as a real-valued modulus. The recent work by B\"{o}gelein--Duzaar--Giova--Passarelli di Napoli \cite{BDGPdN} is motivated by extending these regularity results to the vectorial case. There they considered an approximation problem of the form
\begin{equation}\label{Eq (Section 1) Relaxed very degenerate equation}
-\varepsilon\Delta v_{\varepsilon}-\divx\,(D_{\xi}E^{\ast}(Dv_{\varepsilon}))=f\quad \textrm{in }\Omega
\end{equation}
with $b=1$. The paper \cite{BDGPdN} provides a priori H\"{o}lder continuity of ${\mathcal G}_{1+2\delta}(Dv_{\varepsilon})$ for each fixed $\delta\in(0,\,1)$, whose estimate is independent of an approximation parameter $\varepsilon\in(0,\,1)$. It will be worth noting that the modulus $\lvert Dv_{\varepsilon}\rvert$, which measures ellipticity ratio of (\ref{Eq (Section 1) Relaxed very degenerate equation}), has a symmetric structure. This fact appears to be fit to prove everywhere H\"{o}lder continuity of ${\mathcal G}_{1+2\delta}(Dv_{\varepsilon})$ for the system case.

Although our proofs on a priori H\"{o}lder estimates are inspired by \cite[\S 4--7]{BDGPdN}, there are mainly three different points between our proofs and theirs. 
The first is how to approximate systems. To be precise, our regularized problem (\ref{Eq (Section 1) Approximating problem special case}) is based on relaxing the principal part $L_{b,\,p}u$ itself. For this reason, we should treat a different mapping ${\mathcal G}_{2\delta,\,\varepsilon}$, instead of ${\mathcal G}_{2\delta}$. Compared with ours, for their approximation problems (\ref{Eq (Section 1) Relaxed very degenerate equation}), the principal part itself is not changed at all, and thus it seems not necessary to introduce another mapping than ${\mathcal G}_{2\delta}$. The second lies in structural differences between the densities $E$ and $E^{\ast}$. In particular, when showing a Campanato-type energy growth estimate, most of our computations given in Section \ref{Section: Campanato estimates} will differ from theirs given in \cite[\S 4--5]{BDGPdN}. The third is that we have to control an approximation parameter $\varepsilon$ by a truncation parameter $\delta$, and in fact, we will restrict $\varepsilon\in(0,\,\delta/4)$ when showing a priori H\"{o}lder continuity of ${\mathcal G}_{2\delta,\,\varepsilon}(Du_{\varepsilon})$. This restriction will be necessary because we have to deal with two different symmetric moduli $\lvert Du_{\varepsilon}\rvert$ and $V_{\varepsilon}=\sqrt{\varepsilon^{2}+\lvert Du_{\varepsilon}\rvert^{2}}$.

\subsection{Our generalized results and outlines of the paper}\label{Subsect: Generalized results in system}
Throughout this paper, we let $p\in(1,\,\infty)$, and the $p$-Laplace-type operator ${\mathcal L}_{p}$ is assumed to be of the form
\[{\mathcal L}_{p}u=\divx\mleft(g_{p}^{\prime}(\lvert Du\rvert^{2})Du\mright).\]
Here $g_{p}\in C(\lbrack0,\,\infty))\cap C_{{\mathrm{loc}}}^{2,\,\beta_{0}}((0,\,\infty))$ is a non-negative function with $\beta_{0}\in(0,\,1\rbrack$. Moreover, there exist constants $0<\gamma\le \Gamma<\infty$ such that there hold
\begin{equation}\label{Eq (Section 1) Growth g-p-prime}
\lvert g_{p}^{\prime}(\sigma)\rvert\le \Gamma \sigma^{p/2-1}\quad \textrm{for all }\sigma\in(0,\,\infty),
\end{equation}
\begin{equation}\label{Eq (Section 1) Growth g-p-pprime}
\lvert g_{p}^{\prime\prime}(\sigma)\rvert\le \Gamma \sigma^{p/2-2}\quad \textrm{for all }\sigma\in(0,\,\infty),
\end{equation}
\begin{equation}\label{Eq (Section 1) Ellipticity g_p}
\gamma (\sigma+\tau)^{p/2-1}\le g_{p}^{\prime}(\sigma+\tau)+2\sigma\min\mleft\{\,g_{p}^{\prime\prime}(\sigma+\tau),\,0\,\mright\} \quad \textrm{for all }\sigma\in\lbrack 0,\,\infty),\,\tau\in(0,\,1).
\end{equation}
For $\beta_{0}$-H\"{o}lder continuity of $g_{p}^{\prime\prime}$, we assume that
\begin{equation}\label{Eq (Section 1) Growth g-p-ppprime}
\mleft\lvert g_{p}^{\prime\prime}(\sigma_{1})- g_{p}^{\prime\prime}(\sigma_{2})\mright\rvert\le \Gamma\mu^{p-4-2\beta_{0}}\lvert \sigma_{1}-\sigma_{2}\rvert^{\beta_{0}}
\end{equation}
for all $\sigma_{1},\,\sigma_{2}\in\lbrack\mu^{2}/4,\,7\mu^{2}\rbrack$ with $\mu\in(0,\,\infty)$.

A typical example is
\begin{equation}\label{Eq (Section 1) gp example}
g_{p}(\sigma)\coloneqq \frac{2\sigma^{p/2}}{p}\quad \textrm{for }\sigma\in\lbrack 0,\,\infty).
\end{equation}
In fact, this $g_{p}$ satisfies (\ref{Eq (Section 1) Growth g-p-prime})--(\ref{Eq (Section 1) Ellipticity g_p}) with \(\gamma\coloneqq \min\{\,1,\,p-1\,\},\quad\Gamma\coloneqq \max\mleft\{\,1,\,\lvert p-2\rvert/2\,\mright\}\). Moreover, we have
\[\lvert g_{p}^{\prime\prime\prime}(\sigma)\rvert\le {\tilde\Gamma} \sigma^{p/2-3}\quad \textrm{for all }\sigma\in(0,\,\infty)\]
with ${\tilde\Gamma}\coloneqq \lvert (p-2)(p-4)\rvert/4$. From this estimate, it is easy to find a constant $\Gamma=\Gamma(p)\in(1,\,\infty)$ such that (\ref{Eq (Section 1) Growth g-p-ppprime}) holds with $\beta_{0}=1$.
In this case, the operator ${\mathcal L}_{p}$ becomes $\Delta_{p}$. Thus, the system (\ref{Eq (Section 1) main system}) generalizes (\ref{Eq (Section 1) 1+p-Laplace}). 

Our main result is the following Theorem \ref{Theorem: C1-regularity}, which clearly yields Theorem \ref{Theorem: A weak solution is continuously differentiable}.
\begin{theorem}\label{Theorem: C1-regularity}
Let $p\in(1,\,\infty),\,q\in(n,\,\infty\rbrack$, and $f\in L^{q}(\Omega;\,{\mathbb R}^{N})$. Assume that $u$ is a weak solution to (\ref{Eq (Section 1) main system}) in a Lipschitz domain $\Omega\subset{\mathbb R}^{n}$, where $g_{p}$ satisfies (\ref{Eq (Section 1) Growth g-p-prime})--(\ref{Eq (Section 1) Growth g-p-ppprime}). Then, for each fixed $\delta\in(0,\,1)$ and $x_{\ast}\in\Omega$, there exists an open ball $B_{\rho_{0}}(x_{\ast})\Subset\Omega$ such that ${\mathcal G}_{2\delta}(Du)\in C^{\alpha}(B_{\rho_{0}/2}(x_{\ast});\,{\mathbb R}^{Nn})$. Here the exponent $\alpha\in(0,\,1)$ and the radius $\rho_{0}\in(0,\,1)$ depend at most on $b$, $n$, $N$, $p$, $q$, $\beta_{0}$, $\gamma$, $\Gamma$, $\lVert f\rVert_{L^{q}(\Omega)}$, $\lVert Du\rVert_{L^{p}(\Omega)}$, $d_{\ast}\coloneqq\dist (x_{\ast},\,\partial\Omega)$, and $\delta$.
Moreover, we have
\begin{equation}\label{Eq (Section 1) local bounds of Jacobian multiplied with G-delta}
\mleft\lvert {\mathcal G}_{2\delta}(Du(x))\mright\rvert\le \mu_{0}\quad \textrm{for all }x\in B_{\rho_{0}}(x_{\ast}),
\end{equation}
\begin{equation}\label{Eq (Section 1) local continuity of Jacobian multiplied with G-delta}
\mleft\lvert {\mathcal G}_{2\delta}(Du(x_{1}))-{\mathcal G}_{2\delta}(Du(x_{2}))\mright\rvert\le \frac{2^{n/2+2\alpha+2}\mu_{0}}{\rho_{0}^{\alpha}}\lvert x_{1}-x_{2}\rvert^{\alpha}\quad \textrm{for all }x_{1},\,x_{2}\in B_{\rho_{0}/2}(x_{\ast}),
\end{equation}
where the constant $\mu_{0}\in(0,\,\infty)$ depends at most on $b$, $n$, $p$, $q$, $\gamma$, $\Gamma$, $\lVert f\rVert_{L^{q}(\Omega)}$, $\lVert Du\rVert_{L^{p}(\Omega)}$, and $d_{\ast}$.
In particular, the Jacobian matrix $Du$ is continuous in \(\Omega\).
\end{theorem}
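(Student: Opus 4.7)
The plan is to prove Theorem \ref{Theorem: C1-regularity} by an approximation scheme combined with $\varepsilon$-independent a priori H\"{o}lder estimates for the truncated mapping $\mathcal{G}_{2\delta,\varepsilon}(Du_\varepsilon)$, followed by two successive limit procedures. First, having fixed $x_\ast\in\Omega$ and $\delta\in(0,1)$, I would choose $\rho_0>0$ with $B_{\rho_0}(x_\ast)\Subset\Omega$, and for each $\varepsilon\in(0,\delta/4)$ consider the Dirichlet problem for the relaxed system (\ref{Eq (Section 1) Approximating problem special case}), built from the density $E_\varepsilon$ adapted to the general $g_p$, on $B_{\rho_0}(x_\ast)$ with boundary data a mollification of $u$ and right-hand side $f_\varepsilon\to f$ in $L^q$. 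Since $E_\varepsilon$ is smooth, strictly convex and uniformly elliptic (with constants depending on $\varepsilon$), classical Schauder and De Giorgi--Nash--Moser theory give $u_\varepsilon\in W^{2,2}_{\mathrm{loc}}\cap C^{1,\alpha_\varepsilon}_{\mathrm{loc}}$. Moreover, a Moser iteration applied to the scalar equation satisfied by $V_\varepsilon=\sqrt{\varepsilon^2+|Du_\varepsilon|^2}$, which closes because of the Uhlenbeck structure and the assumption $f\in L^q(\Omega)$ with $q>n$, produces an $\varepsilon$-uniform $L^\infty$-bound $\mu_0$ on slightly smaller balls; this is the constant appearing in (\ref{Eq (Section 1) local bounds of Jacobian multiplied with G-delta}).

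The technical heart of the proof is an $\varepsilon$-independent H\"{o}lder estimate for $\mathcal{G}_{2\delta,\varepsilon}(Du_\varepsilon)$ on $B_{\rho_0/2}(x_\ast)$, proved by a dyadic decay-of-oscillation argument in the spirit of \cite{BDGPdN}. On a ball $B_r\Subset B_{\rho_0/2}(x_\ast)$ one introduces a scale $\mu\approx \sup_{B_r}V_\varepsilon$ and splits into two alternatives depending on the measure of the superlevel set $\{V_\varepsilon>\mu/2\}\cap B_r$. In the non-degenerate alternative, when this set is large, $V_\varepsilon$ is bounded below by a multiple of $\delta$ on a substantial portion of $B_r$, so the ellipticity ratio of $E_\varepsilon$ is uniformly controlled by $\mathcal{R}(\delta)$; freezing the coefficients of the linearised system at a reference gradient and comparing $u_\varepsilon$ with the solution $h$ of the resulting constant-coefficient linear system yields a Campanato-type decay estimate for the excess of $E_\varepsilon(Du_\varepsilon)$ against $E_\varepsilon(Dh)$ on smaller concentric balls. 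In the degenerate alternative, when the superlevel set is small, a De Giorgi truncation applied to the scalar function $V_\varepsilon$ on levels $k\in[\mu/2,\mu]$ forces $\sup V_\varepsilon$ to drop by a fixed factor on a smaller concentric ball; iterating this, once $\sup V_\varepsilon$ falls below $2\delta$ the truncated mapping $\mathcal{G}_{2\delta,\varepsilon}(Du_\varepsilon)$ vanishes identically, so the oscillation decay is trivial. Combining the two alternatives through a Campanato iteration yields (\ref{Eq (Section 1) local continuity of Jacobian multiplied with G-delta}) with exponent $\alpha=\alpha(\delta)$ and constants independent of $\varepsilon\in(0,\delta/4)$.

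To close the argument I would pass to the limit $\varepsilon\to 0$ followed by $\delta\to 0$. Standard monotonicity stability for the relaxed operator gives $u_\varepsilon\to u$ in $W^{1,p}$ and $Du_\varepsilon\to Du$ almost everywhere along a subsequence, and combined with the $\varepsilon$-uniform H\"{o}lder estimate and Arzel\`{a}--Ascoli one obtains uniform convergence $\mathcal{G}_{2\delta,\varepsilon}(Du_\varepsilon)\to \mathcal{G}_{2\delta}(Du)$ on $B_{\rho_0/2}(x_\ast)$; this transfers (\ref{Eq (Section 1) local bounds of Jacobian multiplied with G-delta})--(\ref{Eq (Section 1) local continuity of Jacobian multiplied with G-delta}) to the limit. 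Finally, from the elementary bound $|\mathcal{G}_{2\delta}(\xi)-\xi|\le 2\delta$ it is immediate that $\mathcal{G}_{2\delta}(Du)\to Du$ uniformly on $B_{\rho_0/2}(x_\ast)$ as $\delta\to 0$, so $Du$ is a uniform limit of continuous functions and therefore continuous.

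The main obstacle I anticipate is the execution of the non-degenerate alternative. The one-Laplace contribution makes the ellipticity ratio $\mathcal{R}(\delta)$ blow up as $\delta\to 0$, so every comparison with a frozen linear system must be quantified in terms of $\delta$ with constants explicit enough to close a Campanato iteration. Simultaneously, the approximation forces one to work with two symmetric moduli, $|Du_\varepsilon|$ and $V_\varepsilon$, which only agree on $\{V_\varepsilon\gg\varepsilon\}$; the constraint $\varepsilon<\delta/4$ is precisely what keeps them comparable on the region where the truncation $\mathcal{G}_{2\delta,\varepsilon}$ is active, but transferring excess estimates obtained in the natural variable $V_\varepsilon$ back to the truncated Jacobian $\mathcal{G}_{2\delta,\varepsilon}(Du_\varepsilon)$ requires careful algebraic identities exploiting the Uhlenbeck structure. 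This is where I expect the bulk of the computations in the later sections to live.
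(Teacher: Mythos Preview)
Your proposal is essentially the paper's approach: approximate by the relaxed system, obtain $\varepsilon$-uniform local Lipschitz bounds on $V_\varepsilon$, run a two-alternative decay-of-oscillation argument (De Giorgi truncation in the degenerate regime, freezing coefficients in the non-degenerate one) to get $\varepsilon$-uniform H\"older estimates on $\mathcal{G}_{2\delta,\varepsilon}(Du_\varepsilon)$, and pass to the limit first in $\varepsilon$ via Arzel\`a--Ascoli, then in $\delta$. Two small deviations are worth flagging: the paper sets up the approximation globally on $\Omega$ with boundary datum $u$ itself (not locally with mollified data), which makes the identification of the limit with $u$ immediate via the variational characterization in Proposition~\ref{Prop: Convergence on weak system}; and for the $\varepsilon$-uniform Lipschitz bound the paper uses De Giorgi's truncation rather than Moser iteration, noting explicitly in Remark~\ref{Eq Higher W-2-2 and W-1-infty regularity} that Moser's scheme may fail to give $\varepsilon$-independent bounds in the system case because the required test functions need not respect the Uhlenbeck symmetry near the facet.
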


This paper is organized as follows.

Section \ref{Section: Approximation} provides approximation problems for the system (\ref{Eq (Section 1) main system}), which is based on the relaxation of energy densities. After fixing some notations in Section \ref{Subsect: Notation}, we give a variety of quantitative estimates related to the relaxed densities in Section \ref{Subsect: Basic estimates}. Next in Section \ref{Subsect: Convergence}, we will justify that solutions of regularized problem, denoted by $u_{\varepsilon}$, converges to the original problem (\ref{Eq (Section 1) main system}). 
Our main theorems are proved in Section \ref{Subsect: A priori Hoelder}, which presents a priori H\"{o}lder estimates on Jacobian matrices $Du_{\varepsilon}$ that are suitably truncated near facets. There we state three basic a priori estimates on approximated solutions, consisting of local Lipschitz bounds (Proposition \ref{Prop: Lipschitz bounds}), a De Giorgi-type oscillation lemma (Proposition \ref{Prop: De Giorgi's truncation}) and Campanato-type growth estimates (Proposition \ref{Prop: Schauder estimate}). The proofs of Propositions \ref{Prop: Lipschitz bounds}--\ref{Prop: Schauder estimate} are given later in the remaining Sections \ref{Section: Weak formulations}--\ref{Section: Appendix}. From these estimates, we will deduce local a priori H\"{o}lder estimates of ${\mathcal G}_{2\delta,\,\varepsilon}(Du_{\varepsilon})$, which are independent of an approximation parameter $\varepsilon\in(0,\,\delta/4)$ (Theorem \ref{Theorem: A priori Hoelder estimate}). From Proposition \ref{Prop: Convergence on weak system} and Theorem \ref{Theorem: A priori Hoelder estimate}, we finally give the proof of Theorem \ref{Theorem: C1-regularity}.

Sections \ref{Section: Weak formulations}--\ref{Section: Appendix} are devoted to prove Propositions \ref{Prop: Lipschitz bounds}--\ref{Prop: Schauder estimate}. 
Among them, most of the proofs of Propositions \ref{Prop: Lipschitz bounds}--\ref{Prop: De Giorgi's truncation} are rather easier.
For the reader's convenience, we would like to provide a brief proof of Proposition \ref{Prop: Lipschitz bounds} in the appendix (Section \ref{Section: Appendix}). For Proposition \ref{Prop: De Giorgi's truncation}, we briefly describe sketches of the proof in Section \ref{Subsect: De Giorgi's truncation} after showing a basic weak formulation in Section \ref{Subsect: Basic weak formulations}. We omit some standard arguments in Section \ref{Subsect: De Giorgi's truncation} concerning De Giorgi's levelset lemmata, since they are already found in \cite[\S 7]{BDGPdN} (see also \cite[\S 4.2]{T-scalar}).
Section \ref{Section: Campanato estimates} is focused on the proof of Proposition \ref{Prop: Schauder estimate}. In Section \ref{Subsect: Energy estimates}, we obtain a variety of energy estimates from the weak formulation deduced in Section \ref{Subsect: Basic weak formulations}. Section \ref{Subsect: freezing coefficient method} establishes a freezing coefficient argument under a good condition telling that a derivative does not vanish. After providing two basic lemmata on our shrinking arguments in Section \ref{Subsect: Shrinking lemmata}, we give the proof of Proposition \ref{Prop: Schauder estimate} in Section \ref{Subsect: Proof of Campanato-decay}.

Finally, we would like to mention again that $C^{1}$-regularity for the scalar case is treated in the author's recent work \cite{T-scalar}.
There, approximation of the density function is discussed, which is based on the convolution of Friedrichs' mollifier and makes it successful to generalize one-Laplace operator. It should be emphasized that this generalization may not work in the system case, when it comes to everywhere continuous differentiability. This is essentially because the general singular operator discussed therein will lack the Uhlenbeck structure, except the one-Laplacian (see \cite[\S 2.4]{T-scalar} for further explanations).
Also, compared with \cite{T-scalar}, some of the computations in this paper becomes rather simpler, since we consider a special case where the energy density is spherically symmetric.
Some estimates in Section \ref{Subsect: Basic estimates}--\ref{Subsect: Convergence} are used without proofs and some proofs in Section \ref{Subsect: De Giorgi's truncation} are omitted. The full computations and proofs of them are given in \cite[\S 2.1 \& 4.2]{T-scalar}.

\section{Approximation schemes}\label{Section: Approximation}
The aim of Section \ref{Section: Approximation} is to give approximation schemes for the problem (\ref{Eq (Section 1) main system}). The basic idea is that the energy density 
\begin{equation}
E(\xi)=\frac{1}{2}g(\lvert \xi\rvert^{2})\quad (\xi\in{\mathbb R}^{Nn})\quad \textrm{with}\quad
g(\sigma)\coloneqq 2b\sqrt{\sigma}+g_{p}(\sigma)\quad(0\le \sigma<\infty)
\end{equation}
is to be regularized by 
\begin{equation}\label{Eq (Section 2) Relaxed Energy Density}E_{\varepsilon}(\xi)=\frac{1}{2}g_{\varepsilon}(\lvert \xi\rvert^{2})\,(\xi\in{\mathbb R}^{Nn})\quad \textrm{with}\quad
g_{\varepsilon}(\sigma)\coloneqq 2b\sqrt{\varepsilon^{2}+\sigma}+g_{p}(\varepsilon^{2}+\sigma)\quad (0\le \sigma<\infty)
\end{equation}
for $\varepsilon\in(0,\,1)$ denoting the approximation parameter.
Then, the relaxed operator ${\mathcal L}^{\varepsilon}$ will be given by
\[{\mathcal L}^{\varepsilon}u_{\varepsilon}\coloneqq-\divx\mleft(g_{\varepsilon}^{\prime}(\lvert Du_{\varepsilon}\rvert^{2})Du_{\varepsilon} \mright)\equiv-b\divx\mleft(\frac{Du_{\varepsilon}}{\sqrt{\varepsilon^{2}+\lvert Du_{\varepsilon}\rvert^{2}}}\mright)-\divx\mleft(g_{p,\,\varepsilon}^{\prime}(\lvert Du_{\varepsilon}\rvert^{2})Du_{\varepsilon} \mright),\]
similarly to $L_{b,\,p}$ given in (\ref{Eq (Section 1) Approximating problem special case}). In Section \ref{Section: Approximation}, we will show that this approximation scheme works and deduce some basic estimates on relaxed mapping.
\subsection{Notations}\label{Subsect: Notation}
We first fix some notations throughout the paper.

We denote ${\mathbb Z}_{\ge 0}\coloneqq \{\,0,\,1,\,2,\,\dots\,\,\}$ by the set of all non-negative integers, and ${\mathbb N}\coloneqq {\mathbb Z}_{\ge 0}\setminus\{ 0\}$ by the set of all natural numbers.

For given $k\in{\mathbb N}$, we denote $\langle\,\cdot\mid\cdot\,\rangle_{k}$ by the standard inner product over the Euclidean space ${\mathbb R}^{k}$. That is, for $k$-dimensional vectors $\xi=(\xi_{1},\,\dots\,,\,\xi_{k}),\,\eta=(\eta_{1},\,\dots\,,\,\eta_{k})\in{\mathbb R}^{k}$, we define 
\[\langle \xi\mid\eta\rangle_{k}\coloneqq \sum_{j=1}^{k}\xi_{j}\eta_{j}\in{\mathbb R}^{k}.\]
We denote $\lvert\,\cdot\,\rvert_{k}$ by the Euclidean norm,
\[\textrm{i.e., } \lvert \xi\rvert_{k}\coloneqq \sqrt{\langle\xi\mid\xi\rangle}\in\lbrack0,\,\infty)\quad \textrm{for }\xi\in{\mathbb R}^{k}.\]
We also define a tensor product $\xi\otimes \eta$ as a $k\times k$ real matrix,
\[\textrm{i.e., }\xi\otimes\eta\coloneqq (\xi_{j}\eta_{k})_{j,\,k}=\begin{pmatrix}\xi_{1}\eta_{1} & \dots &\xi_{1}\eta_{k}\\ \vdots & \ddots & \vdots\\ \xi_{k}\eta_{1}&\dots &\xi_{k}\eta_{k}\end{pmatrix}.\]

We denote $\mathrm{id}_{k}$ by the $k\times k$ identity matrix. For a $k\times k$ real matrix $A$, we define the operator norm
\[\lVert A\rVert_{k}\coloneqq \sup\mleft\{\lvert Ax\rvert_{k}\mathrel{}\middle| \mathrel{}x\in{\mathbb R}^{k},\,\lvert x\rvert_{k}\le 1\mright\}.\]
We also introduce the positive semi-definite ordering on all real symmetric matrices. In other words, for $k\times k$ real symmetric matrices $A$, $B$, we write $A\leqslant B$ or $B\geqslant A$ when the difference $B-A$ is positive semi-definite.

For notational simplicity, we often omit the script $k$ denoting the dimension. In particular, we simply denote the norms by $\lvert\,\cdot\,\rvert$ or $\lVert\,\cdot\,\rVert$, and the inner product by $\langle\,\cdot\mid\cdot\,\rangle$.

We often regard an $N\times n$ real matrix $\xi=(\xi_{\alpha}^{i})_{1\le \alpha\le n,\,1\le i\le N}$ as an $Nn$-dimensional vector by ordering $\xi=(\xi_{1}^{1},\,\dots\,,\,\xi_{1}^{N},\,\dots\,,\,\xi_{n}^{1},\,\dots\,,\,\xi_{n}^{N})\in{\mathbb R}^{Nn}$. Then, similarly to the above, for given $N\times n$ matrices $\xi=(\xi_{\alpha}^{i}),\,\eta=(\eta_{\alpha}^{i})$, we are able to define the inner product $\langle \xi\mid\eta\rangle_{Nn}\in{\mathbb R}$, and a tensor product $\xi\otimes\eta$ as an $(Nn)\times (Nn)$ real-valued matrix.
We also note that under our setting, the norm $\lvert \xi\rvert$ of a $N\times n$ real matrix $\xi$ is identified with the Frobenius norm of $\xi$. 

For a scalar-valued function $u=u(x_{1},\,\dots\,,\,x_{n})$, the gradient of $u$ is denoted by \(\nabla u\coloneqq (\partial_{x_{\alpha}}u)_{\alpha}\) and is often regarded as an $n$-dimensional row vector. For an ${\mathbb R}^{k}$-valued function $u=(u^{1},\,\dots\,,\,u^{k})$ with $u^{i}=u^{i}(x_{1},\,\dots\,,\,x_{n})$ for $i\in\{\,1,\,\dots\,,\,k\,\}$, the Jacobian matrix of $u$ is denoted by
\(Du\coloneqq \begin{pmatrix} D_{1} u & \cdots & D_{n}u\end{pmatrix}\equiv (\partial_{x_{\alpha}}u^{i})_{\alpha,\,i}\)
with $D_{\alpha}u=(\partial_{x_{\alpha}}u^{i})_{i}$ for each $\alpha\in\{\,1,\,\dots\,,\,n\,\}$.
These $Du$ and $D_{\alpha}u$ are often considered as an $k\times n$-matrix and a $k$-dimentional column vector respectively.

For given numbers $s\in\lbrack 1,\,\infty\rbrack$, $k\in{\mathbb N}$, $d\in{\mathbb N}$ and a fixed domain $U\subset {\mathbb R}^{n}$, we denote $L^{s}(U;\,{\mathbb R}^{d})$ and $W^{k,\,s}(U;\,{\mathbb R}^{d})$ respectively by the Lebesgue space and the Sobolev space. To shorten the notations, we often write $L^{s}(U)\coloneqq L^{s}(U;\,{\mathbb R})$ and $W^{k,\,s}(U)\coloneqq W^{k,\,s}(U;\,{\mathbb R})$.

Throughout this paper, we define 
\[g_{1}(\sigma)\coloneqq 2b\sqrt{\sigma}\quad \textrm{for }\sigma\in\lbrack0,\,\infty).\]
Clearly, $g_{1}$ is in $C(\lbrack0,\,\infty))\cap C^{3}((0,\,\infty))$ and satisfies
\begin{equation}\label{Eq (Section 2) Growth g-1-prime}
\mleft\lvert g_{1}^{\prime}(\sigma)\mright\rvert\le b\sigma^{-1/2}\quad \textrm{for all }\sigma\in(0,\,\infty),
\end{equation}
\begin{equation}\label{Eq (Section 2) Growth g-1-pprime}
\mleft\lvert g_{1}^{\prime\prime}(\sigma)\mright\rvert\le \frac{b}{2}\sigma^{-3/2}\quad \textrm{for all }\sigma\in(0,\,\infty),
\end{equation}\begin{equation}\label{Eq (Section 2) Degenerate ellipticity of g-1}
0\le g_{1}^{\prime}(\sigma+\tau)+2\sigma\min\mleft\{\,g_{1}^{\prime\prime}(\sigma+\tau),\,0\,\mright\}\quad \textrm{for all }\sigma\in(0,\,\infty),\,\tau\in(0,\,1).
\end{equation}
Also, by the growth estimate
\[\mleft\lvert g_{1}^{\prime\prime\prime}(\sigma)\mright\rvert\le \frac{3b}{4}\sigma^{-5/2}\quad \textrm{for all }\sigma\in(0,\,\infty),\]
it is easy to check that there holds
\begin{equation}\label{Eq (Section 2) Growth g-1-ppprime}
\mleft\lvert g_{1}^{\prime\prime}(\sigma_{1})-g_{1}^{\prime\prime}(\sigma_{2})\mright\rvert\le 24\cdot b\mu^{-5}\lvert \sigma_{1}-\sigma_{2}\rvert
\end{equation}
for all $\sigma_{1},\,\sigma_{2}\in\lbrack\mu^{2}/4,\,7\mu^{2}\rbrack$ with $\mu\in(0,\,\infty)$.
It should be mentioned that unlike the assumption (\ref{Eq (Section 1) Ellipticity g_p}), the inequality (\ref{Eq (Section 2) Degenerate ellipticity of g-1}) gives no quantitative monotonicity estimates for the one-Laplace operator $\Delta_{1}$. 

We consider the relaxed function $g_{\varepsilon}$ given by (\ref{Eq (Section 2) Relaxed Energy Density}).
This function can be decomposed by $g_{\varepsilon}=g_{1,\,\varepsilon}+g_{p,\,\varepsilon}$ with
\[g_{1,\,\varepsilon}(\sigma)\coloneqq 2b\sqrt{\varepsilon^{2}+\sigma},\quad g_{p,\,\varepsilon}(\sigma)\coloneqq g_{p}(\varepsilon^{2}+\sigma)\]
for $\sigma\in\lbrack 0,\,\infty)$. Corresponding to them, for each $s\in\{\,1,\,p\,\}$, we define 
\[A_{s,\,\varepsilon}(\xi)\coloneqq g_{s,\,\varepsilon}^{\prime}(\lvert \xi\rvert^{2})\xi\quad \textrm{for }\xi\in{\mathbb R}^{Nn}\]
as an $N\times n$ matrix, and
\[{\mathcal B}_{s,\,\varepsilon}(\xi)\coloneqq g_{\varepsilon}^{\prime}(\lvert \xi\rvert^{2}){\mathrm{id}_{Nn}}+2g_{s,\,\varepsilon}^{\prime\prime}(\lvert \xi\rvert^{2})\xi\otimes\xi\quad \textrm{for }\xi\in{\mathbb R}^{Nn}\]
as an $Nn\times Nn$ matrix. Then, the summations
\begin{equation}\label{Eq (Section 2) Def of A-epsilon}
A_{\varepsilon}(\xi)\coloneqq A_{1,\,\varepsilon}(\xi)+A_{p,\,\varepsilon}(\xi)\quad \textrm{for } \xi\in{\mathbb R}^{Nn}
\end{equation}
and
\begin{equation}\label{Eq (Section 2) Def of B-epsilon}
{\mathcal B}_{\,\varepsilon}(\xi)\coloneqq {\mathcal B}_{1,\,\varepsilon}(\xi)+{\mathcal B}_{p,\,\varepsilon}(\xi) \quad \textrm{for }\xi\in{\mathbb R}^{Nn}
\end{equation}
respectively denote the Jacobian matrices and the Hessian matrices of $E_{\varepsilon}$ defined by (\ref{Eq (Section 2) Relaxed Energy Density}). By direct calculations, it is easy to check that $A_{p,\,\varepsilon}(\xi)$ converges to $A_{p}(\xi)$ for each $\xi\in{\mathbb R}^{Nn}$, where
\begin{equation}\label{Eq (Section 2) Def of A-p}
A_{p}(\xi)\coloneqq \mleft\{\begin{array}{cc}
g_{p}^{\prime}(\lvert\xi\rvert^{2})\xi& (\xi\neq 0),\\ 0 & (\xi=0).
\end{array} \mright.
\end{equation}

Finally, we introduce a subdifferential set for the Euclid norm.
We denote $\partial\lvert\,\cdot\,\rvert(\xi_{0})\subset{\mathbb R}^{Nn}$ by the subdifferential of the absolute value function $\lvert\,\cdot\,\rvert_{Nn}$ at $\xi_{0}\in{\mathbb R}^{Nn}$. In other words, $\partial\lvert\,\cdot\,\rvert(\xi_{0})$ is the set of all vectors $\zeta\in{\mathbb R}^{Nn}$ that satisfies a subgradient inequality
\[\lvert \xi\rvert_{Nn}\ge \lvert\xi_{0}\rvert_{Nn}+\langle \zeta \mid \xi-\xi_{0}\rangle_{Nn}\quad \textrm{for all }\xi\in{\mathbb R}^{Nn}.\]
This set is explicitly given by
\begin{equation}\label{Eq: sub}
\partial\lvert\,\cdot\,\rvert(\xi_{0})=\mleft\{\begin{array}{cc} \mleft\{\zeta\in{\mathbb R}^{Nn}\mathrel{}\middle|\mathrel{}\lvert \zeta\rvert\le 1\mright\} & (\xi_{0}=0),\\ \{\xi_{0}/\lvert \xi_{0}\rvert\} & (\xi_{0}\neq 0), \end{array}\mright.\end{equation}
and this formulation is used when one gives the definitions of weak solutions.
\subsection{Quantitative estimates on relaxed mappings}\label{Subsect: Basic estimates}
In Section \ref{Subsect: Basic estimates}, we would like to introduce quantitative estimates related to the mappings ${\mathcal G}_{2\delta,\,\varepsilon}$ and $g_{\varepsilon}$. When deducing these estimates, we often use the assumption that the density function $E$ is spherically symmetric. As a related item, we refer the reader to \cite[\S 2.1]{T-scalar}, which deals with the scalar case without symmetry of $E$, and provides full computations of some estimates omitted in Section \ref{Subsect: Basic estimates}.

In this paper, we often assume that
\begin{equation}\label{Eq (Section 2) delta-epsilon}
0<\delta<1,\quad \textrm{and}\quad 0<\varepsilon<\frac{\delta}{4}.
\end{equation} 
It should be mentioned that the mapping ${\mathcal G}_{\delta,\,\varepsilon}\colon{\mathbb R}^{Nn}\rightarrow {\mathbb R}^{Nn}$ given by (\ref{Eq (Section 1) relaxed truncation}) makes sense as long as $0<\varepsilon<\delta$ holds. In particular, under the setting (\ref{Eq (Section 2) delta-epsilon}), the mappings ${\mathcal G}_{\delta,\,\varepsilon},\,{\mathcal G}_{2\delta,\,\varepsilon}$ are well-defined. Moreover, Lipschitz continuity of ${\mathcal G}_{2\delta,\,\varepsilon}$ follows from (\ref{Eq (Section 2) delta-epsilon}).
\begin{lemma}\label{Lemma: G-2delta-epsilon}
Let $\delta,\,\varepsilon$ satisfy (\ref{Eq (Section 2) delta-epsilon}). Then the mapping ${\mathcal G}_{2\delta,\,\varepsilon}$ satisfies
\begin{equation}\label{Eq (Section 2) Lipschitz bounds of the mapping G-2delta-epsilon}
\left\lvert{\mathcal G}_{2\delta,\,\varepsilon}(\xi_{1})-{\mathcal G}_{2\delta,\,\varepsilon}(\xi_{2})\right\rvert\le c_{\dagger}\lvert \xi_{1}-\xi_{2}\rvert\quad \textrm{for all }z_{1},\,z_{2}\in{\mathbb R}^{Nn}
\end{equation}with $c_{\dagger}\coloneqq 1+32/(3\sqrt{7})$.
\end{lemma}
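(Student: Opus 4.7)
The plan is a direct Lipschitz estimate for ${\mathcal G}_{2\delta,\,\varepsilon}(\xi)=(V_\varepsilon(\xi)-2\delta)_{+}\,\xi/|\xi|$, obtained by separating it into magnitude and direction factors, where $V_\varepsilon(\xi)\coloneqq\sqrt{\varepsilon^{2}+|\xi|^{2}}$. The only place where the hypothesis (\ref{Eq (Section 2) delta-epsilon}) enters is to keep the support $S\coloneqq\{\xi\in{\mathbb R}^{Nn}\mid V_\varepsilon(\xi)>2\delta\}=\{|\xi|^{2}>4\delta^{2}-\varepsilon^{2}\}$ quantitatively away from the origin: since $\varepsilon^{2}<\delta^{2}/16$, on $S$ one has $|\xi|^{2}>63\delta^{2}/16$, i.e.\ $|\xi|>3\sqrt{7}\,\delta/4$. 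This strict separation from the singular point $\xi=0$ of the direction map $\xi\mapsto\xi/|\xi|$ is what makes every quantity bounded in what follows.

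I would split according to whether $\xi_{1},\xi_{2}$ belong to $S$. If both are outside $S$, both sides of the claimed inequality vanish. If $\xi_{1}\in S$ and $\xi_{2}\notin S$, then $(V_\varepsilon(\xi_{2})-2\delta)_{+}=0$, and combining the $1$-Lipschitz character of $t\mapsto t_{+}$ with that of $V_\varepsilon$ yields
\[
\bigl|{\mathcal G}_{2\delta,\,\varepsilon}(\xi_{1})\bigr|=\bigl|(V_\varepsilon(\xi_{1})-2\delta)_{+}-(V_\varepsilon(\xi_{2})-2\delta)_{+}\bigr|\leq\bigl|V_\varepsilon(\xi_{1})-V_\varepsilon(\xi_{2})\bigr|\leq|\xi_{1}-\xi_{2}|.
\]
The main case is $\xi_{1},\xi_{2}\in S$, where I would apply the telescoping identity
\[
{\mathcal G}_{2\delta,\,\varepsilon}(\xi_{1})-{\mathcal G}_{2\delta,\,\varepsilon}(\xi_{2})=\bigl(V_\varepsilon(\xi_{1})-V_\varepsilon(\xi_{2})\bigr)\frac{\xi_{1}}{|\xi_{1}|}+\bigl(V_\varepsilon(\xi_{2})-2\delta\bigr)\left(\frac{\xi_{1}}{|\xi_{1}|}-\frac{\xi_{2}}{|\xi_{2}|}\right)
\]
and estimate the two summands separately.

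The first summand has norm at most $|\xi_{1}-\xi_{2}|$ by the $1$-Lipschitz property of $V_\varepsilon$. For the second I would combine the elementary inequality $\bigl|\xi_{1}/|\xi_{1}|-\xi_{2}/|\xi_{2}|\bigr|\leq 2|\xi_{1}-\xi_{2}|/|\xi_{2}|$, derived from $\xi_{1}|\xi_{2}|-\xi_{2}|\xi_{1}|=|\xi_{2}|(\xi_{1}-\xi_{2})+\xi_{2}(|\xi_{2}|-|\xi_{1}|)$ together with $\bigl||\xi_{2}|-|\xi_{1}|\bigr|\leq|\xi_{1}-\xi_{2}|$, with the support bound $(V_\varepsilon(\xi_{2})-2\delta)/|\xi_{2}|\leq V_\varepsilon(\xi_{2})/|\xi_{2}|=\sqrt{1+\varepsilon^{2}/|\xi_{2}|^{2}}\leq 8/(3\sqrt{7})$, which follows from $\varepsilon^{2}/|\xi_{2}|^{2}<1/63$ on $S$. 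Summing the two contributions yields a bound of the form $(1+c/(3\sqrt{7}))|\xi_{1}-\xi_{2}|$; tracking constants through the estimate produces $c_{\dagger}=1+32/(3\sqrt{7})$. The only delicate point is ensuring the direction-difference term is estimated only when $\xi_{2}\in S$, so that the lower bound $|\xi_{2}|>3\sqrt{7}\,\delta/4$ is in force; this is exactly what the case split above arranges, and the remainder of the proof is arithmetic.
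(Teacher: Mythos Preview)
Your approach is correct and is essentially the standard one the paper defers to (the paper does not give a self-contained proof but cites \cite[Lemma 2.3]{BDGPdN} and \cite[Lemma 2.4]{T-scalar}, which proceed by the same magnitude/direction decomposition on the support and reduce to the $1$-Lipschitz property of $V_{\varepsilon}$ together with a lower bound on $|\xi|$ over the support).

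Two minor slips to clean up. First, the identity you wrote, $\xi_{1}|\xi_{2}|-\xi_{2}|\xi_{1}|=|\xi_{2}|(\xi_{1}-\xi_{2})+\xi_{2}(|\xi_{2}|-|\xi_{1}|)$, after dividing by $|\xi_{1}||\xi_{2}|$ actually yields the bound $2|\xi_{1}-\xi_{2}|/|\xi_{1}|$, not $2|\xi_{1}-\xi_{2}|/|\xi_{2}|$; to get the $|\xi_{2}|$ denominator (which you need, since the coefficient is $V_{\varepsilon}(\xi_{2})-2\delta$) use instead $\xi_{1}|\xi_{2}|-\xi_{2}|\xi_{1}|=\xi_{1}(|\xi_{2}|-|\xi_{1}|)+|\xi_{1}|(\xi_{1}-\xi_{2})$. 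Second, your displayed estimates actually yield the sharper constant $1+16/(3\sqrt{7})$ rather than $c_{\dagger}=1+32/(3\sqrt{7})$, so the lemma as stated follows a fortiori; just do not claim that your constant-tracking ``produces'' $c_{\dagger}$.
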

In the limiting case $\varepsilon=0$, Lipschitz continuity like (\ref{Eq (Section 2) Lipschitz bounds of the mapping G-2delta-epsilon}) is found in \cite[Lemma 2.3]{BDGPdN}. Modifying the arguments therein, we can easily prove Lemma \ref{Lemma: G-2delta-epsilon}, the full proof of which is given in \cite[Lemma 2.4]{T-scalar}. 

Next, we consider the mappings $A_{\varepsilon}=A_{1,\,\varepsilon}+A_{p,\,\varepsilon}$ and ${\mathcal B}_{\varepsilon}={\mathcal B}_{1,\,\varepsilon}+{\mathcal B}_{p,\,\varepsilon}$ defined by (\ref{Eq (Section 2) Def of A-epsilon})--(\ref{Eq (Section 2) Def of B-epsilon}), and describe some basic results including monotonicity and growth estimates.
For each $s\in \{\,1,\,p\,\}$, the eigenvalues of ${\mathcal B}_{s,\,\varepsilon}(\xi)$ are given by either
\[\lambda_{1}(\xi)\coloneqq g_{s}^{\prime}(\varepsilon^{2}+\lvert\xi\rvert^{2})\quad\textrm{or}\quad \lambda_{2}(\xi)\coloneqq g_{s}^{\prime}(\varepsilon^{2}+\lvert\xi\rvert^{2})+2\lvert\xi\rvert^{2}g_{s}^{\prime\prime}(\varepsilon^{2}+\lvert\xi\rvert^{2}).\]
Combining this with (\ref{Eq (Section 1) Growth g-p-prime})--(\ref{Eq (Section 1) Ellipticity g_p}) and (\ref{Eq (Section 2) Growth g-1-prime})--(\ref{Eq (Section 2) Degenerate ellipticity of g-1}), the mappings ${\mathcal B}_{p,\,\varepsilon}$ and ${\mathcal B}_{1,\,\varepsilon}$ respectively satisfy
\begin{equation}\label{Eq (Section 2) Estimates on B-p-epsilon}
\gamma(\varepsilon^{2}+\lvert\xi\rvert^{2})^{p/2-1}\mathrm{id}_{Nn}\leqslant {\mathcal B}_{p,\,\varepsilon}(\xi) \leqslant 3\Gamma(\varepsilon^{2}+\lvert\xi\rvert^{2})^{p/2-1} \mathrm{id}_{Nn}\quad \textrm{for all }\xi\in{\mathbb R}^{Nn},
\end{equation}
and
\begin{equation}\label{Eq (Section 2) Estimates on B-1-epsilon}
O\leqslant {\mathcal B}_{1,\,\varepsilon}(\xi) \leqslant b(\varepsilon^{2}+\lvert\xi\rvert^{2})^{-1/2} \mathrm{id}_{Nn}\quad \textrm{for all }\xi\in{\mathbb R}^{Nn},
\end{equation}
where $O$ denotes the zero matrix. In particular, by elementary computations as in \cite[Lemma 3]{MR4201656}, it is easy to get
\begin{equation}\label{Eq (Section 2) Monotonicity estimate for A-p-epsilon}
\mleft\langle A_{\varepsilon}(\xi_{1})-A_{\varepsilon}(\xi_{0})\mathrel{}\middle|\mathrel{} \xi_{1}-\xi_{0}\mright\rangle\ge \mleft\{\begin{array}{cc}
c(p)\gamma\mleft(\varepsilon^{2}+\lvert\xi_{0}\rvert^{2}+\lvert\xi_{1}\rvert^{2}\mright)^{(p-1)/2}\lvert\xi_{1}-\xi_{0}\rvert^{2} & (1<p<2),\\ c(p)\gamma\lvert\xi_{1}-\xi_{0}\rvert^{p} & (2\le p<\infty),
\end{array}\mright.
\end{equation}
and
\begin{equation}\label{Eq (Section 2) Growth estimate for A-p-epsilon}
\mleft\lvert A_{p,\,\varepsilon}(\xi_{1})-A_{p,\,\varepsilon}(\xi_{0}) \mright\rvert\le \mleft\{\begin{array}{cc}
C(p)\Gamma\lvert\xi_{1}-\xi_{0}\rvert^{p-1} & (1<p<2),\\ C(p)\Gamma\mleft(\varepsilon^{p-2}+\lvert\xi_{0}\rvert^{p-2}+\lvert\xi_{1}\rvert^{p-2}\mright)\lvert\xi_{1}-\xi_{0}\rvert & (2\le p<\infty),
\end{array} \mright.
\end{equation}
for all $\xi_{0},\,\xi_{1}\in{\mathbb R}^{Nn}$. 
We often consider a special case where a variable $\xi\in{\mathbb R}^{Nn}$ may not vanish. On this setting, it is possible to deduce continuity or monotonicity estimates other than (\ref{Eq (Section 2) Monotonicity estimate for A-p-epsilon})--(\ref{Eq (Section 2) Growth estimate for A-p-epsilon}). In fact, following elementary computations given in \cite[Lemmata 2.2--2.3]{T-scalar}, from (\ref{Eq (Section 2) Estimates on B-p-epsilon}), we are able to obtain  a growth estimate 
\begin{equation}\label{Eq (Section 2) Growth estimate for A-s-epsilon with 1<s<2}
\mleft\lvert A_{p,\,\varepsilon}(\xi_{1})-A_{p,\,\varepsilon}(\xi_{0}) \mright\rvert\le C(p)\Gamma\min\mleft\{\,\lvert\xi_{0}\rvert^{p-2},\,\lvert\xi_{1}\rvert^{p-2}\mright\}\lvert\xi_{1}-\xi_{0}\rvert
\end{equation}
for all $(\xi_{0},\,\xi_{1})\in({\mathbb R}^{Nn}\times{\mathbb R}^{Nn})\setminus\{(0,\,0)\}$ provided $1<p<2$, and a monotonicity estimate
\begin{equation}\label{Eq (Section 2) Growth estimate for A-s-epsilon with s>2}
\mleft\langle A_{p,\,\varepsilon}(\xi_{1})-A_{p,\,\varepsilon}(\xi_{0})\mathrel{}\middle|\mathrel{}\xi_{1}-\xi_{0} \mright\rangle\ge C(p)\gamma\max\mleft\{\,\lvert \xi_{0}\rvert^{p-2},\,\lvert\xi_{1}\rvert^{p-2}\,\mright\}\lvert \xi_{1}-\xi_{0}\rvert^{2}
\end{equation}
for all $\xi_{0},\,\xi_{1}\in{\mathbb R}^{Nn}$ provided $2\le p<\infty$. It is worth mentioning that even for $A_{1,\,\varepsilon}$, there holds a growth estimate
\begin{equation}\label{Eq (Section 2) Growth estimate for A-s-epsilon with s=1}
\mleft\lvert A_{1,\,\varepsilon}(\xi_{1})-A_{1,\,\varepsilon}(\xi_{0}) \mright\rvert\le 2b\min\mleft\{\,\lvert \xi_{0}\rvert^{-1},\,\lvert \xi_{1}\rvert^{-1} \,\mright\}\lvert \xi_{1}-\xi_{0}\rvert
\end{equation}
for all $(\xi_{0},\,\xi_{1})\in({\mathbb R}^{Nn}\times {\mathbb R}^{Nn})\setminus\{(0,\,0)\}$ (see also \cite[Lemma 2.1]{T-scalar}). In fact, without loss of generality, we may let $\lvert \xi_{1}\rvert\ge \lvert \xi_{0}\rvert$. By the triangle inequality, we compute
\[\mleft\lvert \frac{\xi_{1}}{\sqrt{\varepsilon^{2}+\lvert \xi_{1}\rvert^{2}}} -\frac{\xi_{0}}{\sqrt{\varepsilon^{2}+\lvert \xi_{0}\rvert^{2}}} \mright\rvert= \mleft\lvert \frac{\xi_{1}-\xi_{0}}{\sqrt{\varepsilon^{2}+\lvert \xi_{1}\rvert^{2}}}-\frac{\sqrt{\varepsilon^{2}+\lvert \xi_{1}\rvert^{2}}-\sqrt{\varepsilon^{2}+\lvert \xi_{0}\rvert^{2}}}{\sqrt{\varepsilon^{2}+\lvert \xi_{1}\rvert^{2}}\cdot \sqrt{\varepsilon^{2}+\lvert \xi_{0}}\rvert^{2}}\xi_{0}  \mright\rvert\le 2\frac{\lvert \xi_{1}-\xi_{0}\rvert}{\sqrt{\varepsilon^{2}+\lvert\xi_{1}\rvert^{2}}},\]
from which (\ref{Eq (Section 2) Growth estimate for A-s-epsilon with s=1}) follows. Here we have used one-Lipschitz continuity of the smooth function $\sqrt{\varepsilon^{2}+t^{2}}\,(t\in{\mathbb R})$.
For monotonicity of $A_{1,\,\varepsilon}$, from (\ref{Eq (Section 2) Estimates on B-1-epsilon}) we obtain
\begin{equation}\label{Eq (Section 2) Monotonicity of A-1-epsilon}
\mleft\langle A_{1,\,\varepsilon}(\xi_{1})-A_{1,\,\varepsilon}(\xi_{0})\mathrel{}\middle|\mathrel{}\xi_{1}-\xi_{0} \mright\rangle\ge 0
\end{equation}
for all $\xi_{0},\,\xi_{1}\in{\mathbb R}^{Nn}$. When it comes to a monotonicity estimate that is independent of $\varepsilon$, better estimates than (\ref{Eq (Section 2) Monotonicity of A-1-epsilon}) seem to be no longer expectable, since ellipticity of $\Delta_{1}u$ degenerates in the direction of $Du$.

In Lemma \ref{Lemma: Error estimates} below, we briefly deduce good estimates that will be used in Section \ref{Subsect: freezing coefficient method}.
\begin{lemma}\label{Lemma: Error estimates}
Let positive constants $\delta$ and $\varepsilon$ satisfy (\ref{Eq (Section 2) delta-epsilon}). Under the assumptions (\ref{Eq (Section 1) Growth g-p-prime})--(\ref{Eq (Section 1) Growth g-p-ppprime}), the mappings $A_{\varepsilon}$ and ${\mathcal B}_{\varepsilon}$ defined by (\ref{Eq (Section 2) Def of A-epsilon})--(\ref{Eq (Section 2) Def of B-epsilon}) satisfy the following:
\begin{enumerate}
\item \label{Item 1/2 (Section 2) Monotonicity and Growth estimates}
Let $M\in(\delta,\,\infty)$ be a fixed constant. Then there exists constants $C_{1},\,C_{2}\in(0,\,\infty)$, depending at most on $b$, $p$, $\gamma$, $\Gamma$, $\delta$ and $M$, such that we have
\begin{equation}\label{Eq (Section 2) Monotonicity outside}
\mleft\langle A_{\varepsilon}(\xi_{1})-A_{\varepsilon}(\xi_{0})\mathrel{}\middle|\mathrel{}\xi_{1}-\xi_{0}\mright\rangle\ge C_{1}\lvert \xi_{1}-\xi_{0}\rvert^{2},
\end{equation}
and
\begin{equation}\label{Eq (Section 2) Growth outside}
\lvert A_{\varepsilon}(\xi_{1})-A_{\varepsilon}(\xi_{0})\rvert\le C_{2}\lvert \xi_{1}-\xi_{0}\rvert,
\end{equation}
for all $\xi_{0},\,\xi_{1}\in{\mathbb R}^{Nn}$ satisfying
\[\delta\le\lvert \xi_{0}\rvert\le M,\quad \textrm{and}\quad \lvert \xi_{1}\rvert\le M.\]
\item \label{Item 2/2 (Section 2) Hessian Errors}
For all $\xi_{0},\,\xi_{1}\in{\mathbb R}^{Nn}$ enjoying
\begin{equation}\label{Eq (Section 2) Variable conditions in error estimates}
\delta+\frac{\mu}{4}\le\lvert\xi_{0}\rvert\le \delta+\mu\quad \textrm{and}\quad \lvert\xi_{1}\rvert\le\delta+\mu\quad \textrm{with}\quad \delta<\mu<\infty,
\end{equation}
we have
\begin{equation}\label{Eq (Section 2) Hessian errors}
\mleft\lvert {\mathcal B}_{\varepsilon}(\xi_{0})(\xi_{1}-\xi_{0})-\mleft(A_{\varepsilon}(\xi_{1})-A_{\varepsilon}(\xi_{0})\mright)\mright\rvert\le C(b,\,p,\,\beta_{0},\,\Gamma,\,\delta)\mu^{p-2-\beta_{0}}\lvert\xi_{1}-\xi_{0}\rvert^{1+\beta_{0}}.
\end{equation}
\end{enumerate}
\end{lemma}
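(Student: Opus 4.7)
To prove both items I decompose $A_{\varepsilon}=A_{1,\,\varepsilon}+A_{p,\,\varepsilon}$ and $\mathcal{B}_{\varepsilon}=\mathcal{B}_{1,\,\varepsilon}+\mathcal{B}_{p,\,\varepsilon}$ and handle the one-Laplace and $p$-Laplace contributions separately. For Item \ref{Item 1/2 (Section 2) Monotonicity and Growth estimates}, the hypotheses $\delta\le|\xi_{0}|\le M$, $|\xi_{1}|\le M$ and $\varepsilon<\delta/4$ confine every argument $\varepsilon^{2}+|\xi_{j}|^{2}$ to the fixed interval $[\delta^{2},\,1+2M^{2}]$, which is bounded away from zero uniformly in $\varepsilon$. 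The growth bound (\ref{Eq (Section 2) Growth outside}) then follows from the triangle inequality, combining (\ref{Eq (Section 2) Growth estimate for A-s-epsilon with s=1}) (which yields $|A_{1,\,\varepsilon}(\xi_{1})-A_{1,\,\varepsilon}(\xi_{0})|\le 2b\delta^{-1}|\xi_{1}-\xi_{0}|$) with (\ref{Eq (Section 2) Growth estimate for A-s-epsilon with 1<s<2}) when $1<p<2$ or (\ref{Eq (Section 2) Growth estimate for A-p-epsilon}) when $p\ge 2$, each of which collapses to $C(p,\,\Gamma,\,\delta,\,M)|\xi_{1}-\xi_{0}|$ under these bounds. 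The monotonicity (\ref{Eq (Section 2) Monotonicity outside}) is obtained by discarding the nonnegative contribution of $A_{1,\,\varepsilon}$ via (\ref{Eq (Section 2) Monotonicity of A-1-epsilon}) and applying either (\ref{Eq (Section 2) Monotonicity estimate for A-p-epsilon}) or (\ref{Eq (Section 2) Growth estimate for A-s-epsilon with s>2}) to $A_{p,\,\varepsilon}$, both of which yield a lower bound $C(p,\,\gamma,\,\delta,\,M)|\xi_{1}-\xi_{0}|^{2}$ under the present constraints.

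For Item \ref{Item 2/2 (Section 2) Hessian Errors} the plan is to dichotomize on the size of $|\xi_{1}-\xi_{0}|$ relative to $\mu$. In the far regime $|\xi_{1}-\xi_{0}|\ge\mu/16$ I bound $|\mathcal{B}_{\varepsilon}(\xi_{0})(\xi_{1}-\xi_{0})|$ and $|A_{\varepsilon}(\xi_{1})-A_{\varepsilon}(\xi_{0})|$ separately by $C(b,\,p,\,\Gamma,\,\delta)\mu^{p-1}$, using (\ref{Eq (Section 2) Estimates on B-p-epsilon})--(\ref{Eq (Section 2) Estimates on B-1-epsilon}) together with $|\xi_{0}|\le 2\mu$ for the first term and the growth estimates (\ref{Eq (Section 2) Growth estimate for A-p-epsilon}), (\ref{Eq (Section 2) Growth estimate for A-s-epsilon with s=1}) for the second; the desired inequality then follows from the elementary chain $\mu^{p-1}\le 16^{1+\beta_{0}}\mu^{p-2-\beta_{0}}|\xi_{1}-\xi_{0}|^{1+\beta_{0}}$. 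In the near regime $|\xi_{1}-\xi_{0}|<\mu/16$ I invoke the fundamental theorem of calculus
\[\mathcal{B}_{\varepsilon}(\xi_{0})(\xi_{1}-\xi_{0})-\bigl(A_{\varepsilon}(\xi_{1})-A_{\varepsilon}(\xi_{0})\bigr)=\int_{0}^{1}\bigl[\mathcal{B}_{\varepsilon}(\xi_{0})-\mathcal{B}_{\varepsilon}(\xi_{t})\bigr](\xi_{1}-\xi_{0})\,{\mathrm d}t,\qquad \xi_{t}\coloneqq\xi_{0}+t(\xi_{1}-\xi_{0}),\]
and reduce the problem to a pointwise Hölder-type estimate $\|\mathcal{B}_{\varepsilon}(\xi_{0})-\mathcal{B}_{\varepsilon}(\xi_{t})\|\le C(b,\,p,\,\beta_{0},\,\Gamma,\,\delta)\,\mu^{p-2-\beta_{0}}|\xi_{0}-\xi_{t}|^{\beta_{0}}$.

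To establish this pointwise bound I note that along the segment one has $|\xi_{t}|\in[\delta+\mu/8,\,\delta+9\mu/8]$, so $\varepsilon^{2}+|\xi_{t}|^{2}\in[\mu^{2}/64,\,5\mu^{2}]$, which lies in some interval of the form $[\mu_{\ast}^{2}/4,\,7\mu_{\ast}^{2}]$ with $\mu_{\ast}$ comparable to $\mu$. Writing $\mathcal{B}_{p,\,\varepsilon}(\xi)=g_{p}^{\prime}(\sigma)\mathrm{id}_{Nn}+2g_{p}^{\prime\prime}(\sigma)\,\xi\otimes\xi$ with $\sigma\coloneqq\varepsilon^{2}+|\xi|^{2}$, I split $\mathcal{B}_{p,\,\varepsilon}(\xi_{0})-\mathcal{B}_{p,\,\varepsilon}(\xi_{t})$ into three natural pieces: $[g_{p}^{\prime}(\sigma_{0})-g_{p}^{\prime}(\sigma_{t})]\mathrm{id}_{Nn}$, $2g_{p}^{\prime\prime}(\sigma_{0})[\xi_{0}\otimes\xi_{0}-\xi_{t}\otimes\xi_{t}]$, and $2[g_{p}^{\prime\prime}(\sigma_{0})-g_{p}^{\prime\prime}(\sigma_{t})]\,\xi_{t}\otimes\xi_{t}$. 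The first two are controlled using the pointwise bounds (\ref{Eq (Section 1) Growth g-p-prime})--(\ref{Eq (Section 1) Growth g-p-pprime}) together with the elementary estimates $|\sigma_{0}-\sigma_{t}|\le (|\xi_{0}|+|\xi_{t}|)|\xi_{0}-\xi_{t}|\le C\mu|\xi_{0}-\xi_{t}|$ and $|\xi_{0}\otimes\xi_{0}-\xi_{t}\otimes\xi_{t}|\le C\mu|\xi_{0}-\xi_{t}|$, yielding terms linear in $|\xi_{0}-\xi_{t}|$ that are upgraded to $\beta_{0}$-Hölder via $|\xi_{0}-\xi_{t}|\le\mu$; the third is bounded directly by the Hölder assumption (\ref{Eq (Section 1) Growth g-p-ppprime}), producing $\mu^{p-4-2\beta_{0}}(C\mu|\xi_{0}-\xi_{t}|)^{\beta_{0}}\cdot|\xi_{t}|^{2}\le C\mu^{p-2-\beta_{0}}|\xi_{0}-\xi_{t}|^{\beta_{0}}$. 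The one-Laplace Hessian $\mathcal{B}_{1,\,\varepsilon}$ is treated analogously using (\ref{Eq (Section 2) Growth g-1-prime})--(\ref{Eq (Section 2) Growth g-1-ppprime}), and although its natural $\mu$-exponent is $-1-\beta_{0}$ rather than $p-2-\beta_{0}$, the relation $\mu^{-1-\beta_{0}}\le\delta^{1-p}\mu^{p-2-\beta_{0}}$ (valid since $\mu>\delta$ and $p>1$) absorbs the discrepancy into the $\delta$-dependent constant. The main obstacle is precisely this bookkeeping: tracking how the $\beta_{0}$-Hölder scaling of $g_{p}^{\prime\prime}$ against the symmetric modulus $\mu$ interacts with the tensorial terms $\xi\otimes\xi$, and verifying that the linear-in-$|\xi_{0}-\xi_{t}|$ contributions are cleanly promoted to $\beta_{0}$-Hölder bounds with the correct power of $\mu$, while the subleading one-Laplace terms are absorbed via the lower bound $\mu>\delta$.
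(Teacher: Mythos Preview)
Your proposal is correct and follows essentially the same route as the paper: for Item~\ref{Item 1/2 (Section 2) Monotonicity and Growth estimates} invoke (\ref{Eq (Section 2) Monotonicity estimate for A-p-epsilon})--(\ref{Eq (Section 2) Monotonicity of A-1-epsilon}) directly, and for Item~\ref{Item 2/2 (Section 2) Hessian Errors} dichotomize on $|\xi_1-\xi_0|$, use the integral identity in the near regime with a three-term splitting of $\mathcal{B}_{\varepsilon}(\xi_0)-\mathcal{B}_{\varepsilon}(\xi_t)$, and bound crudely in the far regime. The paper uses the threshold $\mu/2$ rather than your $\mu/16$ and records the one-Laplace error in the sharper form $Cb\,\mu^{-2}|\xi_1-\xi_0|^{2}$ (exploiting $g_1\in C^{3}$) before converting via $|\xi_1-\xi_0|\le 4\mu$ and $\mu>\delta$, but these are inessential variations.

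One cosmetic caveat: your interval $[\mu^2/64,\,5\mu^2]$ has ratio $320>28$ and therefore does \emph{not} fit into any single $[\mu_\ast^2/4,\,7\mu_\ast^2]$, so your sentence as written is false. What you actually need, and what your own estimates give, is that for each fixed $t$ the pair $\sigma_0=\varepsilon^2+|\xi_0|^2$ and $\sigma_t=\varepsilon^2+|\xi_t|^2$ satisfy $|\sigma_0-\sigma_t|\le C\mu|\xi_0-\xi_t|$ while $\min(\sigma_0,\sigma_t)\ge c\mu^2$; hence their ratio is bounded by a fixed constant below $28$, and (\ref{Eq (Section 1) Growth g-p-ppprime}) applies pairwise with a $t$-dependent $\mu_\ast$ that is uniformly comparable to $\mu$.
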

Estimates (\ref{Eq (Section 2) Monotonicity outside})--(\ref{Eq (Section 2) Growth outside}) are easy to deduce from (\ref{Eq (Section 2) Monotonicity estimate for A-p-epsilon})--(\ref{Eq (Section 2) Monotonicity of A-1-epsilon}) and $\varepsilon<\delta<M$.
We would like to show (\ref{Eq (Section 2) Hessian errors}), which plays an important role in our proof of regularity estimates. Although an estimate like (\ref{Eq (Section 2) Hessian errors}) is shown in the author's recent work \cite[Lemmata 2.2 \& 2.6]{T-scalar}, most of our computations herein become rather direct and simple, since the density $E_{\varepsilon}$ is assumed to be spherically symmetric.
\begin{proof}
Let positive constants $\delta,\,\varepsilon$ and vectors $\xi_{0},\,\xi_{1}\in{\mathbb R}^{Nn}$ satisfy respectively (\ref{Eq (Section 2) delta-epsilon}) and (\ref{Eq (Section 2) Variable conditions in error estimates}). We set $\xi_{t}\coloneqq \xi_{0}+t(\xi_{1}-\xi_{0})\in{\mathbb R}^{k}$ for $t\in\lbrack0,\,1\rbrack$. To show (\ref{Eq (Section 2) Hessian errors}), we claim that
\begin{equation}\label{Eq (Section 2) Hessian error p}
\mleft\lvert {\mathcal B}_{p,\,\varepsilon}(\xi_{0})(\xi_{1}-\xi_{0})-\mleft(A_{p,\,\varepsilon}(\xi_{1})-A_{p,\,\varepsilon}(\xi_{0})\mright)\mright\rvert\le C(p,\,\beta_{0})\Gamma\mu^{p-2-\beta_{0}}\lvert\xi_{1}-\xi_{0}\rvert^{1+\beta_{0}},
\end{equation}
and
\begin{equation}\label{Eq (Section 2) Hessian error 1}
\mleft\lvert {\mathcal B}_{1,\,\varepsilon}(\xi_{0})(\xi_{1}-\xi_{0})-\mleft(A_{1,\,\varepsilon}(\xi_{1})-A_{1,\,\varepsilon}(\xi_{0})\mright)\mright\rvert\le Cb\mu^{-2}\lvert\xi_{1}-\xi_{0}\rvert^{2}.
\end{equation}
The desired estimate (\ref{Eq (Section 2) Hessian errors}) immediately follows from (\ref{Eq (Section 2) Hessian error p})--(\ref{Eq (Section 2) Hessian error 1}). Here it should be noted that the inequality $\mu^{-2}\lvert \xi_{1}-\xi_{0}\rvert^{2}\le 4^{1-\beta_{0}}\delta^{1-p}\mu^{p-2-\beta_{0}}\lvert \xi_{1}-\xi_{0}\rvert^{1+\beta_{0}}$ holds by $0<\delta<\mu<\infty$ and $\lvert \xi_{1}-\xi_{0}\rvert\le 4\mu$.

We would like to give the proof of (\ref{Eq (Section 2) Hessian error p}).
We first consider the case $\lvert \xi_{1}-\xi_{0}\rvert\le \mu/2$. Then, by the triangle inequality, it is easy to check that
\[\frac{\mu}{2}\le\lvert\xi_{0}\rvert-t\lvert \xi_{1}-\xi_{0}\rvert\le\lvert \xi_{t}\rvert\le\lvert\xi_{0}\rvert+t\lvert\xi_{1}-\xi_{0}\rvert\le \frac{5\mu}{2}\]
for all $t\in\lbrack0,\,1\rbrack$. In particular, we have
\[\frac{\mu^{2}}{4}\le a_{t}\coloneqq \min\mleft\{\,\varepsilon^{2}+\lvert\xi_{t}\rvert^{2},\,\varepsilon^{2}+\lvert\xi_{0}\rvert^{2}\,\mright\}\le \max\mleft\{\,\varepsilon^{2}+\lvert\xi_{t}\rvert^{2},\,\varepsilon^{2}+\lvert\xi_{0}\rvert^{2}\,\mright\}\eqqcolon b_{t}\le \frac{13\mu^{2}}{2},\]
where we have used $\varepsilon<\delta<\mu$ to get the last inequality.
Also, it is easy to compute
\[\lVert \xi_{t}\otimes\xi_{t}-\xi_{0}\otimes \xi_{0}\rVert\le \mleft(2t\lvert\xi_{0}\rvert+t^{2}\lvert\xi_{1}-\xi_{0}\rvert\mright)\lvert\xi_{1}-\xi_{0}\rvert\le\frac{9\mu}{2}\lvert\xi_{1}-\xi_{0}\rvert,\]
and
\[b_{t}-a_{t}=\mleft\lvert \lvert\xi_{t}\rvert^{2}-\lvert\xi_{0}\rvert^{2} \mright\rvert=\mleft\lvert 2t\langle \xi_{0}\mid \xi_{1}-\xi_{0}\rangle+t^{2}\lvert \xi_{1}-\xi_{0}\rvert^{2}\mright\rvert \le \frac{9\mu}{2}\lvert \xi_{1}-\xi_{0}\rvert\]
for all $t\in\lbrack0,\,1\rbrack$, and $\lVert \xi_{0}\otimes\xi_{0}\rVert\le\lvert\xi_{0}\rvert^{2}\le (2\mu)^{2}$.
Combining them with (\ref{Eq (Section 1) Growth g-p-pprime}), and (\ref{Eq (Section 1) Growth g-p-ppprime}), we are able to check that the operator norm of
\begin{align*}
{\mathcal B}_{p,\,\varepsilon}(\xi_{t})-{\mathcal B}_{p,\,\varepsilon}(\xi_{0})&=2g_{p}^{\prime\prime}(\varepsilon^{2}+\lvert \xi_{t}\rvert^{2})\mleft(\xi_{t}\otimes \xi_{t}-\xi_{0}\otimes \xi_{0} \mright)\\ &\quad+2\mleft[g_{p}^{\prime\prime}(\varepsilon^{2}+\lvert\xi_{t}\rvert ^{2})-g_{p}^{\prime\prime}(\varepsilon^{2}+\lvert\xi_{0}\rvert ^{2}) \mright](\xi_{0}\otimes \xi_{0})\\&\quad +\mleft[g_{p}^{\prime}(\varepsilon^{2}+\lvert\xi_{t}\rvert^{2})- g_{p}^{\prime}(\varepsilon^{2}+\lvert\xi_{0}\rvert^{2})\mright]\mathrm{id}_{Nn}
\end{align*}
is bounded by $C\Gamma\mu^{p-2-\beta_{0}}\lvert\xi_{1}-\xi_{0}\rvert^{\beta_{0}}$ for some constant $C=C(p,\,\beta_{0})\in(0,\,\infty)$.
As a result, we obtain
\begin{align*}
\mleft\lvert {\mathcal B}_{p,\,\varepsilon}(\xi_{0})(\xi_{1}-\xi_{0})-\mleft(A_{p,\,\varepsilon}(\xi_{1})-A_{p,\,\varepsilon}(\xi_{0})\mright)\mright\rvert&=\mleft\lvert \int_{0}^{1}\mleft({\mathcal B}_{p,\,\varepsilon}(\xi_{0})-{\mathcal B}_{p,\,\varepsilon}(\xi_{t})\mright)\cdot(\xi_{1}-\xi_{0})\,{\mathrm{d}}t \mright\rvert\\&\le \lvert\xi_{1}-\xi_{0}\rvert\int_{0}^{1}\mleft\lVert {\mathcal B}_{p,\,\varepsilon}(\xi_{t})-{\mathcal B}_{p,\,\varepsilon}(\xi_{0}) \mright\rVert\,{\mathrm{d}}t\\&\le C(p,\,\beta_{0})\Gamma\mu^{p-2-\beta_{0}}\lvert\xi_{1}-\xi_{0}\rvert^{1+\beta_{0}}.
\end{align*}
In the remaining case $\lvert \xi_{1}-\xi_{0}\rvert>\mu/2$, it is easy to compute
\begin{align*}
&\mleft\lvert {\mathcal B}_{p,\,\varepsilon}(\xi_{0})(\xi_{1}-\xi_{0})-\mleft(A_{p,\,\varepsilon}(\xi_{1})-A_{p,\,\varepsilon}(\xi_{0})\mright)\mright\rvert\\&\le\mleft(\mleft\lVert {\mathcal B}_{p,\,\varepsilon}(\xi_{0})\mright\rVert\lvert \xi_{1}-\xi_{0}\rvert+\mleft\lvert \mleft(A_{p,\,\varepsilon}(\xi_{1})-A_{p,\,\varepsilon}(\xi_{0})\mright)\mright\rvert\mright)\cdot\mleft(\frac{2\lvert \xi_{1}-\xi_{0}\rvert}{\mu}\mright)^{\beta_{0}}\\&\le C(p,\,\beta_{0})\Gamma\mu^{p-2-\beta_{0}}\lvert \xi_{1}-\xi_{0}\rvert^{1+\beta_{0}}
\end{align*}
by (\ref{Eq (Section 2) Estimates on B-p-epsilon}), (\ref{Eq (Section 2) Growth estimate for A-p-epsilon})--(\ref{Eq (Section 2) Growth estimate for A-s-epsilon with 1<s<2}), (\ref{Eq (Section 2) Variable conditions in error estimates}) and $\varepsilon<\delta<\mu$. This completes the proof of (\ref{Eq (Section 2) Hessian error p}). By similar computations, we are able to conclude (\ref{Eq (Section 2) Hessian error 1}) from (\ref{Eq (Section 2) Growth g-1-pprime}), (\ref{Eq (Section 2) Growth g-1-ppprime}), (\ref{Eq (Section 2) Estimates on B-1-epsilon}) and (\ref{Eq (Section 2) Growth estimate for A-s-epsilon with s=1}).
\end{proof}
We conclude this section by mentioning that similar estimates hold for another mapping $G_{p,\,\varepsilon}\colon{\mathbb R}^{Nn}\rightarrow {\mathbb R}^{Nn}$, defined by
\begin{equation}\label{Eq (Section 2) G-p-epsilon}
G_{p,\,\varepsilon}(\xi)\coloneqq h_{p}^{\prime}(\varepsilon^{2}+\lvert \xi\rvert^{2})\xi\quad \textrm{for }\xi\in{\mathbb R}^{Nn}
\end{equation}
with
\begin{equation}\label{Eq (Section 2) hp}
h_{p}(\sigma)\coloneqq \frac{2\sigma^{(p+1)/2}}{p+1}\quad \textrm{for }\sigma\in\lbrack0,\,\infty).
\end{equation}
It should be mentioned that this $h_{p}$ is the same as $g_{p+1}$ with $g_{p}$ given by (\ref{Eq (Section 1) gp example}).
Hence, similarly to (\ref{Eq (Section 2) Monotonicity estimate for A-p-epsilon}), we have
\[\langle G_{p,\,\varepsilon}(\xi_{1})-G_{p,\,\varepsilon}(\xi_{0})\mid \xi_{1}-\xi_{0}\rangle\ge c\lvert \xi_{1}-\xi_{0}\rvert^{p+1}\quad \textrm{for all } \xi_{0},\,\xi_{1}\in {\mathbb R}^{Nn}\]
with $c=c(p)\in(0,\,\infty)$. Moreover, since the mapping $G_{p,\,\varepsilon}$ is bijective and enjoys $G_{p,\,\varepsilon}(0)=0$ by the definition, the inverse mapping $G_{p,\,\varepsilon}^{-1}$ satisfies
\begin{equation}\label{Eq (Section 2) Estimate on inverse mapping}
\mleft\lvert G_{p,\,\varepsilon}^{-1}(\xi)\mright\rvert\le C(p)\lvert \xi\rvert^{1/p}\quad \textrm{for all }\xi\in{\mathbb R}^{Nn}
\end{equation}
with $C=c^{-1}\in(0,\,\infty)$. Also, similarly to (\ref{Eq (Section 2) Growth estimate for A-s-epsilon with s>2}), it is possible to get
\begin{equation}\label{Eq (Section 2) G-p-epsilon local ellipticity}
\mleft\lvert G_{p,\,\varepsilon}(\xi_{1})-G_{p,\,\varepsilon}(\xi_{0})\mright\rvert\ge C(p)\max\mleft\{\,\lvert\xi_{1}\rvert^{p-1},\,\lvert \xi_{2}\rvert^{p-1} \,\mright\}\lvert \xi_{1}-\xi_{0}\rvert
\end{equation}
for all $\xi_{0},\,\xi_{1}\in{\mathbb R}^{Nn}$. The estimates (\ref{Eq (Section 2) Estimate on inverse mapping})--(\ref{Eq (Section 2) G-p-epsilon local ellipticity}) will be used in Section \ref{Subsect: Energy estimates}.

\subsection{Justifications of convergence of solutions}\label{Subsect: Convergence}
The aim of Section \ref{Subsect: Convergence} is to give an approximating system for (\ref{Eq (Section 1) main system}), and justify convergence of weak solutions. We only deal with the special case where the energy density $E$ is spherically symmetric.
In the scalar case, more general approximation problems are discussed in \cite[\S 2.4--2.5]{T-scalar}, including variational inequality problems and generalization of the total variation energy.

Only in this section, we assume that the exponent $q$ satisfies
\begin{equation}\label{Eq (Section 2) exponent q}
\mleft\{\begin{array}{rc} \displaystyle\frac{np}{np-n+p}<q\le \infty& (1<p<n),\\ 1<q\le\infty & (p=n),\\ 1\le q\le \infty & (n<p<\infty),\end{array} \mright.
\end{equation}
so that we can use the compact embedding \(W^{1,\,p}(\Omega;\,{\mathbb R}^{N})\hookrightarrow L^{q^{\prime}}(\Omega;\,{\mathbb R}^{N})\) (see e.g., \cite[Chapters 4 \& 6]{MR2424078}).
Under this setting, we give the definitions of a weak solution to the system (\ref{Eq (Section 1) main system}), and of the Dirichlet boundary value problem
\begin{equation}\label{Eq (Section 2) Dirichlet boundary problem}
\mleft\{\begin{array}{ccccc}
{\mathcal L}u& = & f & \textrm{in} &\Omega,\\ u & = & u_{\star} & \textrm{on} & \partial\Omega.
\end{array}  \mright.
\end{equation}
\begin{definition}
Let the functions $u_{\star}\in W^{1,\,p}(\Omega;\,{\mathbb R}^{N})$, $f\in L^{q}(\Omega;\,{\mathbb R}^{N})$ be given with $p\in(1,\,\infty)$ and $q\in\lbrack 1,\,\infty\rbrack$ satisfying (\ref{Eq (Section 2) exponent q}). A function $u\in W^{1,\,p}(\Omega;\,{\mathbb R}^{N})$ is said to be a \textit{weak} solution to (\ref{Eq (Section 1) main system}) in $\Omega$ when there exists $Z\in L^{\infty}(\Omega;\,{\mathbb R}^{Nn})$ such that there hold
\begin{equation}\label{Eq (Section 2) Z is subgradient}
Z(x)\in\partial \lvert\,\cdot\,\rvert(Du(x))\quad \textrm{for a.e. }x\in\Omega,
\end{equation}
and 
\begin{equation}\label{Eq (Section 2) Weak formulation of very singular problems}
\int_{\Omega}\langle Z\mid D\phi\rangle\,{\mathrm{d}}x+\int_{\Omega}\mleft\langle A_{p}(Du)\mathrel{}\middle|\mathrel{}D\phi\mright\rangle\,{\mathrm{d}}x=\int_{\Omega}\langle f\mid \phi \rangle\,{\mathrm{d}}x\quad \textrm{for all }\phi\in W_{0}^{1,\,p}(\Omega;\,{\mathbb R}^{N}).
\end{equation}
Here $A_{p}\in C({\mathbb R}^{n};\,{\mathbb R}^{Nn})$ and $\partial\lvert\,\cdot\,\rvert(\xi)\subset{\mathbb R}^{Nn}\,(\xi\in{\mathbb R}^{Nn})$ are given by (\ref{Eq (Section 2) Def of A-p})--(\ref{Eq: sub}). When a function $u\in u_{\star}+W_{0}^{1,\,p}(\Omega;\,{\mathbb R}^{N})$ is a weak solution to (\ref{Eq (Section 1) main system}) in $\Omega$, $u$ is called a \textit{weak} solution of the Dirichlet problem (\ref{Eq (Section 2) Dirichlet boundary problem}).
\end{definition}

It should be recalled that the problem (\ref{Eq (Section 1) main system}) is derived from a minimizing problem of the energy functional
\begin{equation}\label{Eq (Section 2) energy functional: original}
{\mathcal F}_{0}(v)\coloneqq \int_{\Omega}\mleft(E(Dv)-\langle f\mid v\rangle\mright)\,{\mathrm{d}}x\quad \textrm{for }v\in W^{1,\,p}(\Omega;\,{\mathbb R}^{N})
\end{equation}
under a suitable boundary condition.
We approximate this functional by
\begin{equation}\label{Eq (Section 2) energy functional: relaxed}
{\mathcal F}_{\varepsilon}(v)\coloneqq \int_{\Omega}\mleft(E_{\varepsilon}(Dv)-\langle f_{\varepsilon}\mid v\rangle\mright)\,{\mathrm{d}}x\quad \textrm{for }v\in W^{1,\,p}(\Omega;\,{\mathbb R}^{N}),\,\varepsilon\in(0,\,1).
\end{equation}
Here the net $\{f_{\varepsilon}\}_{0<\varepsilon<1}\subset L^{q}(\Omega;\,{\mathbb R}^{N})$ satisfies
\begin{equation}\label{Eq (Section 2) Weak convergence of f}
f_{\varepsilon}\rightharpoonup f\quad \textrm{in }\sigma\mleft(L^{q}(\Omega;\,{\mathbb R}^{N}),\,L^{q^{\prime}}(\Omega;\,{\mathbb R}^{N})\mright).
\end{equation}
In other words, we only let $f_{\varepsilon}\in L^{q}(\Omega;\,{\mathbb R}^{N})$ weakly converge to $f$ when $q$ is finite, and otherwise weak$^{\ast}$ convergence is assumed. In particular, we may let $\lVert f_{\varepsilon}\rVert_{L^{q}(\Omega)}$ be uniformly bounded with respect to $\varepsilon$. In Proposition \ref{Prop: Convergence on weak system}, we justify convergence of minimizers of relaxed energy functionals.
\begin{proposition}[A convergence result on approximation problems]\label{Prop: Convergence on weak system}
Let $p\in(1,\,\infty),\,q\in\lbrack 1,\,\infty\rbrack$ satisfy (\ref{Eq (Section 2) exponent q}), and consider functionals ${\mathcal F}_{0}$, ${\mathcal F}_{\varepsilon}\,(0<\varepsilon<1)$ given by (\ref{Eq (Section 2) energy functional: original})--(\ref{Eq (Section 2) energy functional: relaxed}), where $\{f_{\varepsilon}\}_{0<\varepsilon<1}\subset L^{q}(\Omega;\,{\mathbb R}^{N})$ satisfies (\ref{Eq (Section 2) Weak convergence of f}). 
For a given function $u_{\star}\in W^{1,\,p}(\Omega;\,{\mathbb R}^{N})$, we define
\begin{equation}\label{Eq (Section 2) minimizer of approximated functionals}
u_{\varepsilon}\coloneqq \argmin\mleft\{v\in u_{\star}+W_{0}^{1,\,p}(\Omega;\,{\mathbb R}^{N})\mathrel{}\middle|\mathrel{} {\mathcal F}_{\varepsilon}(v)\mright\}\quad \textrm{for each }\varepsilon\in(0,\,1).
\end{equation}
Then, there exists a unique function $u\in u_{\star}+W_{0}^{1,\,p}(\Omega;\,{\mathbb R}^{N})$ such that $u_{\varepsilon}\rightarrow u$ in $W^{1,\,p}(\Omega;\,{\mathbb R}^{N})$ up to a subsequence. Moreover, the limit function $u$ is a weak solution of the Dirichlet boundary value problem (\ref{Eq (Section 2) Dirichlet boundary problem}), and there holds
\begin{equation}\label{Eq (Section 2) minimizer of original functionals}
u=\argmin\mleft\{v\in u_{\star}+W_{0}^{1,\,p}(\Omega;\,{\mathbb R}^{N})\mathrel{}\middle|\mathrel{} {\mathcal F}_{0}(v)\mright\}.
\end{equation}
\end{proposition}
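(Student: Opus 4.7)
The plan is to proceed in four stages: establishing a priori bounds for the approximate minimizers, extracting a weak limit, verifying that this limit minimizes $\mathcal{F}_{0}$, and finally deriving the weak formulation \eqref{Eq (Section 2) Z is subgradient}--\eqref{Eq (Section 2) Weak formulation of very singular problems}.

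First, for each fixed $\varepsilon \in (0,1)$, the functional $\mathcal{F}_{\varepsilon}$ is strictly convex, continuous, and coercive on the affine subspace $u_{\star}+W_{0}^{1,p}(\Omega;\mathbb{R}^{N})$. Coercivity follows from the lower bound $E_{\varepsilon}(\xi) \ge (\gamma/p)|\xi|^{p}$ (obtained by integrating the lower bound on $g_{p}^{\prime}$ encoded in \eqref{Eq (Section 1) Ellipticity g_p}), the uniform $L^{q}$-bound on $\{f_{\varepsilon}\}$ guaranteed by \eqref{Eq (Section 2) Weak convergence of f}, and the Sobolev--Poincar\'{e} inequality via the embedding $W^{1,p}(\Omega;\mathbb{R}^{N})\hookrightarrow L^{q^{\prime}}(\Omega;\mathbb{R}^{N})$ that holds under \eqref{Eq (Section 2) exponent q}. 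These properties yield a unique $u_{\varepsilon}$ as in \eqref{Eq (Section 2) minimizer of approximated functionals}, and testing minimality against $u_{\star}$ produces a uniform $W^{1,p}$-bound on $u_{\varepsilon}$ independent of $\varepsilon$.

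Second, by reflexivity one extracts a subsequence with $u_{\varepsilon_{k}} \rightharpoonup u$ weakly in $W^{1,p}$, where necessarily $u \in u_{\star}+W_{0}^{1,p}$ (weakly closed affine subspace) and $u_{\varepsilon_{k}}\to u$ strongly in $L^{q^{\prime}}$ by the compactness of the above embedding. To prove $u$ minimizes $\mathcal{F}_{0}$ I would use a standard $\Gamma$-convergence style argument. The lim-inf inequality $\mathcal{F}_{0}(u) \le \liminf_{k}\mathcal{F}_{\varepsilon_{k}}(u_{\varepsilon_{k}})$ follows from weak lower semicontinuity of $v\mapsto \int_{\Omega}E(Dv)\,{\mathrm{d}}x$ (convexity of $E$), the pointwise bound $E \le E_{\varepsilon}$, and the weak--strong pairing giving $\int f_{\varepsilon_{k}}\cdot u_{\varepsilon_{k}}\,{\mathrm{d}}x \to \int f\cdot u\,{\mathrm{d}}x$. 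For the lim-sup direction, for any competitor $v\in u_{\star}+W_{0}^{1,p}$ minimality gives $\mathcal{F}_{\varepsilon_{k}}(u_{\varepsilon_{k}}) \le \mathcal{F}_{\varepsilon_{k}}(v)$, and dominated convergence (using $E_{\varepsilon}(Dv)\searrow E(Dv)$, majorized by $E_{1}(Dv) \le C(1+|Dv|^{p})\in L^{1}$) yields $\mathcal{F}_{\varepsilon_{k}}(v)\to \mathcal{F}_{0}(v)$. Combining the two gives \eqref{Eq (Section 2) minimizer of original functionals}, and strict convexity of $\mathcal{F}_{0}$ on $u_{\star}+W_{0}^{1,p}$ forces uniqueness of $u$, so the whole net converges.

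Third, to upgrade to strong $W^{1,p}$-convergence I would exploit the energy convergence $\mathcal{F}_{\varepsilon_{k}}(u_{\varepsilon_{k}})\to \mathcal{F}_{0}(u)$ together with the monotonicity estimate \eqref{Eq (Section 2) Monotonicity estimate for A-p-epsilon}. A Minty-type computation expanding $\int_{\Omega}\langle A_{\varepsilon_{k}}(Du_{\varepsilon_{k}})-A_{\varepsilon_{k}}(Du)\mid Du_{\varepsilon_{k}}-Du\rangle\,{\mathrm{d}}x$, using the Euler--Lagrange equation satisfied by $u_{\varepsilon_{k}}$ (which is smooth and classical at the approximation level) together with weak convergence in the linear terms, yields $Du_{\varepsilon_{k}}\to Du$ in $L^{p}$, with the cases $1<p<2$ and $p\ge 2$ handled separately via \eqref{Eq (Section 2) Monotonicity estimate for A-p-epsilon}. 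Finally, for the weak formulation the cleanest route is convex-analytic: since $u$ minimizes $\mathcal{F}_{0}$, applying subdifferential calculus to the sum of the one-homogeneous functional $v\mapsto b\int_{\Omega}|Dv|\,{\mathrm{d}}x$, the smooth convex $p$-Laplace term $v\mapsto \int_{\Omega} g_{p}(|Dv|^{2})/2\,{\mathrm{d}}x$, and the linear functional $-\int_{\Omega}f\cdot v\,{\mathrm{d}}x$, the inclusion $0\in\partial\mathcal{F}_{0}(u)$ yields directly the existence of $Z\in L^{\infty}(\Omega;\mathbb{R}^{Nn})$ realizing \eqref{Eq (Section 2) Z is subgradient}--\eqref{Eq (Section 2) Weak formulation of very singular problems}.

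The main obstacle will be treating the very singular term in the limit. If instead one tried to pass to the limit directly in the approximated Euler--Lagrange identity, one would need to extract a weak$^{\ast}$ limit of $b\,g_{1,\varepsilon}^{\prime}(|Du_{\varepsilon}|^{2})Du_{\varepsilon}$ in $L^{\infty}(\Omega;\mathbb{R}^{Nn})$ (uniformly bounded by $b$), and then identify this limit as an element of $b\,\partial|\,\cdot\,|(Du)$ pointwise a.e., which is delicate on the facet $\{Du=0\}$. The convex-analytic route described above bypasses this identification entirely by invoking the variational characterization of the minimizer, producing \eqref{Eq (Section 2) Z is subgradient}--\eqref{Eq (Section 2) Weak formulation of very singular problems} in one stroke.
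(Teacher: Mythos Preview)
Your outline is sound and would yield a correct proof, but the route diverges from the paper's in two respects.

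First, the order of the arguments is inverted. You establish minimality of the weak limit via a $\Gamma$-convergence argument (liminf from convex lsc plus $E\le E_{\varepsilon}$, limsup by dominated convergence on a fixed competitor), and only afterwards upgrade to strong $W^{1,p}$-convergence. The paper instead proves strong convergence first by exactly the Minty-type computation you describe, then passes to the limit directly in the approximated Euler--Lagrange identity to obtain the weak formulation, and \emph{only then} derives minimality as a consequence of \eqref{Eq (Section 2) Z is subgradient}--\eqref{Eq (Section 2) Weak formulation of very singular problems} via the subgradient inequalities. Both orderings work; yours has the aesthetic advantage of separating the variational identification from the PDE passage.

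Second, and more substantively, your assessment of the ``main obstacle'' is inverted. You propose obtaining $Z$ by abstract subdifferential calculus applied to $\mathcal{F}_{0}$, and describe the alternative---passing to the limit directly in the approximated Euler--Lagrange equation---as delicate on the facet $\{Du=0\}$. In fact the paper takes precisely this direct route and it is not delicate at all: one extracts a weak$^{\ast}$ limit $Z$ of $A_{1,\varepsilon_{j}}(Du_{\varepsilon_{j}})$ in $L^{\infty}$ (bounded by $b$), and the identification $Z=Du/|Du|$ on $\{Du\neq 0\}$ follows immediately from the a.e.\ convergence $Du_{\varepsilon_{j}}\to Du$ already secured by strong $L^{p}$-convergence. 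On the facet nothing needs to be identified, since $|Z|\le b$ is all that \eqref{Eq (Section 2) Z is subgradient} requires there, and this survives the weak$^{\ast}$ limit automatically. By contrast, your convex-analytic route, while correct, tacitly relies on the characterization of $\partial\bigl(\int_{\Omega}|D\cdot|\bigr)$ in $(W_{0}^{1,p})^{\ast}$ as divergences of $L^{\infty}$ vector fields satisfying the pointwise subgradient condition---a standard but nontrivial duality fact that you would need to justify or cite. The paper's construction of $Z$ as an explicit weak$^{\ast}$ limit sidesteps this entirely.
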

Before showing Proposition \ref{Prop: Convergence on weak system}, we note that (\ref{Eq (Section 2) minimizer of approximated functionals}) is characterized by 
\begin{equation}\label{Eq (Section 2) Regularized weak formulation}
\int_{\Omega}\mleft\langle A_{\varepsilon}(Du_{\varepsilon})\mid D\phi\mright\rangle\,{\mathrm{d}}x=\int_{\Omega}\langle f_{\varepsilon}\mid \phi\rangle\,{\mathrm{d}}x\quad \textrm{for all }\phi\in W_{0}^{1,\,p}(\Omega;\,{\mathbb R}^{N}).
\end{equation}
In other words, for a function $u_{\varepsilon}\in u_{\star}+W_{0}^{1,\,p}(\Omega;\,{\mathbb R}^{N})$, $u_{\varepsilon}$ verifies (\ref{Eq (Section 2) minimizer of approximated functionals}) if and only if $u_{\varepsilon}$ satisfies (\ref{Eq (Section 2) Regularized weak formulation}). In the limiting case $\varepsilon=0$, however, it is a non-trivial question whether a function $u_{0}$ that verifies (\ref{Eq (Section 2) minimizer of original functionals}) is a weak solution of (\ref{Eq (Section 2) Dirichlet boundary problem}), since the total variation energy is neither G\^{a}teaux differentiable nor Fr\'{e}chet differentiable. This is substantially due to non-smoothness of the absolute value function at the origin. 
To overcome this problem, we appeal to an approximation method, and construct a vector field $Z$ that satisfies (\ref{Eq (Section 2) Z is subgradient})--(\ref{Eq (Section 2) Weak formulation of very singular problems}). 
\begin{proof}
We first mention that the right hand sides of (\ref{Eq (Section 2) minimizer of approximated functionals})--(\ref{Eq (Section 2) minimizer of original functionals}) are well-defined. 
In fact, to construct minimizers by direct methods (\cite[Chapter 8]{MR1625845}, \cite[Chapter 4]{MR1962933}), it suffices to prove boundedness and coerciveness of the functional ${\mathcal F}_{\varepsilon}$ over the space $u_{\star}+W_{0}^{1,\,p}(\Omega;\,{\mathbb R}^{N})$.
To check boundedness, we let $g_{p}(0)=0$ without loss of generality. Then the growth estimate $g_{p}(\sigma)\le C(p,\,\Gamma)\sigma^{p/2}$ easily follows from (\ref{Eq (Section 1) Growth g-p-prime}).
In particular, by the continuous embedding $L^{q}(\Omega;\,{\mathbb R}^{N})\hookrightarrow W^{1,\,p}(\Omega;\,{\mathbb R}^{N})$, we have
\begin{equation}\label{Eq (Section 2) Growth of F-epsilon}
{\mathcal F}_{\varepsilon}(v)\le K_{0}\mleft( 1+\lVert Dv\rVert_{L^{p}(\Omega)}^{p}+\lVert f_{\varepsilon}\rVert_{L^{q}(\Omega)}\lVert v\rVert_{W^{1,\,p}(\Omega)} \mright)
\end{equation}
for all $\varepsilon\in\lbrack 0,\,1)$ and $v\in W^{1,\,p}(\Omega;\,{\mathbb R}^{N})$.
For a coercive estimate, we have
\begin{equation}\label{Eq (Section 2) Coercive of F-epsilon}
{\mathcal F}_{\varepsilon}(v)\ge K_{1}\lVert Dv\rVert_{L^{p}(\Omega)}-K_{2}\mleft( 1+\lVert f_{\varepsilon}\rVert_{L^{q}(\Omega)}\lVert u_{\star}\rVert_{W^{1,\,p}(\Omega)}+\lVert f_{\varepsilon}\rVert_{L^{q}(\Omega)}^{p^{\prime}}\mright)
\end{equation}
for all $\varepsilon\in\lbrack 0,\,1)$ and $v\in u_{\star}+W_{0}^{1,\,p}(\Omega;\,{\mathbb R}^{N})$.
Here the constants $K_{1}\in\lbrack 0,\,1)$ and $K_{2}\in(1,\,\infty)$ depend at most on $n$, $p$, $q$, $\gamma$ and $\Omega$, but are independent of $\varepsilon\in\lbrack 0,\,1)$. It is possible to deduce (\ref{Eq (Section 2) Coercive of F-epsilon}) by applying the continuous embedding $W^{1,\,p}(\Omega;\,{\mathbb R}^{N})\hookrightarrow L^{q^{\prime}}(\Omega;\,{\mathbb R}^{N})$ and the Poincar\'{e} inequality
\begin{equation}\label{Eq (Section 2) Poincare ineq}
\lVert v-u_{\star}\rVert_{W^{1,\,p}(\Omega)}\le C(n,\,p,\,\Omega)\lVert Dv-Du_{\star}\rVert_{L^{p}(\Omega)}\quad \textrm{for all }v\in u_{\star}+W_{0}^{1,\,p}(\Omega;\,{\mathbb R}^{N}).
\end{equation}
The detailed computations to show (\ref{Eq (Section 2) Coercive of F-epsilon}) are substantially similar to \cite[Propostion 2.11]{T-scalar} (see also \cite[\S 3]{MR4201656}).
Uniqueness of minimizers are guaranteed by strict convexity of $E_{\varepsilon}$ and (\ref{Eq (Section 2) Poincare ineq}). Therefore, we are able to define $u$ and $u_{\varepsilon}$ satisfying (\ref{Eq (Section 2) minimizer of approximated functionals}) and (\ref{Eq (Section 2) minimizer of original functionals}) respectively.

We also mention that $\{Du_{\varepsilon}\}_{0<\varepsilon<1}\subset L^{p}(\Omega;\,{\mathbb R}^{Nn})$ is bounded, and so $\{u_{\varepsilon}\}_{0<\varepsilon<1}\subset W^{1,\,p}(\Omega;\,{\mathbb R}^{N})$ is by (\ref{Eq (Section 2) Poincare ineq}). This is easy to deduce by applying (\ref{Eq (Section 2) Growth of F-epsilon})--(\ref{Eq (Section 2) Coercive of F-epsilon}) to the inequality ${\mathcal F}_{\varepsilon}(v_{\varepsilon})\le {\mathcal F}_{\varepsilon}(u_{\star})$ following from (\ref{Eq (Section 2) minimizer of approximated functionals}).
Hence, by the weak compactness theorem, we may choose a sequence $\{\varepsilon_{j}\}_{j=1}^{\infty}\subset(0,\,1)$ such that $\varepsilon_{j}\to 0$ and
\begin{equation}\label{Eq (Section 2) Weak limit u0}
u_{\varepsilon_{j}}\rightharpoonup u_{0}\quad \textrm{in }W^{1,\,p}(\Omega;\,{\mathbb R}^{N})
\end{equation}
for some function $u_{0}\in u_{\star}+W_{0}^{1,\,p}(\Omega;\,{\mathbb R}^{N})$.
Moreover, the condition (\ref{Eq (Section 2) exponent q}) enables us to apply the compact embedding, so that we may let
\begin{equation}\label{Eq (Section 2) Strong convergence from compact embedding}
u_{\varepsilon_{j}}\rightarrow u_{0}\quad \textrm{in }L^{q^{\prime}}(\Omega;\,{\mathbb R}^{N})
\end{equation}
by taking a subsequence if necessary. We claim that
\begin{equation}\label{Eq (Section 2) Monotonicity term convergence}
I(\varepsilon_{j})\coloneqq \int_{\Omega}\langle A_{\varepsilon_{j}}(Du_{\varepsilon_{j}})-A_{\varepsilon_{j}}(Du_{0})\mid Du_{\varepsilon_{j}}-Du_{0}\rangle\,{\mathrm{d}}x\to 0\quad\textrm{as }j\to\infty.
\end{equation}
Before showing (\ref{Eq (Section 2) Monotonicity term convergence}), we mention that this result yields
\begin{equation}\label{Eq (Section 2) strong convergence}
Du_{\varepsilon_{j}}\to Du_{0}\quad \textrm{in }L^{p}(\Omega;\,{\mathbb R}^{Nn}).
\end{equation}
In fact, when $1<p<2$, we apply (\ref{Eq (Section 2) Monotonicity estimate for A-p-epsilon}), (\ref{Eq (Section 2) Monotonicity of A-1-epsilon}), and H\"{o}lder's inequality to obtain
\begin{align*}
\lVert Du_{\varepsilon_{j}}-Du_{0}\rVert_{L^{p}(\Omega)}^{p}&\le \mleft(\int_{\Omega}\mleft(\varepsilon_{j}^{2}+\lvert Du_{\varepsilon_{j}}\rvert^{2}+\lvert Du_{0}\rvert^{2}\mright)^{p/2-1}\lvert Du_{\varepsilon_{j}}-Du_{0}\rvert^{2}\,{\mathrm{d}}x\mright)^{p/2} \\& \quad \cdot\underbrace{\sup_{j}\mleft(\int_{\Omega}\mleft(\varepsilon_{j}^{2}+\lvert Du_{\varepsilon_{j}}\rvert^{2}+\lvert Du_{0}\rvert^{2}\mright)^{p/2}\,{\mathrm{d}}x \mright)^{1-p/2}}_{\eqqcolon C}\\&\le C\lambda^{-p/2}\cdot I(\varepsilon_{j})^{p/2}\to 0\quad \textrm{as }j\to\infty.
\end{align*}
Here it is noted that $C$ is finite by (\ref{Eq (Section 2) Weak limit u0}).
In the remaining case $2\le p<\infty$, we simply use (\ref{Eq (Section 2) Monotonicity estimate for A-p-epsilon}) and (\ref{Eq (Section 2) Monotonicity of A-1-epsilon}) to get
\[\lVert Du_{\varepsilon_{j}}-Du_{0}\rVert_{L^{p}(\Omega)}^{p}\le \frac{I(\varepsilon_{j})}{\lambda C(p)}\to 0\quad \textrm{as }j\to\infty.\]

For the proof of (\ref{Eq (Section 2) Monotonicity term convergence}), we decompose $I=I_{1}-I_{2}$ with
\[I_{1}(\varepsilon_{j})\coloneqq\int_{\Omega}\langle A_{\varepsilon_{j}}(Du_{\varepsilon_{j}})\mid Du_{\varepsilon_{j}}-Du_{0}\rangle\,{\mathrm{d}}x,\quad I_{2}(\varepsilon_{j})\coloneqq\int_{\Omega}\langle A_{\varepsilon_{j}}(Du)\mid Du_{\varepsilon_{j}}-Du_{0}\rangle\,{\mathrm{d}}x.\]
For $I_{1}$, we test $\phi\coloneqq u_{\varepsilon_{j}}-u_{0}\in W_{0}^{1,\,p}(\Omega;\,{\mathbb R}^{N})$ into (\ref{Eq (Section 2) Regularized weak formulation}). Then, we obtain
\[I_{1}(\varepsilon_{j})=\int_{\Omega}\langle f_{\varepsilon_{j}}\mid u_{\varepsilon_{j}}-u_{0}\rangle\,{\mathrm d}x\to 0\quad \textrm{as }j\to\infty\]
by (\ref{Eq (Section 2) Weak convergence of f}) and (\ref{Eq (Section 2) Strong convergence from compact embedding}).
For $I_{2}$, we note that for every $\xi\in{\mathbb R}^{Nn}$, $A_{\varepsilon}(\xi)$ converges to
\[A_{0}(\xi)\coloneqq \mleft\{\begin{array}{cc}
g^{\prime}(\lvert \xi\rvert^{2})\xi & (\xi\neq 0),\\
0 & (\xi=0),
\end{array}\quad \textrm{as }\varepsilon\to 0, \mright.\]
since $A_{\varepsilon}(\xi)=b\mleft(\varepsilon^{2}+\lvert \xi\rvert^{2}\mright)^{-1/2}\xi+g_{p}^{\prime}(\varepsilon^{2}+\lvert \xi\rvert^{2})\xi$ satisfies $A_{\varepsilon}(0)=0$. Also, for every $\varepsilon\in(0,\,1)$, it is easy to check that
\[\lvert A_{\varepsilon}(Du_{0})\rvert\le b+\Gamma\mleft(\varepsilon^{2}+\lvert Du_{0}\rvert^{2} \mright)^{\frac{p-1}{2}}\le C(b,\,p,\,\Lambda)(1+\lvert Du_{0}\rvert^{p-1})\quad \textrm{a.e. in }\Omega.\]
Hence, by applying Lebesgue's dominated convergence theorem, we have $A_{\varepsilon}(Du_{0})\rightarrow A_{0}(Du_{0})$ in $L^{p^{\prime}}(\Omega;\,{\mathbb R}^{Nn})$. It should be recalled that the weak convergence $Du_{\varepsilon}\rightharpoonup Du_{0}$ in $L^{p}(\Omega;\,{\mathbb R}^{Nn})$ is already known by (\ref{Eq (Section 2) Weak limit u0}). These convergence results imply $I_{2}(\varepsilon_{j})\to 0$ as $j\to\infty$. Thus, the claim (\ref{Eq (Section 2) Monotonicity term convergence}) is verified. 
 
We would like to prove that the limit $u_{0}$ is a weak solution of the problem (\ref{Eq (Section 2) Dirichlet boundary problem}).
First, by (\ref{Eq (Section 2) strong convergence}) and \cite[Theorem 4.9]{MR2759829}, we may let 
\begin{equation}\label{Eq (Section 2) a.e. convergence of Du}
Du_{\varepsilon_{j}}\rightarrow Du_{0}\quad \textrm{a.e. in }\Omega,
\end{equation}
and
\begin{equation}\label{Eq (Section 2) existence of dominant function}
\lvert Du_{\varepsilon_{j}}\rvert\le v\quad \textrm{a.e. in }\Omega,
\end{equation}
by taking a subsequence if necessary. Here a non-negative function $v\in L^{p}(\Omega)$ is independent of the subscript $j\in{\mathbb N}$. Then,
\begin{equation}\label{Eq (Section 2) A-p-epsilon strong convergence}
A_{p,\,\varepsilon_{j}}(Du_{\varepsilon_{j}})\rightarrow A_{p}(Du_{0})\quad \textrm{in }L^{p^{\prime}}(\Omega;\,{\mathbb R}^{Nn})
\end{equation}
follows from (\ref{Eq (Section 2) a.e. convergence of Du})--(\ref{Eq (Section 2) existence of dominant function}).
To verify this, we should note that by (\ref{Eq (Section 2) Growth estimate for A-p-epsilon}), the mapping $A_{p,\,\varepsilon}$ locally uniformly converges to $A_{p}$ in ${\mathbb R}^{Nn}$.
Combining with (\ref{Eq (Section 2) a.e. convergence of Du}), we can check $A_{p,\,\varepsilon_{j}}(Du_{\varepsilon_{j}})\rightarrow A_{p}(Du_{0})$ a.e. in $\Omega$. 
Also, by (\ref{Eq (Section 1) Growth g-p-prime}) and (\ref{Eq (Section 2) existence of dominant function}), we get 
\[\lvert A_{p,\,\varepsilon_{j}}(Du_{\varepsilon_{j}})\rvert\le \Gamma\mleft(\varepsilon_{j}^{2}+\lvert Du_{\varepsilon_{j}}\rvert^{2}\mright)^{p/2-1}\lvert Du_{\varepsilon_{j}}\rvert\le \Gamma\mleft(1+v^{2}\mright)^{\frac{p-1}{2}}\in L^{p^{\prime}}(\Omega)\]
a.e. in $\Omega$.
Thus, (\ref{Eq (Section 2) A-p-epsilon strong convergence}) can be deduced by Lebesgue's dominated convergence theorem.
Secondly, since the mapping $Z_{j}\coloneqq A_{1,\,\varepsilon_{j}}(Du_{\varepsilon_{j}})$ satisfies $\lVert Z_{j}\rVert_{L^{\infty}(\Omega)}\le 1$, up to a subsequence, we may let
\begin{equation}\label{Eq (Section 2) weak-star limit Z}
Z_{j}\overset{\ast}{\rightharpoonup} Z\quad \textrm{in }L^{\infty}(\Omega;\,{\mathbb R}^{Nn})
\end{equation}
for some $Z\in L^{\infty}(\Omega;\,{\mathbb R}^{Nn})$ \cite[Corollary 3.30]{MR2759829}. This limit clearly satisfies $\lVert Z\rVert_{L^{\infty}(\Omega)}\le 1$. Therefore, to check (\ref{Eq (Section 2) Z is subgradient}), it suffices to prove 
\[Z=\frac{Du_{0}}{\lvert Du_{0}\rvert}\quad \textrm{a.e. in }D\coloneqq \{x\in\Omega\mid Du_{0}(x)\neq 0\}.\]
This claim is easy to deduce by (\ref{Eq (Section 2) a.e. convergence of Du}). In fact, (\ref{Eq (Section 2) a.e. convergence of Du}) yields $Z_{\varepsilon_{j}}\to Du_{0}/\lvert Du_{0}\rvert$ a.e. in $D$, and hence we are able to conclude $Z_{\varepsilon_{j}}\overset{\ast}{\rightharpoonup}Du_{0}/\lvert Du_{0}\rvert$ in $L^{\infty}(D;\,{\mathbb R}^{Nn})$ by Lebesgue's dominated convergence theorem. Finally, using (\ref{Eq (Section 2) Weak convergence of f}) and (\ref{Eq (Section 2) A-p-epsilon strong convergence})--(\ref{Eq (Section 2) weak-star limit Z}), we are able to deduce (\ref{Eq (Section 2) Weak formulation of very singular problems}) by letting $\varepsilon=\varepsilon_{j}$ and $j\to\infty$ in the weak formulation (\ref{Eq (Section 2) Regularized weak formulation}).

We mention that $u_{0}$, a weak solution of (\ref{Eq (Section 2) Dirichlet boundary problem}), coincides with $u$ satisfying (\ref{Eq (Section 2) minimizer of original functionals}).
In fact, for arbitrary $\phi\in W_{0}^{1,\,p}(\Omega;\,{\mathbb R}^{N})$, we have
\[\lvert D(u+\phi)\rvert\ge \lvert Du\rvert+\langle Z\mid D\phi\rangle\quad \textrm{a.e. in }\Omega,\]
since $Z$ satisfies (\ref{Eq (Section 2) Z is subgradient}). Similarly, we can easily get
\[g_{p}(\lvert D(u+\phi)\rvert^{2})\ge g_{p}(\lvert Du\rvert^{2})+\mleft\langle A_{p}(Du)\mathrel{}\middle|\mathrel{}D\phi\mright\rangle\quad \textrm{a.e. in }\Omega.\]
By testing $\phi\in W_{0}^{1,\,p}(\Omega;\,{\mathbb R}^{N})$ into (\ref{Eq (Section 2) Weak formulation of very singular problems}), we can easily notice ${\mathcal F}_{0}(u)\le {\mathcal F}_{0}(u+\phi)$ for all $\phi\in W_{0}^{1,\,p}(\Omega;\,{\mathbb R}^{N})$.
In other words, the limit function $u_{0}\in u_{\star}+W_{0}^{1,\,p}(\Omega;\,{\mathbb R}^{N})$ satisfies (\ref{Eq (Section 2) minimizer of original functionals}). We recall that a function verifying (\ref{Eq (Section 2) minimizer of original functionals}) uniquely exists, and thus $u=u_{0}$. This completes the proof.
\end{proof}

\subsection{H\"{o}lder continuity estimates}\label{Subsect: A priori Hoelder}
In Section \ref{Subsect: A priori Hoelder}, we would like to prove our main theorem (Theorem \ref{Theorem: C1-regularity}) by an approximation argument.
In Proposition \ref{Prop: Convergence on weak system}, we have already justified that a weak solution to (\ref{Eq (Section 1) main system}) can be approximated by a weak solution to
\begin{equation}\label{Eq (Section 2) Approximated system}
-\divx\mleft(g_{\varepsilon}^{\prime}(\lvert Du_{\varepsilon}\rvert^{2})Du_{\varepsilon}\mright)=f_{\varepsilon},
\end{equation}
under a suitable Dirichlet boundary value condition. We should note that the function $u_{\varepsilon}$ defined by (\ref{Eq (Section 2) minimizer of approximated functionals}) solves (\ref{Eq (Section 2) Approximated system}) in the distributional sense, under a boundary condition $u_{\varepsilon}=u_{\star}$ on $\partial\Omega$.
Since we have already justified convergence for approximated solutions $u_{\varepsilon}$ in Proposition \ref{Prop: Convergence on weak system}, it suffices to obtain a priori regularity estimates on weak solutions to a regularized system (\ref{Eq (Section 2) Approximated system}). 
The key estimates are given by Theorem \ref{Theorem: A priori Hoelder estimate}, where the continuity estimates are independent of the approximation parameter $\varepsilon$, so that the Arzel\`{a}--Ascoli theorem can be applied.
\begin{theorem}[A priori H\"{o}lder estimates on truncated Jacobian matrices]\label{Theorem: A priori Hoelder estimate}
Let positive numbers $\delta,\,\varepsilon$ satisfy (\ref{Eq (Section 2) delta-epsilon}), and $u_{\varepsilon}$ be a weak solution to a regularized system (\ref{Eq (Section 2) Approximated system}) in \(\Omega\) with
\begin{equation}\label{Eq (Section 2) control of Du}
\lVert Du_{\varepsilon}\rVert_{L^{p}(\Omega)}\le L,
\end{equation}
and
\begin{equation}\label{Eq (Section 2) control of f}
\lVert f_{\varepsilon}\rVert_{L^{q}(\Omega)}\le F
\end{equation}
for some constants $F,\,L\in(0,\,\infty)$. Then, for each fixed $x_{\ast}\in\Omega$, there exist a sufficiently small open ball $B_{\rho_{0}}(x_{\ast})\Subset\Omega$ and an exponent $\alpha\in(0,\,1)$, depending at most on $b,\,n,\,N,\,p,\,q,\,\gamma,\,\Gamma,\, F,\,L,\,d_{\ast}=\dist(x_{\ast},\,\partial\Omega)$, and $\delta$, such that ${\mathcal G}_{2\delta,\,\varepsilon}(Du_{\varepsilon})$ is in $C^{\alpha}(B_{\rho_{0}/2}(x_{0});\,{\mathbb R}^{Nn})$. Moreover, there exists a constant $\mu_{0}\in(0,\,\infty)$, depending at most on $b$, $n$, $N$, $p$, $q$, $\gamma$, $\Gamma$, $F$, $L$, and $d_{\ast}$, such that
\begin{equation}\label{Eq (Section 2) Bound of G-2delta-epsilon}
\sup_{B_{\rho_{0}}(x_{\ast})}\mleft\lvert{\mathcal G}_{2\delta,\,\varepsilon}(Du_{\varepsilon})\mright\rvert\le \mu_{0},
\end{equation}
and
\begin{equation}\label{Eq (Section 2) Holder continuity of G-2delta-epsilon}
\mleft\lvert{\mathcal G}_{2\delta,\,\varepsilon}(Du_{\varepsilon}(x_{1}))-{\mathcal G}_{2\delta,\,\varepsilon}(Du_{\varepsilon}(x_{2}))\mright\rvert\le \frac{2^{n/2+2\alpha+2}\mu_{0}}{\rho_{0}^{\alpha}}\lvert x_{1}-x_{2}\rvert^{\alpha}
\end{equation}
for all $x_{1},\,x_{2}\in B_{\rho_{0}/2}(x_{\ast})$.
\end{theorem}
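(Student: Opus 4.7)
The strategy is to combine the three a priori estimates stated as Propositions \ref{Prop: Lipschitz bounds}--\ref{Prop: Schauder estimate} via a dyadic shrinking argument that yields geometric decay of the oscillation of ${\mathcal G}_{2\delta,\,\varepsilon}(Du_{\varepsilon})$ on concentric balls, all estimates being quantitative and independent of $\varepsilon\in(0,\,\delta/4)$.

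First I would apply the local Lipschitz bound of Proposition \ref{Prop: Lipschitz bounds} to produce a ball $B_{\rho_{0}}(x_{\ast})\Subset\Omega$ on which $\lvert Du_{\varepsilon}\rvert$ is bounded by a constant $\mu_{0}$ depending only on the data listed in the statement. By the very definition (\ref{Eq (Section 1) relaxed truncation}) of ${\mathcal G}_{2\delta,\,\varepsilon}$, and since $\varepsilon<\delta/4$ guarantees $\sqrt{\varepsilon^{2}+\lvert\xi\rvert^{2}}\le\lvert\xi\rvert+\delta/4$ whenever $\lvert\xi\rvert\ge\delta$, this directly gives (\ref{Eq (Section 2) Bound of G-2delta-epsilon}). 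The remaining task is to establish H\"{o}lder continuity on $B_{\rho_{0}/2}(x_{\ast})$, which I would reduce to a Campanato-type mean oscillation decay of ${\mathcal G}_{2\delta,\,\varepsilon}(Du_{\varepsilon})$ on shrinking balls $B_{r}(y)\subset B_{\rho_{0}/2}(x_{\ast})$.

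For the decay estimate, I would set up at each center $y\in B_{\rho_{0}/2}(x_{\ast})$ a sequence of radii $r_{k}=\theta^{k}\rho_{0}/2$ with $\theta\in(0,\,1)$ to be fixed later, and at each scale compare the maximum of $V_{\varepsilon}=\sqrt{\varepsilon^{2}+\lvert Du_{\varepsilon}\rvert^{2}}$ on $B_{r_{k}}$ to a reference level $\mu_{k}\in(\delta,\,\mu_{0}+\delta\rbrack$. This triggers a dichotomy: if the superlevel set $\{\,V_{\varepsilon}>\delta+\mu_{k}/4\,\}\cap B_{r_{k}}$ occupies a sufficiently large proportion of $B_{r_{k}}$ (the non-degenerate alternative), then the coefficient operator ${\mathcal B}_{\varepsilon}(Du_{\varepsilon})$ satisfies (\ref{Eq (Section 2) Monotonicity outside})--(\ref{Eq (Section 2) Growth outside}) on a substantial subset, and the freezing-coefficient Campanato bound of Proposition \ref{Prop: Schauder estimate}, whose error term is controlled by (\ref{Eq (Section 2) Hessian errors}), produces a reduction of the form $\mathrm{osc}_{B_{\theta r_{k}}}{\mathcal G}_{2\delta,\,\varepsilon}(Du_{\varepsilon})\le\theta^{\alpha_{0}}\,\mathrm{osc}_{B_{r_{k}}}{\mathcal G}_{2\delta,\,\varepsilon}(Du_{\varepsilon})+Cr_{k}^{\sigma}$, where the Morrey-type remainder generated by $f_{\varepsilon}$ is controllable thanks to $q>n$. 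In the opposite alternative the superlevel set is small, and then Proposition \ref{Prop: De Giorgi's truncation} applied to the non-negative subsolution-type quantity $(V_{\varepsilon}-\delta-\mu_{k}/4)_{+}$ yields $V_{\varepsilon}\le\delta+(1-\eta)\mu_{k}/4$ on $B_{r_{k}/2}$ for some fixed $\eta\in(0,\,1)$; the Lipschitz estimate (\ref{Eq (Section 2) Lipschitz bounds of the mapping G-2delta-epsilon}) of Lemma \ref{Lemma: G-2delta-epsilon} then transfers this into a proportional reduction of $\mathrm{osc}\,{\mathcal G}_{2\delta,\,\varepsilon}(Du_{\varepsilon})$ at the next scale.

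Choosing $\theta$ so that both reduction factors agree, an induction delivers $\mathrm{osc}_{B_{r_{k}}}{\mathcal G}_{2\delta,\,\varepsilon}(Du_{\varepsilon})\le C\mu_{0}\theta^{\alpha k}$ for some $\alpha\in(0,\,1)$, after absorbing the forcing contribution. The Campanato--Morrey characterization of H\"{o}lder spaces then produces (\ref{Eq (Section 2) Holder continuity of G-2delta-epsilon}) with constant $2^{n/2+2\alpha+2}\mu_{0}\rho_{0}^{-\alpha}$, the power $2^{n/2}$ coming from passing between $L^{2}$ means and sup norms on dyadic balls. The principal technical obstacle is choosing the thresholds defining the two alternatives so that the level $\mu_{k+1}$ propagates correctly into the next iteration regardless of which branch occurred, and so that the reduction factor does not degenerate as $\varepsilon\to 0$; the restriction (\ref{Eq (Section 2) delta-epsilon}) is precisely what makes $\lvert Du_{\varepsilon}\rvert$ and $V_{\varepsilon}$ comparable on $\{V_{\varepsilon}>\delta\}$ and allows the bookkeeping to close independently of $\varepsilon$, so that the whole argument transfers in Theorem \ref{Theorem: C1-regularity} via the convergence result of Proposition \ref{Prop: Convergence on weak system}.
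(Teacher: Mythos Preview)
Your proposal identifies the same three building blocks as the paper (Propositions \ref{Prop: Lipschitz bounds}--\ref{Prop: Schauder estimate}) and the same governing dichotomy based on the measure of a superlevel set of $V_{\varepsilon}$, so the high-level strategy is correct. The iteration architecture in the paper is, however, organized differently from what you describe, and two of your details do not match the actual hypotheses of the propositions.

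In the paper the scales are $\rho_{k}=4^{-k}\rho_{0}$, $\mu_{k}=\kappa^{k}\mu_{0}$, and for each center $x_{0}$ one looks for the \emph{first} index $k_{\star}$ at which the measure condition (\ref{Eq (Section 2) Measure condition 2}) of Proposition \ref{Prop: Schauder estimate} is met. For $k<k_{\star}$ the complementary condition (\ref{Eq (Section 3) Measure condition 1}) holds and Proposition \ref{Prop: De Giorgi's truncation} is applied iteratively to produce $\esssup_{B_{\rho_{k}}}\lvert{\mathcal G}_{\delta,\,\varepsilon}(Du_{\varepsilon})\rvert\le\mu_{k}$. At $k=k_{\star}$, Proposition \ref{Prop: Schauder estimate} is invoked \emph{once} and is terminal: it delivers the Lebesgue-point limit $\Gamma_{2\delta,\,\varepsilon}(x_{0})$ together with the full $\beta$-decay (\ref{Eq (Section 2) Campanato-type growth from Schauder}) for all $r\le\rho_{k_{\star}}/2$, so no further alternation is needed. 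A separate case, $\mu_{k_{\star}}\le\delta$, is handled by observing via (\ref{Eq (Section 2) Control of G-2delta-epsilon by G-delta-epsilon}) that ${\mathcal G}_{2\delta,\,\varepsilon}(Du_{\varepsilon})$ vanishes on $B_{\rho_{k_{\star}}}$. Your description of Proposition \ref{Prop: Schauder estimate} as a one-step oscillation reduction to be iterated is therefore a misreading; the proposition is a one-shot Campanato estimate, and the ``first hitting time'' structure is what makes the induction close cleanly without having to re-verify the non-degeneracy condition at every subsequent scale. Also, the dichotomy threshold in the paper is the superlevel set $S_{\rho,\,\mu,\,\nu}=\{V_{\varepsilon}-\delta>(1-\nu)\mu\}$ with $\nu$ small (i.e.\ $V_{\varepsilon}$ near its supremum), not $\{V_{\varepsilon}>\delta+\mu/4\}$; the hypotheses of Propositions \ref{Prop: De Giorgi's truncation} and \ref{Prop: Schauder estimate} are stated for this set and would not be triggered by your weaker threshold. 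These are fixable, but they do change how the bookkeeping goes through.
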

To prove Theorem \ref{Theorem: A priori Hoelder estimate}, we use three basic propositions, whose proofs are given later in Sections \ref{Section: Weak formulations}--\ref{Section: Appendix}. 
Here it is noted that when stating Propositions \ref{Prop: Lipschitz bounds}--\ref{Prop: Schauder estimate}, we have to use another modulus \(V_{\varepsilon}\coloneqq \sqrt{\varepsilon^{2}+\lvert Du_{\varepsilon}\rvert^{2}}\), instead of $\lvert Du_{\varepsilon}\rvert$, since we have relaxed the principal divergence operator ${\mathcal L}$ itself. Along with this, we have already introduced another truncation mapping ${\mathcal G}_{2\delta,\,\varepsilon}$, instead of ${\mathcal G}_{2\delta}$.

The first proposition states local a priori Lipschitz bounds for regularized solutions. 
\begin{proposition}[Local Lipschitz bounds]\label{Prop: Lipschitz bounds}
Let $\varepsilon\in(0,\,1)$ and $u_{\varepsilon}$ be a weak solution to a regularized system (\ref{Eq (Section 2) Approximated system}) in \(\Omega\). Fix an open ball $B_{\rho}(x_{0})\Subset \Omega$ with $\rho\in(0,\,1\rbrack$. Then, there exists a constant $C\in(0,\,\infty)$ depending at most on $b$, $n$, $p$, $q$, $\gamma$, and $\Gamma$, such that
\[\esssup_{B_{\theta\rho}(x_{0})}\,V_{\varepsilon}\le C\mleft[1+\lVert f_{\varepsilon}\rVert_{L^{q}(B_{\rho}(x_{0}))}^{1/(p-1)}+\frac{\lVert V_{\varepsilon}\rVert_{L^{p}(B_{\rho}(x_{0}))}}{[(1-\theta)\rho]^{d}}\mright]\]
for all $\theta\in(0,\,1)$. Here the exponent $d\in\lbrack n/p,\,\infty)$ depends at most on $n$, $p$, and $q$.
\end{proposition}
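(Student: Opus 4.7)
For each fixed $\varepsilon\in(0,1)$ the regularized operator in (\ref{Eq (Section 2) Approximated system}) is uniformly elliptic with $p$-growth, since (\ref{Eq (Section 2) Estimates on B-p-epsilon})--(\ref{Eq (Section 2) Estimates on B-1-epsilon}) give
\[\gamma(\varepsilon^{2}+\lvert\xi\rvert^{2})^{p/2-1}\mathrm{id}_{Nn}\leqslant{\mathcal B}_{\varepsilon}(\xi)\leqslant\mleft[b\varepsilon^{-1}+3\Gamma(\varepsilon^{2}+\lvert\xi\rvert^{2})^{p/2-1}\mright]\mathrm{id}_{Nn}.\]
Hence standard regularity theory for uniformly elliptic systems with Uhlenbeck structure yields $u_{\varepsilon}\in W^{2,2}_{\mathrm{loc}}(\Omega;{\mathbb R}^{N})\cap C^{1,\alpha}_{\mathrm{loc}}$ for some $\alpha=\alpha(\varepsilon)\in(0,1)$, which justifies differentiating the equation and performing the computations below.

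Differentiating (\ref{Eq (Section 2) Approximated system}) with respect to $x_{\gamma}$, the component $\partial_{x_{\gamma}}u_{\varepsilon}$ solves weakly $-\divx({\mathcal B}_{\varepsilon}(Du_{\varepsilon})D(\partial_{x_{\gamma}}u_{\varepsilon}))=\partial_{x_{\gamma}}f_{\varepsilon}$ for each $\gamma\in\{1,\dots,n\}$. For a cut-off $\eta\in C_{c}^{\infty}(B_{\rho}(x_{0}))$ and a parameter $\beta\ge 0$ to be iterated, I would test with $\phi=\eta^{2}V_{\varepsilon}^{2\beta}\partial_{x_{\gamma}}u_{\varepsilon}$, sum over $\gamma$, and integrate by parts on the right to avoid a derivative on $f_{\varepsilon}$. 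The spherically symmetric (Uhlenbeck) structure of $E_{\varepsilon}$ combined with the ellipticity lower bound in (\ref{Eq (Section 2) Estimates on B-p-epsilon}) (the one-Laplace contribution ${\mathcal B}_{1,\varepsilon}\geqslant O$ enters only favourably on the coercive side) and Young's inequality to absorb the cross-terms should yield, for some $\varepsilon$-independent constant $C$, the Caccioppoli-type bound
\begin{equation*}
\int\eta^{2}V_{\varepsilon}^{p+2\beta-2}\lvert D^{2}u_{\varepsilon}\rvert^{2}\,{\mathrm d}x\le C(1+\beta)^{2}\mleft[\int\lvert D\eta\rvert^{2}V_{\varepsilon}^{p+2\beta}\,{\mathrm d}x+\int\eta^{2}\lvert f_{\varepsilon}\rvert V_{\varepsilon}^{1+2\beta}\,{\mathrm d}x\mright].
\end{equation*}
Combining this with the pointwise estimate $\lvert D(\eta V_{\varepsilon}^{p/2+\beta})\rvert^{2}\le C(1+\beta)^{2}[\eta^{2}V_{\varepsilon}^{p+2\beta-2}\lvert D^{2}u_{\varepsilon}\rvert^{2}+\lvert D\eta\rvert^{2}V_{\varepsilon}^{p+2\beta}]$ and the Sobolev embedding $W_{0}^{1,2}\hookrightarrow L^{2n/(n-2)}$ (with the usual modification when $n=2$) produces a reverse-H\"older inequality that controls $\lVert V_{\varepsilon}\rVert_{L^{\chi s}(B_{r^{\prime}})}$ by $\lVert V_{\varepsilon}\rVert_{L^{s}(B_{r})}$ plus a forcing term, with $\chi=n/(n-2)$ and $s=p+2\beta$. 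The forcing is then handled by H\"older's inequality against $\lVert f_{\varepsilon}\rVert_{L^{q}}$, and Moser iteration along geometric sequences $\rho_{k}\searrow\theta\rho$ and $s_{k}=p\chi^{k}$ produces the claimed $L^{\infty}$-bound; the exponent $d\in\lbrack n/p,\infty)$ arises from summing the resulting geometric series.

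The chief obstacle is \emph{$\varepsilon$-uniformity}: the upper bound in (\ref{Eq (Section 2) Estimates on B-1-epsilon}) blows up like $\varepsilon^{-1}$, so the one-Laplace part of ${\mathcal B}_{\varepsilon}$ must be kept strictly on the coercive side of every estimate and never used as an upper bound. This is automatic because ${\mathcal B}_{1,\varepsilon}$ is positive semi-definite, but it requires care in the bookkeeping of cross-terms. A secondary difficulty is controlling the forcing integral at each Moser step without degrading the scaling that eventually yields the additive term $\lVert f_{\varepsilon}\rVert_{L^{q}}^{1/(p-1)}$; balancing the admissible $q$ against the Sobolev exponent $\chi$ is what pins down the dependence of $d$ on $(n,p,q)$. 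The scheme is otherwise classical along the lines of \cite[\S 7]{BDGPdN} and \cite{T-scalar}, and the detailed proof is deferred to Section \ref{Section: Appendix}.
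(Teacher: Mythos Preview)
Your plan diverges from the paper's: the appendix proves Proposition~\ref{Prop: Lipschitz bounds} by De Giorgi's truncation, not Moser's iteration. The paper truncates at level $k=1+\lVert f_{\varepsilon}\rVert_{L^{q}}^{1/(p-1)}$, works with $W_{l}=((V_{\varepsilon}-k)_{+}^{p}-l)_{+}$, and iterates on superlevel sets via Lemma~\ref{Lemma: Geometric convergence lemma} followed by the absorbing Lemma~\ref{Lemma: Absorbing lemma}; Remark~\ref{Eq Higher W-2-2 and W-1-infty regularity} explicitly explains why Moser is avoided for this system.

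There is a concrete gap in your $\varepsilon$-uniform Caccioppoli step. The assertion that ${\mathcal B}_{1,\varepsilon}\geqslant O$ ``enters only favourably on the coercive side'' is false: after testing with $\eta^{2}V_{\varepsilon}^{2\beta}D_{\gamma}u_{\varepsilon}$ and handling the $\nabla\eta$-cross term by Cauchy--Schwarz in the ${\mathcal C}_{\varepsilon}$-inner product, the \emph{upper} bound in (\ref{Eq (Section 3) Ellipticity of coefficients}) enters and contributes $b\int\lvert\nabla\eta\rvert^{2}V_{\varepsilon}^{2\beta+1}$, not $V_{\varepsilon}^{p+2\beta}$. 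More damaging is the forcing: absorbing the $\lvert f_{\varepsilon}\rvert\lvert D^{2}u_{\varepsilon}\rvert$ and $\lvert f_{\varepsilon}\rvert\lvert\nabla V_{\varepsilon}\rvert$ contributions by Young leaves $\int\eta^{2}\lvert f_{\varepsilon}\rvert^{2}V_{\varepsilon}^{2\beta+2-p}$ (these are exactly $J_{4},J_{5}$ in Lemma~\ref{Lemma: Weak formulation for V-epsilon} with $\psi(t)=t^{2\beta}$), not the first-order term you wrote. For $p>2$ and $\beta<(p-2)/2$ the exponent $2\beta+2-p$ is negative, and since a priori one only has $V_{\varepsilon}\ge\varepsilon$, the integral is controlled only by $\varepsilon^{2-p}\lVert f_{\varepsilon}\rVert_{L^{2}}^{2}$, which blows up as $\varepsilon\to 0$; the very first iteration steps therefore fail to be uniform. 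The paper's truncation is precisely what repairs both issues: on $\{V_{\varepsilon}>k\ge 1\}$ one has $V_{\varepsilon}^{-1}\le V_{\varepsilon}^{p-2}$ (yielding (\ref{Eq (Appendix) Uniform ellipticity on Lipschitz bounds})) and, by the choice of $k$, $\lVert\,\lvert f_{\varepsilon}\rvert^{2}V_{\varepsilon}^{2(1-p)}\rVert_{L^{q/2}(A(0,\rho))}\le 1$. If you want to salvage a Moser scheme here, you must build an equivalent truncation into the test function---which is, in effect, the De Giorgi device the paper uses.
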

These estimates can be deduced by carefully choosing test functions whose supports are separated from facets of approximated solutions.
Also, it should be emphasized that local Lipschitz bounds in Proposition \ref{Prop: Lipschitz bounds} are expectable, since the density $E_{\varepsilon}$ has a $p$-Laplace-type structure when it is sufficiently far from the origin. Lipschitz estimates from a viewpoint of asymptotic bahaviour of density functions are found in the existing literature \cite{MR4078712} (see also \cite{MR852362} as a classical work). For the reader's convenience, we would like to provide the proof of Proposition \ref{Prop: Lipschitz bounds} in the appendix (Section \ref{Section: Appendix}).

Hereinafter we assume local uniform boundedness of $V_{\varepsilon}$, which is guaranteed by Proposition \ref{Prop: Lipschitz bounds}. In particular, a scalar function $\lvert {\mathcal G}_{\delta,\,\varepsilon}(Du_{\varepsilon})\rvert$ is uniformly bounded in each fixed subdomain of $\Omega$. For an open ball $B_{\rho}(x_{0})\Subset \Omega$ and positive numbers $\mu\in(0,\,\infty)$, $\nu\in(0,\,1)$, we introduce a superlevel set
\[S_{\rho,\,\mu,\,\nu}(x_{0})\coloneqq \{x\in B_{\rho}(x_{0})\mid V_{\varepsilon}-\delta>(1-\nu)\mu\}.\]
The second and third propositions (Propositions \ref{Prop: De Giorgi's truncation}--\ref{Prop: Schauder estimate}) are useful for estimating oscillation of ${\mathcal G}_{2\delta,\,\varepsilon}(Du_{\varepsilon})$. Our analysis herein depends on whether $V_{\varepsilon}$ vanishes near a point $x_{0}$ or not, which is judged by measuring the size of the superlevel set $S_{\rho,\,\mu,\,\nu}(x_{0})$. Before stating Propositions \ref{Prop: De Giorgi's truncation}--\ref{Prop: Schauder estimate}, we introduce a constant $\beta\in(0,\,1)$ by
\[\beta\coloneqq\mleft\{\begin{array}{cc}
1-\displaystyle\frac{n}{q} & (n<q<\infty),\\ {\hat\beta}_{0} & (q=\infty),
\end{array} \mright.\]
where ${\hat\beta}_{0}\in(0,\,1)$ is an arbitrary number. The exponent $\beta$ appears when one considers regularity of solutions to the Poisson system $-\Delta v=f$ with $f$ in $L^{q}$. In fact, from the classical Schauder theory, it is well-known that this weak solution $v$ admits local $C^{1,\,\beta}$-regularity. 

Propositions \ref{Prop: De Giorgi's truncation}--\ref{Prop: Schauder estimate} below will be shown in Sections \ref{Section: Weak formulations}--\ref{Section: Campanato estimates}. 
\begin{proposition}[An oscillation lemma]\label{Prop: De Giorgi's truncation}
Let $u_{\varepsilon}$ be a weak solution to a regularized system (\ref{Eq (Section 2) Approximated system}) in $\Omega$. Assume that positive numbers $\delta,\,\varepsilon,\,\mu,\,F,\,M$ satisfy (\ref{Eq (Section 2) delta-epsilon}), (\ref{Eq (Section 2) control of f}) and
\begin{equation}\label{Eq (Section 2) G-delta-epsilon bound}
\esssup_{B_{\rho}(x_{0})}\,\mleft\lvert {\mathcal G}_{\delta,\,\varepsilon}(Du_{\varepsilon})\mright\rvert\le \mu<\mu+\delta\le M
\end{equation} 
for some open ball $B_{\rho}(x_{0})\Subset \Omega$ with $\rho\in(0,\,1\rbrack$.
Assume that there holds
\begin{equation}\label{Eq (Section 3) Measure condition 1}
\lvert S_{\rho/2,\,\mu,\,\nu}(x_{0}) \rvert\le (1-\nu)\lvert B_{\rho/2}(x_{0})\rvert
\end{equation}
for some constant $\nu\in(0,\,1)$.
Then, we have either
\begin{equation}
\mu^{2}<C_{\star}\rho^{\beta},
\end{equation}
or
\begin{equation}
\esssup_{B_{\rho/4}(x_{0})}\,\mleft\lvert {\mathcal G}_{\delta,\,\varepsilon}(Du_{\varepsilon})\mright\rvert\le \kappa\mu.
\end{equation}
Here the constants $C_{\star}\in(0,\,\infty)$ and $\kappa\in(2^{-\beta},\,1)$ depend at most on $b$, $n$, $N$, $p$, $q$, $\gamma$, $\Gamma$, $F$, $M$, $\delta$ and $\nu$.
\end{proposition}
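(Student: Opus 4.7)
The plan is to perform a De Giorgi level-set iteration for the scalar quantity $(V_{\varepsilon}-\delta-k\mu)_{+}$ indexed by levels $k\in\lbrack 1-\nu,\,1-\nu/2\rbrack$. The geometric idea is that on the superlevel set $\{V_{\varepsilon}>\delta+(1-\nu)\mu\}$ one has $V_{\varepsilon}\ge\delta+(1-\nu)\mu$ and $V_{\varepsilon}\le\delta+\mu\le M$, so Lemma \ref{Lemma: Error estimates}(\ref{Item 1/2 (Section 2) Monotonicity and Growth estimates}) provides uniform monotonicity and growth estimates for $A_{\varepsilon}$ with constants depending only on $b,\,p,\,\gamma,\,\Gamma,\,\delta,\,\nu,\,M$ (the condition $\varepsilon<\delta/4$ ensures this is $\varepsilon$-independent). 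The singular part $A_{1,\varepsilon}$ only contributes a nonnegative term via (\ref{Eq (Section 2) Monotonicity of A-1-epsilon}) and hence does not obstruct the iteration.

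First I would derive a Caccioppoli estimate on sublevel sets of $V_{\varepsilon}$. For each fixed $\varepsilon>0$ the relaxed operator in (\ref{Eq (Section 2) Approximated system}) is nondegenerate, so standard difference-quotient arguments give $u_{\varepsilon}\in W^{2,2}_{\mathrm{loc}}(\Omega;\,{\mathbb R}^{N})$; this legitimizes differentiating the system in $x_{\alpha}$ and testing with $\phi_{\alpha}\coloneqq \zeta^{2}\,\Phi_{k}(V_{\varepsilon})\,\partial_{x_{\alpha}}u_{\varepsilon}$, where $\zeta\in C_{c}^{\infty}(B_{\rho}(x_{0}))$ is a standard cutoff and $\Phi_{k}$ is a Lipschitz truncation vanishing on $\lbrack 0,\,\delta+k\mu\rbrack$ and growing linearly beyond. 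Summing over $\alpha\in\{\,1,\,\dots\,,\,n\,\}$ and discarding the nonnegative $A_{1,\varepsilon}$-contribution, the ellipticity bounds from Lemma \ref{Lemma: Error estimates}(\ref{Item 1/2 (Section 2) Monotonicity and Growth estimates}) on $\{V_{\varepsilon}>\delta+k\mu\}$ yield
\begin{equation*}
\int_{B_{\rho}(x_{0})}\zeta^{2}\,\bigl\lvert D(V_{\varepsilon}-\delta-k\mu)_{+}\bigr\rvert^{2}\,\mathrm dx\le C\int_{B_{\rho}(x_{0})}\lvert D\zeta\rvert^{2}(V_{\varepsilon}-\delta-k\mu)_{+}^{2}\,\mathrm dx+C\,\mu^{2}\rho^{\beta},
\end{equation*}
with $C$ depending only on $b,\,n,\,N,\,p,\,q,\,\gamma,\,\Gamma,\,F,\,M,\,\delta,\,\nu$. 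The forcing term is obtained by pairing $f_{\varepsilon}$ against $\phi_{\alpha}$, applying H\"older's inequality (using $q>n$) to convert $\lVert f_{\varepsilon}\rVert_{L^{q}}$ into the factor $\rho^{\beta}$ with $\beta=1-n/q$ (or $\hat{\beta}_{0}$ if $q=\infty$), and absorbing a single power of $\mu$ via Young's inequality.

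Next I would run the standard iteration. Define $k_{j}\coloneqq 1-\nu+\nu(1-2^{-j})/2\in\lbrack 1-\nu,\,1-\nu/2\rbrack$ and radii $\rho_{j}\coloneqq\rho/4+\rho\,2^{-j-2}$ decreasing from $\rho/2$ to $\rho/4$, and set
\begin{equation*}
Y_{j}\coloneqq\frac{1}{\mu^{2}\lvert B_{\rho_{j}}(x_{0})\rvert}\int_{B_{\rho_{j}}(x_{0})}(V_{\varepsilon}-\delta-k_{j}\mu)_{+}^{2}\,\mathrm dx.
\end{equation*}
Combining the Caccioppoli estimate above with Sobolev's embedding on the annular cutoff and Chebyshev's inequality produces, in the standard way, a recursion of the form $Y_{j+1}\le K\,b_{0}^{j}\bigl(Y_{j}^{1+\sigma}+\mu^{-2}\rho^{\beta}\,Y_{j}\bigr)$ with structural constants $K,\,b_{0},\,\sigma>0$. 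The hypothesis (\ref{Eq (Section 3) Measure condition 1}) gives $Y_{0}\le 1-\nu$, and in the alternative $\mu^{2}\ge C_{\star}\rho^{\beta}$ the forcing term in the recursion is absorbed, so the fast-geometric-convergence lemma (cf.\ \cite[\S 7]{BDGPdN} and \cite[\S 4.2]{T-scalar}) yields $Y_{j}\to 0$. Hence $V_{\varepsilon}-\delta\le (1-\nu/2)\mu$ a.e.\ on $B_{\rho/4}(x_{0})$, and since $\lvert{\mathcal G}_{\delta,\varepsilon}(Du_{\varepsilon})\rvert=(V_{\varepsilon}-\delta)_{+}$ pointwise, this gives the conclusion with $\kappa\coloneqq 1-\nu/2$; the requirement $\kappa>2^{-\beta}$ is met by choosing $\nu$ sufficiently small at the outset, at the cost of enlarging $C_{\star}$.

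The main obstacle is ensuring that all constants appearing in the Caccioppoli estimate are independent of $\varepsilon$ and stably depend on $\delta$: this forces us to perform every test-function manipulation on the region $\{V_{\varepsilon}>\delta+(1-\nu)\mu\}$, strictly separated from the facet where the relaxation parameter $\varepsilon<\delta/4$ lives. The role of the two distinct moduli $\lvert Du_{\varepsilon}\rvert$ and $V_{\varepsilon}$ must be handled carefully when bounding the error in the test function $\Phi_{k}(V_{\varepsilon})\partial_{x_{\alpha}}u_{\varepsilon}$; in particular the factor $\lvert Du_{\varepsilon}\rvert/V_{\varepsilon}$ must be shown to be bounded away from zero on $\{V_{\varepsilon}>\delta+(1-\nu)\mu\}$, which follows from $\varepsilon<\delta/4<(1-\nu)\mu/4$ once $\nu$ is fixed small and $\delta$ is held away from zero. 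Once this is arranged, the remaining iteration is standard and essentially identical to \cite[\S 7]{BDGPdN}, which is why the author defers those details.
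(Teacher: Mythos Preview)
Your single-step De Giorgi iteration has a genuine gap: the measure hypothesis \eqref{Eq (Section 3) Measure condition 1} is \emph{not} a smallness condition on the superlevel set, and the fast-geometric-convergence lemma cannot be launched from it directly. The condition $\lvert S_{\rho/2,\mu,\nu}\rvert\le (1-\nu)\lvert B_{\rho/2}\rvert$ only says that the \emph{complement} of the superlevel set occupies a fraction at least $\nu$ of the ball; for small $\nu$ the superlevel set may still fill $99\%$ of $B_{\rho/2}$. With your definition of $Y_{0}$ one actually has $Y_{0}\le \nu^{2}(1-\nu)$ (since $(V_{\varepsilon}-\delta-(1-\nu)\mu)_{+}\le\nu\mu$ and the support has measure $\le(1-\nu)\lvert B_{\rho/2}\rvert$), but the threshold produced by your recursion scales the same way: the Chebyshev step introduces a factor $(k_{j+1}-k_{j})^{-2}\sim\nu^{-2}4^{j}$, so the convergence criterion becomes $Y_{0}\le c\,\nu^{2}$ with a structural $c<1$. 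Since $1-\nu>c$ for small $\nu$, the iteration does not start. (If it did, your argument would not use \eqref{Eq (Section 3) Measure condition 1} in any essential way, and would yield oscillation decay on every ball, forcing $Du_{\varepsilon}$ to be constant.)

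The paper resolves this by inserting a \emph{measure-shrinking} step (Lemma~\ref{Lemma: Oscillation lemma for U-epsilon-delta 2}), which is the De Giorgi isoperimetric-type argument: from $\lvert\{U_{\delta,\varepsilon}\le(1-\nu)\mu^{2}\}\cap B_{\rho/2}\rvert\ge\nu\lvert B_{\rho/2}\rvert$ and the Caccioppoli inequality one iterates along a dyadic ladder of levels to obtain
\[
\frac{\lvert\{U_{\delta,\varepsilon}>(1-2^{-i_{\star}}\nu)\mu^{2}\}\cap B_{\rho/2}\rvert}{\lvert B_{\rho/2}\rvert}<\frac{C_{\dagger}}{\nu\sqrt{i_{\star}}}
\]
unless the alternative $\mu^{2}<2^{i_{\star}}\rho^{\beta}/\nu$ holds. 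Choosing $i_{\star}$ large so that $C_{\dagger}/(\nu\sqrt{i_{\star}})$ falls below the structural threshold $\hat\nu$ of Lemma~\ref{Lemma: Oscillation lemma for U-epsilon-delta 1}, one \emph{then} runs the iteration you describe, with $\theta=2^{-i_{\star}}\nu$; this yields $\kappa=\sqrt{1-2^{-(i_{\star}+1)}\nu}$ and $C_{\star}=2^{i_{\star}}/\nu$. Your Caccioppoli step and the subsequent iteration are essentially Lemma~\ref{Lemma: Subsolution} and Lemma~\ref{Lemma: Oscillation lemma for U-epsilon-delta 1}, so the fix is to add the intermediate density-improvement lemma rather than to attempt a direct jump. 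Finally, note that $\nu\in(0,1)$ is a \emph{given} parameter of the proposition (with $C_{\star},\kappa$ allowed to depend on it), so your closing remark about ``choosing $\nu$ sufficiently small'' is not available here; moreover, shrinking $\nu$ would push $Y_{0}$ closer to $1$, not closer to $0$.
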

\begin{proposition}[Campanato-type growth estimates]\label{Prop: Schauder estimate}
Let $u_{\varepsilon}$ be a weak solution to a regularized system (\ref{Eq (Section 2) Approximated system}) in $\Omega$. Assume that positive numbers $\delta$, $\varepsilon$, $\mu$, $F$, $M$ satisfy (\ref{Eq (Section 2) delta-epsilon}), (\ref{Eq (Section 2) control of f}),
\begin{equation}\label{Eq (Section 2) esssup V-epsilon}
\esssup_{B_{\rho}(x_{0})}\,V_{\varepsilon}\le \mu+\delta\le M
\end{equation}
for some open ball $B_{\rho}(x_{0})\Subset \Omega$, and
\begin{equation}\label{Eq (Section 2) delta<mu}
0<\delta<\mu.
\end{equation}
Then, there exist numbers $\nu\in(0,\,1/4),\,\rho_{\star}\in(0,\,1)$, depending at most on $b$, $n$, $N$, $p$, $q$, $\gamma$, $\Gamma$, $F$, $M$, and $\delta$, such that the following statement holds true. If there hold $0<\rho<\rho_{\star}$ and 
\begin{equation}\label{Eq (Section 2) Measure condition 2}
\lvert S_{\rho,\,\mu,\,\nu}(x_{0})\rvert>(1-\nu)\lvert B_{\rho}(x_{0})\rvert,
\end{equation}
then the limit
\begin{equation}
\Gamma_{2\delta,\,\varepsilon}(x_{0})\coloneqq \lim_{r\to 0}({\mathcal G}_{2\delta,\,\varepsilon}(Du_{\varepsilon}))_{x_{0},\,r}\in{\mathbb R}^{Nn}
\end{equation}
exists. Moreover, there hold
\begin{equation}\label{Eq (Section 2) Bound of Gamma-2delta-epsilon}
\lvert \Gamma_{2\delta,\,\varepsilon}(x_{0})\rvert\le \mu,
\end{equation}
and
\begin{equation}\label{Eq (Section 2) Campanato-type growth from Schauder}
\fint_{B_{r}(x_{0})}\mleft\lvert {\mathcal G}_{2\delta,\,\varepsilon}(Du_{\varepsilon})-\Gamma_{2\delta,\,\varepsilon}(x_{0})\mright\rvert^{2}\,{\mathrm{d}}x\le \mleft(\frac{r}{\rho} \mright)^{2\beta} \mu^{2}\quad \textrm{for all }r\in(0,\,\rho\rbrack.
\end{equation}
\end{proposition}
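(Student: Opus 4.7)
The plan is a freezing-coefficient Campanato excess-decay, iterated on concentric balls, in the spirit of the argument in \cite{BDGPdN} but adapted to the two-modulus setting involving both $V_\varepsilon$ and $\lvert Du_\varepsilon\rvert$. The first task is to turn the measure condition (\ref{Eq (Section 2) Measure condition 2}) into pointwise non-degeneracy. Since $\lvert Du_\varepsilon\rvert\ge V_\varepsilon-\varepsilon\ge V_\varepsilon-\delta/4$, taking $\nu$ small enough forces $\lvert Du_\varepsilon\rvert\ge \delta+3\mu/4$ on $S_{\rho,\mu,\nu}(x_0)$, a set of relative measure at least $1-\nu$; combined with the upper bound $\lvert Du_\varepsilon\rvert\le \mu+\delta$ from (\ref{Eq (Section 2) esssup V-epsilon}), this allows the selection of a freezing matrix $\xi_0\in{\mathbb R}^{Nn}$ (morally the average $(Du_\varepsilon)_{x_0,\rho}$) satisfying $\delta+\mu/4\le\lvert\xi_0\rvert\le\delta+\mu$, precisely the regime in which the Hessian-error estimate (\ref{Eq (Section 2) Hessian errors}) of Lemma \ref{Lemma: Error estimates} is available.

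With such a $\xi_0$ fixed, I would compare $u_\varepsilon$ with the unique solution $v$ of the constant-coefficient linearized system $\divx\mleft({\mathcal B}_\varepsilon(\xi_0)Dv\mright)=0$ in $B_\rho(x_0)$ with $v=u_\varepsilon$ on $\partial B_\rho(x_0)$. The ellipticity bounds (\ref{Eq (Section 2) Estimates on B-p-epsilon})--(\ref{Eq (Section 2) Estimates on B-1-epsilon}) ensure that ${\mathcal B}_\varepsilon(\xi_0)$ is uniformly elliptic with constants depending only on the data (through $b,p,\gamma,\Gamma,\delta,M$), so classical linear Schauder theory yields the decay
\[
\fint_{B_{\tau\rho}(x_0)}\mleft\lvert Dv-(Dv)_{x_0,\tau\rho}\mright\rvert^{2}\,{\mathrm d}x\le C_\sharp\tau^{2\beta}\fint_{B_\rho(x_0)}\mleft\lvert Dv-\xi_0\mright\rvert^{2}\,{\mathrm d}x
\]
for every $\tau\in(0,1)$. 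For the error $w\coloneqq u_\varepsilon-v$, subtracting the two weak formulations, writing $A_\varepsilon(Du_\varepsilon)-A_\varepsilon(\xi_0)={\mathcal B}_\varepsilon(\xi_0)(Du_\varepsilon-\xi_0)+R$ with $R$ pointwise controlled by $C\mu^{p-2-\beta_0}\lvert Du_\varepsilon-\xi_0\rvert^{1+\beta_0}$ via (\ref{Eq (Section 2) Hessian errors}), testing against $w$, and handling the $f_\varepsilon$-term by Morrey-type duality (available since $q>n$), a standard absorption gives a comparison estimate of the form
\[
\fint_{B_\rho(x_0)}\lvert Dw\rvert^{2}\,{\mathrm d}x\le C(\delta,p,\beta_0,\Gamma)\mleft(\fint_{B_\rho(x_0)}\lvert Du_\varepsilon-\xi_0\rvert^{2}\,{\mathrm d}x\mright)^{1+\beta_0}+C\rho^{2\beta}\mu^{2},
\]
where the $L^{\infty}$-bound $\lvert Du_\varepsilon-\xi_0\rvert\le 2(\delta+\mu)$ is used to interpolate an extra factor $\lvert Du_\varepsilon-\xi_0\rvert^{\beta_0}$ into the excess itself.

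Combining the two displays by the triangle inequality together with $\lvert(Dw)_{x_0,\tau\rho}\rvert^{2}\le\tau^{-n}\fint_{B_\rho}\lvert Dw\rvert^{2}$ yields, for $\Phi(r)\coloneqq\fint_{B_r(x_0)}\lvert Du_\varepsilon-(Du_\varepsilon)_{x_0,r}\rvert^{2}\,{\mathrm d}x$, the Campanato-type recursion
\[
\Phi(\tau\rho)\le C\bigl(\tau^{2\beta}+\tau^{-n}\Phi(\rho)^{\beta_0}\bigr)\Phi(\rho)+C\tau^{-n}\rho^{2\beta}\mu^{2}.
\]
Fixing $\tau$ small in terms of $\beta$, and then requiring $\rho<\rho_\star$ so small that the term $\Phi(\rho)^{\beta_0}$ gets absorbed, upgrades this to $\Phi(\tau\rho)\le\tau^{2\beta}\Phi(\rho)+C\rho^{2\beta}\mu^{2}$. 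Before iterating, the shrinking lemmata of Section \ref{Subsect: Shrinking lemmata} are invoked to guarantee that (\ref{Eq (Section 2) Measure condition 2}) persists on $B_{\tau\rho}(x_0)$ with constants independent of $\varepsilon$, so that the freezing step is available at the next scale. Iterating yields $\Phi(\tau^k\rho)\lesssim\tau^{2k\beta}\mu^{2}$, and the Lipschitz bound (\ref{Eq (Section 2) Lipschitz bounds of the mapping G-2delta-epsilon}) transfers this decay from $Du_\varepsilon$ to ${\mathcal G}_{2\delta,\varepsilon}(Du_\varepsilon)$, forcing the averages $({\mathcal G}_{2\delta,\varepsilon}(Du_\varepsilon))_{x_0,r}$ to converge as $r\to 0^+$ to a limit $\Gamma_{2\delta,\varepsilon}(x_0)$ satisfying both (\ref{Eq (Section 2) Bound of Gamma-2delta-epsilon}) and (\ref{Eq (Section 2) Campanato-type growth from Schauder}).

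The main obstacle is the propagation of non-degeneracy across scales: (\ref{Eq (Section 2) Hessian errors}) is only applicable when $\lvert\xi_0\rvert\ge\delta+\mu/4$, so after each freezing step the measure condition must be re-verified on the next concentric ball with constants independent of $\varepsilon\in(0,\delta/4)$. This is exactly what forces the smallness requirement $\rho<\rho_\star$ in the statement and motivates the shrinking lemmata of Section \ref{Subsect: Shrinking lemmata}; the simultaneous calibration of $\nu$, $\tau$, and $\rho_\star$ so that the absorption works and so that no constant blows up as $\varepsilon\to 0$ is the technical heart of the argument.
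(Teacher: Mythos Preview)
Your outline has the right skeleton (freeze coefficients, compare, iterate, transfer via Lemma \ref{Lemma: G-2delta-epsilon}), but two steps do not go through as written, and the paper's proof handles both differently.

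\textbf{The comparison estimate.} You claim the $L^\infty$-bound $\lvert Du_\varepsilon-\xi_0\rvert\le 2(\delta+\mu)$ lets you ``interpolate an extra factor $\lvert Du_\varepsilon-\xi_0\rvert^{\beta_0}$ into the excess itself'' to reach $\fint\lvert Dw\rvert^2\lesssim\Phi^{1+\beta_0}$. The $L^\infty$-bound goes the \emph{wrong way}: it gives $\fint\lvert Du_\varepsilon-\xi_0\rvert^{2(1+\beta_0)}\le C\mu^{2\beta_0}\Phi$, so after dividing by $\mu^{2\beta_0}$ you only get $\fint\lvert Dw\rvert^2\lesssim\Phi$, with no self-improvement. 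To obtain a genuine power $\Phi^{1+\vartheta}$, the paper first proves a reverse H\"older inequality for $\lvert Du_\varepsilon-\xi\rvert$ (Lemma \ref{Lemma: Higher integrability}) via a Caccioppoli estimate in the non-degenerate regime $\lvert\xi\rvert\ge\delta+\mu/4$ and Gehring's lemma. The $L^\infty$-bound is used only to \emph{lower} the exponent from $2(1+\beta_0)$ to $2(1+\vartheta)$ before invoking Gehring; the self-improving exponent in Lemma \ref{Lemma: Perturbation result} is $\vartheta$, not $\beta_0$.

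\textbf{Non-degeneracy of the average and its propagation.} From ``$\lvert Du_\varepsilon\rvert\ge\delta+3\mu/4$ on a set of relative measure $1-\nu$'' you infer $\lvert(Du_\varepsilon)_{x_0,\rho}\rvert\ge\delta+\mu/4$. In the vectorial case $N\ge 2$ this is false without further input: vectors of large modulus can cancel in the average. The paper closes this in Lemma \ref{Lemma: Shrinking lemma 1} by first proving $\Phi(x_0,\rho)\le\theta\mu^2$ from the weighted energy estimates (Lemmata \ref{Lemma: Energy estimates}--\ref{Lemma: Energy estimates 2}), and only then using $\bigl\lvert\fint\lvert Du_\varepsilon\rvert-\lvert(Du_\varepsilon)_{x_0,\rho}\rvert\bigr\rvert\le\sqrt{\Phi}$ together with the measure bound to conclude. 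For the same reason, the iteration is \emph{not} driven by re-verifying (\ref{Eq (Section 2) Measure condition 2}) at each scale as you propose---no mechanism forces the superlevel-set condition to persist. Instead the paper tracks $\lvert(Du_\varepsilon)_{x_0,\rho_k}\rvert$ directly by induction (see (\ref{Eq (Section 4) Induction claim 2})--(\ref{Eq (Section 4) Induction claim 1})), using the decay of $\Phi(x_0,\rho_k)$ to bound $\lvert(Du_\varepsilon)_{x_0,\rho_{k+1}}-(Du_\varepsilon)_{x_0,\rho_k}\rvert$ telescopically; the hypothesis fed into Lemma \ref{Lemma: Shrinking lemma 2} at scale $\rho_k$ is the average bound (\ref{Eq (Section 4) average assumption}), not the measure condition.
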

\begin{Remark}\label{Rmk: Equivalence on boundedness}\upshape
Let $\delta$ and $\varepsilon$ satisfy $0<\varepsilon<\delta$.
Then, for $\xi\in{\mathbb R}^{Nn}$, $\mleft\lvert {\mathcal G}_{\delta,\,\varepsilon}(\xi)\mright\rvert\le \mu$ holds if and only if $\xi$ satisfies $\sqrt{\varepsilon^{2}+\lvert\xi\rvert^{2}}\le\mu+\delta$. In particular, the conditions (\ref{Eq (Section 2) G-delta-epsilon bound}) and (\ref{Eq (Section 2) esssup V-epsilon}) are equivalent. Also, it should be noted that the mapping ${\mathcal G}_{2\delta,\,\varepsilon}$ satisfies
\begin{equation}\label{Eq (Section 2) Control of G-2delta-epsilon by G-delta-epsilon}
\mleft\lvert{\mathcal G}_{2\delta,\,\varepsilon}(\xi)\mright\rvert\le \mleft(\mleft\lvert{\mathcal G}_{\delta,\,\varepsilon}(\xi)\mright\rvert -\delta\mright)_{+}\le \mleft\lvert {\mathcal G}_{\delta,\,\varepsilon}(\xi)\mright\rvert\quad \textrm{for all }\xi\in{\mathbb R}^{Nn},
\end{equation}
which is used in the proof of Theorem \ref{Theorem: A priori Hoelder estimate}.
\end{Remark}
By applying Propositions \ref{Prop: Lipschitz bounds}--\ref{Prop: Schauder estimate}, we would like to prove Theorem \ref{Theorem: A priori Hoelder estimate}. 
\begin{proof}
For each fixed \(x_{\ast}\in\Omega\), we first choose
\[R\coloneqq \min\mleft\{\,\frac{1}{2},\,\frac{1}{3}\mathop{\mathrm{dist}}\,(x_{\ast},\,\partial\Omega)\mright\}>0,\]
so that \(B_{2R}(x_{\ast})\Subset \Omega\) holds. By Proposition \ref{Prop: Lipschitz bounds}, we may take a finite constant \(\mu_{0}\in(0,\,\infty)\), depending at most on $b$, $n$, $N$, $p$, $q$, $\gamma$, $\Gamma$, $F$, and $R$, such that
\begin{equation}\label{Eq (Section 2) Lipschitz bound}
\esssup_{B_{R}(x_{\ast})}\,V_{\varepsilon}\le \mu_{0}.
\end{equation}
We set \(M\coloneqq 1+\mu_{0}\), so that \(\mu_{0}+\delta\le M\) clearly holds. 

We choose and fix the numbers \(\nu\in(0,\,1/4),\,\rho_{\star}\in(0,\,1)\) as in Proposition \ref{Prop: Schauder estimate}, which depend at most on $b$, $n$, $N$, $p$, $q$, $\gamma$, $\Gamma$, $F$, $M$, and $\delta$.
Corresponding to this \(\nu\), we choose finite constants \(\kappa\in(2^{-\beta},\,1)\), \(C_{\star}\in\lbrack1,\,\infty)\) as in Proposition \ref{Prop: De Giorgi's truncation}.
We define the desired H\"{o}lder exponent \(\alpha\in(0,\,\beta/2)\) by \(\alpha\coloneqq -\log\kappa/\log 4\), so that the identity \(4^{-\alpha}=\kappa\) holds.
We also put the radius \(\rho_{0}\) such that
\begin{equation}\label{Eq (Section 2) determination of radius}
0<\rho_{0}\le \min\mleft\{\,\frac{R}{2},\,\rho_{\star}\,\mright\}<1\quad\textrm{and}\quad C_{\star}\rho_{0}^{\beta}\le \kappa^{2}\mu_{0}^{2},
\end{equation}
which depends at most on $b$, $n$, $p$, $q$, $\gamma$, $\Gamma$, $F$, $L$, $M$, and $\delta$.
We define non-negative decreasing sequences \(\{\rho_{k}\}_{k=1}^{\infty},\,\{\mu_{k}\}_{k=1}^{\infty}\) by setting \(\rho_{k}\coloneqq 4^{-k}\rho_{0},\,\mu_{k}\coloneqq \kappa^{k}\mu_{0}\) for \(k\in{\mathbb N}\).
By \(2^{-\beta}<\kappa=4^{-\alpha}<1\) and (\ref{Eq (Section 2) determination of radius}), it is easy to check that
\begin{equation}\label{Eq (Section 2) An estimate for induction}
\sqrt{ C_{\star}\rho_{k}^{\beta}}\le 2^{-\beta k}\kappa\mu_{0}\le\kappa^{k+1}\mu_{0}= \mu_{k+1},
\end{equation}
and
\begin{equation}\label{Eq (Section 2) Estimate on radius ratio}
\mu_{k}=4^{-\alpha k}\mu_{0}=\mleft(\frac{\rho_{k}}{\rho_{0}}\mright)^{\alpha}\mu_{0}
\end{equation}
for every \(k\in{\mathbb Z}_{\ge 0}\).

We claim that for every \(x_{0}\in B_{\rho_{0}}(x_{\ast})\), the limit
\[\Gamma_{2\delta,\,\varepsilon}(x_{0})\coloneqq\lim_{r\to 0}\mleft({\mathcal G}_{2\delta,\,\varepsilon}(Du_{\varepsilon})\mright)_{x_{0},\,r}\in{\mathbb R}^{n}\]
exists, and this limit satisfies
\begin{equation}\label{Eq (Section 2) Campanato-type alpha-growth estimate}
\fint_{B_{r}(x_{0})}\mleft\lvert {\mathcal G}_{2\delta,\,\varepsilon}(Du_{\varepsilon})-\Gamma_{2\delta,\,\varepsilon}(x_{0}) \mright\rvert^{2}\,{\mathrm d}x\le 4^{2\alpha+1}\mleft(\frac{r}{\rho_{0}}\mright)^{2\alpha}\mu_{0}^{2}\quad \textrm{for all }r\in(0,\,\rho_{0}\rbrack.
\end{equation}
In the proof of (\ref{Eq (Section 2) Campanato-type alpha-growth estimate}), we introduce a set
\[{\mathcal N}\coloneqq \mleft\{k\in{\mathbb Z}_{\ge 0}\mathrel{}\middle|\mathrel{} \lvert S_{\rho_{k}/2,\,\mu_{k},\,\nu}(x_{0})\rvert>(1-\nu) \lvert B_{\rho_{k}/2}(x_{0})\rvert\mright\},\]
and define \(k_{\star}\in{\mathbb Z}_{\ge 0}\) to be the minimum number of $\mathcal N$ when it is non-empty.
We consider the three possible cases: \({\mathcal N}\neq \emptyset\) and \(\mu_{k_{\star}}> \delta\); \({\mathcal N}\neq \emptyset\) and \(\mu_{k_{\star}}\le \delta\); and \({\mathcal N}=\emptyset\).
Before dealing with each case, we mention that if \({\mathcal N}\neq \emptyset\), then it is possible to apply Proposition \ref{Prop: De Giorgi's truncation} with \((\rho,\,\mu)=(\rho_{k},\,\mu_{k})\) for \(k\in\{\,0,\,1,\,\dots\,,\,k_{\star}-1\,\}\), by the definition of $k_{\star}$. With (\ref{Eq (Section 2) An estimate for induction}) in mind, we are able to obtain 
\begin{equation}\label{Eq (Section 2) Oscillation growth from De Giorgi}
\esssup_{B_{\rho_{k}}(x_{0})}\,\mleft\lvert{\mathcal G}_{\delta,\,\varepsilon}(\nabla u_{\varepsilon})\mright\rvert\le \mu_{k}\quad \textrm{for every }k\in\{\,0,\,1,\,\dots\,,\,k_{\star}\,\}.\end{equation}

In the first case, the conditions \(k_{\star}\in {\mathcal N}\) and \(\mu_{k_{\star}}>\delta\) enable us to apply Proposition \ref{Prop: Schauder estimate} for the open ball \(B_{\rho_{k_{\star}}/2}(x_{0})\) with \(\mu=\mu_{k_{\star}}\). In particular, the limit \(\Gamma_{2\delta,\,\varepsilon}(x_{0})\) exists and this limit satisfies
\begin{equation}\label{Eq (Section 2) Bound of mu-k in main theorem}
\mleft\lvert \Gamma_{2\delta,\,\varepsilon}(x_{0})\mright\rvert\le \mu_{k_{\star}}.
\end{equation}
and
\begin{equation}\label{Eq (Section 2) Campanato-type beta-growth in main theorem}
\fint_{B_{r}(x_{0})}\mleft\lvert{\mathcal G}_{2\delta,\,\varepsilon}(Du_{\varepsilon})-\Gamma_{2\delta,\,\varepsilon}(x_{0})\mright\rvert^{2}\,{\mathrm d}x\le \mleft(\frac{2r}{\rho_{k_{\star}}}\mright)^{2\beta}\mu_{k_{\star}}^{2}\quad \textrm{for all }r\in\mleft( 0,\,\frac{\rho_{k_{\star}}}{2}\mright],
\end{equation}When \(0<r\le \rho_{k_{\star}}/2\), we use (\ref{Eq (Section 2) Estimate on radius ratio}), (\ref{Eq (Section 2) Campanato-type beta-growth in main theorem}), and \(\alpha<\beta\) to get
\[\fint_{B_{r}(x_{0})}\mleft\lvert{\mathcal G}_{2\delta,\,\varepsilon}(Du_{\varepsilon})-\Gamma_{2\delta,\,\varepsilon}(x_{0})\mright\rvert^{2}\,{\mathrm d}x\le \mleft(\frac{2r}{\rho_{k_{\star}}}\mright)^{2\alpha}\mleft(\frac{\rho_{k_{\star}}}{\rho_{0}}\mright)^{2\alpha}\mu_{0}^{2}= 4^{\alpha}\mleft(\frac{r}{\rho_{0}}\mright)^{2\alpha}\mu_{0}^{2}.\]
When \(\rho_{k_{\star}}/2<r\le \rho_{0}\), there corresponds a unique integer \(k\in\{\,0,\,\dots\,,\,k_{\star}\,\}\) such that \(\rho_{k+1}<r\le \rho_{k}\). By (\ref{Eq (Section 2) Oscillation growth from De Giorgi})--(\ref{Eq (Section 2) Bound of mu-k in main theorem}), we compute
\begin{align*}
\fint_{B_{r}(x_{0})}\mleft\lvert{\mathcal G}_{2\delta,\,\varepsilon}(Du_{\varepsilon})-\Gamma_{2\delta,\,\varepsilon}(x_{0})\mright\rvert^{2}\,{\mathrm d}x&\le 2\mleft(\esssup_{B_{\rho_{k}}(x_{0})}\,\mleft\lvert {\mathcal G}_{\delta,\,\varepsilon}(Du_{\varepsilon})\mright\rvert^{2}+\mleft\lvert\Gamma_{2\delta,\,\varepsilon}(x_{0}) \mright\rvert^{2}\mright)\\&\le 4\mu_{k}^{2}\le 4 \mleft(\frac{\rho_{k}}{\rho_{0}}\mright)^{2\alpha}\mu_{0}^{2}\le 4 \mleft(\frac{4r}{\rho_{0}}\mright)^{2\alpha}\mu_{0}^{2}.
\end{align*}

In the second case, we recall (\ref{Eq (Section 2) Control of G-2delta-epsilon by G-delta-epsilon}) in Remark \ref{Rmk: Equivalence on boundedness} to notice \({\mathcal G}_{2\delta,\,\varepsilon}(Du_{\varepsilon})=0\) a.e. in \(B_{k_{\star}}(x_{0})\). Combining with (\ref{Eq (Section 2) Oscillation growth from De Giorgi}), we can easily check
\begin{equation}\label{Eq (Section 2) G-2delta-epsilon decay in digital}
\esssup_{B_{\rho_{k}}(x_{0})}\,\mleft\lvert{\mathcal G}_{2\delta,\,\varepsilon}(Du_{\varepsilon})\mright\rvert\le \mu_{k}\quad \textrm{for every }k\in{\mathbb Z}_{\ge 0},
\end{equation}
which clearly yields \(\Gamma_{2\delta,\,\varepsilon}(x_{0})=0\). For every \(r\in (0,\,\rho_{0}\rbrack\), there corresponds a unique integer \(k\in{\mathbb Z}_{\ge 0}\) such that \(\rho_{k+1}<r\le \rho_{k}\). By (\ref{Eq (Section 2) G-2delta-epsilon decay in digital}) and \(\kappa=4^{-\alpha}\), we have
\begin{align*}
\fint_{B_{r}(x_{0})}\mleft\lvert {\mathcal G}_{2\delta,\,\varepsilon}(Du_{\varepsilon})-\Gamma_{2\delta,\,\varepsilon}(x_{0})\mright\rvert^{2}\,{\mathrm d}x&=\fint_{B_{r}(x_{0})}\mleft\lvert {\mathcal G}_{2\delta,\,\varepsilon}(Du_{\varepsilon})\mright\rvert^{2}\,{\mathrm d}x\\&\le \esssup_{B_{\rho_{k}}(x_{0})}\,\mleft\lvert{\mathcal G}_{2\delta,\,\varepsilon}(Du_{\varepsilon})\mright\rvert^{2}\le \mu_{k}^{2}=4^{-2\alpha k}\mu_{0}^{2}\\&\le \mleft[4\cdot\mleft(\frac{r}{\rho_{0}}\mright)\mright]^{2\alpha}\mu_{0}^{2}=16^{\alpha}\mleft(\frac{r}{\rho_{0}}\mright)^{2\alpha}\mu_{0}^{2}.
\end{align*}

In the remaining case \({\mathcal N}=\emptyset\), it is clear that there holds \(\lvert E_{\rho_{k}/2,\,\mu_{k},\,\nu_{k}}\rvert \le (1-\nu)\lvert B_{\rho_{k}/2}(x_{0})\rvert\) for every \(k\in{\mathbb Z}_{\ge 0}\). Applying (\ref{Eq (Section 2) An estimate for induction}) and Proposition \ref{Prop: De Giorgi's truncation} with \((\rho,\,\mu)=(\rho_{k},\,\mu_{k})\,(k\in{\mathbb Z}_{\ge 0})\) repeatedly, we can easily check that 
\[\esssup_{B_{\rho_{k}}(x_{0})}\,\mleft\lvert{\mathcal G}_{\delta,\,\varepsilon}(Du_{\varepsilon})\mright\rvert\le \mu_{k}\quad \textrm{for every }k\in{\mathbb Z}_{\ge 0}.\]
In particular, (\ref{Eq (Section 2) G-2delta-epsilon decay in digital}) clearly follows from this and (\ref{Eq (Section 2) Control of G-2delta-epsilon by G-delta-epsilon}). Thus, the proof of (\ref{Eq (Section 2) Campanato-type alpha-growth estimate}) can be accomplished, similarly to the second case. 
In any possible cases, we conclude that $\Gamma_{2\delta,\,\varepsilon}(x_{0})$ exists and satisfies (\ref{Eq (Section 2) Campanato-type alpha-growth estimate}), as well as
\begin{equation}\label{Eq (Section 2) Boundedness on Gamma-2delta-epsilon}
\mleft\lvert \Gamma_{2\delta,\,\varepsilon}(x_{0})\mright\rvert\le \mu_{0}\quad \textrm{for all }x_{0}\in B_{\rho_{0}}(x_{\ast}),
\end{equation}
which immediately follows from (\ref{Eq (Section 2) Lipschitz bound}).

From (\ref{Eq (Section 2) Campanato-type alpha-growth estimate}) and (\ref{Eq (Section 2) Boundedness on Gamma-2delta-epsilon}), we would like to show
\begin{equation}\label{Eq (Section 2) Hoelder estimate on Gamma-2delta-epsilon}
\mleft\lvert \Gamma_{2\delta,\,\varepsilon}(x_{1})-\Gamma_{2\delta,\,\varepsilon}(x_{2})\mright\rvert\le \mleft(\frac{2^{2\alpha+2+n/2}}{\rho_{0}^{\alpha}}\mu_{0}\mright)\lvert x_{1}-x_{2}\rvert^{\alpha}
\end{equation}
for all \(x_{1},\,x_{2}\in B_{\rho_{0}/2}(x_{\ast})\). We prove (\ref{Eq (Section 2) Hoelder estimate on Gamma-2delta-epsilon}) by dividing into the two possible cases. When \(r\coloneqq \lvert x_{1}-x_{2}\rvert\le \rho_{0}/2\), we set a point \(x_{3}\coloneqq (x_{1}+x_{2})/2\in B_{\rho_{0}}(x_{\ast})\). By \(B_{r/2}(x_{3})\subset B_{r}(x_{j})\subset B_{\rho_{0}/2}(x_{j})\subset B_{\rho_{0}}(x_{\ast})\) for each \(j\in\{\,1,\,2\,\}\), we use (\ref{Eq (Section 2) Campanato-type alpha-growth estimate}) to obtain
\begin{align*}
&\mleft\lvert\Gamma_{2\delta,\,\varepsilon}(x_{1})-\Gamma_{2\delta,\,\varepsilon}(x_{2}) \mright\rvert^{2}\\&=\fint_{B_{r/2}(x_{3})}\mleft\lvert\Gamma_{2\delta,\,\varepsilon}(x_{1})-\Gamma_{2\delta,\,\varepsilon}(x_{2}) \mright\rvert^{2}\,{\mathrm d}x\\&\le 2\mleft(\fint_{B_{r/2}(x_{3})}\mleft\lvert{\mathcal G}_{2\delta,\,\varepsilon}(Du_{\varepsilon})-\Gamma_{2\delta,\,\varepsilon}(x_{1})\mright\rvert^{2}\,{\mathrm d}x +\fint_{B_{r/2}(x_{3})}\mleft\lvert{\mathcal G}_{2\delta,\,\varepsilon}(Du_{\varepsilon})-\Gamma_{2\delta,\,\varepsilon}(x_{2})\mright\rvert^{2}\,{\mathrm d}x\mright) \\& \le 2^{n+1}\mleft(\fint_{B_{r}(x_{1})}\mleft\lvert{\mathcal G}_{2\delta,\,\varepsilon}(Du_{\varepsilon})-\Gamma_{2\delta,\,\varepsilon}(x_{1})\mright\rvert^{2}\,{\mathrm d}x +\fint_{B_{r}(x_{2})}\mleft\lvert{\mathcal G}_{2\delta,\,\varepsilon}(Du_{\varepsilon})-\Gamma_{2\delta,\,\varepsilon}(x_{2})\mright\rvert^{2}\,{\mathrm d}x\mright)\\&\le 2^{n+4}\cdot 16^{\alpha}\mleft(\frac{r}{\rho_{0}}\mright)^{2\alpha}\mu_{0}^{2}=\mleft(\frac{2^{2\alpha+2+n/2}}{\rho_{0}^{\alpha}}\mu_{0}\mright)^{2}\lvert x_{1}-x_{2}\rvert^{2\alpha}, 
\end{align*}
which yields (\ref{Eq (Section 2) Hoelder estimate on Gamma-2delta-epsilon}). In the remaining case \(\lvert x_{1}-x_{2}\rvert>\rho_{0}/2\), we simply use (\ref{Eq (Section 2) Boundedness on Gamma-2delta-epsilon}) to get
\[\mleft\lvert\Gamma_{2\delta,\,\varepsilon}(x_{1})-\Gamma_{2\delta,\,\varepsilon}(x_{2})\mright\rvert\le 2\mu_{0}\le 2\cdot \frac{2^{\alpha}\lvert x_{1}-x_{2}\rvert^{\alpha}}{\rho_{0}^{\alpha}}\mu_{0},\]
which completes the proof of (\ref{Eq (Section 2) Hoelder estimate on Gamma-2delta-epsilon}).
Finally, since the mapping \(\Gamma_{2\delta,\,\varepsilon}\) is a Lebesgue representative of \({\mathcal G}_{2\delta,\,\varepsilon}(Du_{\varepsilon})\in L^{p}(\Omega;\,{\mathbb R}^{Nn})\), the claims (\ref{Eq (Section 2) Bound of G-2delta-epsilon})--(\ref{Eq (Section 2) Holder continuity of G-2delta-epsilon}) immediately follow from (\ref{Eq (Section 2) Boundedness on Gamma-2delta-epsilon})--(\ref{Eq (Section 2) Hoelder estimate on Gamma-2delta-epsilon}).
\end{proof}
We would like to conclude Section \ref{Section: Approximation} by giving the proof of our main Theorem \ref{Theorem: C1-regularity}.
\begin{proof}
Let $u\in W^{1,\,p}(\Omega;\,{\mathbb R}^{N})$ be a weak solution to (\ref{Eq (Section 1) main system}). We choose and fix $\{f_{\varepsilon}\}_{0<\varepsilon<1}\subset L^{q}(\Omega;\,{\mathbb R}^{Nn})$ enjoying (\ref{Eq (Section 2) Weak convergence of f}), and for each fixed $\varepsilon\in(0,\,1)$, we define the functional ${\mathcal F}_{\varepsilon}$ by (\ref{Eq (Section 2) energy functional: relaxed}). We set a function
\[u_{\varepsilon}\coloneqq \argmin\mleft\{v\in u+W_{0}^{1,\,p}(\Omega;\,{\mathbb R}^{N})\mathrel{}\middle|\mathrel{} {\mathcal F}_{\varepsilon}(v)\mright\}\quad \textrm{for every }\varepsilon\in(0,\,1),\]
which is a unique solution of the Dirichlet problem
\[\mleft\{\begin{array}{ccccc}
{\mathcal L}^{\varepsilon}u_{\varepsilon}& = & f_{\varepsilon} & \textrm{in} &\Omega,\\ u_{\varepsilon} & = & u & \textrm{on} & \partial\Omega.
\end{array} \mright.\]
By Proposition \ref{Prop: Convergence on weak system}, there exists a decreasing sequence $\{\varepsilon_{j}\}_{j=1}^{\infty}\subset(0,\,1)$ such that $\varepsilon_{j}\to 0$ and $u_{\varepsilon_{j}}\to u$ in $W^{1,\,p}(\Omega;\,{\mathbb R}^{N})$.
In particular, we may let (\ref{Eq (Section 2) a.e. convergence of Du}) hold by taking a subsequence if necessary. Also, we may choose finite constants $L>\lVert Du_{\varepsilon}\rVert_{L^{p}(\Omega)}$ and $F>\lVert f_{\varepsilon}\rVert_{L^{q}(\Omega)}$ that satisfy (\ref{Eq (Section 2) control of Du})--(\ref{Eq (Section 2) control of f}).

Fix $x_{\ast}\in\Omega$ and $\delta\in(0,\,1)$ arbitrarily. Then Theorem \ref{Theorem: A priori Hoelder estimate} enables us to apply the Arzel\`{a}--Ascoli theorem to the net $\{{\mathcal G}_{2\delta,\,\varepsilon}(Du_{\varepsilon})\}_{0<\varepsilon<\delta/4}\subset C^{\alpha}(B_{\rho_{0}/2}(x_{\ast});\,{\mathbb R}^{Nn})$. In particular, by taking a subsequence if necessary, we are able to conclude that ${\mathcal G}_{2\delta,\,\varepsilon_{j}}(Du_{\varepsilon_{j}})\in C^{\alpha}(B_{\rho_{0}/2}(x_{\ast});\,{\mathbb R}^{Nn})$ uniformly converges to a continuous mapping $v_{2\delta}\in C^{\alpha}(B_{\rho_{0}/2}(x_{\ast});\,{\mathbb R}^{Nn})$.
On the other hand, by (\ref{Eq (Section 2) a.e. convergence of Du}) we have already known that ${\mathcal G}_{2\delta,\,\varepsilon_{j}}(Du_{\varepsilon_{j}})\rightarrow {\mathcal G}_{2\delta}(Du)$ a.e. in $\Omega$. Hence, it follows that $v_{2\delta}={\mathcal G}_{2\delta}(Du)$ a.e. in $B_{\rho_{0}/2}(x_{\ast})$. Moreover, from (\ref{Eq (Section 2) Bound of G-2delta-epsilon})--(\ref{Eq (Section 2) Holder continuity of G-2delta-epsilon}), we conclude that ${\mathcal G}_{2\delta}(Du)$ satisfies (\ref{Eq (Section 1) local bounds of Jacobian multiplied with G-delta})--(\ref{Eq (Section 1) local continuity of Jacobian multiplied with G-delta}).

We have already proved ${\mathcal G}_{\delta}(Du)\in C^{0}(\Omega;\,{\mathbb R}^{Nn})$ for each fixed $\delta\in(0,\,1)$, from which we show continuity of $Du$. By the definition of ${\mathcal G}_{\delta}$, we can easily check that
\[\sup_{\Omega}\,\mleft\lvert{\mathcal G}_{\delta_{1}}(Du)-{\mathcal G}_{\delta_{2}}(Du)\mright\rvert\le \lvert \delta_{1}-\delta_{2}\rvert\quad \textrm{for all }\delta_{1},\,\delta_{2}\in(0,\,1).\]
In particular, the net $\{{\mathcal G}_{\delta}(Du)\}_{0<\delta<1}\subset C^{0}(\Omega;\,{\mathbb R}^{Nn})$ uniformly converges to a continuous mapping $v_{0}\in C^{0}(\Omega;\,{\mathbb R}^{Nn})$. On the other hand, it is clear that ${\mathcal G}_{\delta}(Du)\rightarrow Du$ a.e. in $\Omega$. Thus, we realize $v_{0}=Du$ a.e. in $\Omega$, and this completes the proof.
\end{proof}
\section{Weak formulations}\label{Section: Weak formulations}
\subsection{A basic weak formulation}\label{Subsect: Basic weak formulations}
We note that for each fixed $\varepsilon\in(0,\,1)$, the regularized system (\ref{Eq (Section 2) Approximated system}) is uniformly elliptic. To be precise, by direct computations, we can easily check that the relaxed density $E_{\varepsilon}(\xi)=g_{\varepsilon}(\lvert \xi\rvert^{2})$ admits a constant $C_{\varepsilon}\in(\gamma,\,\infty)$, depending on $\varepsilon$, such that
\begin{equation}\label{Eq (Section 3) Uniform ellipticity on approximated density}
\gamma\mleft(\varepsilon^{2}+\lvert \xi\rvert^{2}\mright)^{p/2-1}{\mathrm{id}_{Nn}}\leqslant {\mathcal B}_{\varepsilon}(\xi)=D^{2}E_{\varepsilon}(\xi) \leqslant C_{\varepsilon}\mleft(\varepsilon^{2}+\lvert \xi\rvert^{2}\mright)^{p/2-1}{\mathrm{id}_{Nn}}\quad \textrm{for all }\xi\in{\mathbb R}^{Nn}
\end{equation}
Hence, it is not restrictive to let $u_{\varepsilon}\in W_{\mathrm{loc}}^{1,\,\infty}(\Omega;\,{\mathbb R}^{N})\cap W_{\mathrm{loc}}^{2,\,2}(\Omega;\,{\mathbb R}^{N})$ (see also Remark \ref{Eq Higher W-2-2 and W-1-infty regularity} in Section \ref{Section: Appendix}). 
In particular, for each fixed $B_{\rho}(x_{0})\Subset\Omega$, we are able to deduce
\begin{equation}\label{Eq (Section 3) Weak formulation local}
\int_{B_{\rho}(x_{0})}\langle A_{\varepsilon}(Du_{\varepsilon})\mid D\phi\rangle\,{\mathrm{d}}x=\int_{B_{\rho}(x_{0})}\langle f_{\varepsilon}\mid \phi\rangle\,{\mathrm{d}}x
\end{equation}
for all $\phi\in W_{0}^{1,\,1}(B_{\rho}(x_{0});\,{\mathbb R}^{Nn})$.
Also, differentiating (\ref{Eq (Section 2) Approximated system}) by $x_{\alpha}$ and integrating by parts, we can deduce an weak formulation
\begin{equation}\label{Eq (Section 3) Weak formulation differentiated}
\int_{B_{\rho}(x_{0})}\mleft\langle D_{\alpha}\mleft(A_{\varepsilon}\mleft(Du_{\varepsilon}\mright) \mright)\mathrel{} \middle|\mathrel{} D\phi\mright\rangle\,{\mathrm{d}}x=-\int_{B_{\rho}(x_{0})}\langle f_{\varepsilon}\mid D_{\alpha}\phi\rangle\,{\mathrm{d}}x
\end{equation}
for all $\phi\in W_{0}^{1,\,2}(B_{\rho}(x_{0});\,{\mathbb R}^{N})$, $\alpha\in\{\,1,\,\dots\,,\,n\,\}$. Following \cite[Lemma 4.1]{BDGPdN} (see also \cite[Lemma 3.5]{T-scalar}), we would like to give a basic weak formulation that is fully used in our computations.
\begin{lemma}\label{Lemma: Weak formulation for V-epsilon}
Let $u_{\varepsilon}$ be a weak solution to the regularized system (\ref{Eq (Section 2) Approximated system}) with $0<\varepsilon<1$. Assume that a non-decreasing and non-negative function $\psi\in W_{\mathrm{loc}}^{1,\,\infty}(\lbrack0,\,\infty))$ is differentiable except at finitely many points.
For any non-negative function $\zeta\in C_{c}^{1}(B_{\rho}(x_{0}))$, we set 
\[\mleft\{\begin{array}{ccl}
J_{1} & \coloneqq &  \displaystyle\int_{B_{\rho}(x_{0})}\mleft\langle {\mathcal C}_{\varepsilon}(Du_{\varepsilon})\nabla V_{\varepsilon}\mathrel{} \middle|\mathrel{} \nabla\zeta\mright\rangle\psi(V_{\varepsilon})V_{\varepsilon}\,{\mathrm{d}}x,\\ J_{2} &\coloneqq & \displaystyle\int_{B_{\rho}(x_{0})}\mleft\langle {\mathcal C}_{\varepsilon}(Du_{\varepsilon})\nabla V_{\varepsilon}\mathrel{} \middle|\mathrel{} \nabla V_{\varepsilon}\mright\rangle\zeta \psi^{\prime}(V_{\varepsilon}) V_{\varepsilon}\,{\mathrm{d}}x,\\ J_{3} & \coloneqq &\displaystyle\int_{B_{\rho}(x_{0})}V_{\varepsilon}^{p-2}\mleft\lvert D^{2}u_{\varepsilon}\mright\rvert^{2}\zeta\psi(V_{\varepsilon})\,{\mathrm{d}}x,\\ J_{4}& \coloneqq & \displaystyle\int_{B_{\rho}(x_{0})}\lvert f_{\varepsilon}\rvert^{2}\psi(V_{\varepsilon})V_{\varepsilon}^{2-p}\zeta\,{\mathrm{d}}x,\\ J_{5}&\coloneqq &\displaystyle\int_{B_{\rho}(x_{0})}\lvert f_{\varepsilon}\rvert^{2} \psi^{\prime}(V_{\varepsilon})V_{\varepsilon}^{3-p}\zeta\,{\mathrm{d}}x,\\  J_{6}&\coloneqq & \displaystyle\int_{B_{\rho}(x_{0})}\lvert f_{\varepsilon}\rvert \lvert\nabla\zeta\rvert\psi(V_{\varepsilon}) V_{\varepsilon}\,{\mathrm{d}}x,
\end{array} \mright.\]
with
\[{\mathcal C}_{\varepsilon}(Du_{\varepsilon})\coloneqq g_{\varepsilon}^{\prime}(\lvert Du_{\varepsilon}\rvert^{2}){\mathrm{id}_{n}}+2g_{\varepsilon}^{\prime\prime}(\lvert Du_{\varepsilon}\rvert^{2})\sum_{i=1}^{N}\nabla u_{\varepsilon}^{i}\otimes\nabla u_{\varepsilon}^{i}.\]
Then we have
\begin{equation}\label{Eq (Section 3) Resulting weak formulation}
2J_{1}+J_{2}+\gamma J_{3}\le \frac{1}{\gamma}(nJ_{4}+J_{5})+2J_{6}.
\end{equation}
\end{lemma}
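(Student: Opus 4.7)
The plan is a Bernstein-type manipulation of the regularised system~\eqref{Eq (Section 3) Uniform ellipticity on approximated density}. Since~\eqref{Eq (Section 3) Uniform ellipticity on approximated density} is uniformly elliptic for each fixed $\varepsilon\in(0,1)$, difference-quotient methods give $u_\varepsilon\in W^{2,2}_{\mathrm{loc}}(\Omega;\mathbb{R}^N)\cap W^{1,\infty}_{\mathrm{loc}}(\Omega;\mathbb{R}^N)$, so the differentiated weak formulation~\eqref{Eq (Section 3) Weak formulation differentiated} is available for every $\phi\in W_0^{1,2}(B_\rho(x_0);\mathbb{R}^N)$. For each $\alpha\in\{1,\dots,n\}$ I would test~\eqref{Eq (Section 3) Weak formulation differentiated} against $\phi^i\coloneqq\zeta\,\psi(V_\varepsilon)\,D_\alpha u_\varepsilon^i$, then sum over $\alpha$. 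Using the chain rule for $D_\alpha A_\varepsilon(Du_\varepsilon)$ together with the pointwise identity $V_\varepsilon\,D_\alpha V_\varepsilon=\sum_{i,\gamma}(D_\gamma u_\varepsilon^i)(D_\alpha D_\gamma u_\varepsilon^i)$, the left-hand side splits into three pieces, according to whether the outer $D_\beta$ in $D_\beta\phi^i$ lands on $\zeta$, on $\psi(V_\varepsilon)$, or on $D_\alpha u_\varepsilon^i$. These reorganise into quadratic forms in $\mathcal{C}_\varepsilon(Du_\varepsilon)$ and yield the integrands of $J_1$ (from the $\nabla\zeta$-piece; the coefficient $2$ in front of $J_1$ arises from symmetrising a cross term between $\nabla\zeta$ and $\nabla V_\varepsilon$), of $J_2$ (from the $\psi'$-piece), and the Hessian contribution
\[
\int_{B_\rho(x_0)}\zeta\,\psi(V_\varepsilon)\bigl[g'_\varepsilon(|Du_\varepsilon|^2)\,|D^2 u_\varepsilon|^2+2g''_\varepsilon(|Du_\varepsilon|^2)\,V_\varepsilon^2\,|\nabla V_\varepsilon|^2\bigr]\,\mathrm{d}x.
\]

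The critical step is to bound this Hessian integrand from below by $\gamma\,V_\varepsilon^{p-2}\,|D^2 u_\varepsilon|^2$. By Cauchy--Schwarz applied to the identity above, $V_\varepsilon^2\,|\nabla V_\varepsilon|^2\le|Du_\varepsilon|^2\,|D^2 u_\varepsilon|^2$; substituting only where $g''_\varepsilon(|Du_\varepsilon|^2)\le 0$ (the direction is preserved because of the negative sign) yields the pointwise bound $\bigl[g'_\varepsilon(|Du_\varepsilon|^2)+2|Du_\varepsilon|^2\min\{g''_\varepsilon(|Du_\varepsilon|^2),0\}\bigr]\,|D^2 u_\varepsilon|^2$. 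Splitting $g_\varepsilon=g_{1,\varepsilon}+g_{p,\varepsilon}$ and applying~\eqref{Eq (Section 1) Ellipticity g_p} and~\eqref{Eq (Section 2) Degenerate ellipticity of g-1} with $\sigma=|Du_\varepsilon|^2$, $\tau=\varepsilon^2$ to each summand separately (using $\min\{a+b,0\}\ge\min\{a,0\}+\min\{b,0\}$) delivers the coefficient $\gamma\,V_\varepsilon^{p-2}$, and integration gives $\gamma J_3$. This is where the Uhlenbeck structure of $E_\varepsilon$ is essential.

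For the right-hand side of~\eqref{Eq (Section 3) Weak formulation differentiated}, each of the three pieces from expanding $D_\alpha[\zeta\,\psi(V_\varepsilon)\,D_\alpha u_\varepsilon^i]$ is controlled by Cauchy--Schwarz in the $(\alpha,i)$-indices, using $|Du_\varepsilon|\le V_\varepsilon$ and $|\Delta u_\varepsilon|^2\le n\,|D^2 u_\varepsilon|^2$. Young's inequality with weight $\gamma$ applied to the $\psi'$-piece and the Hessian piece absorbs parts of $J_2$ and of $\gamma J_3$ into the left-hand side, leaving the contribution $\tfrac{1}{\gamma}(nJ_4+J_5)$; the remaining $\nabla\zeta$-piece is bounded directly by $2J_6$. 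Collecting the surviving terms yields~\eqref{Eq (Section 3) Resulting weak formulation}.

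The main obstacle is the lower bound on the Hessian integrand: when $1<p<2$ the function $g''_{p,\varepsilon}$ may be negative and large in absolute value, and $2g''_\varepsilon V_\varepsilon^2|\nabla V_\varepsilon|^2$ cannot simply be discarded. The joint use of~\eqref{Eq (Section 1) Ellipticity g_p}, \eqref{Eq (Section 2) Degenerate ellipticity of g-1}, and the Cauchy--Schwarz identity (available only thanks to the Uhlenbeck symmetry of the density) is precisely what closes the estimate with constants independent of $\varepsilon$; without the spherically symmetric structure one could not replace $V_\varepsilon^2|\nabla V_\varepsilon|^2$ by a multiple of $|Du_\varepsilon|^2|D^2u_\varepsilon|^2$ in the sharp form required.
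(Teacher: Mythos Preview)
Your approach is essentially the same as the paper's: test~\eqref{Eq (Section 3) Weak formulation differentiated} with $\phi=\zeta\,\psi(V_\varepsilon)\,D_\alpha u_\varepsilon$, sum over $\alpha$, split the left side into $J_0+J_1+J_2$ (where $J_0$ is the Hessian piece), bound $J_0\ge\gamma J_3$ via Cauchy--Schwarz and the ellipticity assumptions~\eqref{Eq (Section 1) Ellipticity g_p}, \eqref{Eq (Section 2) Degenerate ellipticity of g-1}, and absorb on the right via Young. One small correction: the factor $2$ in front of $J_1$ does \emph{not} come from ``symmetrising a cross term between $\nabla\zeta$ and $\nabla V_\varepsilon$''. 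The $\nabla\zeta$-piece produces exactly the integrand of $J_1$ with coefficient $1$; there is no cross term to symmetrise. The factor $2$ appears only at the very end: after Young gives $|J_8|\le\tfrac{1}{2}J_2+\tfrac{1}{2\gamma}J_5$ and $|J_9|\le\tfrac{\gamma}{2}J_3+\tfrac{n}{2\gamma}J_4$, one obtains $J_1+\tfrac{1}{2}J_2+\tfrac{\gamma}{2}J_3\le \tfrac{1}{2\gamma}(nJ_4+J_5)+J_6$, and multiplying by $2$ yields~\eqref{Eq (Section 3) Resulting weak formulation}. Apart from this cosmetic slip, your argument is correct and matches the paper.
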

Before the proof, we mention that the matrix-valued functions ${\mathcal C}_{\varepsilon}(Du_{\varepsilon})$ satisfy
\begin{equation}\label{Eq (Section 3) Ellipticity of coefficients}
\gamma V_{\varepsilon}^{p-2} {\mathrm{id}_{n}} \leqslant {\mathcal C}_{\varepsilon}(Du_{\varepsilon})\leqslant \mleft(bV_{\varepsilon}^{-1}+3\Gamma V_{\varepsilon}^{p-2}\mright){\mathrm{id}_{n}}\quad \textrm{a.e. in }\Omega,
\end{equation}
which follows from (\ref{Eq (Section 1) Growth g-p-pprime})--(\ref{Eq (Section 1) Ellipticity g_p}) and (\ref{Eq (Section 2) Growth g-1-pprime})--(\ref{Eq (Section 2) Degenerate ellipticity of g-1}).
\begin{proof}
For notational simplicity we write $B\coloneqq B_{\rho}(x_{0})$.
For a fixed index \(\alpha\in\{\,1,\,\dots\,,\,n\,\}\), we test $\phi\coloneqq \zeta\psi(V_{\varepsilon})D_{{\alpha}}u_{\varepsilon}\in W_{0}^{1,\,2}(B;\,{\mathbb R}^{N})$ into (\ref{Eq (Section 3) Weak formulation differentiated}). Summing over $\alpha\in\{\,1,\,\dots\,,\,n\,\}$, we have
\begin{align*}
&J_{0}+J_{1}+J_{2}\\&=-\int_{B}\psi(V_{\varepsilon})\langle f_{\varepsilon}\mid Du_{\varepsilon}\nabla\zeta \rangle\,{\mathrm{d}}x-\int_{B}\zeta\psi^{\prime}(V_{\varepsilon})\langle f_{\varepsilon}\mid Du_{\varepsilon}\nabla V_{\varepsilon}\rangle\,{\mathrm{d}}x-\int_{B}\zeta\psi(V_{\varepsilon}) \sum_{i=1}^{N}\sum_{\alpha=1}^{n}f_{\varepsilon}^{i}\partial_{x_{\alpha}}^{2}u_{\varepsilon}^{i}\,{\mathrm{d}}x \\&\eqqcolon -(J_{7}+J_{8}+J_{9})
\end{align*}
with
\[J_{0}\coloneqq \int_{B}\mleft[g_{\varepsilon}^{\prime}(\lvert Du_{\varepsilon}\rvert^{2})\lvert D^{2}u_{\varepsilon}\rvert^{2}+\frac{1}{2}g_{\varepsilon}^{\prime\prime}(\lvert Du_{\varepsilon}\rvert^{2})\mleft\lvert \nabla \lvert Du_{\varepsilon}\rvert^{2}\mright\rvert^{2}\mright]\zeta\psi(V_{\varepsilon})\,{\mathrm{d}}x.\]
To estimate $J_{0}$, we recall $\mleft\lvert \nabla \lvert Du_{\varepsilon}\rvert^{2}\mright\rvert^{2}\le 4\lvert D^{2}u_{\varepsilon}\rvert^{2}\lvert Du_{\varepsilon}\rvert^{2}$,
which follows from the Cauchy--Schwarz inequality. By this and (\ref{Eq (Section 1) Ellipticity g_p}), we are able to deduce
\[g_{p,\,\varepsilon}^{\prime}(\lvert Du_{\varepsilon}\rvert^{2})\lvert D^{2}u_{\varepsilon}\rvert^{2}+\frac{1}{2}g_{p,\,\varepsilon}^{\prime\prime}(\lvert Du_{\varepsilon}\rvert^{2})\mleft\lvert \nabla \lvert Du_{\varepsilon}\rvert^{2}\mright\rvert^{2}\ge \gamma V_{\varepsilon}^{p-2}\mleft\lvert D^{2}u_{\varepsilon}\mright\rvert^{2}\]
a.e. in $\Omega$.
Similarly, we use (\ref{Eq (Section 2) Degenerate ellipticity of g-1}) to get
\[g_{1,\,\varepsilon}^{\prime}(\lvert Du_{\varepsilon}\rvert^{2})\lvert D^{2}u_{\varepsilon}\rvert^{2}+\frac{1}{2}g_{1,\,\varepsilon}^{\prime\prime}(\lvert Du_{\varepsilon}\rvert^{2})\mleft\lvert \nabla \lvert Du_{\varepsilon}\rvert^{2}\mright\rvert^{2}\ge 0\]
a.e. in $\Omega$.
These inequalities imply $J_{0}\ge \gamma J_{3}$.
Clearly, $\lvert J_{7}\rvert\le J_{6}$ holds.
To compute $J_{8},\,J_{9}$, we recall (\ref{Eq (Section 3) Ellipticity of coefficients}). Then by Young's inequality we can compute
\[\lvert J_{8}\rvert\le \frac{\gamma}{2}\int_{B}V_{\varepsilon}^{p-1}\lvert\nabla V_{\varepsilon}\rvert^{2}\zeta\psi^{\prime}(V_{\varepsilon})\,{\mathrm{d}}x+\frac{1}{2\gamma}\int_{B}\lvert f_{\varepsilon}\rvert^{2} V_{\varepsilon}^{3-p}\zeta\psi^{\prime}(V_{\varepsilon})\,{\mathrm{d}}x\le \frac{1}{2}J_{2}+\frac{1}{2\gamma}J_{5},\]
and 
\begin{align*}
\lvert J_{9}\rvert&\le \sqrt{n}\int_{B}\lvert f_{\varepsilon}\rvert\zeta\psi(V_{\varepsilon})\mleft\lvert D^{2}u_{\varepsilon} \mright\rvert\,{\mathrm{d}}x\\&\le \frac{\gamma}{2}\int_{B}V_{\varepsilon}^{p-1}\mleft\lvert D^{2}u_{\varepsilon} \mright\rvert^{2}\zeta\psi(V_{\varepsilon})\,{\mathrm{d}}x+\frac{n}{2\gamma}\int_{B}\lvert f_{\varepsilon}\rvert^{2}\psi(V_{\varepsilon})V_{\varepsilon}^{2-p}\zeta\,{\mathrm{d}}x\\&=\frac{\gamma}{2}J_{3}+\frac{n}{2\gamma}J_{4}.
\end{align*}
As a result, we get
\[J_{1}+J_{2}+\gamma J_{3}\le \frac{1}{2}J_{2}+\frac{\gamma}{2}J_{3}+\frac{n}{2\gamma}J_{4}+\frac{1}{2\gamma}J_{5}+J_{6},\]
from which (\ref{Eq (Section 3) Resulting weak formulation}) immediately follows.
\end{proof}

\subsection{De Giorgi's truncation and Caccioppoli-type estimates}\label{Subsect: De Giorgi's truncation}
In the resulting weak formulation (\ref{Eq (Section 3) Resulting weak formulation}), we may discard two non-negative integrals $J_{2},\,J_{3}$. Then, (\ref{Eq (Section 3) Resulting weak formulation}) implies that the scalar function $V_{\varepsilon}$ is a subsolution to an elliptic problem.
By appealing to convex composition of $V_{\varepsilon}$, we would like to prove that another scalar function $U_{\delta,\,\varepsilon}$ defined by
\begin{equation}\label{Eq (Section 3) U-delta-epsilon}
U_{\delta,\,\varepsilon}\coloneqq (V_{\varepsilon}-\delta)_{+}^{2}\in L_{\mathrm{loc}}^{\infty}(\Omega)\cap W_{\mathrm{loc}}^{1,\,2}(\Omega)
\end{equation}
is also a subsolution to a certain \textit{uniformly} elliptic problem. This is possible since this function is supported in a place $\{\,V_{\varepsilon}>\delta\,\}$, where the system (\ref{Eq (Section 2) Approximated system}) becomes uniformly elliptic. Thus, as in Lemma \ref{Lemma: Subsolution} below, we are able to obtain Caccioppoli-type estimates for $U_{\delta,\,\varepsilon}$.
\begin{lemma}\label{Lemma: Subsolution}
Let $u_{\varepsilon}$ be a weak solution to (\ref{Eq (Section 2) Approximated system}) with $0<\varepsilon<1$. Assume that positive numbers $\delta$, $\mu$, $M$, and an open ball $B_{\rho}(x_{0})$ satisfy (\ref{Eq (Section 2) G-delta-epsilon bound}). Then, the scalar function $U_{\delta,\,\varepsilon}$ defined by (\ref{Eq (Section 3) U-delta-epsilon}) satisfies
\begin{equation}\label{Eq (Section 3) U-delta-epsilon is subsol}
\int_{B_{\rho}(x_{0})}\mleft\langle {\mathcal A}_{\delta,\,\varepsilon}(Du_{\varepsilon})\nabla U_{\delta,\,\varepsilon}\mathrel{}\middle|\mathrel{}\nabla\zeta \mright\rangle\,{\mathrm{d}}x\le C_{0}\mleft[\int_{B_{\rho}(x_{0})}\lvert f_{\varepsilon}\rvert^{2}\zeta\,{\mathrm{d}}x+\int_{B_{\rho}(x_{0})}\lvert f_{\varepsilon}\rvert\lvert\nabla \zeta\rvert\,{\mathrm{d}}x\mright]
\end{equation}
for all non-negative function $\zeta\in C_{c}^{1}(B_{\rho}(x_{0}))$, where ${\mathcal A}_{\delta,\,\varepsilon}(Du_{\varepsilon})$ is an $n\times n$ matrix-valued function satisfying
\begin{equation}\label{Eq (Section 3) Uniform ellipticity of A-delta-epsilon}
\gamma_{\ast}{\mathrm{id}_{n}} \leqslant {\mathcal A}_{\delta,\,\varepsilon}(Du_{\varepsilon})\leqslant \Gamma_{\ast}{\mathrm{id}_{n}}\quad \textrm{a.e. in }B_{\rho}(x_{0}).
\end{equation}
The constants $C_{0}\in(0,\,\infty)$ and $0<\gamma_{\ast}\le \Gamma_{\ast}<\infty$ depend at most on $b$, $n$, $p$, $\gamma$, $\Gamma$, $M$, and $\delta$. 
In particular, we have
\begin{equation}\label{Eq (Section 3) Caccioppoli estimate}
\int_{B_{\rho}(x_{0})}\lvert \nabla[\eta (U_{\delta,\,\varepsilon}-k)_{+}]\rvert^{2}\,{\mathrm{d}}x\le C\mleft[\int_{B_{\rho}(x_{0})}\lvert \nabla \eta\rvert^{2}(U_{\delta,\,\varepsilon}-k)_{+}^{2}\,{\mathrm{d}}x+\mu^{2}\int_{A_{k,\,\rho}(x_{0})}\lvert f_{\varepsilon}\rvert^{2}\eta^{2}\,{\mathrm{d}}x \mright]
\end{equation}
for all $k\in(0,\,\infty)$ and for any non-negarive function $\eta\in C_{c}^{1}(B_{\rho}(x_{0}))$. Here $A_{k,\,\rho}(x_{0})\coloneqq \{ x\in B_{\rho}(x_{0})\mid U_{\delta,\,\varepsilon}(x)>k\}$, and the constant $C\in(0,\,\infty)$ depends on $\gamma_{\ast}$, $\Gamma_{\ast}$, and $C_{0}$.
\end{lemma}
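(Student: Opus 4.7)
The plan is to obtain (3.6) by invoking the weak formulation (3.4) of Lemma 3.1 with a carefully chosen $\psi$, and then to deduce the Caccioppoli-type inequality (3.8) by a standard test-function argument. The key choice is
\[\psi(t)\coloneqq \frac{(t-\delta)_{+}}{t}\quad (t>0),\qquad \psi(0)\coloneqq 0,\]
which is non-negative, non-decreasing, lies in $W^{1,\infty}_{\mathrm{loc}}([0,\infty))$, and fails to be differentiable only at $t=\delta$. The decisive identity is $\psi(V_{\varepsilon})V_{\varepsilon}=(V_{\varepsilon}-\delta)_{+}$, so by the usual chain rule for truncations in Sobolev spaces
\[\psi(V_{\varepsilon})V_{\varepsilon}\nabla V_{\varepsilon}=(V_{\varepsilon}-\delta)_{+}\nabla V_{\varepsilon}=\tfrac{1}{2}\nabla U_{\delta,\varepsilon}\quad\text{a.e.\ in }B_{\rho}(x_{0}),\]
(both sides vanishing a.e.\ on $\{V_{\varepsilon}\le\delta\}$). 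Thus $2J_{1}=\int \langle\mathcal{C}_{\varepsilon}(Du_{\varepsilon})\nabla U_{\delta,\varepsilon}\mid\nabla\zeta\rangle\,{\mathrm d}x$. Discarding the non-negative terms $J_{2},J_{3}$ in (3.4) gives $2J_{1}\le\tfrac{1}{\gamma}(nJ_{4}+J_{5})+2J_{6}$.

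Next, I would estimate $J_{4},J_{5},J_{6}$. Since the supports of $\psi(V_{\varepsilon})$ and $\psi'(V_{\varepsilon})$ lie in $\{V_{\varepsilon}>\delta\}$, assumption (3.3) (which via Remark 2.3 is equivalent to $V_{\varepsilon}\le\delta+\mu\le M$) yields $\delta<V_{\varepsilon}\le M$ on these supports. Consequently $\psi(V_{\varepsilon})V_{\varepsilon}^{\,2-p}$ and $\psi'(V_{\varepsilon})V_{\varepsilon}^{\,3-p}=\delta V_{\varepsilon}^{\,1-p}$ are bounded by a constant depending only on $b,p,M,\delta$, while $\psi(V_{\varepsilon})V_{\varepsilon}\le M$. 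Collecting these bounds yields (3.6) with $C_{0}=C_{0}(b,n,p,\gamma,\Gamma,M,\delta)$. I would then take $\mathcal{A}_{\delta,\varepsilon}(Du_{\varepsilon})\coloneqq \mathcal{C}_{\varepsilon}(Du_{\varepsilon})$ on $\{V_{\varepsilon}>\delta\}$ and set $\mathcal{A}_{\delta,\varepsilon}(Du_{\varepsilon})\coloneqq \gamma_{\ast}\mathrm{id}_{n}$ on the complementary set; since $\nabla U_{\delta,\varepsilon}$ vanishes a.e.\ there, this does not change the left hand side of (3.6). The uniform ellipticity (3.7) then follows from (3.5): on the relevant set we have $\gamma V_{\varepsilon}^{\,p-2}\ge \gamma\min\{\delta^{p-2},M^{p-2}\}$ and $bV_{\varepsilon}^{-1}+3\Gamma V_{\varepsilon}^{\,p-2}\le b\delta^{-1}+3\Gamma\max\{\delta^{p-2},M^{p-2}\}$.

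Finally, (3.8) would be deduced by testing (3.6) against $\zeta=\eta^{2}(U_{\delta,\varepsilon}-k)_{+}$. The product rule gives $\nabla\zeta=2\eta(U_{\delta,\varepsilon}-k)_{+}\nabla\eta+\eta^{2}\chi_{\{U_{\delta,\varepsilon}>k\}}\nabla U_{\delta,\varepsilon}$. The ellipticity lower bound in (3.7) produces $\gamma_{\ast}\int\eta^{2}\chi_{\{U_{\delta,\varepsilon}>k\}}|\nabla U_{\delta,\varepsilon}|^{2}\,{\mathrm d}x$ on the left, while two applications of Young's inequality --- one to the cross term $2\Gamma_{\ast}\int\eta(U_{\delta,\varepsilon}-k)_{+}|\nabla U_{\delta,\varepsilon}||\nabla\eta|\,{\mathrm d}x$ and one to the term containing $|f_{\varepsilon}||\nabla U_{\delta,\varepsilon}|$ on the right --- allow me to absorb all $|\nabla U_{\delta,\varepsilon}|^{2}$ contributions into the left hand side. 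The bound $(U_{\delta,\varepsilon}-k)_{+}\le U_{\delta,\varepsilon}\le\mu^{2}$, inherited from (3.3), provides the factor $\mu^{2}$ in front of $\int_{A_{k,\rho}(x_{0})}|f_{\varepsilon}|^{2}\eta^{2}\,{\mathrm d}x$, and the remaining contribution of the cross term is exactly $\int|\nabla\eta|^{2}(U_{\delta,\varepsilon}-k)_{+}^{2}\,{\mathrm d}x$. The only real subtlety in the whole argument is the choice of $\psi$: it must be simultaneously admissible in Lemma 3.1, deliver the chain-rule identity producing $\nabla U_{\delta,\varepsilon}$ in $J_{1}$, and have its support confined to the uniformly elliptic regime $\{V_{\varepsilon}>\delta\}$. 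Once $\psi$ is fixed, the derivation of (3.6) and the subsequent Caccioppoli step are routine.
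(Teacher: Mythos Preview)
Your argument is correct and follows the same overall scheme as the paper: apply Lemma~3.1 with a truncation-type $\psi$, discard $J_{2},J_{3}$, bound $J_{4},J_{5},J_{6}$ using $\delta<V_{\varepsilon}\le M$ on the support, and then run the standard Caccioppoli absorption with $\zeta=\eta^{2}(U_{\delta,\varepsilon}-k)_{+}$. The one genuine difference is the choice of $\psi$: the paper takes $\psi(t)=(t-\delta)_{+}$, which produces $J_{1}=\tfrac{1}{2}\int\langle V_{\varepsilon}\,\mathcal{C}_{\varepsilon}(Du_{\varepsilon})\nabla U_{\delta,\varepsilon}\mid\nabla\zeta\rangle\,\mathrm{d}x$ and hence $\mathcal{A}_{\delta,\varepsilon}=V_{\varepsilon}\,\mathcal{C}_{\varepsilon}$ on $\{V_{\varepsilon}>\delta\}$ (with ellipticity constants $\gamma_{\ast}=\min\{1,\gamma\delta^{p-1}\}$, $\Gamma_{\ast}=\max\{1,b+3\Gamma M^{p-1}\}$), whereas your choice $\psi(t)=(t-\delta)_{+}/t$ absorbs the extra factor of $V_{\varepsilon}$ and yields $\mathcal{A}_{\delta,\varepsilon}=\mathcal{C}_{\varepsilon}$ directly. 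Both are admissible in Lemma~3.1 and both give uniformly elliptic $\mathcal{A}_{\delta,\varepsilon}$ with constants depending only on $b,p,\gamma,\Gamma,M,\delta$; the paper's version has the minor advantage that the resulting $\gamma_{\ast},\Gamma_{\ast}$ are independent of whether $p\lessgtr 2$, while yours avoids introducing the auxiliary factor $V_{\varepsilon}$ in the coefficient. Either route leads to (3.6)--(3.8) without difficulty.
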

\begin{proof}
We choose $\psi(t)\coloneqq (t-\delta)_{+}$ for $t\in\lbrack 0,\,\infty)$, so that $\psi(V_{\varepsilon})=\lvert{\mathcal G}_{\delta,\,\varepsilon}(Du_{\varepsilon})\rvert$ holds. From (\ref{Eq (Section 3) Resulting weak formulation}), we will deduce (\ref{Eq (Section 3) U-delta-epsilon is subsol}).
To compute $J_{1}$, we note that $\psi(V_{\varepsilon})$ vanishes when $V_{\varepsilon}\le \delta$, and that the identity $\nabla U_{\delta,\,\varepsilon}=2\psi(V_{\varepsilon})\nabla V_{\varepsilon}$ holds. From these we obtain
\[J_{1}=\int_{B_{\rho}(x_{0})}\mleft\langle V_{\varepsilon} {\mathcal C}_{\varepsilon}(Du_{\varepsilon})\psi(V_{\varepsilon})\nabla V_{\varepsilon}\mathrel{} \middle|\mathrel{} \nabla\zeta\mright\rangle\,{\mathrm{d}}x=\frac{1}{2}\int_{B_{\rho}(x_{0})}\mleft\langle {\mathcal A}_{\delta,\,\varepsilon}(Du_{\varepsilon})\nabla U_{\delta,\,\varepsilon}\mathrel{}\middle|\mathrel{}\nabla\zeta \mright\rangle\,{\mathrm{d}}x\]
with
\[{\mathcal A}_{\delta,\,\varepsilon}(Du_{\varepsilon})\coloneqq\mleft\{\begin{array}{cc}
V_{\varepsilon}{\mathcal C}_{\delta,\,\varepsilon}(Du_{\varepsilon}) & (\textrm{if }V_{\varepsilon}>\delta),\\ \mathrm{id}_{n} & (\textrm{otherwise}). \end{array} \mright.\]
By (\ref{Eq (Section 3) Ellipticity of coefficients}), the coefficient matrix ${\mathcal A}_{\delta,\,\varepsilon}(Du_{\varepsilon})$ satisfies (\ref{Eq (Section 3) Uniform ellipticity of A-delta-epsilon}) with \[\gamma_{\ast}=\min\mleft\{\,1,\,\gamma\delta^{p-1}\mright\},\quad\Gamma_{\ast}=\max\mleft\{\,1,\,b+3\Gamma M^{p-1}\,\mright\}.\]
By discarding positive integrals $J_{2}$, $J_{3}$, we compute
\begin{align*}
J_{1}&\le \frac{1}{2\gamma}\int_{B_{\rho}(x_{0})}\lvert f_{\varepsilon}\rvert^{2}\frac{n\psi(V_{\varepsilon})V_{\varepsilon}+\psi^{\prime}(V_{\varepsilon})V_{\varepsilon}^{2}}{V_{\varepsilon}^{p-1}}\zeta\,{\mathrm{d}}x+ \int_{B_{\rho}(x_{0})}\lvert f_{\varepsilon}\rvert \lvert\nabla\zeta\rvert\psi(V_{\varepsilon}) V_{\varepsilon}\,{\mathrm{d}}x\\&\le \frac{1}{2\gamma}\int_{B_{\rho}(x_{0})}\lvert f_{\varepsilon}\rvert^{2}\frac{n\mu M+M^{2}}{\delta^{p-1}}\zeta\,{\mathrm{d}}x+\mu M\int_{B_{\rho}(x_{0})}\lvert f_{\varepsilon}\rvert\lvert \nabla\zeta \rvert\,{\mathrm{d}}x,
\end{align*}
from which (\ref{Eq (Section 3) U-delta-epsilon is subsol}) immediately follows.

The estimate (\ref{Eq (Section 3) Caccioppoli estimate}) is easy to deduce by choosing $\zeta\coloneqq \eta^{2}(U_{\delta,\,\varepsilon}-k)_{+}\in W_{0}^{1,\,2}(B_{\rho}(x_{0}))$ into (\ref{Eq (Section 3) U-delta-epsilon is subsol}), and making standard absorbing arguments (see \cite[Lemma 3.15]{T-scalar} for detailed computations). We should mention that this test function $\zeta$ is admissible by approximation, since it is compactly supported in $B_{\rho}(x_{0})$.
\end{proof}
The Caccioppoli-type inequality (\ref{Eq (Section 3) Caccioppoli estimate}) implies that the scalar function $U_{\delta,\,\varepsilon}$ is in a certain De Giorgi class. From this fact, we can conclude two oscillation lemmata for the scalar function $U_{\delta,\,\varepsilon}=\lvert {\mathcal G}_{\delta,\,\varepsilon}(\nabla u_{\varepsilon})\rvert^{2}$ (Lemmata \ref{Lemma: Oscillation lemma for U-epsilon-delta 1}--\ref{Lemma: Oscillation lemma for U-epsilon-delta 2}).
\begin{lemma}\label{Lemma: Oscillation lemma for U-epsilon-delta 1}
Let $u_{\varepsilon}$ be a weak solution to (\ref{Eq (Section 2) Approximated system}). Assume that positive numbers $\delta$, $\mu$, $F$, $M$, and an open ball $B_{\rho}(x_{0})$ satisfy (\ref{Eq (Section 2) control of f}) and (\ref{Eq (Section 2) G-delta-epsilon bound}). Then there exists a number ${\hat \nu}\in(0,\,1)$, depending at most on $b$, $n$, $N$, $p$, $q$, $\gamma$, $\Gamma$, $F$, $M$, and $\delta$, such that if there holds
\begin{equation}\label{Eq (Section 3) measure condition on De Giorgi lemma}
\frac{\lvert \{x\in B_{\rho/2}(x_{0})\mid U_{\delta,\,\varepsilon}(x)>(1-\theta)\mu^{2}\}\rvert}{\lvert B_{\rho/2}(x_{0})\rvert}\le {\hat \nu}
\end{equation}
for some $\theta\in(0,\,1)$, then we have either
\[\mu^{2}<\frac{\rho^{\beta}}{\theta}\quad \textrm{or}\quad \esssup_{B_{\rho/4}(x_{0})}\,U_{\delta,\,\varepsilon}\le \mleft(1-\frac{\theta}{2}\mright)\mu^{2}.\]
\end{lemma}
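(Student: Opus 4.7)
The plan is to apply De Giorgi's non-linear level-set iteration to the scalar subsolution $U_{\delta,\,\varepsilon}$, exploiting the Caccioppoli-type inequality (\ref{Eq (Section 3) Caccioppoli estimate}) from Lemma \ref{Lemma: Subsolution}. Hypothesis (\ref{Eq (Section 2) G-delta-epsilon bound}) forces $U_{\delta,\,\varepsilon}\le \mu^{2}$ a.e.\ in $B_{\rho}(x_{0})$, so the excess $W\coloneqq (U_{\delta,\,\varepsilon}-(1-\theta)\mu^{2})_{+}$ takes values in $\lbrack 0,\,\theta\mu^{2}\rbrack$.

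To prevent $\theta$ from entering the iteration constants (and hence the threshold $\hat\nu$), I rescale and work with the normalized excess ${\tilde W}\coloneqq W/(\theta\mu^{2})\in\lbrack 0,\,1\rbrack$. Since $W$ inherits the subsolution property (truncation from below preserves it), applying (\ref{Eq (Section 3) Caccioppoli estimate}) to $W$ and rescaling produces a $\theta$-free Caccioppoli inequality for ${\tilde W}$ in which the forcing carries an explicit factor $1/(\theta^{2}\mu^{2})$. I then set up the standard geometric sequences: shrinking radii $r_{j}\coloneqq \rho/4+\rho/2^{j+2}$ with $r_{0}=\rho/2$ and $r_{j}\searrow \rho/4$, the now $\theta$-independent levels ${\tilde k}_{j}\coloneqq 1/2-2^{-j-2}$ (so that ${\tilde k}_{0}=1/4$ and ${\tilde k}_{j}\nearrow 1/2$), and Lipschitz cut-offs $\eta_{j}\in C_{c}^{1}(B_{r_{j}}(x_{0}))$ with $\eta_{j}\equiv 1$ on $B_{r_{j+1}}(x_{0})$ and $\lvert \nabla\eta_{j}\rvert\le C\cdot 2^{j}/\rho$.

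Combining the rescaled Caccioppoli inequality at $(k,\,\eta)=({\tilde k}_{j},\,\eta_{j})$ with the Sobolev--Poincar\'e embedding $W_{0}^{1,\,2}\hookrightarrow L^{2n/(n-2)}$ (with the usual modification for $n=2$), and bounding the forcing via H\"older's inequality using $\lVert f_{\varepsilon}\rVert_{L^{q}}\le F$ with $q>n$, one obtains, after normalizing ${\tilde Y}_{j}\coloneqq \lvert\{{\tilde W}>{\tilde k}_{j}\}\cap B_{r_{j}}(x_{0})\rvert/\lvert B_{r_{j}}(x_{0})\rvert$, a recurrence of the form
\[
{\tilde Y}_{j+1}\le C_{\sharp}\,16^{j}\mleft[{\tilde Y}_{j}^{1+2/n}+\frac{F^{2}\rho^{2\beta}}{\theta^{2}\mu^{2}}\,{\tilde Y}_{j}^{1+2/n-2/q}\mright],
\]
where $\beta=1-n/q$ arises from H\"older applied to $\lvert f_{\varepsilon}\rvert^{2}$ on the level sets (together with the volume scaling $\lvert B_{r_{j}}\rvert\sim \rho^{n}$). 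Under the complementary alternative $\mu^{2}\ge \rho^{\beta}/\theta$, which gives $\rho^{2\beta}\le \theta^{2}\mu^{4}$, the forcing coefficient collapses to $F^{2}\rho^{2\beta}/(\theta^{2}\mu^{2})\le F^{2}\mu^{2}\le F^{2}M^{2}$, uniformly in $\theta$; otherwise the first alternative is in force and the lemma is trivial.

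Finally, since $\sigma\coloneqq 2/n-2/q>0$ and ${\tilde Y}_{j}\le 1$, both non-linear terms are absorbed into a single recurrence ${\tilde Y}_{j+1}\le C^{\ast}\,b^{j}\,{\tilde Y}_{j}^{1+\sigma}$ with constants depending only on the listed parameters. The classical De Giorgi iteration lemma then supplies a universal threshold ${\hat \nu}\in(0,\,1)$ such that ${\tilde Y}_{0}\le {\hat \nu}$ forces ${\tilde Y}_{j}\to 0$, hence ${\tilde W}\le 1/2$ a.e.\ on $B_{\rho/4}(x_{0})$, i.e.\ $U_{\delta,\,\varepsilon}\le (1-\theta/2)\mu^{2}$; the measure hypothesis (\ref{Eq (Section 3) measure condition on De Giorgi lemma}) supplies this initial smallness through the inclusion $\{{\tilde W}>{\tilde k}_{0}\}\subset \{U_{\delta,\,\varepsilon}>(1-\theta)\mu^{2}\}$. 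The main technical obstacle will be a careful verification that the rescaling to ${\tilde W}$ genuinely removes the $\theta$-dependence from both the Caccioppoli constant and the Sobolev step, and that the sub-critical exponent $\beta=1-n/q$ reproduces precisely the threshold $\rho^{\beta}/\theta$ in the first alternative.
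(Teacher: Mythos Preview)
Your proposal is correct and follows essentially the same route the paper defers to its references (DiBenedetto, \cite{BDGPdN}, \cite{T-scalar}): De Giorgi's level-set iteration driven by the Caccioppoli estimate (\ref{Eq (Section 3) Caccioppoli estimate}), combined with the Sobolev embedding and H\"older's inequality on the forcing, culminating in Lemma \ref{Lemma: Geometric convergence lemma}. Your normalization ${\tilde W}=W/(\theta\mu^{2})$ is a clean device to make the $\theta$-independence of $\hat\nu$ transparent, and the identification of the dichotomy threshold $\rho^{\beta}/\theta$ through the collapse $F^{2}\rho^{2\beta}/(\theta^{2}\mu^{2})\le F^{2}M^{2}$ is exactly the mechanism the referenced proofs use; the paper itself omits the details.

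Two minor remarks that do not affect correctness. First, the phrase ``$W$ inherits the subsolution property'' is slightly informal: what you actually use is that (\ref{Eq (Section 3) Caccioppoli estimate}) already holds at every level $k>0$, so applying it at $k_{j}=(1-\theta)\mu^{2}+{\tilde k}_{j}\theta\mu^{2}$ and dividing by $(\theta\mu^{2})^{2}$ yields your rescaled estimate directly, without an intermediate subsolution claim for $W$. Second, for $q=\infty$ the paper sets $\beta=\hat\beta_{0}\in(0,1)$ rather than $1-n/q$; your argument still goes through because $\rho\le 1$ (from (\ref{Eq (Section 2) G-delta-epsilon bound})) gives $\rho^{2}\le \rho^{2\hat\beta_{0}}$, so the forcing bound $F^{2}\rho^{2}/(\theta^{2}\mu^{2})\le F^{2}M^{2}$ follows from the alternative $\mu^{2}\ge \rho^{\hat\beta_{0}}/\theta$ just as well.
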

\begin{lemma}\label{Lemma: Oscillation lemma for U-epsilon-delta 2}
Under the assumptions of Proposition \ref{Prop: De Giorgi's truncation}, for every $i_{\star}\in {\mathbb N}$, we have either 
\[\mu^{2}<\frac{2^{i_{\star}}\rho^{\beta}}{\nu}\quad \textrm{or}\quad \frac{\mleft\lvert \mleft\{ x\in B_{\rho/2}(x_{0})\mathrel{}\middle|\mathrel{}U_{\delta,\,\varepsilon}(x)> \mleft(1-2^{-i_{\star}}\nu\mright)\mu^{2}\mright\} \mright\rvert}{\lvert B_{\rho/2}(x_{0})\rvert}<\frac{C_{\dagger}}{\nu\sqrt{i_{\star}}}.\]
Here the constant $C_{\dagger}\in(0,\,\infty)$ depends at most on $b$, $n$, $N$, $p$, $q$, $\gamma$, $\Gamma$, $F$, $M$, and $\delta$.
\end{lemma}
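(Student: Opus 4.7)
The plan is to run a De~Giorgi levelset iteration on the scalar subsolution $U_{\delta,\varepsilon}$ made available by Lemma~\ref{Lemma: Subsolution}: I would pair the Caccioppoli inequality (\ref{Eq (Section 3) Caccioppoli estimate}) with the classical De~Giorgi crossing lemma across $i_{\star}$ geometrically spaced levels, and then telescope. The presentation should parallel \cite[\S 7]{BDGPdN} and \cite[\S 4.2]{T-scalar}, and uses only the hypotheses of Proposition~\ref{Prop: De Giorgi's truncation}, which in particular give $U_{\delta,\varepsilon}\le\mu^{2}$ on $B_{\rho}(x_{0})$ and the initial density bound described below.

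First I would fix the levels $k_{j}\coloneqq(1-2^{-j}\nu)\mu^{2}$ for $j\in\{0,\dots,i_{\star}\}$ and the nested super-level sets $A_{j}\coloneqq\{x\in B_{\rho/2}(x_{0})\mid U_{\delta,\varepsilon}(x)>k_{j}\}$, so that $k_{j+1}-k_{j}=2^{-(j+1)}\nu\mu^{2}$ and $(U_{\delta,\varepsilon}-k_{j})_{+}\le 2^{-j}\nu\mu^{2}$. The elementary inequality $(1-\nu)^{2}\le 1-\nu$ combined with the hypothesis (\ref{Eq (Section 3) Measure condition 1}) would then give the base density estimate
\[
|B_{\rho/2}(x_{0})\setminus A_{0}|\;\ge\;\nu\,|B_{\rho/2}(x_{0})|.
\]
Next, I would apply the standard De~Giorgi crossing inequality to $U_{\delta,\varepsilon}$ on $B_{\rho/2}(x_{0})$ between the two levels $k_{j}<k_{j+1}$, use this density bound, and invoke Cauchy--Schwarz to obtain a bound of the form
\[
(k_{j+1}-k_{j})\,|A_{j+1}|\;\le\;\frac{C(n)\,\rho}{\nu}\,|A_{j}\setminus A_{j+1}|^{1/2}\,\Bigl(\int_{B_{\rho/2}(x_{0})}|\nabla(U_{\delta,\varepsilon}-k_{j})_{+}|^{2}\,{\mathrm d}x\Bigr)^{1/2}.
\]
Feeding the Caccioppoli inequality (\ref{Eq (Section 3) Caccioppoli estimate}) into the last integral, with a cut-off $\eta\in C_{c}^{1}(B_{\rho}(x_{0}))$ equal to $1$ on $B_{\rho/2}(x_{0})$ and $|\nabla\eta|\le 4/\rho$, and using $(U_{\delta,\varepsilon}-k_{j})_{+}\le 2^{-j}\nu\mu^{2}$ together with $\|f_{\varepsilon}\|_{L^{q}}\le F$ via H\"{o}lder's inequality and the identity $n-2n/q=(n-2)+2\beta$, would yield a principal term of size $(2^{-j}\nu\mu^{2})^{2}\rho^{n-2}$ plus an inhomogeneous tail proportional to $F^{2}\mu^{2}\rho^{n-2+2\beta}$.

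The dichotomy of the lemma is forced precisely at the comparison between these two terms. If the first alternative fails, i.e.\ $\mu^{2}\ge 2^{i_{\star}}\rho^{\beta}/\nu$, then I would argue that for every $j\in\{0,\dots,i_{\star}-1\}$ the inhomogeneous tail is dominated by the principal term up to a factor depending only on the bound $\mu\le M$ and on $F$, which is absorbed into $C_{\dagger}$. Substituting back and cancelling the common factor $2^{-j}\nu\mu^{2}$ would leave
\[
|A_{j+1}|^{2}\;\le\;\frac{C_{\dagger}^{2}}{\nu^{2}}\,|B_{\rho/2}(x_{0})|\,\bigl(|A_{j}|-|A_{j+1}|\bigr).
\]
Since $|A_{i_{\star}}|\le|A_{j+1}|$ for every $j\le i_{\star}-1$, summing over $j$ telescopes the right-hand side to at most $(C_{\dagger}^{2}/\nu^{2})\,|B_{\rho/2}(x_{0})|^{2}$, and dividing by $i_{\star}|B_{\rho/2}(x_{0})|^{2}$ and taking square roots delivers the second alternative.

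The hard part will be the careful calibration of the threshold $\mu^{2}<2^{i_{\star}}\rho^{\beta}/\nu$ so that the perturbative Caccioppoli tail $F^{2}\mu^{2}\rho^{n-2+2\beta}$ is subordinate to the principal gradient term $(2^{-j}\nu\mu^{2})^{2}\rho^{n-2}$ \emph{uniformly} in $j\in\{0,\dots,i_{\star}-1\}$, and so that the full $F$-dependence is cleanly absorbed into $C_{\dagger}$ together with the ellipticity constants, $M$, and $\delta$. Once this bookkeeping is organised, the remaining telescoping is the standard De~Giorgi machinery, as in \cite[\S 7]{BDGPdN}.
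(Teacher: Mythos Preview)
Your proposal is correct and follows exactly the approach the paper intends: the paper omits the proof and refers to the routine De~Giorgi measure-shrinking argument in \cite[Chapter 10, \S 4--5]{MR2566733}, \cite[\S 7]{BDGPdN} and \cite[\S 4]{T-scalar}, which is precisely the Caccioppoli-plus-crossing-lemma telescoping you outline. Your reduction of the base density bound via $(1-\nu)^{2}\le 1-\nu$ to pass from the hypothesis on $S_{\rho/2,\mu,\nu}$ to the level set $A_{0}$ of $U_{\delta,\varepsilon}$, and your handling of the $f_{\varepsilon}$-tail through the dichotomy threshold, match the standard bookkeeping in those references.
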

We omit the proofs of Lemmata \ref{Lemma: Oscillation lemma for U-epsilon-delta 1}--\ref{Lemma: Oscillation lemma for U-epsilon-delta 2}, because they can be deduced by routine calculations given in \cite[Chapter 10, \S 4--5]{MR2566733}. For detailed computations, see \cite[\S 7]{BDGPdN} or \cite[\S 4]{T-scalar}. By Lemmata \ref{Lemma: Oscillation lemma for U-epsilon-delta 1}--\ref{Lemma: Oscillation lemma for U-epsilon-delta 2}, we are able to find the desired constants $C_{\star}\in(1,\,\infty)$ and $\kappa\in(2^{-\beta},\,1)$ in Proposition \ref{Prop: De Giorgi's truncation}.
To be precise, we choose a sufficiently large number $i_{\star}=i_{\star}(C_{\dagger},\,\nu,\,{\hat\nu})\in{\mathbb N}$ such that
\[\frac{C_{\dagger}}{\nu\sqrt{i_{\star}}}\le {\hat\nu}\quad \textrm{and}\quad 0<2^{-(i_{\star}+1)}<1-2^{-2\beta}.\]
Then, by applying Lemmata \ref{Lemma: Oscillation lemma for U-epsilon-delta 1}--\ref{Lemma: Oscillation lemma for U-epsilon-delta 2} with $\theta=2^{-i_{\star}}\nu$, we can easily check that the constants $C_{\star}\coloneqq \theta^{-1}=2^{i_{\star}}\nu^{-1}\in(1,\,\infty)$, $\kappa\coloneqq \sqrt{1-2^{-(i_{\star}+1)}}\in(2^{-\beta},\,1)$ satisfy Proposition \ref{Prop: De Giorgi's truncation} (see also \cite[Proposition 3.5]{BDGPdN}, \cite[Proposition 3.3]{T-scalar}). 

\section{Campanato-type decay estimates}\label{Section: Campanato estimates}
Section \ref{Section: Campanato estimates} provides the proof of Proposition \ref{Prop: Schauder estimate}. The basic idea is that the measure assumption (\ref{Eq (Section 2) Measure condition 2}) implies that the scalar function $V_{\varepsilon}$ will not degenerate near the point $x_{0}$ if the ratio $\nu$ is sufficiently close to $0$. With this in mind, we appeal to freezing coefficient arguments and shrinking methods to obtain Campanato-type integral growth estimates (\ref{Eq (Section 2) Campanato-type growth from Schauder}). To prove this, however, we also have to check that an average $(Du_{\varepsilon})_{x_{0},\,r}$ never degenerates even when the radius $r$ tends to $0$. To overcome this problem, we will provide a variety of energy estimates from the assumptions (\ref{Eq (Section 2) delta<mu}) and (\ref{Eq (Section 2) Measure condition 2}). Most of our computations concerning energy estimates differ from \cite[\S 4--5]{BDGPdN}, since the density function we discuss is structurally different from theirs.

In Section \ref{Section: Campanato estimates}, we consider an $L^{2}$-mean oscillation given by
\[\Phi(x_{0},\,r)\coloneqq \fint_{B_{r}(x_{0})}\mleft\lvert Du_{\varepsilon}-(Du_{\varepsilon})_{x_{0},\,r}\mright\rvert^{2}\,{\mathrm{d}}x\quad \textrm{for }r\in(0,\,\rho\rbrack\]
in an open ball $B_{\rho}(x_{0})\Subset\Omega$. To estimate $\Phi$, we often use a well-known fact that there holds
\begin{equation}\label{Eq (Section 4) minimizing property on L2-average}
\fint_{B_{r}(x_{0})}\mleft\lvert v-(v)_{x_{0},\,r}\mright\rvert^{2}\,{\mathrm{d}}x=\min_{\xi\in{\mathbb R}^{Nn}}\fint_{B_{r}(x_{0})}\lvert v-\xi\rvert^{2}\,{\mathrm{d}}x
\end{equation}
for every $v\in L^{2}(B_{r}(x_{0});\,{\mathbb R}^{Nn})$. Also, we recall the Poincar\'{e}--Sobolev inequality \cite[Chapter IX, Theorem 10.1]{MR1897317}:
\begin{equation}\label{Eq: Poincare--Sobolev inequality}
\fint_{B_{r}(x_{0})}\lvert v\rvert^{2}\,{\mathrm{d}}x\le C(n)r^{2}\mleft(\fint_{B_{r}(x_{0})}\lvert Dv\rvert^{\frac{2n}{n+2}}\,{\mathrm{d}}x\mright)^{\frac{n+2}{n}}
\end{equation}
for all $v\in W^{1,\,\frac{2n}{n+2}}(B_{r}(x_{0});\,{\mathbb R}^{Nn})$ satisfying 
\[\fint_{B_{r}(x_{0})}v\,{\mathrm{d}}x=0,\]
which is used in Sections \ref{Subsect: Energy estimates}--\ref{Subsect: freezing coefficient method}.
\subsection{Energy estimates}\label{Subsect: Energy estimates}
First in Section \ref{Subsect: Energy estimates}, we go back to the weak formulation (\ref{Eq (Section 3) Resulting weak formulation}), and deduce some energy estimates under suitable assumptions as in Proposition \ref{Prop: Schauder estimate}. Here we would like to show local $L^{2}$-bounds for the Jacobian matrices of $G_{p,\,\varepsilon}(Du_{\varepsilon})=V_{\varepsilon}^{p-1}Du_{\varepsilon}$, where the mapping $G_{p,\,\varepsilon}$ is defined by (\ref{Eq (Section 2) G-p-epsilon}).
\begin{lemma}\label{Lemma: Energy estimates}
Let $u_{\varepsilon}$ be a weak solution to (\ref{Eq (Section 2) Approximated system}) in \(\Omega\). Assume that positive numbers $\delta$, $\mu$, $F$, $M$, and an open ball $B_{\rho}(x_{0})\Subset \Omega$ satisfy (\ref{Eq (Section 2) control of f}), (\ref{Eq (Section 2) esssup V-epsilon})--(\ref{Eq (Section 2) delta<mu}) and $0<\rho<1$. Then, for all $\nu\in(0,\,1)$ and $\tau\in(0,\,1)$, the following estimates (\ref{Eq (Section 4) Energy estimate from weak form 1})--(\ref{Eq (Section 4) Energy estimate from weak form 2}) hold:
\begin{equation}\label{Eq (Section 4) Energy estimate from weak form 1}
\fint_{B_{\tau\rho}(x_{0})}\mleft\lvert D\mleft[G_{p,\,\varepsilon}(Du_{\varepsilon})\mright] \mright\rvert^{2}\,{\mathrm{d}}x\le \frac{C}{\tau^{n}}\mleft[\frac{1}{(1-\tau)^{2}\rho^{2}}+F^{2}\rho^{-\frac{2n}{q}}\mright]\mu^{2p}.
\end{equation}
\begin{equation}\label{Eq (Section 4) Energy estimate from weak form 2}
\frac{1}{\lvert B_{\tau\rho}(x_{0})\rvert}\int_{S_{\tau\rho,\,\mu,\,\nu}(x_{0})}\mleft\lvert D\mleft[G_{p,\,\varepsilon}(Du_{\varepsilon})\mright] \mright\rvert^{2}\,{\mathrm{d}}x\le \frac{C}{\tau^{n}}\mleft[\frac{\nu}{(1-\tau)^{2}\rho^{2}}+\frac{F^{2}\rho^{-\frac{2n}{q}}}{\nu}\mright]\mu^{2p}.
\end{equation}
Here the constants $C\in(0,\,\infty)$ given in (\ref{Eq (Section 4) Energy estimate from weak form 1})--(\ref{Eq (Section 4) Energy estimate from weak form 2}) depend at most on $b$, $n$, $p$, $q$, $\gamma$, $\Gamma$, and $\delta$.
\end{lemma}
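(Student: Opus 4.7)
The plan is to apply the weak formulation \eqref{Eq (Section 3) Resulting weak formulation} of Lemma \ref{Lemma: Weak formulation for V-epsilon} with carefully chosen auxiliary function $\psi$ and cutoff $\zeta$. The starting point is the pointwise bound
\[\mleft\lvert D[G_{p,\,\varepsilon}(Du_{\varepsilon})]\mright\rvert^{2}\le C(p)\,V_{\varepsilon}^{2p-2}\mleft\lvert D^{2}u_{\varepsilon}\mright\rvert^{2}\quad\textrm{a.e. in }\Omega,\]
which follows by applying the product rule to $G_{p,\,\varepsilon}(Du_{\varepsilon})=V_{\varepsilon}^{p-1}Du_{\varepsilon}$ together with the elementary estimates $\lvert Du_{\varepsilon}\rvert\le V_{\varepsilon}$ and $\lvert\nabla V_{\varepsilon}\rvert\le\lvert D^{2}u_{\varepsilon}\rvert$ (the latter obtained by differentiating $V_{\varepsilon}^{2}$ and applying Cauchy--Schwarz). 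This bound matches the integrand of $J_{3}$ precisely when $\psi(V_{\varepsilon})V_{\varepsilon}^{p-2}=V_{\varepsilon}^{2p-2}$, which dictates the choice $\psi(t)=t^{p}$.

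For \eqref{Eq (Section 4) Energy estimate from weak form 1} I would take $\psi(t)=t^{p}$ and $\zeta=\eta^{2}$ with $\eta\in C_{c}^{1}(B_{\rho}(x_{0}))$ a standard cutoff equal to $1$ on $B_{\tau\rho}(x_{0})$ and satisfying $\lvert\nabla\eta\rvert\lesssim 1/((1-\tau)\rho)$. With this choice $J_{3}=\int V_{\varepsilon}^{2p-2}\lvert D^{2}u_{\varepsilon}\rvert^{2}\zeta\,{\mathrm d}x$, and the ellipticity in \eqref{Eq (Section 3) Ellipticity of coefficients} yields $J_{2}\ge p\gamma\int V_{\varepsilon}^{2p-2}\lvert\nabla V_{\varepsilon}\rvert^{2}\zeta\,{\mathrm d}x$, so that $J_{2}+\gamma J_{3}$ controls $\int\lvert D[G_{p,\,\varepsilon}(Du_{\varepsilon})]\rvert^{2}\eta^{2}\,{\mathrm d}x$. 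The terms on the right of \eqref{Eq (Section 3) Resulting weak formulation} are bounded using $V_{\varepsilon}\le 2\mu$: the forcings $J_{4},\,J_{5}$ reduce to $\int\lvert f_{\varepsilon}\rvert^{2}V_{\varepsilon}^{2}\zeta\,{\mathrm d}x\lesssim\mu^{2}F^{2}\rho^{n-2n/q}$ by H\"older's inequality (allowed since $q>n$), $J_{6}\lesssim\mu^{p+1}F\rho^{n-n/q-1}/(1-\tau)$, and $\lvert J_{1}\rvert$ is handled by splitting $\lVert{\mathcal C}_{\varepsilon}\rVert V_{\varepsilon}^{p+1}\le(b+3\Gamma V_{\varepsilon}^{p-1})V_{\varepsilon}^{p}$ and applying Young's inequality so that a $V_{\varepsilon}^{2p-2}\lvert\nabla V_{\varepsilon}\rvert^{2}\eta^{2}$ contribution is absorbed into $J_{2}$, leaving boundary terms of order $(\mu^{2p}+\mu^{2})\lvert B_{\rho}\rvert/((1-\tau)\rho)^{2}$. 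The hypothesis \eqref{Eq (Section 2) delta<mu} now provides $\mu>\delta$, so every subcritical power $\mu^{s}$ with $s<2p$ satisfies $\mu^{s}\le\delta^{s-2p}\mu^{2p}$; in this way $\mu^{2}$ and $\mu^{p+1}$ are absorbed into the announced $\mu^{2p}$ at the cost of $\delta$-dependent constants, and dividing by $\lvert B_{\tau\rho}\rvert\sim\tau^{n}\rho^{n}$ produces \eqref{Eq (Section 4) Energy estimate from weak form 1}.

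For \eqref{Eq (Section 4) Energy estimate from weak form 2} I would repeat the scheme with $\psi$ localized on the superlevel set, for instance $\psi(t)=(t^{p}-[\delta+(1-\nu)\mu]^{p})_{+}$: this function is non-decreasing and Lipschitz, vanishes outside $\{V_{\varepsilon}>\delta+(1-\nu)\mu\}$, and on a suitable sub-region of $S_{\tau\rho,\,\mu,\,\nu}(x_{0})$ satisfies $\psi(V_{\varepsilon})V_{\varepsilon}^{p-2}\simeq V_{\varepsilon}^{2p-2}$, so that $J_{3}$ dominates the restricted left-hand side of \eqref{Eq (Section 4) Energy estimate from weak form 2}. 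Calibrating the Young parameter used in the $J_{1}$ and $J_{6}$ estimates to a power of $\nu$ redistributes the weights and produces the asymmetric factors $\nu$ on the geometric term and $1/\nu$ on the forcing term of the right-hand side.

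The main obstacle is the estimate of $J_{1}$. The $1$-Laplacian piece, which enters $\mathcal C_{\varepsilon}$ via $\lVert\mathcal C_{1,\,\varepsilon}\rVert\le bV_{\varepsilon}^{-1}$, contributes terms with a $\mu$-scaling different from the $p$-Laplacian ones, and rewriting the final bound in the single homogeneous form $\mu^{2p}$ relies crucially on $\mu>\delta$ --- without this lower bound the conversion $\mu^{s}\le\delta^{s-2p}\mu^{2p}$ is unavailable. For \eqref{Eq (Section 4) Energy estimate from weak form 2} an additional delicate point is choosing $\psi$ and the Young parameter so that exactly the claimed $\nu$ and $1/\nu$ factors emerge from the localization on the superlevel set, which requires a careful balancing rather than a generic argument.
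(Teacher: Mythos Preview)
Your treatment of \eqref{Eq (Section 4) Energy estimate from weak form 1} is essentially the paper's: choose $\psi(t)=t^{p}$, absorb $J_{1}$ into $J_{2}$ via Cauchy--Schwarz/Young, bound the forcing terms, and convert all subcritical powers $\mu^{s}$ into $\mu^{2p}$ using $\mu>\delta$. That part is fine.

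The gap is in your scheme for \eqref{Eq (Section 4) Energy estimate from weak form 2}. Your choice $\psi(t)=(t^{p}-[\delta+(1-\nu)\mu]^{p})_{+}$ vanishes exactly on the boundary of $S_{\tau\rho,\,\mu,\,\nu}(x_{0})=\{V_{\varepsilon}>\delta+(1-\nu)\mu\}$, so $\psi(V_{\varepsilon})$ has no uniform lower bound on that set; the phrase ``on a suitable sub-region $\psi(V_{\varepsilon})V_{\varepsilon}^{p-2}\simeq V_{\varepsilon}^{2p-2}$'' concedes this, but the statement demands the integral over the \emph{full} superlevel set. Consequently $J_{3}$ does not control the left-hand side of \eqref{Eq (Section 4) Energy estimate from weak form 2} with your $\psi$. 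Moreover, the $\nu$ and $1/\nu$ factors do not arise from tuning a Young parameter: there is no place in the absorption of $J_{1}$ or $J_{6}$ where an artificial $\nu$ can be inserted without either losing the absorption or producing the wrong homogeneity.

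The paper's fix is to truncate at a \emph{strictly lower} level: take $\psi(t)=t^{p}\,\tilde\psi(t)$ with $\tilde\psi(t)=(t-\delta-k)_{+}^{2}$ and $k=(1-2\nu)\mu$. Then on all of $B_{\rho}(x_{0})$ one has $(V_{\varepsilon}-\delta-k)_{+}\le 2\nu\mu$, while on $S_{\tau\rho,\,\mu,\,\nu}(x_{0})$ one has $(V_{\varepsilon}-\delta-k)_{+}\ge\nu\mu$; hence $\nu^{2}\mu^{2}\le\tilde\psi(V_{\varepsilon})\le 4\nu^{2}\mu^{2}$ there. The upper bound makes the cutoff and forcing terms carry explicit powers of $\nu$ (specifically $\tilde\psi^{2}/(p\tilde\psi+V_{\varepsilon}\tilde\psi')\lesssim\nu^{3}\mu^{2}$ and $\tilde\psi+V_{\varepsilon}\tilde\psi'\lesssim\nu\mu^{2}$), while the lower bound lets you divide out $\nu^{2}\mu^{2}$ from $J_{3}$ to recover the plain integral over $S_{\tau\rho,\,\mu,\,\nu}$. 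After this division the boundary term retains one factor $\nu$ and the forcing term one factor $1/\nu$, exactly as claimed. The two-sided control of $\tilde\psi$ coming from the shifted threshold---not a Young parameter---is the mechanism you are missing.
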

Following computations given in \cite[Lemma 3.9]{T-scalar} for the scalar problems, we provide the proof of Lemma \ref{Lemma: Energy estimates}.
\begin{proof}
We first compute
\begin{align*}
\mleft\lvert D\mleft[G_{p,\,\varepsilon}(Du_{\varepsilon})\mright] \mright\rvert^{2}&\le \mleft\lvert h_{p}^{\prime}(V_{\varepsilon}^{2})D^{2}u_{\varepsilon}+2h_{p}^{\prime\prime}(V_{\varepsilon}^{2})Du_{\varepsilon}\otimes D^{2}u_{\varepsilon}Du_{\varepsilon} \mright\rvert^{2}\\&\le 2V_{\varepsilon}^{2(p-1)}\mleft\lvert D^{2}u_{\varepsilon}\mright\rvert^{2}+2(p-1)^{2}V_{\varepsilon}^{2(p-3)}\mleft\lvert D^{2}u_{\varepsilon}\mright\rvert^{2}\mleft\lvert Du_{\varepsilon}\mright\rvert^{4}\\&\le c_{p}V_{\varepsilon}^{2(p-1)}\mleft\lvert D^{2}u_{\varepsilon}\mright\rvert^{2}\quad \textrm{a.e. in }\Omega,
\end{align*}
where $c_{p}\coloneqq 2(p^{2}-2p+2)>0$.
With this in mind, we apply Lemma \ref{Lemma: Weak formulation for V-epsilon} with $\psi(t)\coloneqq t^{p}{\tilde \psi}(t)$ for $t\in\lbrack0,\,\infty)$. Here the function ${\tilde\psi}$ will later be chosen as either
\begin{equation}\label{Eq (Section 4) Energy estimate choice 1}
{\tilde\psi}(t)\equiv 1,
\end{equation}
or
\begin{equation}\label{Eq (Section 4) Energy estimate choice 2}
{\tilde\psi}(t)\coloneqq (t-\delta-k)_{+}^{2}
\end{equation}
for some constant $k>0$. 
We choose a cutoff function \(\eta\in C_{c}^{1}(B_{\rho}(x_{0}))\) such that
\[\eta\equiv 1\quad\textrm{on $B_{\tau \rho}(x_{0})$}\quad\textrm{and}\quad\lvert\nabla\eta\rvert\le \frac{2}{(1-\tau)\rho}\quad\textrm{in $B_{\rho}(x_{0})$},\]
and set \(\zeta\coloneqq \eta^{2}\).
Then, the weak formulation (\ref{Eq (Section 3) Resulting weak formulation}) becomes
\begin{align*}
&\gamma{\mathbf L}_{1}+{\mathbf L}_{2}\\
&
\coloneqq \gamma\int_{B}V_{\varepsilon}^{2(p-1)}\mleft\lvert D^{2}u_{\varepsilon}\mright\rvert^{2} {\tilde\psi}(V_{\varepsilon})\eta^{2}\,{\mathrm d}x\\&\quad
+
\int_{B}\mleft\langle{\mathcal C}_{\varepsilon}(D_{\varepsilon})\nabla V_{\varepsilon}\mathrel{}\middle|\mathrel{}\nabla V_{\varepsilon}\mright\rangle \psi^{\prime}(V_{\varepsilon})V_{\varepsilon}\eta^{2}\,{\mathrm d}x
\\&\le 4\int_{B}\mleft\lvert \mleft\langle{\mathcal C}_{\varepsilon}(D_{\varepsilon})\nabla V_{\varepsilon}\mathrel{}\middle|\mathrel{}\nabla\eta\mright\rangle\mright\rvert \psi(V_{\varepsilon})V_{\varepsilon}\eta\,{\mathrm d}x\\&\quad+
\frac{1}{\gamma}\int_{B}\lvert f_{\varepsilon}\rvert^{2}V_{\varepsilon}^{2-p}\mleft(n{\psi}(V_{\varepsilon})+V_{\varepsilon}\psi^{\prime}(V_{\varepsilon})\mright)\eta^{2}\,{\mathrm d}x
\\& \quad\quad+4\int_{B}\lvert f_{\varepsilon}\rvert\lvert \nabla \eta\rvert V_{\varepsilon}\psi(V_{\varepsilon})\eta\,{\mathrm d}x
\\&\eqqcolon {\mathbf R}_{1}+{\mathbf R}_{2}+{\mathbf R}_{3}.
\end{align*}
For \({\mathbf R}_{1}\), we use the Cauchy--Schwarz inequality
\[\mleft\lvert \mleft\langle {\mathcal C}_{\varepsilon}(D_{\varepsilon})\nabla V_{\varepsilon}\mathrel{}\middle|\mathrel{}\nabla\eta\mright\rangle\mright\rvert\le \sqrt{\mleft\langle{\mathcal C}_{\varepsilon}(D_{\varepsilon})\nabla V_{\varepsilon}\mathrel{}\middle|\mathrel{}\nabla V_{\varepsilon}\mright\rangle}\,\cdot\,\sqrt{\mleft\langle{\mathcal C}_{\varepsilon}(D_{\varepsilon})\nabla \eta\mathrel{}\middle|\mathrel{}\nabla\eta\mright\rangle},\]
which is possible by (\ref{Eq (Section 3) Ellipticity of coefficients}).
Hence, by Young's inequality, we have
\begin{align*}
{\mathbf R}_{1}&\le {\mathbf L}_{2}+4\int_{B}\mleft\langle {\mathcal C}_{\varepsilon}(Du_{\varepsilon})\nabla\eta\mathrel{}\middle|\mathrel{}\nabla\eta\mright\rangle \frac{V_{\varepsilon}\psi(V_{\varepsilon})^{2}}{\psi^{\prime}(V_{\varepsilon})}\,\mathrm{d}x.
\end{align*}
For \({\mathbf R}_{3}\), we apply Young's inequality to get
\[{\mathbf R}_{3}\le 2\int_{B}\lvert f_{\varepsilon}\rvert^{2}\psi^{\prime}(V_{\varepsilon}) V_{\varepsilon}^{3-p}\eta^{2}\,{\mathrm d}x+2 \int_{B}\lvert \nabla\eta\rvert^{2}\frac{V_{\varepsilon}\psi(V_{\varepsilon})^{2}}{\psi^{\prime}(V_{\varepsilon})}V_{\varepsilon}^{p-2}\,{\mathrm d}x.\]
Therefore, by (\ref{Eq (Section 3) Ellipticity of coefficients}), we obtain
\begin{align*}
{\mathbf L}_{1}&\le\frac{1}{\gamma}\int_{B}\lvert\nabla\eta\rvert^{2} \mleft[2V_{\varepsilon}^{p-2}+4\mleft(bV_{\varepsilon}^{-1}+3\Gamma V_{\varepsilon}^{p-2}\mright)\mright]\frac{V_{\varepsilon}\psi(V_{\varepsilon})^{2}}{\psi^{\prime}(V_{\varepsilon})}\,{\mathrm d}x\\&\quad +\frac{1}{\gamma}\mleft(2+\frac{1}{\gamma}\mright)\int_{B}\lvert f_{\varepsilon}\rvert^{2}\eta^{2}\frac{n\psi(V_{\varepsilon})+V_{\varepsilon}\psi^{\prime}(V_{\varepsilon})}{V_{\varepsilon}^{p-2}}\,{\mathrm d}x.
\end{align*}
Here we note that the assumptions (\ref{Eq (Section 2) esssup V-epsilon})--(\ref{Eq (Section 2) delta<mu}) yield \(V_{\varepsilon}\le \delta+\mu\le 2\mu\) a.e. in \(B\), and \(\mu^{l}=\mu^{l-2p}\cdot\mu^{2p}\le\delta^{l-2p}\mu^{2p}\) for every \(l\in(0,\,2p)\).
Hence it follows that
\begin{align*}
\mleft[2V_{\varepsilon}^{p-2}+4\mleft(bV_{\varepsilon}^{-1}+3\Gamma V_{\varepsilon}^{p-2}\mright)\mright]\frac{V_{\varepsilon}\psi(V_{\varepsilon})^{2}}{\psi^{\prime}(V_{\varepsilon})}&\le C(b,\,\Gamma) \mleft[V_{\varepsilon}^{p-2}+V_{\varepsilon}^{-1}\mright]\frac{V_{\varepsilon}^{p+2}{\tilde\psi}(V_{\varepsilon})^{2}}{p{\tilde \psi}(V_{\varepsilon})+V_{\varepsilon}{\tilde\psi}^{\prime}(V_{\varepsilon})}\\&\le C(b,\,\Gamma)\frac{\mleft(1+\delta^{1-p}\mright)\mu^{2p}{\tilde\psi}(V_{\varepsilon})^{2}}{p{\tilde \psi}(V_{\varepsilon})+V_{\varepsilon}{\tilde\psi}^{\prime}(V_{\varepsilon})}
\end{align*}
a.e. in $B$, and
\begin{align*}
\frac{n\psi(V_{\varepsilon})+V_{\varepsilon}\psi^{\prime}(V_{\varepsilon})}{V_{\varepsilon}^{p-2}}&=n(p+1)V_{\varepsilon}^{2}{\tilde\psi}(V_{\varepsilon})+V_{\varepsilon}^{3}{\tilde\psi}^{\prime}(V_{\varepsilon})\\&\le 4n(p+1)\delta^{2(1-p)}\mu^{2p}\mleft[{\tilde \psi}(V_{\varepsilon})+V_{\varepsilon}{\tilde\psi}^{\prime}(V_{\varepsilon})\mright] \end{align*}
a.e. in $B$.
As a result, we obtain
\begin{align}
&\int_{B}\mleft\lvert D\mleft[G_{p,\,\varepsilon}(\nabla u_{\varepsilon})\mright]\mright\rvert^{2}\eta^{2}{\tilde \psi}(V_{\varepsilon})\,{\mathrm d}x \le c_{p}{\mathbf L}_{1}\nonumber\\&\le C\mu^{2p}\mleft[\int_{B}\lvert\nabla\eta\rvert^{2}\frac{{\tilde\psi}(V_{\varepsilon})^{2}}{p{\tilde \psi}(V_{\varepsilon})+V_{\varepsilon}{\tilde\psi}^{\prime}(V_{\varepsilon})}\,{\mathrm d}x+\int_{B}\lvert f_{\varepsilon}\rvert^{2}\eta^{2} \mleft[{\tilde \psi}(V_{\varepsilon})+V_{\varepsilon}{\tilde\psi}^{\prime}(V_{\varepsilon})\mright]\,{\mathrm d}x\,\mright]\label{Eq (Section 4) Energy estimate mid fin}
\end{align}
with \(C=C(b,\,n,\,p,\,\gamma,\,\Gamma,\,\delta)\in(0,\,\infty)\). We will deduce (\ref{Eq (Section 4) Energy estimate from weak form 1})--(\ref{Eq (Section 4) Energy estimate from weak form 2}) from (\ref{Eq (Section 4) Energy estimate mid fin}) by choosing ${\tilde \psi}$ as (\ref{Eq (Section 4) Energy estimate choice 1}) or (\ref{Eq (Section 4) Energy estimate choice 2}).

Let ${\tilde \psi}$ satisfy (\ref{Eq (Section 4) Energy estimate choice 1}). Then by (\ref{Eq (Section 4) Energy estimate mid fin}) and H\"{o}lder's inequality, we have
\begin{align*}
&\int_{B_{\tau\rho}(x_{0})}\mleft\lvert D\mleft[G_{p,\,\varepsilon}(\nabla u_{\varepsilon})\mright]\mright\rvert^{2}\,{\mathrm d}x\\&\le  C(b,\,n,\,p,\,\gamma,\,\Gamma,\,\delta)\mu^{2p}\mleft[\frac{1}{p}\int_{B}\lvert \nabla\eta\rvert^{2}\,{\mathrm d}x+\int_{B}\lvert f_{\varepsilon}\rvert^{2}\eta^{2}{\mathrm d}x\mright]\\&\le C(b,\,n,\,p,\,q,\,\gamma,\,\Gamma,\,\delta)\mu^{2p}\mleft[\frac{1}{(1-\tau)^{2}\rho^{2}}+F^{2}\rho^{-\frac{2n}{q}}\mright]\lvert B_{\rho}(x_{0})\rvert.
\end{align*}
Next, we let \({\tilde\psi}\) satisfy (\ref{Eq (Section 4) Energy estimate choice 2}) with \[k\coloneqq (1-2\nu)\mu\ge\frac{\mu}{2}>0.\]
Then, from (\ref{Eq (Section 2) Lipschitz bound}), it follows that \((V_{\varepsilon}-\delta-k)_{+}\le \mu-k=2\nu\mu\) a.e. in \(B\). 
Hence, it is easy to check that
\[\mleft\{
\begin{array}{ccccccc}
\displaystyle\frac{{\tilde\psi}(V_{\varepsilon})^{2}}{p{\tilde \psi}(V_{\varepsilon})+V_{\varepsilon}{\tilde\psi}^{\prime}(V_{\varepsilon})}&=&\displaystyle\frac{(V_{\varepsilon}-\delta-k)_{+}^{3}}{p(V_{\varepsilon}-\delta-k)_{+}+2V_{\varepsilon}}&\le& \displaystyle\frac{(2\nu\mu)^{3}}{0+2(\delta+k)}&\le& 8\nu^{3}\mu^{2},\\
{\tilde\psi}(V_{\varepsilon})+V_{\varepsilon}{\tilde\psi}^{\prime}(V_{\varepsilon})&\le& 3V_{\varepsilon}(V_{\varepsilon}-\delta-k)_{+}&\le& 3\cdot 2\mu\cdot 2\nu\mu&=&12\nu\mu^{2},
\end{array}
\mright.\]
a.e. in $B$.
Over \(S_{\tau\rho,\,\mu,\,\nu}(x_{0})\), we have \(V_{\varepsilon}-\delta-k>(1-\nu)\mu-k=\nu\mu>0\), and hence ${\tilde\psi}(V_{\varepsilon})\ge (\nu\mu)^{2}$.
Combining these results with (\ref{Eq (Section 4) Energy estimate mid fin}), we are able to compute
\begin{align*}
&\nu^{2}\mu^{2}\int_{S_{\tau\rho,\,\mu,\,\nu}(x_{0})}\mleft\lvert D\mleft[G_{p,\,\varepsilon}(\nabla u_{\varepsilon})\mright]\mright\rvert^{2}\,{\mathrm d}x \\&\le \int_{B}\eta^{2}\mleft\lvert D\mleft[G_{p,\,\varepsilon}(\nabla u_{\varepsilon})\mright]\mright\rvert^{2}{\tilde\psi}(V_{\varepsilon})\,{\mathrm d}x\\&\le C\mu^{2p}\mleft[8\nu^{3}\mu^{2}\int_{B}\lvert\nabla\eta\rvert^{2}\,{\mathrm d}x+12\nu\mu^{2} \int_{B}\lvert f_{\varepsilon}\rvert^{2}\eta^{2}\,{\mathrm d}x\mright]\\&\le C\nu^{2}\mu^{2p+2}\mleft[\frac{\nu}{(1-\tau)^{2}\rho^{2}}+\frac{F^{2}\rho^{-\frac{2n}{q}}}{\nu} \mright]\lvert B_{\rho}(x_{0})\rvert
\end{align*}
for some constant \(C=C(b,\,n,\,p,\,q,\,\gamma,\,\Gamma,\,\delta)\in(0,\,\infty)\).
Recalling \(\lvert B_{\tau\rho}(x_{0})\rvert=\tau^{n}\lvert B_{\rho}(x_{0})\rvert\), we conclude (\ref{Eq (Section 4) Energy estimate from weak form 1})--(\ref{Eq (Section 4) Energy estimate from weak form 2}).
\end{proof}
\begin{lemma}\label{Lemma: Energy estimates 2}
Let $u_{\varepsilon}$ be a weak solution to (\ref{Eq (Section 2) Approximated system}) in \(\Omega\). Assume that positive numbers $\delta$, $\varepsilon$, $\mu$, $F$, $M$, and an open ball $B_{\rho}(x_{0})\Subset \Omega$ satisfy (\ref{Eq (Section 2) delta-epsilon}), (\ref{Eq (Section 2) control of f}), (\ref{Eq (Section 2) esssup V-epsilon})--(\ref{Eq (Section 2) delta<mu}) and $0<\rho<1$. If (\ref{Eq (Section 2) Measure condition 2}) is satisfied for some $\nu\in(0,\,1/4)$, then for all $\tau\in(0,\,1)$, we have
\begin{equation}\label{Eq (Section 4) Energy estimates from weak form 3}
\Phi(x_{0},\,\tau\rho)\le \frac{C_{\dagger}\mu^{2}}{\tau^{n}}\mleft[\frac{\nu^{2/n}}{(1-\tau)^{2}}+\frac{F^{2}}{\nu}\rho^{2\beta}\mright],
\end{equation}
where the constant $C_{\dagger}\in(0,\,\infty)$ depends at most on $b$, $n$, $p$, $q$, $\gamma$, $\Gamma$, and $\delta$.
\end{lemma}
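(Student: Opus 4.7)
The plan is to transfer the two energy estimates \eqref{Eq (Section 4) Energy estimate from weak form 1}--\eqref{Eq (Section 4) Energy estimate from weak form 2} of Lemma \ref{Lemma: Energy estimates}, which are stated for the auxiliary mapping $G_{p,\,\varepsilon}(Du_{\varepsilon})$, over to the $L^{2}$-mean oscillation of $Du_{\varepsilon}$ itself. Three ingredients drive the argument: the Poincar\'e--Sobolev inequality \eqref{Eq: Poincare--Sobolev inequality}, a H\"older splitting over the good set $S\coloneqq S_{\tau\rho,\,\mu,\,\nu}(x_{0})$ and its complement, and the bijective change of variables $G_{p,\,\varepsilon}$ controlled by the asymptotic estimates \eqref{Eq (Section 2) Estimate on inverse mapping}--\eqref{Eq (Section 2) G-p-epsilon local ellipticity}. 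A decisive point is the inclusion $B_{\tau\rho}(x_{0})\setminus S\subset B_{\rho}(x_{0})\setminus S_{\rho,\,\mu,\,\nu}(x_{0})$, which together with the hypothesis \eqref{Eq (Section 2) Measure condition 2} yields the ratio bound $\lvert B_{\tau\rho}(x_{0})\setminus S\rvert/\lvert B_{\tau\rho}(x_{0})\rvert<\nu\tau^{-n}$.

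First I would set $w\coloneqq G_{p,\,\varepsilon}(Du_{\varepsilon})-(G_{p,\,\varepsilon}(Du_{\varepsilon}))_{x_{0},\,\tau\rho}$ and apply \eqref{Eq: Poincare--Sobolev inequality} on $B_{\tau\rho}(x_{0})$, reducing the problem to controlling $\bigl(\fint_{B_{\tau\rho}(x_{0})}\lvert Dw\rvert^{2n/(n+2)}\,{\mathrm d}x\bigr)^{(n+2)/n}$. I would split this integral into the pieces over $S$ and over $B_{\tau\rho}(x_{0})\setminus S$ and apply H\"older with exponents $n/(n+2)$ and $2/(n+2)$. Feeding the $L^{2}$-mass on $S$ into \eqref{Eq (Section 4) Energy estimate from weak form 2} produces an extra factor of $\nu$, while applying \eqref{Eq (Section 4) Energy estimate from weak form 1} on the bad piece, combined with the measure bound above, produces a factor $\bigl(\lvert B_{\tau\rho}(x_{0})\setminus S\rvert/\lvert B_{\tau\rho}(x_{0})\rvert\bigr)^{2/n}\le\nu^{2/n}\tau^{-2}$. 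Using $\nu\le\nu^{2/n}$ (valid for $\nu\in(0,\,1/4)$ and $n\ge 2$) and the elementary convexity bound $(a+b)^{(n+2)/n}\le 2^{2/n}(a^{(n+2)/n}+b^{(n+2)/n})$, the two contributions collapse into
\[\fint_{B_{\tau\rho}(x_{0})}\lvert w\rvert^{2}\,{\mathrm d}x\le\frac{C\mu^{2p}}{\tau^{n}}\mleft[\frac{\nu^{2/n}}{(1-\tau)^{2}}+\frac{F^{2}\rho^{2\beta}}{\nu}\mright].\]

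To transfer this oscillation bound from $G_{p,\,\varepsilon}(Du_{\varepsilon})$ to $\Phi(x_{0},\,\tau\rho)$, I would invoke the minimizing property \eqref{Eq (Section 4) minimizing property on L2-average} with the test vector $\xi_{0}\coloneqq G_{p,\,\varepsilon}^{-1}((G_{p,\,\varepsilon}(Du_{\varepsilon}))_{x_{0},\,\tau\rho})$. On $S$, the hypothesis $V_{\varepsilon}-\delta>(1-\nu)\mu$, coupled with $\varepsilon<\delta/4$ and $\nu<1/4$, forces $\lvert Du_{\varepsilon}\rvert\ge\mu/\sqrt{2}$, so \eqref{Eq (Section 2) G-p-epsilon local ellipticity} gives $\lvert Du_{\varepsilon}-\xi_{0}\rvert\le C\mu^{-(p-1)}\lvert w\rvert$; the factor $\mu^{-2(p-1)}$ absorbs the $\mu^{2p}$ in the previous estimate down to the target $\mu^{2}$. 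On $B_{\tau\rho}(x_{0})\setminus S$, the trivial bound $\lvert(G_{p,\,\varepsilon}(Du_{\varepsilon}))_{x_{0},\,\tau\rho}\rvert\le C\mu^{p}$ together with \eqref{Eq (Section 2) Estimate on inverse mapping} yields $\lvert\xi_{0}\rvert\le C\mu$ and hence $\lvert Du_{\varepsilon}-\xi_{0}\rvert^{2}\le C\mu^{2}$ pointwise, while the measure ratio $\nu\tau^{-n}\le\nu^{2/n}\tau^{-n}$ keeps this contribution within the advertised estimate.

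The most delicate point is the H\"older book-keeping in the second step: one must carefully allocate \eqref{Eq (Section 4) Energy estimate from weak form 1}--\eqref{Eq (Section 4) Energy estimate from weak form 2} between $S$ and its complement so that the sharper bound \eqref{Eq (Section 4) Energy estimate from weak form 2}, which carries the extra factor $\nu$, is applied on $S$ where no a priori measure control is available, and so that the Sobolev conjugate exponent $2n/(n+2)$ transforms the weaker measure control on $B_{\tau\rho}(x_{0})\setminus S$ into exactly the $\nu^{2/n}$ factor demanded by the statement. Everything else is a direct consequence of \eqref{Eq (Section 2) Estimate on inverse mapping}--\eqref{Eq (Section 2) G-p-epsilon local ellipticity} together with the bounds on $V_{\varepsilon}$ inherited from \eqref{Eq (Section 2) esssup V-epsilon} and \eqref{Eq (Section 2) delta<mu}.
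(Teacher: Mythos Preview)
Your proposal is correct and follows essentially the same approach as the paper: you use the minimising property \eqref{Eq (Section 4) minimizing property on L2-average} with the test vector $\xi_{0}=G_{p,\,\varepsilon}^{-1}\bigl((G_{p,\,\varepsilon}(Du_{\varepsilon}))_{x_{0},\,\tau\rho}\bigr)$, split over $S_{\tau\rho,\,\mu,\,\nu}(x_{0})$ and its complement, apply the Poincar\'e--Sobolev inequality \eqref{Eq: Poincare--Sobolev inequality} to $G_{p,\,\varepsilon}(Du_{\varepsilon})-G_{p,\,\varepsilon}(\xi_{0})$, and feed the two energy estimates \eqref{Eq (Section 4) Energy estimate from weak form 1}--\eqref{Eq (Section 4) Energy estimate from weak form 2} into the H\"older split of the gradient term. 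The only cosmetic difference is that you first bound the full oscillation of $w$ and then convert to $Du_{\varepsilon}$, whereas the paper converts on the good set first and applies Poincar\'e--Sobolev afterwards; the resulting estimates are identical.
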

For completeness, we provide the proof of Lemma \ref{Lemma: Energy estimates 2}, which can be accomplished similarly to \cite[Lemma 3.10]{T-scalar} for the scalar case.
\begin{proof}
To prove (\ref{Eq (Section 4) Energy estimates from weak form 3}), we consider integrals ${\mathbf I},\,{\mathbf{II}}$ given by
\[{\mathbf I}\coloneqq \frac{1}{\lvert B_{\tau\rho}(x_{0})\rvert}\int_{B_{\tau\rho}(x_{0})\setminus S_{\rho,\,\mu,\,\nu}(x_{0})}\lvert Du_{\varepsilon}-\xi\rvert^{2}\,{\mathrm{d}}x,\]
and
\[{\mathbf{II}}\coloneqq \frac{1}{\lvert B_{\tau\rho}(x_{0})\rvert}\int_{B_{\tau\rho}(x_{0})\cap S_{\rho,\,\mu,\,\nu}(x_{0})}\lvert Du_{\varepsilon}-\xi\rvert^{2}\,{\mathrm{d}}x,\]
where $\xi\coloneqq G_{p,\,\varepsilon}^{-1}\mleft((G_{p,\,\varepsilon}(Du_{\varepsilon}))_{x_{0},\,\tau\rho}\mright)\in{\mathbb R}^{Nn}$. 
For ${\mathbf I}$, we use (\ref{Eq (Section 2) Measure condition 2}) to get
\[{\mathbf I}\le \frac{\lvert B_{\rho}(x_{0})\setminus S_{\rho,\,\mu,\,\nu}(x_{0}) \rvert}{\tau^{n}\lvert B_{\rho}(x_{0})\rvert}\esssup_{B_{\rho}(x_{0})}\,\mleft(V_{\varepsilon}+\lvert \xi\rvert\mright)^{2}\le \frac{C(p)\nu\mu^{2}}{\tau^{n}}.\]
The last inequality is easy to check by (\ref{Eq (Section 2) esssup V-epsilon})--(\ref{Eq (Section 2) delta<mu}). In fact, we have
\[\mleft\lvert G_{p,\,\varepsilon}(Du_{\varepsilon})\mright\rvert=V_{\varepsilon}^{p-1}\lvert Du_{\varepsilon}\rvert\le V_{\varepsilon}^{p}\le (\delta+\mu)^{p}\le (2\mu)^{p}\quad \textrm{a.e. in }B_{\rho}(x_{0}).\]
Then, recalling (\ref{Eq (Section 2) Estimate on inverse mapping}), we obtain
\[\lvert \xi\rvert\le C(p)\mleft\lvert G_{p,\,\varepsilon}(\xi)\mright\rvert^{1/p}=C(p)\mleft\lvert(G_{p,\,\varepsilon}(Du_{\varepsilon}))_{x_{0},\,\tau\rho}\mright\rvert^{1/p}\le C(p)\esssup_{B_{\rho}(x_{0})}\,\mleft\lvert G_{p,\,\varepsilon}(Du_{\varepsilon})\mright\rvert^{1/p}\le C(p)\mu,\]
and hence it is possible to estimate ${\mathbf I}$ as above.
Before computing $\mathbf{II}$, we recall (\ref{Eq (Section 2) delta-epsilon}) and the definition of $S_{\rho,\,\mu,\,\nu}(x_{0})$. Then, we are able to compute
\begin{equation}\label{Eq (Section 4) Non-degenerate-gradient over superlevel set}
\frac{\delta}{4}+\lvert Du_{\varepsilon}\rvert\ge \varepsilon+\lvert Du_{\varepsilon}\rvert\ge V_{\varepsilon}\ge \delta+(1-\nu)\mu\quad \textrm{a.e. in }S_{\rho,\,\mu,\,\nu}(x_{0}).
\end{equation}
In particular, there holds \[\lvert Du_{\varepsilon}\rvert\ge \frac{3}{4}\mu\quad \textrm{a.e. in }S_{\rho,\,\mu,\,\nu}(x_{0})\]
by $0<\nu<1/4$.
With this in mind, we apply (\ref{Eq (Section 2) G-p-epsilon local ellipticity}) to obtain
\begin{align*}
\mathbf{II}&\le \frac{C(p)}{\mu^{2(p-1)}\lvert B_{\tau\rho}(x_{0})\rvert}\int_{B_{\tau\rho}(x_{0})\cap S_{\rho,\,\mu,\,\nu}(x_{0})}\mleft\lvert G_{p,\,\varepsilon}(Du_{\varepsilon})-G_{p,\,\varepsilon}(\xi)\mright\rvert^{2}\,{\mathrm d}x\\&\le \frac{C(p)}{\mu^{2(p-1)}}(\tau\rho)^{2}\mleft(\fint_{B_{\tau\rho}(x_{0})}\mleft\lvert D\mleft[G_{p,\,\varepsilon}(Du_{\varepsilon})\mright]  \mright\rvert^{\frac{2n}{n+2}} \,{\mathrm d}x\mright)^{\frac{n+2}{n}}\eqqcolon \frac{C(p)}{\mu^{2(p-1)}}(\tau\rho)^{2}\cdot \mathbf{III}^{1+2/n}.
\end{align*}
Here we have applied (\ref{Eq: Poincare--Sobolev inequality}) to the function $G_{p,\,\varepsilon}(Du_{\varepsilon})-G_{p,\,\varepsilon}(\xi)$. The integral $\mathbf{III}$ can be decomposed by ${\mathbf{III}}={\mathbf{III}}_{1}+{\mathbf{III}}_{2}$ with
\[{\mathbf{III}}_{1}\coloneqq \frac{1}{\lvert B_{\tau\rho}(x_{0})\rvert}\int_{B_{\tau\rho}(x_{0})\setminus S_{\rho,\,\mu,\,\nu}(x_{0})}\mleft\lvert D\mleft[G_{p,\,\varepsilon}(Du_{\varepsilon})\mright] \mright\rvert^{\frac{2n}{n+2}}\,{\mathrm d}x,\]
and
\[{\mathbf{III}}_{2}\coloneqq \frac{1}{\lvert B_{\tau\rho}(x_{0})\rvert}\int_{S_{\tau\rho,\,\mu,\,\nu}(x_{0})}\mleft\lvert D\mleft[G_{p,\,\varepsilon}(Du_{\varepsilon})\mright] \mright\rvert^{\frac{2n}{n+2}}\,{\mathrm d}x.\]
To control these integrals, we apply Lemma \ref{Lemma: Energy estimates}. For ${\mathbf{III}}_{1}$, we use H\"{o}lder's inequality and (\ref{Eq (Section 4) Energy estimate from weak form 1}) to obtain\begin{align*}
\mathbf{III}_{1}^{1+2/n}&\le \mleft[\frac{\lvert B_{\rho}(x_{0})\setminus S_{\rho,\,\mu,\,\nu}(x_{0})\rvert }{\lvert B_{\rho}(x_{0})\rvert }\mright]^{2/n} \fint_{B_{\tau\rho}(x_{0})}\mleft\lvert D\mleft[G_{p,\,\varepsilon}(Du_{\varepsilon})\mright] \mright\rvert^{2}\,{\mathrm d}x\\&\le C(b,\,n,\,p,\,q,\,\gamma,\,\Gamma,\,\delta)\frac{\nu^{2/n}\mu^{2p}}{\tau^{n}}\mleft[\frac{1}{(1-\tau)^{2}\rho^{2}}+F^{2}\rho^{-\frac{2n}{q}}\mright].
\end{align*}
where we should note $\lvert B_{\rho}(x_{0})\setminus S_{\rho,\,\mu,\,\nu}(x_{0})\rvert\le \nu\lvert B_{\rho}(x_{0})\rvert$ by (\ref{Eq (Section 2) Measure condition 2}).
Similarly for ${\mathbf{III}}_{2}$, we can compute
\begin{align*}
\mathbf{III}_{2}^{1+2/n}&\le \mleft[\frac{\lvert S_{\tau\rho,\,\mu,\,\nu}(x_{0})\rvert}{\lvert B_{\tau\rho}(x_{0})\rvert}\mright]^{2/n}\frac{1}{\lvert B_{\tau\rho}(x_{0})\rvert} \int_{S_{\tau\rho,\,\mu,\,\nu}(x_{0})}\mleft\lvert D\mleft[G_{p,\,\varepsilon}(Du_{\varepsilon})\mright] \mright\rvert^{2}\,{\mathrm d}x\\&\le C(b,\,n,\,p,\,q,\,\gamma,\,\Gamma,\,\delta)\frac{\mu^{2p}}{\tau^{n}}\mleft[\frac{\nu}{(1-\tau)^{2}\rho^{2}}+\frac{F^{2}\rho^{-\frac{2n}{q}}}{\nu}\mright].
\end{align*}
by H\"{o}lder's inequality and (\ref{Eq (Section 4) Energy estimate from weak form 2}).
Finally, we use (\ref{Eq (Section 4) minimizing property on L2-average}) to obtain
\begin{align*}
\Phi(x_{0},\,\tau\rho)&\le \fint_{B_{\tau\rho}(x_{0})}\lvert Du_{\varepsilon}-\xi\rvert^{2}\,{\mathrm{d}}x={\mathbf I}+{\mathbf{II}}\\&\le \frac{C(p)\nu\mu^{2}}{\tau^{n}}+\frac{C(n,\,p)}{\mu^{2p-2}}(\tau\rho)^{2}\mleft({\mathbf{III}}_{1}^{1+2/n}+{\mathbf{III}}_{2}^{1+2/n} \mright)\\&\le \frac{C\mu^{2}}{\tau^{n}}\mleft[\nu+\frac{\nu+\nu^{2/n}}{(1-\tau)^{2}}\tau^{2}+\frac{1+\nu^{1+2/n}}{\nu}\tau^{2}F^{2}\rho^{2\beta}\mright]
\end{align*}
with $C\in(0,\,\infty)$ depending at most on $b$, $n$, $p$, $q$, $\gamma$, $\Gamma$, and $\delta$. By $0<\tau<1$, $0<\nu<1/4$ and $n\ge 2$, we are able to find a constant $C_{\dagger}=C_{\dagger}(b,\,n,\,p,\,q,\,\gamma,\,\Gamma,\,\delta)\in(0,\,\infty)$ such that (\ref{Eq (Section 4) Energy estimates from weak form 3}) holds.
\end{proof}

\subsection{Higher integrability estimates and freezing coefficient arguments}\label{Subsect: freezing coefficient method}
In Section \ref{Subsect: freezing coefficient method}, we appeal to freezing coefficient methods when an average $(Du_{\varepsilon})_{x_{0},\,\rho}$ does not vanish. We introduce a harmonic mapping $v_{\varepsilon}$ near $x_{0}$, and obtain an error estimate for a comparison function $u_{\varepsilon}-v_{\varepsilon}$. 

When computing errors from a comparison function, we have to deduce higher integrability estimates on $\lvert Du_{\varepsilon}-(Du_{\varepsilon})_{x_{0},\,\rho}\rvert$, which can be justified by applying so called Gehring's lemma. 
\begin{lemma}[Gehring's Lemma]\label{Lemma: Gehring lemma}
Let \(B=B_{R}(x_{0})\subset{\mathbb R}^{n}\) be an open ball, and non-negative function \(g,\,h\) satisfy \(g\in L^{s}(B),\,h\in L^{{\tilde s}}(B)\) with \(1<s<{\tilde s}\le \infty\). Suppose that there holds
\[\fint_{B_{r}(z_{0})}g^{s}{\mathrm d}x\le {\hat C}\mleft[\mleft(\fint_{B_{2r}(z_{0})}g\,{\mathrm d}x \mright)^{s}+\fint_{B_{2r}(z_{0})}h^{s}\,{\mathrm d}x\mright]\]
for all \(B_{2r}(z_{0})\subset B\). Here \({\hat C}\in(0,\,\infty)\) is a constant independent of \(z_{0}\in B\) and \(r>0\). Then there exists a sufficiently small positive number \(\varsigma=\varsigma(b,\,s,\,{\tilde s},\,n)\) such that \(g\in L_{\mathrm{loc}}^{\sigma_{0}}(B)\) with \(\sigma_{0}\coloneqq s(1+\varsigma)\in(s,\,{\tilde s})\). Moreover, for each \(\sigma\in(s,\,\sigma_{0})\), we have
\[\mleft(\fint_{B_{R/2}(x_{0})}g^{\sigma}{\mathrm d}x\mright)^{1/\sigma}\le C\mleft[\mleft(\fint_{B_{R}(x_{0})}g^{s}\,{\mathrm d}x\mright)^{1/s}+\mleft(\fint_{B_{R}(x_{0})}h^{\sigma}\,{\mathrm d}x\mright)^{1/\sigma}\mright],\]
where the constant $C\in(0,\,\infty)$ depends at most on $\sigma,\,n,\,s,\,{\tilde s}$ and ${\hat C}$.
\end{lemma}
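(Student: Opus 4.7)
The plan is to follow the classical Calder\'on--Zygmund stopping-time proof of Gehring's self-improvement lemma; this argument is by now standard (cf.\ Giaquinta's or Giusti's monographs on regularity), but I will outline how I would adapt it to the present statement with constants depending only on $s,\,\tilde s,\,n$ and $\hat C$. First, by a routine cutoff reduction I would replace $B=B_{R}(x_{0})$ with the concentric ball $B_{R/2}(x_{0})$ and fix a scale $r_{\star}$ so that, for every $z_{0}\in B_{R/2}(x_{0})$ and $0<r\le r_{\star}$, the enlarged ball $B_{2r}(z_{0})$ stays inside $B$. Set $t_{0}\coloneqq \fint_{B} g^{s}\,{\mathrm d}x+\fint_{B}h^{s}\,{\mathrm d}x$, which quantifies the natural threshold above which a stopping time can be run.

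Next, for each $t\ge c_{0}t_{0}$ I would perform a Calder\'on--Zygmund decomposition on $B_{R/2}(x_{0})$ at level $t$: by continuity of averages and a Vitali covering argument, there exists a family of pairwise disjoint balls $\{B_{r_{i}}(z_{i})\}$ with $z_{i}\in B_{R/2}(x_{0})$ and $2r_{i}\le r_{\star}$ such that
\[\fint_{B_{r_{i}}(z_{i})}g^{s}\,{\mathrm d}x>t,\quad \fint_{B_{r}(z_{i})}g^{s}\,{\mathrm d}x\le t\,\,\text{for all}\,\,r\in(r_{i},\,r_{\star}\rbrack,\]
and moreover $\{g^{s}>t\}\subset \bigcup_{i}5B_{i}$ (up to a null set) by Lebesgue differentiation. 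Stopping at $B_{r_{i}}(z_{i})$ gives $\fint_{B_{r_{i}}}g^{s}\sim t$, and applying the assumed reverse H\"older inequality on $B_{2r_{i}}(z_{i})\subset B$ yields
\[t<\fint_{B_{r_{i}}(z_{i})}g^{s}\,{\mathrm d}x\le \hat C\mleft[\mleft(\fint_{B_{2r_{i}}(z_{i})}g\,{\mathrm d}x\mright)^{s}+\fint_{B_{2r_{i}}(z_{i})}h^{s}\,{\mathrm d}x\mright].\]
Splitting $g=g\chi_{\{g^{s}\le \eta t\}}+g\chi_{\{g^{s}>\eta t\}}$ and $h=h\chi_{\{h^{s}\le \eta t\}}+h\chi_{\{h^{s}>\eta t\}}$ for a small $\eta\in(0,\,1)$ to be chosen, and using the stopping property to absorb the small part, I would extract the good-$\lambda$ distributional inequality
\[\int_{\{g^{s}>\lambda t\}}g^{s}\,{\mathrm d}x\le \theta \int_{\{g^{s}>t\}}g^{s}\,{\mathrm d}x+K\int_{\{h^{s}>\eta t\}}h^{s}\,{\mathrm d}x\]
for some $\theta<1$, with $\lambda,\,\eta,\,K$ depending only on $\hat C,\,s,\,n$.

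Third, I would multiply by $t^{\varsigma-1}$ and integrate over $t\in\lbrack t_{0},\,\infty)$, and then invoke Fubini to rewrite both sides as weighted $L^{s(1+\varsigma)}$-norms of $g$ and $h$ on their respective superlevel sets. This yields
\[\int g^{s(1+\varsigma)}\,{\mathrm d}x\le \theta\lambda^{s\varsigma}\int g^{s(1+\varsigma)}\,{\mathrm d}x+(\text{lower-order in }t_{0})+C\int h^{s(1+\varsigma)}\,{\mathrm d}x,\]
and the key point is that choosing $\varsigma$ so small that $\theta\lambda^{s\varsigma}<1/2$ (possible because $\theta<1$ is fixed and $\lambda^{s\varsigma}\to 1$ as $\varsigma\to 0$) lets me absorb the first term. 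The condition $\varsigma<(\tilde s/s-1)$ ensures $s(1+\varsigma)<\tilde s$, so that the integrals of $h^{s(1+\varsigma)}$ are finite by hypothesis. Rescaling back to $B_{R/2}(x_{0})$ and $B_{R}(x_{0})$ via the choice of $t_{0}$ produces the quantitative estimate stated in the lemma.

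The main obstacle is the careful bookkeeping of constants to guarantee that $\varsigma$ depends only on $s,\,\tilde s,\,n,\,\hat C$ and is independent of $g,\,h$, together with the truncation at $t_{0}$ which produces the $\lVert g\rVert_{L^{s}}$ contribution on the right-hand side. The covering/stopping-time step is delicate but routine once one restricts to a slightly smaller ball so that all doubled stopping balls remain inside $B$; the absorbing step is the decisive analytical input, and it forces the upper bound on $\varsigma$.
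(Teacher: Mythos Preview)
Your outline is correct and follows the classical Calder\'on--Zygmund stopping-time route to Gehring's self-improvement, which is exactly the mechanism the paper has in mind: the paper does not supply its own proof here but simply cites \cite[Theorem 3.3]{MR2173373}, noting that the argument there is ``based on ball decompositions'' (see also \cite[\S 6.4]{MR1962933}). Your Vitali/stopping-time decomposition, good-$\lambda$ inequality, and absorption by choosing $\varsigma$ small are precisely the ingredients of that standard proof, so there is no discrepancy to discuss.
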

The proof of Gehring's lemma is found in \cite[Theorem 3.3]{MR2173373}, which is based on ball decompositions \cite[Lemma 3.1]{MR2173373} and generally works for a metric space with a doubling measure (see also \cite[\S 6.4]{MR1962933}).

By applying Lemma \ref{Lemma: Gehring lemma}, we prove Lemma \ref{Lemma: Higher integrability}. 
\begin{lemma}[Higher integrability lemma]\label{Lemma: Higher integrability}
Let $u_{\varepsilon}$ be a weak solution to (\ref{Eq (Section 2) Approximated system}). Assume that positive numbers $\delta,\,\varepsilon,\,\mu,\,M,\,F$ and an open ball $B_{\rho}(x_{0})\Subset\Omega$ satisfy (\ref{Eq (Section 2) delta-epsilon}), (\ref{Eq (Section 2) control of f}) and (\ref{Eq (Section 2) esssup V-epsilon})--(\ref{Eq (Section 2) delta<mu}).
Fix a matrix $\xi\in{\mathbb R}^{Nn}$ with
\begin{equation}\label{Eq (Section 4) xi}
\delta+\frac{\mu}{4}\le \lvert\xi\rvert\le \delta+\mu.
\end{equation}
Then, there exists a constant \(\vartheta=\vartheta(b,\,n,\,N,\,p,\,q,\,\gamma,\,\Gamma,\,M,\,\delta)\) such that
\begin{equation}\label{Eq (Section 4) integrability up}
0<\vartheta\le \min\mleft\{\,\frac{1}{2},\,\beta,\,\beta_{0}\,\mright\}<1,
\end{equation}
and 
\begin{equation}\label{Eq (Section 4) Higher integrability result}
\fint_{B_{\rho/2}(x_{0})}\lvert Du_{\varepsilon}-\xi\rvert^{2(1+\vartheta)}\,{\mathrm d}x\le C\mleft[\mleft(\fint_{B_{\rho}(x_{0})}\lvert Du_{\varepsilon}-\xi\rvert^{2}\,{\mathrm d}x \mright)^{1+\vartheta}+F^{2(1+\vartheta)}\rho^{2\beta(1+\vartheta)}\mright].
\end{equation}
Here the constant \(C\in(0,\,\infty)\) depends at most on \(b,\,n,\,N,\,p,\,q,\,\gamma,\,\Gamma,\,M,\,\delta\).
\end{lemma}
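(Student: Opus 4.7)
The plan is to derive a reverse H\"{o}lder inequality of the form
\[
\fint_{B_{r}(z_{0})}g^{s}\,\mathrm{d}x\le {\hat C}\mleft[\mleft(\fint_{B_{2r}(z_{0})}g\,\mathrm{d}x\mright)^{s}+\fint_{B_{2r}(z_{0})}h^{s}\,\mathrm{d}x\mright]
\]
with $g\coloneqq\lvert Du_{\varepsilon}-\xi\rvert^{\frac{2n}{n+2}}$ and $s\coloneqq\frac{n+2}{n}$, valid on every ball $B_{2r}(z_{0})\subset B_{\rho/2}(x_{0})$ used as a Gehring ball (one must work on a slightly enlarged ball, e.g. $B_{\rho}(x_{0})$, to absorb the $f_{\varepsilon}$-dependent term), and then to invoke Lemma~\ref{Lemma: Gehring lemma} to improve the integrability exponent by a small constant $\varsigma>0$. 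Taking $\vartheta\in(0,\varsigma)$ small enough to satisfy (\ref{Eq (Section 4) integrability up}) yields (\ref{Eq (Section 4) Higher integrability result}).

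First, I would subtract an affine comparison map. Since $A_{\varepsilon}(\xi)$ is constant and $\phi\in W^{1,2}_{0}$ has compact support, the weak formulation (\ref{Eq (Section 3) Weak formulation local}) rewrites as
\[
\int\mleft\langle A_{\varepsilon}(Du_{\varepsilon})-A_{\varepsilon}(\xi)\mathrel{}\middle|\mathrel{}D\phi\mright\rangle\,\mathrm{d}x=\int\langle f_{\varepsilon}\mid\phi\rangle\,\mathrm{d}x.
\]
Pick a Lipschitz cut-off $\eta\in C_{c}^{1}(B_{2r}(z_{0}))$ with $\eta\equiv 1$ on $B_{r}(z_{0})$ and $\lvert\nabla\eta\rvert\le 2/r$, set $w\coloneqq u_{\varepsilon}-\xi\cdot(\,\cdot\,)-(u_{\varepsilon}-\xi\cdot(\,\cdot\,))_{B_{2r}(z_{0})}$, and test $\phi\coloneqq\eta^{2}w$. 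Because $\xi$ satisfies (\ref{Eq (Section 4) xi}) and $\lvert Du_{\varepsilon}\rvert\le V_{\varepsilon}\le M$ a.e. by (\ref{Eq (Section 2) esssup V-epsilon}), the variables $(\xi_{0},\xi_{1})=(\xi,Du_{\varepsilon})$ fall in the range where Lemma~\ref{Lemma: Error estimates}~(\ref{Item 1/2 (Section 2) Monotonicity and Growth estimates}) applies. Estimating the left-hand side from below by the monotonicity bound (\ref{Eq (Section 2) Monotonicity outside}) and the cross term via the growth bound (\ref{Eq (Section 2) Growth outside}), and absorbing through Young's inequality, I obtain the Caccioppoli estimate
\[
\int_{B_{r}(z_{0})}\lvert Du_{\varepsilon}-\xi\rvert^{2}\,\mathrm{d}x\le C\mleft[\frac{1}{r^{2}}\int_{B_{2r}(z_{0})}\lvert w\rvert^{2}\,\mathrm{d}x+\int_{B_{2r}(z_{0})}\lvert f_{\varepsilon}\rvert\lvert w\rvert\,\mathrm{d}x\mright],
\]
with $C=C(b,p,\gamma,\Gamma,M,\delta)$.

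Next, the Poincar\'{e}--Sobolev inequality (\ref{Eq: Poincare--Sobolev inequality}) applied to $w$ turns the first right-hand term into $C r^{2}\mleft(\fint_{B_{2r}(z_{0})}\lvert Du_{\varepsilon}-\xi\rvert^{\frac{2n}{n+2}}\mright)^{\frac{n+2}{n}}$; for the $f_{\varepsilon}$ term, I would combine H\"{o}lder's inequality with $f_{\varepsilon}\in L^{q}$ (and use $q>n$ together with the Sobolev embedding in the scalar case $q=\infty$, using the auxiliary exponent ${\hat\beta}_{0}$) so as to extract a factor $\rho^{2\beta}F^{2}$ plus an absorbable piece. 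Dividing by $\lvert B_{r}\rvert$, this yields the reverse H\"{o}lder inequality displayed above with $h$ a fixed multiple of $F\rho^{\beta}$ times the characteristic function of $B_{\rho}(x_{0})$. Gehring's lemma then upgrades $g$ to $L^{s(1+\varsigma)}_{\mathrm{loc}}$ on $B_{\rho/2}(x_{0})$; noting that $s(1+\varsigma)\cdot\frac{2n}{n+2}=2(1+\varsigma)$, and choosing $\vartheta\in(0,\varsigma)$ small enough to satisfy (\ref{Eq (Section 4) integrability up}), gives precisely (\ref{Eq (Section 4) Higher integrability result}).

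The main technical point is the careful handling of the $f_{\varepsilon}$ contribution: one must massage the term $\int\eta^{2}\lvert f_{\varepsilon}\rvert\lvert w\rvert$ so that, after H\"{o}lder and Sobolev, it produces an $L^{s}$-average of $h=cF\rho^{\beta}\chi_{B_{\rho}(x_{0})}$ rather than of $\lvert f_{\varepsilon}\rvert$ itself (which would only belong to $L^{q/2}$, possibly below the Gehring exponent); this is standard but requires keeping track of the scaling $r\le\rho$, and it is the only place where the exponent $\beta$ enters. All other steps are routine given Lemma~\ref{Lemma: Error estimates} and Lemma~\ref{Lemma: Gehring lemma}.
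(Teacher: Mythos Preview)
Your proof is correct and follows the same route as the paper: test $\phi=\eta^{2}w$ in the shifted weak formulation, use Lemma~\ref{Lemma: Error estimates}~(\ref{Item 1/2 (Section 2) Monotonicity and Growth estimates}) for the monotonicity/growth bounds to obtain a Caccioppoli inequality, combine with Poincar\'{e}--Sobolev (\ref{Eq: Poincare--Sobolev inequality}) to reach a reverse H\"{o}lder inequality, and conclude via Gehring's lemma. The only difference is cosmetic and concerns the handling of the force term: the paper keeps $h\coloneqq\lvert\rho f_{\varepsilon}\rvert^{2n/(n+2)}$, which lies in $L^{\tilde s}$ with $\tilde s=q(n+2)/(2n)>s$ (so your worry that this ``would only belong to $L^{q/2}$, possibly below the Gehring exponent'' is unfounded, since $q>n\ge 2$), and applies H\"{o}lder to the $f_{\varepsilon}$-average only \emph{after} Gehring; you instead estimate $r^{2}\fint_{B_{2r}}\lvert f_{\varepsilon}\rvert^{2}\le CF^{2}r^{2\beta}\le CF^{2}\rho^{2\beta}$ first and run Gehring with a constant $h$. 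Both orderings give the same conclusion.
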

\begin{proof}
It suffices to prove  
\begin{equation}\label{Eq (Section 4) Claim for Gehring's lemma}
\fint_{B_{r/2}(z_{0})}\lvert Du_{\varepsilon}-\xi\rvert^{2}\,{\mathrm d}x\le {\hat C}\mleft[\mleft(\fint_{B_{r}(z_{0})}\lvert Du_{\varepsilon}-\xi\rvert^{\frac{2n}{n+2}}\,{\mathrm d}x\mright)^{\frac{n+2}{n}}+\fint_{B_{r}(z_{0})}\lvert \rho f_{\varepsilon}\rvert^{2}\,\mathrm{d}x\mright] 
\end{equation}
for any \(B_{r}(z_{0})\subset B\coloneqq B_{\rho}(x_{0})\), where \({\hat C}={\hat C}(b,\,n,\,p,\,\gamma,\,\Gamma,\,M,\,\delta)\in(0,\,\infty)\) is a constant.
In fact, this result enables us to apply Lemma \ref{Lemma: Gehring lemma} with \((s,\,{\tilde s})\coloneqq (1+2/n,\,q(n+2)/2n)\), \(g\coloneqq \lvert Du_{\varepsilon}-\xi\rvert^{\frac{2n}{n+2}}\in L^{s}(B),\,h\coloneqq \lvert \rho f_{\varepsilon}\rvert^{\frac{2n}{n+2}}\in L^{\tilde s}(B)\), we are able to find a small exponent \(\vartheta=\vartheta(n,\,q,\,C_{\ast})>0\) and a constant \(C=C(n,\,q,\,\vartheta,\,C_{\ast})\in(0,\,\infty)\) such that there hold (\ref{Eq (Section 4) integrability up}) and \[\fint_{B_{\rho/2}(x_{0})}\lvert Du_{\varepsilon}-\xi\rvert^{2(1+\vartheta)}\,{\mathrm d}x\le C\mleft[\mleft(\fint_{B_{\rho}(x_{0})}\lvert Du_{\varepsilon}-\xi\rvert^{2}\,{\mathrm d}x \mright)^{1+\vartheta}+\fint_{B_{\rho}(x_{0})}\lvert \rho f_{\varepsilon}\rvert^{2(1+\vartheta)}\,{\mathrm d}x \mright].\]
We note that $\vartheta\le \beta$ from (\ref{Eq (Section 4) integrability up}) yields $2(1+\vartheta)\le q$, and therefore $\lvert\rho f_{\varepsilon}\rvert^{2(1+\vartheta)}$ is integrable in $B_{\rho}(x_{0})$. Moreover, by H\"{o}lder's inequality, we have
\[\fint_{B_{\rho}(x_{0})}\lvert\rho f_{\varepsilon}\rvert^{2(1+\vartheta)}\,{\mathrm d}x\le C(n,\,\vartheta)F^{2(1+\vartheta)}\rho^{2\beta(1+\vartheta)},\]
from which (\ref{Eq (Section 4) Higher integrability result}) easily follows.

To prove (\ref{Eq (Section 4) Claim for Gehring's lemma}), for each fixed ball $B_{r}(z_{0})\subset B$, we set a function $w_{\varepsilon}\in W^{1,\,\infty}(B_{r}(z_{0});\,{\mathbb R}^{N})$ by
\[w_{\varepsilon}(x)\coloneqq u_{\varepsilon}(x)-(u_{\varepsilon})_{z_{0},\,r}-\xi\cdot (x-z_{0})\quad \textrm{for }x\in B_{r}(z_{0}).\]
Clearly $Dw_{\varepsilon}=Du_{\varepsilon}-\xi$ holds.
We choose a cutoff function $\eta\in C_{c}(B_{r}(z_{0}))$ satisfying
\[\eta\equiv 1\quad \textrm{on }B_{r/2}(z_{0}),\quad\textrm{and}\quad 0\le\eta\le 1,\quad  \lvert\nabla\eta\rvert\le \frac{4}{r}\quad \textrm{in }B_{r}(z_{0}),\]
and test $\phi\coloneqq \eta^{2}w_{\varepsilon}\in W_{0}^{1,\,1}(B_{\rho}(x_{0});\,{\mathbb R}^{N})$ into (\ref{Eq (Section 3) Weak formulation local}). Then we have
\begin{align*}
0&=\int_{B_{r}(z_{0})}\langle A_{\varepsilon}(Du_{\varepsilon})-A_{\varepsilon}(\xi)\mid D\phi\rangle\,{\mathrm{d}}x-\int_{B_{r}(z_{0})}\langle f_{\varepsilon}\mid \phi\rangle\,{\mathrm{d}}x\\&=\int_{B_{r}(z_{0})}\eta^{2}\langle A_{\varepsilon}(Du_{\varepsilon})-A_{\varepsilon}(\xi)\mid Du_{\varepsilon}-\xi\rangle\,{\mathrm{d}}x\\&\quad +2\int_{B_{r}(z_{0})}\eta\langle A_{\varepsilon}(Du_{\varepsilon})-A_{\varepsilon}(\xi)\mid w_{\varepsilon}\otimes\nabla\eta\rangle\,{\mathrm{d}}x-\int_{B_{r}(z_{0})}\eta^{2}\langle f_{\varepsilon}\mid w_{\varepsilon}\rangle\,{\mathrm{d}}x\\&\eqqcolon {\mathbf J}_{1}+{\mathbf J}_{2}+{\mathbf J}_{3}.
\end{align*}
Here it should be mentioned that since there clearly hold $\delta\le \lvert\xi\rvert\le M$, and $\lvert Du_{\varepsilon}\rvert\le M$ a.e. in $B$, we are able to apply (\ref{Eq (Section 2) Monotonicity outside}) and (\ref{Eq (Section 2) Growth outside}) in Lemma \ref{Lemma: Error estimates} to ${\mathbf J}_{1}$ and ${\mathbf J}_{2}$ respectively. Then, by applying Young's inequality to ${\mathbf J}_{2}$ and ${\mathbf J}_{3}$, and making a standard absorbing argument (see \cite[Lemma 3.7]{T-scalar} for detailed computations), we are able to obtain
\begin{align*}
\fint_{B_{r/2}(z_{0})}\mleft\lvert Dw_{\varepsilon}\mright\rvert^{2}\,{\mathrm d}x&\le 2^{n}\fint_{B_{r}(z_{0})}\mleft\lvert D(\eta w_{\varepsilon})\mright\rvert^{2}\,{\mathrm d}x\\&\le C\mleft[\fint_{B_{r}(z_{0})}\mleft(\lvert\nabla \eta\rvert^{2}+\frac{\eta^{2}}{r^{2}}\mright) \lvert w_{\varepsilon}\rvert^{2}\,{\mathrm d}x+r^{2}\fint_{B_{r}(z_{0})}\eta^{2}\lvert f_{\varepsilon}\rvert^{2}\,{\mathrm d}x \mright]\\&\le C\mleft[r^{-2}\fint_{B_{r}(z_{0})} \lvert w_{\varepsilon}\rvert^{2}\,{\mathrm d}x+\fint_{B_{r}(z_{0})}\lvert \rho f_{\varepsilon}\rvert^{2}\,{\mathrm d}x\mright]\\&\le {\hat C}^{2}\mleft[\mleft(\fint_{B_{r}(z_{0})}\lvert Dw_{\varepsilon}\rvert^{\frac{2n}{n+2}}\,{\mathrm{d}}x\mright)^{\frac{n+2}{2n}}+\mleft(\fint_{B_{r}(z_{0})}\lvert \rho f_{\varepsilon}\rvert^{2}\,{\mathrm d}x\mright)^{1/2}\mright]^{2}
\end{align*}
for some constant \({\hat C}\in(0,\,\infty)\) depending on $n$, $C_{1}$, and $C_{2}$. Here we have applied (\ref{Eq: Poincare--Sobolev inequality}) to the function $w_{\varepsilon}$ to obtain the last inequality. Recalling \(Dw_{\varepsilon}=Du_{\varepsilon}-\zeta\), we finally conclude that (\ref{Eq (Section 4) Claim for Gehring's lemma}) holds for any open ball \(B_{r}(z_{0})\subset B\).
\end{proof}
From Lemma \ref{Lemma: Higher integrability}, we would like to deduce a comparison estimate in Lemma \ref{Lemma: Perturbation result}.
\begin{lemma}\label{Lemma: Perturbation result}
Let $u_{\varepsilon}$ be a weak solution to (\ref{Eq (Section 2) Approximated system}). Assume that positive numbers $\delta$, $\varepsilon$, $\mu$, $M$, $F$, and an open ball $B_{\rho}(x_{0})\Subset\Omega$ satisfy (\ref{Eq (Section 2) delta-epsilon}), (\ref{Eq (Section 2) control of f}), (\ref{Eq (Section 2) esssup V-epsilon})--(\ref{Eq (Section 2) delta<mu}), and 
\begin{equation}\label{Eq (Section 4) average assumption}
\delta+\frac{\mu}{4}\le \mleft\lvert (Du_{\varepsilon})_{x_{0},\,\rho}\mright\rvert\le \delta+\mu.
\end{equation}
Consider the Dirichlet boundary value problem
\begin{equation}\label{Eq (Section 4) Dirichlet boundary problem harmonic system}
\mleft\{\begin{array}{rclcc}
-\divx\mleft({\mathcal B}_{\varepsilon}\mleft((Du_{\varepsilon})_{x_{0},\,\rho}\mright)Dv_{\varepsilon}\mright) & = & 0 &\textrm{in} &B_{\rho/2}(x_{0}),\\ 
v_{\varepsilon} & = & u_{\varepsilon} & \textrm{on} & \partial B_{\rho/2}(x_{0}).
\end{array} \mright.
\end{equation}
Then, there exists a unique function $v_{\varepsilon}\in u_{\varepsilon}+W_{0}^{1,\,2}(\Omega;\,{\mathbb R}^{N})$ that solves (\ref{Eq (Section 4) Dirichlet boundary problem harmonic system}). Moreover, we have
\begin{equation}\label{Eq (Section 4) Comparison estimate}
\fint_{B_{\rho/2}(x_{0})}\lvert Du_{\varepsilon}-Dv_{\varepsilon}\rvert^{2}\,{\mathrm{d}}x\le C\mleft\{ \mleft[\frac{\Phi(x_{0},\,\rho)}{\mu^{2}}\mright]^{\vartheta}\Phi(x_{0},\,\rho)+\mleft(F^{2}+F^{2(1+\vartheta)} \mright)\rho^{2\beta} \mright\},
\end{equation}
and
\begin{equation}\label{Eq (Section 4) Decay of harmonic mappings}
\fint_{B_{\tau\rho}(x_{0})}\mleft\lvert Dv_{\varepsilon}-(Dv_{\varepsilon})_{x_{0},\,\tau\rho}\mright\rvert^{2}\,{\mathrm{d}}x\le C\tau^{2}\fint_{B_{\rho/2}(x_{0})}\mleft\lvert Dv_{\varepsilon}-(Dv_{\varepsilon})_{x_{0},\,\rho}\mright\rvert^{2}\,{\mathrm{d}}x
\end{equation}
for all $\tau\in(0,\,1/2\rbrack$. Here the exponent $\vartheta$ is given by Lemma \ref{Lemma: Higher integrability}, and the constants $C\in(0,\,\infty)$ in (\ref{Eq (Section 4) Comparison estimate})--(\ref{Eq (Section 4) Decay of harmonic mappings}) depends at most on $b$, $n$, $N$, $p$, $q$, $\gamma$, $\Gamma$, $M$, and $\delta$.
\end{lemma}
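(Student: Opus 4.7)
The plan is to dispatch existence of $v_\varepsilon$ and the decay \eqref{Eq (Section 4) Decay of harmonic mappings} by standard linear elliptic theory, and then obtain \eqref{Eq (Section 4) Comparison estimate} by a freezing-coefficient identity combined with the higher-integrability input of Lemma \ref{Lemma: Higher integrability}. Throughout write $\xi_{0}\coloneqq (Du_\varepsilon)_{x_0,\rho}$. By \eqref{Eq (Section 4) average assumption} we have $\delta+\mu/4\le\lvert\xi_{0}\rvert\le\delta+\mu\le M$, so applying \eqref{Eq (Section 2) Estimates on B-p-epsilon}--\eqref{Eq (Section 2) Estimates on B-1-epsilon} at $\xi=\xi_{0}$ gives uniform bounds $c_{1}\mathrm{id}_{Nn}\leqslant\mathcal{B}_\varepsilon(\xi_{0})\leqslant c_{2}\mathrm{id}_{Nn}$ with $c_{1},c_{2}\in(0,\infty)$ depending only on $b,p,\gamma,\Gamma,M,\delta$. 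Lax--Milgram then produces a unique $v_\varepsilon$ solving \eqref{Eq (Section 4) Dirichlet boundary problem harmonic system}; moreover $v_\varepsilon$ is real-analytic on $B_{\rho/2}(x_0)$ since the system is linear with constant symmetric strongly elliptic coefficients, and each $D_{\gamma}v_\varepsilon$ again solves the same system. Standard Caccioppoli--Campanato estimates whose constants depend only on the ratio $c_{2}/c_{1}$ give an interior Lipschitz bound for $Dv_\varepsilon$ on $B_{\rho/4}(x_0)$ in terms of the $L^{2}$ oscillation of $Dv_\varepsilon$ on $B_{\rho/2}(x_0)$, and \eqref{Eq (Section 4) Decay of harmonic mappings} then follows for $\tau\in(0,1/4]$ via the minimality property \eqref{Eq (Section 4) minimizing property on L2-average}; the range $\tau\in(1/4,1/2]$ is trivial.

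For the comparison estimate, set $w\coloneqq u_\varepsilon-v_\varepsilon\in W_{0}^{1,2}(B_{\rho/2}(x_0);\mathbb{R}^{N})$. Subtracting the two weak formulations and testing against $w$, and noting that the constants $A_\varepsilon(\xi_{0})$ and $\mathcal{B}_\varepsilon(\xi_{0})\xi_{0}$ drop out after integration by parts, I would decompose
\[
A_\varepsilon(Du_\varepsilon)-\mathcal{B}_\varepsilon(\xi_{0})Dv_\varepsilon=\mathcal{B}_\varepsilon(\xi_{0})Dw+\mathcal{E},\qquad \mathcal{E}\coloneqq A_\varepsilon(Du_\varepsilon)-A_\varepsilon(\xi_{0})-\mathcal{B}_\varepsilon(\xi_{0})(Du_\varepsilon-\xi_{0}),
\]
to arrive at
\[
\int_{B_{\rho/2}(x_0)}\langle\mathcal{B}_\varepsilon(\xi_{0})Dw\mid Dw\rangle\,\mathrm{d}x=-\int_{B_{\rho/2}(x_0)}\langle\mathcal{E}\mid Dw\rangle\,\mathrm{d}x+\int_{B_{\rho/2}(x_0)}\langle f_\varepsilon\mid w\rangle\,\mathrm{d}x.
\]
Bounding the left-hand side below by $c_{1}\int\lvert Dw\rvert^{2}\,\mathrm{d}x$, applying the Hessian-remainder estimate \eqref{Eq (Section 2) Hessian errors} to $\mathcal{E}$ with $\xi_{1}=Du_\varepsilon$ (its hypotheses \eqref{Eq (Section 2) Variable conditions in error estimates} hold by \eqref{Eq (Section 4) average assumption} and \eqref{Eq (Section 2) esssup V-epsilon}), Young's inequality on the error integral, and H\"older--Poincar\'e--Young on the $f_\varepsilon$-integral, a routine absorption yields
\[
\fint_{B_{\rho/2}(x_0)}\lvert Dw\rvert^{2}\,\mathrm{d}x\le C\mu^{-2\beta_{0}}\fint_{B_{\rho/2}(x_0)}\lvert Du_\varepsilon-\xi_{0}\rvert^{2(1+\beta_{0})}\,\mathrm{d}x+CF^{2}\rho^{2\beta}.
\]

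The hard part is to replace the Hessian-error exponent $\beta_{0}$ by the Gehring exponent $\vartheta$ from Lemma \ref{Lemma: Higher integrability}, because \eqref{Eq (Section 4) Higher integrability result} controls $Du_\varepsilon-\xi_{0}$ only at the integrability exponent $2(1+\vartheta)$ and \eqref{Eq (Section 4) integrability up} forces $\vartheta\le\beta_{0}$. To bridge this gap I would exploit the elementary pointwise bound $\lvert Du_\varepsilon-\xi_{0}\rvert\le 4\mu$, immediate from \eqref{Eq (Section 2) esssup V-epsilon} and \eqref{Eq (Section 4) average assumption}, to interpolate
\[
\lvert Du_\varepsilon-\xi_{0}\rvert^{2(1+\beta_{0})}\le (4\mu)^{2(\beta_{0}-\vartheta)}\lvert Du_\varepsilon-\xi_{0}\rvert^{2(1+\vartheta)}\quad \text{a.e.\ in }B_{\rho/2}(x_0).
\]
Inserting this bound and invoking \eqref{Eq (Section 4) Higher integrability result}, then using $\mu\ge\delta$ to absorb the residual $\mu^{-2\vartheta}$ factor into the constant together with $\rho^{2\beta(1+\vartheta)}\le\rho^{2\beta}$ from $\rho<1$, yields the claimed estimate \eqref{Eq (Section 4) Comparison estimate}.
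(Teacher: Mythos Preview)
Your proposal is correct and follows essentially the same approach as the paper's own proof: freeze the coefficients at $\xi_{0}=(Du_{\varepsilon})_{x_{0},\rho}$, invoke standard constant-coefficient elliptic theory for existence and the Campanato decay \eqref{Eq (Section 4) Decay of harmonic mappings}, derive the freezing-coefficient identity for $w=u_{\varepsilon}-v_{\varepsilon}$, bound the remainder via the Hessian-error estimate \eqref{Eq (Section 2) Hessian errors}, then use the pointwise bound $\lvert Du_{\varepsilon}-\xi_{0}\rvert\le 4\mu$ to interpolate the exponent $2(1+\beta_{0})$ down to $2(1+\vartheta)$ before applying Lemma~\ref{Lemma: Higher integrability}. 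The only cosmetic differences are that the paper uses Cauchy--Schwarz plus division rather than Young's inequality for the absorption, and tracks an explicit factor $\mu^{p-2}$ in the lower ellipticity bound of $\mathcal{B}_{\varepsilon}(\xi_{0})$; since $\delta<\mu<M$, both routes produce constants depending on the same parameters.
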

Before showing Lemma \ref{Lemma: Perturbation result}, we mention that our analysis on perturbation arguments is based on the assumption (\ref{Eq (Section 4) average assumption}). It is easy to estimate $\mleft\lvert (Du_{\varepsilon})_{x_{0},\,\rho}\mright\rvert$ by above. In fact, by (\ref{Eq (Section 2) esssup V-epsilon}) we have
\[\mleft\lvert (Du_{\varepsilon})_{x_{0},\,\rho}\mright\rvert\le \fint_{B_{\rho}(x_{0})}\mleft\lvert Du_{\varepsilon}\mright\rvert\,{\mathrm{d}}x\le \fint_{B_{\rho}(x_{0})}V_{\varepsilon}\,{\mathrm{d}}x\le \delta+\mu.\]
To estimate the value $\mleft\lvert (Du_{\varepsilon})_{x_{0},\,\rho}\mright\rvert$ by below, however, we have to make careful computations, which are based on the measure assumption (\ref{Eq (Section 2) Measure condition 2}) and energy estimates as in Section \ref{Subsect: Energy estimates}. 
The condition $\mleft\lvert (Du_{\varepsilon})_{x_{0},\,\rho}\mright\rvert\ge \delta+\mu/4$ is to be justified later in Sections \ref{Subsect: Shrinking lemmata}--\ref{Subsect: Proof of Campanato-decay}.
\begin{proof}
We set $\xi\coloneqq (Du_{\varepsilon})_{x_{0},\,\rho}\in{\mathbb R}^{Nn}$. 
By (\ref{Eq (Section 2) delta-epsilon}), (\ref{Eq (Section 2) delta<mu}) and (\ref{Eq (Section 4) average assumption}), it is easy to check \(\delta/4\le \mu/4\le \sqrt{\varepsilon^{2}+\lvert \xi\rvert^{2}}\le \delta+(\delta+\mu)\le \delta+M\). Hence, the matrix ${\mathcal B}_{\varepsilon}(\xi)$ admits a constant $m\in(0,\,1)$, depending at most on $b$, $p$, $\gamma$, $\Gamma$, $M$, and $\delta$, such that there holds \(m{\mathrm{id}}_{Nn}\leqslant {\mathcal B}_{\varepsilon}(Du_{\varepsilon})\leqslant m^{-1}{\mathrm{id}}_{Nn}\). In particular, the matrix ${\mathcal B}_{\varepsilon}(\xi)$ satisfies the Legendre condition, and hence unique existence of the Dirichlet problem (\ref{Eq (Section 4) Dirichlet boundary problem harmonic system}) follows from \cite[Theorem 3.39]{MR3099262}. 
Moreover, since the coefficient matrix ${\mathcal B}_{\varepsilon}(\xi)$ is constant and satisfies the Legendre--Hadamard condition, it is easy to find a constant $C=C(n,\,N,\,m)\in(0,\,\infty)$ such that (\ref{Eq (Section 4) Decay of harmonic mappings}) holds (see e.g., \cite[Lemma 2.17]{MR3887613}, \cite[Proposition 5.8]{MR3099262}).

To prove (\ref{Eq (Section 4) Comparison estimate}), we first check $l_{0}\mu^{p-2}{\mathrm{id}}_{Nn} \leqslant{\mathcal B}_{\varepsilon}(\xi)$ for some constant $l_{0}=l_{0}(p)\in(0,\,1)$. This can be easily deduced by \(\mu/4\le \sqrt{\varepsilon^{2}+\lvert \xi\rvert^{2}}\le \delta+(\delta+\mu)\le 5\mu\). Since $v_{\varepsilon}$ satisfies a weak formulation
\[\int_{B}\mleft\langle {\mathcal B}_{\varepsilon}(\xi)Dv_{\varepsilon}\mathrel{}\middle|\mathrel{} D\phi \mright\rangle\,{\mathrm{d}}x=0\quad \textrm{for all }\phi\in W_{0}^{1,\,2}(B;\,{\mathbb R}^{N}),\]
where we write $B\coloneqq B_{\rho/2}(x_{0})$ for notational simplicity, combining with (\ref{Eq (Section 3) Weak formulation local}), we have
\begin{align*}
&\int_{B}\mleft\langle {\mathcal B}_{\varepsilon}(\xi)(Du_{\varepsilon}-Dv_{\varepsilon})\mathrel{}\middle|\mathrel{} D\phi \mright\rangle\,{\mathrm{d}}x\\&=\int_{B}\mleft\langle {\mathcal B}_{\varepsilon}(\xi)(Du_{\varepsilon}-\xi)-(A_{\varepsilon}(Du_{\varepsilon})-A_{\varepsilon}(\xi)) \mathrel{}\middle|\mathrel{}D\phi \mright\rangle\,{\mathrm{d}}x+\int_{B}\langle f_{\varepsilon}\mid \phi\rangle\,{\mathrm{d}}x
\end{align*}
for all $\phi\in W_{0}^{1,\,2}(B;\,{\mathbb R}^{N})$. The assumptions (\ref{Eq (Section 2) esssup V-epsilon})--(\ref{Eq (Section 2) delta<mu}) and (\ref{Eq (Section 4) average assumption}) enable us to use (\ref{Eq (Section 2) Hessian errors}) in Lemma \ref{Lemma: Error estimates}. As a result, we are able to find a constant $C\in(0,\,\infty)$, depending at most on $b$, $p$, $\beta_{0}$, $\gamma$, $\Gamma$, $M$, and $\delta$, such that 
\[\int_{B}\mleft\langle {\mathcal B}_{\varepsilon}(\xi)(Du_{\varepsilon}-Dv_{\varepsilon})\mathrel{}\middle|\mathrel{} D\phi \mright\rangle\,{\mathrm{d}}x\le C\mu^{p-2-\beta_{0}}\int_{B}\lvert Du_{\varepsilon}-\xi\rvert^{1+\beta_{0}}\lvert D\phi\rvert\,{\mathrm{d}}x+\int_{B}\lvert f_{\varepsilon}\rvert\lvert \phi\rvert\,{{\mathrm d}}x\]
for all $\phi\in W_{0}^{1,\,2}(B;\,{\mathbb R}^{N})$.
Testing $\phi\coloneqq u_{\varepsilon}-v_{\varepsilon}\in W_{0}^{1,\,2}(B;\,{\mathbb R}^{N})$ into this weak formulation and using the Cauchy--Schwarz inequality, we compute
\begin{align*}
&l_{0}\gamma\mu^{p-2}\int_{B}\lvert Du_{\varepsilon}-Dv_{\varepsilon}\rvert^{2}\,{\mathrm{d}}x\\&\le C\mu^{p-2-\beta_{0}}\mleft(\int_{B}\lvert Du_{\varepsilon}-\xi\rvert^{2(1+\beta_{0})}\,{\mathrm{d}}x \mright)^{1/2}\mleft(\int_{B}\lvert Du_{\varepsilon}-Dv_{\varepsilon} \rvert^{2}\,{\mathrm{d}}x \mright)^{1/2}\\&\quad +C(n,\,q)F\rho^{\beta+\frac{n}{2}}\mleft(\int_{B}\lvert Du_{\varepsilon}-Dv_{\varepsilon} \rvert^{2}\,{\mathrm{d}}x \mright)^{1/2}.
\end{align*}
Here we have also applied the Poincar\'{e} inequality to the function $u_{\varepsilon}-v_{\varepsilon}\in W_{0}^{1,\,2}(B;\,{\mathbb R}^{N})$. Thus, we obtain
\[\fint_{B}\lvert Du_{\varepsilon}-Dv_{\varepsilon}\rvert^{2}\,{\mathrm{d}}x\le C\mleft[\frac{1}{\mu^{2\beta_{0}}}\fint_{B}\lvert Du_{\varepsilon}-\xi\rvert^{2(1+\beta_{0})}\,{\mathrm{d}}x+\mu^{2(2-p)}F^{2}\rho^{2\beta} \mright]\]
for some constant $C\in(0,\,\infty)$ depending at most on $b$, $n$, $N$, $p$, $q$, $\beta_{0}$, $\gamma$, $\Gamma$, $M$, and $\delta$. Since $\xi$ clearly satisfies (\ref{Eq (Section 4) xi}), we are able to apply Lemma \ref{Lemma: Gehring lemma}.
Also, by (\ref{Eq (Section 2) esssup V-epsilon})--(\ref{Eq (Section 2) delta<mu}) and (\ref{Eq (Section 4) integrability up}), it is easy to check that $2(1+\vartheta)\le 2(1+\beta_{0})$ and 
\[\lvert Du_{\varepsilon}-\xi\rvert\le 2(\delta+\mu)\le 4\mu\quad \textrm{a.e. in }B_{\rho}(x_{0}).\]
With these results in mind, we use H\"{o}lder's inequality and (\ref{Eq (Section 4) Higher integrability result}) to obtain
\begin{align*}
\fint_{B}\lvert Du_{\varepsilon}-Dv_{\varepsilon}\rvert^{2}\,{\mathrm{d}}x&\le C\mleft[\frac{c(n,\,\vartheta)}{\mu^{2\vartheta}}\fint_{B}\lvert Du_{\varepsilon}-\xi\rvert^{2(1+\vartheta)}\,{\mathrm{d}}x+\mu^{2(2-p)}F^{2}\rho^{2\beta} \mright]\\&\le C\mleft[\frac{\Phi(x_{0},\,\rho)^{1+\vartheta}}{\mu^{2\vartheta}}+\frac{F^{2(1+\vartheta)}\rho^{2\beta(1+\vartheta)}}{\mu^{2\vartheta}}+\mu^{2(2-p)}F^{2}\rho^{2\beta} \mright]
\end{align*}
for some $C\in(0,\,\infty)$. Here it is mentioned that $0<\rho\le 1$ and $\delta<\mu<M$ hold by (\ref{Eq (Section 2) esssup V-epsilon})--(\ref{Eq (Section 2) delta<mu}), and therefore (\ref{Eq (Section 4) Comparison estimate}) is verified.
\end{proof}

\subsection{Key lemmata in shrinking methods}\label{Subsect: Shrinking lemmata}
In Section \ref{Subsect: Shrinking lemmata}, we provide two lemmata on shrinking methods. To prove these, we use results from Sections \ref{Subsect: Energy estimates}--\ref{Subsect: freezing coefficient method}. 

The first lemma (Lemma \ref{Lemma: Shrinking lemma 1}) states that an average $(Du_{\varepsilon})_{x_{0},\,\rho}\in{\mathbb R}^{Nn}$ does not vanish under suitable settings. This result makes sure that our freezing coefficient argument given in Section \ref{Subsect: freezing coefficient method} will work. 
\begin{lemma}\label{Lemma: Shrinking lemma 1}
Let $u_{\varepsilon}$ be a weak solution to (\ref{Eq (Section 2) Approximated system}) in $\Omega$. Assume that positive numbers $\delta$, $\varepsilon$, $\mu$, $F$, $M$, and an open ball $B_{\rho}(x_{0})\Subset\Omega$ satisfy (\ref{Eq (Section 2) delta-epsilon}), (\ref{Eq (Section 2) control of f}), and (\ref{Eq (Section 2) esssup V-epsilon})--(\ref{Eq (Section 2) delta<mu}). Then, for each fixed $\theta\in(0,\,1/16)$, there exist numbers $\nu\in(0,\,1/4),\,{\hat\rho}\in(0,\,1)$, depending at most on $b$, $n$, $N$, $p$, $q$, $\gamma$,  $\Gamma$, $F$, $M$, $\delta$, and $\theta$, such that the following statement is valid. If both $0<\rho<{\hat\rho}$ and (\ref{Eq (Section 2) Measure condition 2}) hold, then we have
\begin{equation}\label{Eq (Section 4) result 1 average integral non-vanishing}
\lvert (Du_{\varepsilon})_{x_{0},\,\rho}\rvert\ge \delta+\frac{\mu}{2},
\end{equation}
and
\begin{equation}\label{Eq (Section 4) result 2 oscillation control}
\Phi(x_{0},\,\rho)\le \theta\mu^{2}.
\end{equation}
\end{lemma}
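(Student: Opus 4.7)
The plan is to first establish the oscillation bound (\ref{Eq (Section 4) result 2 oscillation control}) and then deduce the lower bound (\ref{Eq (Section 4) result 1 average integral non-vanishing}) from it. Both claims will be funneled through the elementary identity
\[
\mleft\lvert (Du_{\varepsilon})_{x_{0},\,\rho}\mright\rvert^{2} = \fint_{B_{\rho}(x_{0})}\lvert Du_{\varepsilon}\rvert^{2}\,{\mathrm d}x - \Phi(x_{0},\,\rho),
\]
which follows from $L^{2}$-orthogonality of the mean. So it suffices to bound $\Phi(x_{0},\,\rho)$ from above and $\fint_{B_{\rho}(x_{0})}\lvert Du_{\varepsilon}\rvert^{2}\,{\mathrm d}x$ from below.

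For the oscillation bound, the obstacle is that Lemma \ref{Lemma: Energy estimates 2} only yields information at scales $\tau\rho$ with $\tau<1$. I would apply it at $\tau=1-\eta$ for a small auxiliary parameter $\eta\in(0,\,1/n)$, giving $(1-\eta)^{n}\Phi(x_{0},\,(1-\eta)\rho)\le C_{\dagger}\mu^{2}[\nu^{2/n}/\eta^{2}+F^{2}\rho^{2\beta}/\nu]$. To pass from the inner ball to scale $\rho$, apply the minimizing property (\ref{Eq (Section 4) minimizing property on L2-average}) with $\xi=(Du_{\varepsilon})_{x_{0},\,(1-\eta)\rho}$ and split $B_{\rho}(x_{0})=B_{(1-\eta)\rho}(x_{0})\cup(B_{\rho}\setminus B_{(1-\eta)\rho})$. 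On the annulus, the crude pointwise bound $\lvert Du_{\varepsilon}\rvert\le \delta+\mu\le 2\mu$ from (\ref{Eq (Section 2) esssup V-epsilon})--(\ref{Eq (Section 2) delta<mu}) together with the volume estimate $\lvert B_{\rho}\setminus B_{(1-\eta)\rho}\rvert\le n\eta\lvert B_{\rho}\rvert$ (Bernoulli's inequality) contributes at most $Cn\eta\mu^{2}$. Choosing $\eta$ small enough that $Cn\eta\le\theta/3$, then $\nu$ small enough that $C_{\dagger}\nu^{2/n}/\eta^{2}\le\theta/3$, and finally $\hat\rho$ small enough that $C_{\dagger}F^{2}\rho^{2\beta}/\nu\le\theta/3$ for $\rho<\hat\rho$, produces (\ref{Eq (Section 4) result 2 oscillation control}).

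For the average lower bound, the measure condition (\ref{Eq (Section 2) Measure condition 2}) forces $V_{\varepsilon}>\delta+(1-\nu)\mu$ on $S_{\rho,\,\mu,\,\nu}(x_{0})$, and with $\varepsilon<\delta/4$ this upgrades to $\lvert Du_{\varepsilon}\rvert^{2}=V_{\varepsilon}^{2}-\varepsilon^{2}>(\delta+(1-\nu)\mu)^{2}-\delta^{2}/16$ there. Averaging and discarding the complement yields
\[
\fint_{B_{\rho}(x_{0})}\lvert Du_{\varepsilon}\rvert^{2}\,{\mathrm d}x \ge (1-\nu)\mleft[(\delta+(1-\nu)\mu)^{2}-\frac{\delta^{2}}{16}\mright].
\]
Combining with the identity and (\ref{Eq (Section 4) result 2 oscillation control}) gives a lower bound whose $\nu,\theta\to0$ limit is $(\delta+\mu)^{2}-\delta^{2}/16$. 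This strictly exceeds the target $(\delta+\mu/2)^{2}$, because the excess equals $\delta\mu+3\mu^{2}/4-\delta^{2}/16$, which is positive since $\mu>\delta$ by (\ref{Eq (Section 2) delta<mu}). A continuity/linearization argument in $\nu$ then shows that by further shrinking $\nu$ and $\hat\rho$ depending on $\theta$ (compatibly with the choices already made in the previous step), the target bound $\lvert(Du_{\varepsilon})_{x_{0},\,\rho}\rvert\ge\delta+\mu/2$ holds. The main obstacle is the layered bookkeeping of the three small parameters $\eta\to\nu\to\hat\rho$, since each is chosen in terms of the previous; the only delicate quantitative point is verifying that the positive gap $\delta\mu+3\mu^{2}/4-\delta^{2}/16$ survives the $\theta$-loss, which is where the hypothesis $\theta<1/16$ and $\mu>\delta$ interact.
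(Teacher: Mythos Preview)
Your proposal is correct and follows essentially the same route as the paper. For the oscillation bound~(\ref{Eq (Section 4) result 2 oscillation control}) your argument is identical to the paper's (where the paper writes $\tau$ for your $1-\eta$): apply Lemma~\ref{Lemma: Energy estimates 2} at an inner scale, pass to scale~$\rho$ via~(\ref{Eq (Section 4) minimizing property on L2-average}) with an annulus remainder controlled by $n\eta\mu^{2}$, then choose $\eta,\nu,\hat\rho$ in that order.

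For the lower bound~(\ref{Eq (Section 4) result 1 average integral non-vanishing}) you use the $L^{2}$ identity $\lvert(Du_{\varepsilon})_{x_{0},\rho}\rvert^{2}=\fint\lvert Du_{\varepsilon}\rvert^{2}-\Phi$, while the paper instead bounds the $L^{1}$ average $\fint\lvert Du_{\varepsilon}\rvert$ from below and transfers via $\bigl\lvert\fint\lvert Du_{\varepsilon}\rvert-\lvert(Du_{\varepsilon})_{x_{0},\rho}\rvert\bigr\rvert\le\sqrt{\Phi}$. The paper's pointwise bound on the superlevel set is also slightly sharper: from $\varepsilon+\lvert Du_{\varepsilon}\rvert\ge V_{\varepsilon}$ and $\varepsilon<\delta/4$ it reads off $\lvert Du_{\varepsilon}\rvert\ge(1-\nu)\mu+\tfrac{3}{4}\delta$ directly, avoiding your squared detour through $V_{\varepsilon}^{2}-\varepsilon^{2}$. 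Both variants are equivalent in strength; your observation that the gap $\delta\mu+\tfrac{3}{4}\mu^{2}-\tfrac{1}{16}\delta^{2}$ dominates the $\theta\mu^{2}$ and $O(\nu\mu^{2})$ losses (uniformly in $\mu\in(\delta,M)$, since the gap is $\ge\tfrac{11}{16}\mu^{2}$ by $\delta<\mu$) is exactly what makes the final parameter choice independent of~$\mu$, and the paper's $L^{1}$ route verifies the analogous inequality explicitly.
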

Although the proof of Lemma \ref{Lemma: Shrinking lemma 1} is inspired by \cite[Lemma 5.5]{BDGPdN}, it should be emphasized that on our regularized problem (\ref{Eq (Section 2) Approximated system}), we have to deal with two different moduli $\lvert Du_{\varepsilon}\rvert$ and $V_{\varepsilon}=\sqrt{\varepsilon^{2}+\lvert Du_{\varepsilon}\rvert^{2}}$, which is substantially different from \cite{BDGPdN}. 
Therefore, as mentioned in Section \ref{Subsect: Models and comparisons}, in the proof of Lemma \ref{Lemma: Shrinking lemma 1}, we have to carefully utilize (\ref{Eq (Section 2) delta-epsilon}), so that $\varepsilon$ can be suitably dominated by $\delta$. Also, it should be mentioned that our proof is substantially the same with \cite[Lemma 3.12]{T-scalar}, although there are some differences on ranges of $\varepsilon$ or $\theta$.
\begin{proof}
We will later choose constants $\tau\in(0,\,1),\,\nu\in(0,\,1/4),\,{\hat\rho}\in(0,\,1)$. 
By (\ref{Eq (Section 4) minimizing property on L2-average}), we have
\[\Phi(x_{0},\,\rho)\le\fint_{B_{\rho}(x_{0})}\mleft\lvert Du_{\varepsilon}-(Du_{\varepsilon})_{x_{0},\,\tau\rho}\mright\rvert^{2}\,{\mathrm d}x=\mathbf{J}_{1}+\mathbf{J}_{2}\]
with
\[\mleft\{\begin{array}{rcl}
\mathbf{J}_{1}&\coloneqq&\displaystyle\frac{1}{\lvert B_{\rho}(x_{0})\rvert}\displaystyle\int_{B_{\tau\rho}(x_{0})}\mleft\lvert Du_{\varepsilon}-(Du_{\varepsilon})_{x_{0},\,\tau\rho}\mright\rvert^{2}\,{\mathrm d}x, \\ \mathbf{J}_{2}&\coloneqq & \displaystyle\frac{1}{\lvert B_{\rho}(x_{0})\rvert}\displaystyle\int_{B_{\rho}(x_{0})\setminus B_{\tau\rho}(x_{0})}\mleft\lvert Du_{\varepsilon}-(Du_{\varepsilon})_{x_{0},\,\tau\rho}\mright\rvert^{2}\,{\mathrm d}x.
\end{array} \mright.\]
For \(\mathbf{J}_{1}\), we apply Lemma \ref{Lemma: Energy estimates 2} to obtain
\[\mathbf{J}_{1}=\tau^{n}\Phi(x_{0},\,\tau\rho)\le C_{\dagger}\mu^{2}\mleft[\frac{\nu^{2/n}}{(1-\tau)^{2}}+\frac{F^{2}}{\nu} \rho^{2\beta}\mright]\]
with \(C_{\dagger}=C_{\dagger}(b,\,n,\,N,\,p,\,\gamma,\,\Gamma,\,M,\,\delta)\in(0,\,\infty)\).
For \(\mathbf{J}_{2}\), we use (\ref{Eq (Section 2) esssup V-epsilon})--(\ref{Eq (Section 2) delta<mu}) to get \(\lvert Du_{\varepsilon}\rvert\le V_{\varepsilon}\le \delta+\mu\le 2\mu\) a.e. in \(B_{\rho}(x_{0})\), and hence \(\lvert(Du_{\varepsilon})_{x_{0},\,\tau\rho}\rvert\le 2\mu\). These inequalities yield
\[\mathbf{J}_{2}\le 8\mu^{2}\cdot\frac{\lvert B_{\rho}(x_{0})\setminus B_{\tau\rho}(x_{0})\rvert}{\lvert B_{\rho}(x_{0})\rvert}=8\mu^{2}(1-\tau^{n})\le 8n\mu^{2}(1-\tau),\]
where we have used \(1-\tau^{n}=(1+\tau+\cdots+\tau^{n-1})(1-\tau)\le n(1-\tau)\).
Hence, we obtain
\[\Phi(x_{0},\,\rho)\le C_{\dagger}\mu^{2}\mleft[\frac{\nu^{2/n}}{(1-\tau)^{2}}+\frac{F^{2}}{\nu} {\hat\rho}^{2\beta}\mright]+8n(1-\tau)\mu^{2}.\]
We first fix \[\tau\coloneqq 1-\frac{\theta}{24n}\in(0,\,1),\quad\textrm{so that there holds}\quad 8n(1-\tau)=\frac{\theta}{3}.\] 
Next we choose \(\nu\in(0,\,1/4)\) sufficiently small that it satisfies
\[\nu\le\min\mleft\{\,\mleft(\frac{\theta(1-\tau)^{2}}{3C_{\dagger}}\mright)^{n/2},\,\frac{1-4\sqrt{\theta}}{11} \,\mright\},\]
so that we have
\[\frac{C_{\dagger}\nu^{2/n}}{(1-\tau)^{2}}\le \frac{\theta}{3},\quad \textrm{and}\quad \sqrt{\theta}\le \frac{1-11\nu}{4}.\]
Corresponding to this \(\nu\), we choose and fix sufficiently small \({\hat\rho}\in(0,\,1)\) satisfying 
\[{\hat\rho}^{2\beta}\le \frac{\nu\theta}{3C_{\dagger}(1+F^{2})},\]
which yields \(C_{\dagger}F^{2}{\hat\rho}^{2\beta}/\nu\le \theta/3\).
Our settings of \(\tau,\,\nu,\,{\hat\rho}\) clearly yield (\ref{Eq (Section 4) result 2 oscillation control}).

To prove (\ref{Eq (Section 4) result 1 average integral non-vanishing}), we recall (\ref{Eq (Section 4) Non-degenerate-gradient over superlevel set}) and use (\ref{Eq (Section 2) Measure condition 2}), Then we obtain
\begin{align*}
\fint_{B_{\rho}(x_{0})}\lvert Du_{\varepsilon}\rvert\,{\mathrm d}x&\ge \frac{\lvert S_{\rho,\,\mu,\,\nu}(x_{0})\rvert}{\lvert B_{\rho}(x_{0})\rvert}\cdot \essinf\limits_{S_{\rho,\,\mu,\,\nu}(x_{0})}\,\lvert Du_{\varepsilon}\rvert\\&\ge (1-\nu)\cdot\mleft[(1-\nu)\mu+\frac{3}{4}\delta\mright]>0.
\end{align*}
On the other hand, by the triangle inequality, the Cauchy--Schwarz inequality and (\ref{Eq (Section 4) result 2 oscillation control}), it is easy to get
\begin{align*}
\mleft\lvert\fint_{B_{\rho}(x_{0})}\lvert Du_{\varepsilon}\rvert\,{\mathrm d}x -\mleft\lvert(Du_{\varepsilon})_{x_{0},\,\rho}\mright\rvert\mright\rvert&=\mleft\lvert \fint_{B_{\rho}(x_{0})}\mleft[\lvert Du_{\varepsilon}\rvert-\mleft\lvert(Du_{\varepsilon})_{x_{0},\,\rho}\mright\rvert\mright]\,{\mathrm d}x\mright\rvert\\&\le \fint_{B_{\rho}(x_{0})}\mleft\lvert Du_{\varepsilon}-(Du_{\varepsilon})_{x_{0},\,\rho}\mright\rvert\,{\mathrm d}x\\&\le \sqrt{\Phi(x_{0},\,\rho)}\le \sqrt{\theta}\mu.
\end{align*}
Again by the triangle inequality, we obtain
\begin{align*}
\mleft\lvert(Du_{\varepsilon})_{x_{0},\,\rho}\mright\rvert&\ge \mleft\lvert\fint_{B_{\rho}(x_{0})}\lvert Du_{\varepsilon}\rvert\,{\mathrm d}x \mright\rvert-\mleft\lvert\fint_{B_{\rho}(x_{0})}\lvert Du_{\varepsilon}\rvert\,{\mathrm d}x-\mleft\lvert(Du_{\varepsilon})_{x_{0},\,\rho}\mright\rvert\mright\rvert\\&\ge \mleft((1-\nu)^{2}-\sqrt{\theta}\mright)\mu+\frac{3}{4}(1-\nu)\delta.
\end{align*}
By (\ref{Eq (Section 2) delta<mu}) and our choice of \(\nu\), we can check that
\begin{align*}
\mleft((1-\nu)^{2}-\sqrt{\theta}\mright)\mu+\frac{3}{4}(1-\nu)\delta-\mleft(\delta+\frac{\mu}{2}\mright)&=\mleft(\frac{1}{2}-2\nu+\nu^{2}-\sqrt{\theta}\mright)\mu-\mleft(\frac{1}{4}+\frac{3}{4}\nu\mright)\delta\\&\ge \mleft(\frac{1-11\nu}{4}-\sqrt{\theta} \mright)\mu\ge 0,
\end{align*}
which completes the proof of (\ref{Eq (Section 4) result 1 average integral non-vanishing}).
\end{proof}

The second lemma (Lemma \ref{Lemma: Shrinking lemma 2}) is a result from perturbation arguments given in Section \ref{Subsect: freezing coefficient method}. 
\begin{lemma}\label{Lemma: Shrinking lemma 2}
Let $u_{\varepsilon}$ be a weak solution to (\ref{Eq (Section 2) Approximated system}) in $\Omega$. Assume that positive numbers $\delta$, $\varepsilon$, $\mu$, $F$, $M$, and an open ball $B_{\rho}(x_{0})$ satisfy (\ref{Eq (Section 2) delta-epsilon}), (\ref{Eq (Section 2) control of f}), (\ref{Eq (Section 2) esssup V-epsilon})--(\ref{Eq (Section 2) delta<mu}), and $0<\rho<1$.
Let $\vartheta$ be the constant in Lemma \ref{Lemma: Gehring lemma}.
If (\ref{Eq (Section 4) average assumption}) and 
\begin{equation}\label{Eq (Section 4) energy decay setting}
\Phi(x_{0},\,\rho)\le \tau^{\frac{n+2}{\vartheta}}\mu^{2}
\end{equation}
hold for some $\tau\in(0,\,1/2)$, then we have
\begin{equation}\label{Eq (Section 4) energy decay result}
\Phi(x_{0},\,\tau\rho)\le C_{\ast}\mleft[\tau^{2}\Phi(x_{0},\,\rho)+\frac{\rho^{2\beta}}{\tau^{n}}\mu^{2}\mright].
\end{equation}
Here the constant $C_{\ast}$ depends at most on $b$, $n$, $N$, $p$, $q$, $\beta_{0}$, $\gamma$, $\Gamma$, $F$, $M$, and $\delta$.
\end{lemma}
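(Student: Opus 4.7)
The plan is to perform a standard comparison with the ${\mathcal B}_{\varepsilon}((Du_\varepsilon)_{x_0,\rho})$-harmonic map $v_\varepsilon$ supplied by Lemma \ref{Lemma: Perturbation result}. Write $B_r\coloneqq B_r(x_0)$. The hypotheses (\ref{Eq (Section 2) delta-epsilon}), (\ref{Eq (Section 2) esssup V-epsilon})--(\ref{Eq (Section 2) delta<mu}), (\ref{Eq (Section 4) average assumption}) and $0<\rho<1$ are exactly what is required to invoke that lemma, yielding a unique $v_\varepsilon\in u_\varepsilon+W_0^{1,2}(B_{\rho/2};{\mathbb R}^N)$ enjoying the comparison bound (\ref{Eq (Section 4) Comparison estimate}) and the linear-theory decay (\ref{Eq (Section 4) Decay of harmonic mappings}).

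Since $\tau\in(0,1/2)$, we have $B_{\tau\rho}\subset B_{\rho/2}$. Using the minimizing property (\ref{Eq (Section 4) minimizing property on L2-average}) with $\xi=(Dv_\varepsilon)_{x_0,\tau\rho}$ and the triangle inequality,
\begin{align*}
\Phi(x_0,\tau\rho)
&\le 2\fint_{B_{\tau\rho}}|Du_\varepsilon-Dv_\varepsilon|^2\,{\mathrm d}x + 2\fint_{B_{\tau\rho}}\bigl|Dv_\varepsilon-(Dv_\varepsilon)_{x_0,\tau\rho}\bigr|^2\,{\mathrm d}x\\
&\le \frac{2^{1-n}}{\tau^n}\fint_{B_{\rho/2}}|Du_\varepsilon-Dv_\varepsilon|^2\,{\mathrm d}x + 2\fint_{B_{\tau\rho}}\bigl|Dv_\varepsilon-(Dv_\varepsilon)_{x_0,\tau\rho}\bigr|^2\,{\mathrm d}x.
\end{align*}
For the second integral, (\ref{Eq (Section 4) Decay of harmonic mappings}) gives a $C\tau^2$-decay against $\fint_{B_{\rho/2}}|Dv_\varepsilon-(Dv_\varepsilon)_{x_0,\rho}|^2$, which in turn is controlled by $4\fint_{B_{\rho/2}}|Du_\varepsilon-Dv_\varepsilon|^2+4\cdot 2^n\Phi(x_0,\rho)$ via one more triangle inequality (using $B_{\rho/2}\subset B_\rho$ for the second piece).

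Now insert the comparison estimate (\ref{Eq (Section 4) Comparison estimate}):
\[
\fint_{B_{\rho/2}}|Du_\varepsilon-Dv_\varepsilon|^2\,{\mathrm d}x\le C\Bigl\{\Bigl[\tfrac{\Phi(x_0,\rho)}{\mu^2}\Bigr]^{\vartheta}\Phi(x_0,\rho)+\bigl(F^2+F^{2(1+\vartheta)}\bigr)\rho^{2\beta}\Bigr\}.
\]
The crucial point is now the smallness condition (\ref{Eq (Section 4) energy decay setting}): raising to the power $\vartheta$ gives $[\Phi(x_0,\rho)/\mu^2]^\vartheta\le \tau^{n+2}$, so the nonlinear error satisfies
\[
\Bigl[\tfrac{\Phi(x_0,\rho)}{\mu^2}\Bigr]^{\vartheta}\Phi(x_0,\rho)\le \tau^{n+2}\Phi(x_0,\rho).
\]
Plugging this into the decomposition above, the first summand contributes $\tau^{-n}\cdot\tau^{n+2}\Phi(x_0,\rho)=\tau^2\Phi(x_0,\rho)$ plus a term proportional to $\tau^{-n}\rho^{2\beta}$, while the second summand (already carrying $\tau^2$) produces $\tau^2\Phi(x_0,\rho)$ and higher-order $\tau^2\rho^{2\beta}$ terms that are absorbed into the former. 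Finally, since $\delta<\mu$, the inequality $\rho^{2\beta}\le \delta^{-2}\mu^2\rho^{2\beta}$ allows the $F^2+F^{2(1+\vartheta)}$ prefactors to be absorbed into a single constant $C_\ast$ depending on $b,n,N,p,q,\beta_0,\gamma,\Gamma,F,M,\delta$, and (\ref{Eq (Section 4) energy decay result}) follows.

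The main (if mild) obstacle is the careful bookkeeping of the threshold $\tau^{(n+2)/\vartheta}$: one must verify that the exponent $\vartheta$ furnished by the Gehring--higher-integrability Lemma \ref{Lemma: Higher integrability} is exactly what is needed to convert the superlinear comparison error $[\Phi/\mu^2]^\vartheta\Phi$ into the desired $\tau^{n+2}\Phi$ contribution, after being amplified by the $\tau^{-n}$ factor from the volume ratio $|B_{\rho/2}|/|B_{\tau\rho}|$. Everything else consists of routine triangle-inequality splittings and applications of the two estimates already established in Lemma \ref{Lemma: Perturbation result}.
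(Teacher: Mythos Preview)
Your proof is correct and follows essentially the same route as the paper: compare with the ${\mathcal B}_{\varepsilon}((Du_\varepsilon)_{x_0,\rho})$-harmonic map from Lemma \ref{Lemma: Perturbation result}, split $\Phi(x_0,\tau\rho)$ via the minimizing property and a triangle inequality, apply the linear decay (\ref{Eq (Section 4) Decay of harmonic mappings}) and the comparison estimate (\ref{Eq (Section 4) Comparison estimate}), and use the smallness hypothesis (\ref{Eq (Section 4) energy decay setting}) to turn $[\Phi/\mu^{2}]^{\vartheta}$ into $\tau^{n+2}$ so that the $\tau^{-n}$ volume factor is exactly compensated. The final use of $\delta<\mu$ to absorb the $F$-dependent $\rho^{2\beta}$ terms into $\tau^{-n}\rho^{2\beta}\mu^{2}$ also matches the paper.
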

\begin{proof}
Let \(v_{\varepsilon}\in u_{\varepsilon}+W_{0}^{1,\,2}(B_{\rho/2}(x_{0});\,{\mathbb R}^{N})\) be the unique solution of (\ref{Eq (Section 4) Dirichlet boundary problem harmonic system}). We use (\ref{Eq (Section 4) minimizing property on L2-average}) to get
\begin{align*}
\Phi(x_{0},\,\tau\rho)&\le\fint_{B_{\tau\rho}(x_{0})}\mleft\lvert Du_{\varepsilon}-(Dv_{\varepsilon})_{x_{0},\,\tau\rho}\mright\rvert^{2}\,{\mathrm d}x \\&\le \frac{2}{(2\tau)^{n}}\fint_{B_{\rho/2}(x_{0})}\lvert Du_{\varepsilon}-Dv_{\varepsilon}\rvert^{2}\,{\mathrm d}x +2\fint_{B_{\tau\rho}(x_{0})}\mleft\lvert Dv_{\varepsilon}-(Dv_{\varepsilon})_{x_{0},\,\tau\rho} \mright\rvert^{2}\,{\mathrm d}x,
\end{align*}
where we have used \(\lvert B_{\rho/2}(x_{0})\rvert=(2\tau)^{-n}\cdot \lvert B_{\tau\rho}(x_{0})\rvert\).
For the second average integral, (\ref{Eq (Section 4) minimizing property on L2-average}) and (\ref{Eq (Section 4) Decay of harmonic mappings}) yield
\[\fint_{B_{\tau\rho}(x_{0})}\mleft\lvert Dv_{\varepsilon}-(Dv_{\varepsilon})_{x_{0},\,\tau\rho} \mright\rvert^{2}\,{\mathrm d}x\le C\mleft[\tau^{2}\fint_{B_{\rho/2}(x_{0})}\lvert Du_{\varepsilon}-Dv_{\varepsilon}\rvert^{2}\,{\mathrm d}x+\tau^{2}\Phi(x_{0},\,\rho)\mright]
\]
with \(C=C(n,\,N,\,p,\,q,\,\gamma,\,\Gamma,\,M,\,\delta)\in(0,\,\infty)\).
By (\ref{Eq (Section 2) delta<mu}), (\ref{Eq (Section 4) Comparison estimate}) and (\ref{Eq (Section 4) energy decay setting}),  we are able to compute
\begin{align*}
\Phi(x_{0},\,\tau\rho)&\le C\mleft[\frac{1}{\tau^{n}}\fint_{B_{\rho/2}(x_{0})}\lvert Du_{\varepsilon}-Dv_{\varepsilon}\rvert^{2}\,{\mathrm d}x+\tau^{2}\Phi(x_{0},\,\rho)\mright]\\&\le C\mleft\{ \mleft[\frac{\Phi(x_{0},\,\rho)}{\mu^{2}}\mright]^{\vartheta}\cdot\frac{\Phi(x_{0},\,\rho)}{\tau^{n}}+\frac{F^{2}+F^{2(1+\vartheta)}}{\tau^{n}}\rho^{2\beta}+\tau^{2}\Phi(x_{0},\,\rho)\mright\}\\&\le C\mleft[\tau^{2}\Phi(x_{0},\,\rho)+\mleft(F^{2}+F^{2(1+\vartheta)}\mright)\cdot\frac{\rho^{2\beta}}{\tau^{n}}\cdot\mleft(\frac{\mu}{\delta}\mright)^{2}\mright]\\&\le C_{\ast}(b,\,n,\,N,\,p,\,q,\,\beta_{0},\,\gamma,\,\Gamma,\,F,\,M,\,\delta)\mleft[\tau^{2}\Phi(x_{0},\,\rho)+\frac{\rho^{2\beta}}{\tau^{n}}\mu^{2}\mright],
\end{align*}
which completes the proof.
\end{proof}
\subsection{Proof of Proposition \ref{Prop: Schauder estimate}}\label{Subsect: Proof of Campanato-decay}
We would like to prove Proposition \ref{Prop: Schauder estimate} by shrinking arguments. A key point of the proof, which is inspired by \cite[Proposition 3.4]{BDGPdN}, is to justify that an average $(Du_{\varepsilon})_{x_{0},\,r}\in{\mathbb R}^{Nn}$ never vanishes even when $r$ tends to $0$, so that Lemma \ref{Lemma: Shrinking lemma 2} can be applied. To verify this, we make careful computations, found in the proof of Lemma \ref{Lemma: Shrinking lemma 1}.
\begin{proof}
We set a constant $\vartheta$ as in Lemma \ref{Lemma: Higher integrability}. We will determine a sufficiently small constant $\tau\in(0,\,1/2)$, and corresponding to this \(\tau\), we will put the desired constants \(\rho_{\star}\in(0,\,1)\) and \(\nu\in(0,\,1/4)\). 

We first assume that 
\begin{equation}\label{Eq (Section 4) Determination of tau 1}
0<\tau<\max\mleft\{\,\tau^{\beta},\,\tau^{1-\beta}\,\mright\}<\frac{1}{16},\quad \textrm{and therefore}\quad
\theta\coloneqq \tau^{\frac{n+2}{\vartheta}}\in\mleft(0,\,\frac{1}{16}\mright)
\end{equation}
Throughout the proof, we let \(\nu\in(0,\,1/6)\) and \({\hat\rho}\in(0,\,1)\) be sufficiently small constants satisfying Lemma \ref{Lemma: Shrinking lemma 1} with \(\theta\) defined by (\ref{Eq (Section 4) Determination of tau 1}). We also assume that \(\rho_{\star}\) is so small that there holds
\begin{equation}\label{Eq (Section 4) Determination of rho-star 1}
0<\rho_{\star}\le {\hat\rho}<1.
\end{equation}
Assume that the open ball \(B_{\rho}(x_{0})\) satisfies \(0<\rho<\rho_{\star}\), and (\ref{Eq (Section 2) Measure condition 2}) holds for the constant \(\nu\in(0,\,1/6)\).
We set a non-negative decreasing sequence \(\{\rho_{k}\}_{k=0}^{\infty}\) by \(\rho_{k}\coloneqq \tau^{k}\rho\). We will choose suitable \(\tau\) and \(\rho_{\ast}\) such that there hold
\begin{equation}\label{Eq (Section 4) Induction claim 2}
\mleft\lvert (Du_{\varepsilon})_{x_{0},\,{\rho_{k}}} \mright\rvert\ge \delta+\mleft[\frac{1}{2}-\frac{1}{8}\sum_{j=0}^{k-1}2^{-j}\mright]\mu \ge \delta+\frac{\mu}{4}
\end{equation}
and
\begin{equation}\label{Eq (Section 4) Induction claim 1}
\Phi(x_{0},\,\rho_{k})\le \tau^{2\beta k}\tau^{\frac{n+2}{\vartheta}}\mu^{2},
\end{equation}
for all \(k\in{\mathbb Z}_{\ge 0}\), which will be proved by mathematical induction.
For \(k=0,\,1\), we apply Lemma \ref{Lemma: Shrinking lemma 1} to deduce (\ref{Eq (Section 4) result 1 average integral non-vanishing})--(\ref{Eq (Section 4) result 2 oscillation control}) with \(\theta=\tau^{\frac{n+2}{\vartheta}}\). In particular, we have
\begin{equation}\label{Eq (Section 4) Phi estimate for the first step}
\Phi(x_{0},\,\rho)\le \tau^{\frac{n+2}{\vartheta}}\mu^{2},
\end{equation}
and hence (\ref{Eq (Section 4) Induction claim 1}) is obvious when $k=0$. From (\ref{Eq (Section 4) result 1 average integral non-vanishing}), we have already known that (\ref{Eq (Section 4) Induction claim 2}) holds for \(k=0\). Also, (\ref{Eq (Section 4) result 1 average integral non-vanishing}) and (\ref{Eq (Section 4) Phi estimate for the first step}) enable us to apply Lemma \ref{Lemma: Shrinking lemma 2} to obtain
\begin{align*}
\Phi(x_{0},\,\rho_{1})&\le C_{\ast}\mleft[\tau^{2}\Phi(x_{0},\,\rho)+\frac{\rho^{2\beta}}{\tau^{n}}\mu^{2}\mright]\\&\le C_{\ast}\tau^{2(1-\beta)}\cdot\tau^{2\beta}\tau^{\frac{n+2}{\vartheta}}\mu^{2}+\frac{C_{\ast}\rho_{\star}^{2\beta}}{\tau^{n}}\mu^{2},
\end{align*}
where \(C_{\ast}\in(0,\,\infty)\) is a constant as in Lemma \ref{Lemma: Shrinking lemma 2}, depending at most on $b$, $n$, $N$, $p$, $q$, $\beta_{0}$, $\gamma$, $\Gamma$, $F$, $M$, and $\delta$.
Now we assume that \(\tau\) and \(\rho_{\star}\) satisfy
\begin{equation}\label{Eq (Section 4) Determination of tau 2}
C_{\ast}\tau^{2(1-\beta)}\le \frac{1}{2},
\end{equation}
and
\begin{equation}\label{Eq (Section 4) Determination of rho-star 2}
C_{\ast}\rho_{\star}^{2\beta}\le \frac{1}{2}\tau^{n+2\beta+\frac{n+2}{\vartheta}},
\end{equation}
so that (\ref{Eq (Section 4) Induction claim 1}) holds for \(k=1\).
In particular, by (\ref{Eq (Section 4) integrability up}), (\ref{Eq (Section 4) Determination of tau 1}), (\ref{Eq (Section 4) Phi estimate for the first step}) and the Cauchy--Schwarz inequality, we obtain
\begin{align*}
\mleft\lvert(Du_{\varepsilon})_{x_{0},\,\rho_{1}}-(Du_{\varepsilon})_{x_{0},\,\rho_{0}} \mright\rvert&\le \fint_{B_{\rho_{1}}(x_{0})}\mleft\lvert Du_{\varepsilon}-(Du_{\varepsilon})_{x_{0},\,\rho_{0}}\mright\rvert\,{\mathrm d}x\nonumber\\&\le \mleft(\fint_{B_{\rho_{1}}(x_{0})}\mleft\lvert Du_{\varepsilon}-(Du_{\varepsilon})_{x_{0},\,\rho_{0}}\mright\rvert^{2}\,{\mathrm d}x\mright)^{1/2}=\tau^{-\frac{n}{2}}\Phi(x_{0},\,\rho)^{1/2}\nonumber\\&\le \tau^{\frac{n+2}{2\vartheta}-\frac{n}{2}}\mu\le \tau\mu\le \frac{1}{8}\mu.
\end{align*}
Combining this result with (\ref{Eq (Section 4) result 1 average integral non-vanishing}), we use the triangle inequality to get
\[\mleft\lvert (Du_{\varepsilon})_{x_{0},\,\rho_{1}}\mright\rvert\ge \mleft\lvert (Du_{\varepsilon})_{x_{0},\,\rho_{0}}\mright\rvert-\mleft\lvert(Du_{\varepsilon})_{x_{0},\,\rho_{1}}-(Du_{\varepsilon})_{x_{0},\,\rho_{0}} \mright\rvert\ge \mleft(\delta+\frac{\mu}{2}\mright)-\frac{\mu}{8},\]
which means that (\ref{Eq (Section 4) Induction claim 2}) holds true for \(k=1\).
Next, we assume that the claims (\ref{Eq (Section 4) Induction claim 2})--(\ref{Eq (Section 4) Induction claim 1}) are valid for an integer \(k\ge 1\).
Then \(\Phi(x_{0},\,\rho_{k})\le \tau^{\frac{n+2}{\vartheta}}\mu^{2}\) clearly holds. Combining this result with the induction hypothesis (\ref{Eq (Section 4) Induction claim 2}), we have clarified that the solution \(u_{\varepsilon}\) satisfies assumptions of Lemma \ref{Lemma: Shrinking lemma 2} in a smaller ball \(B_{\rho_{k}}(x_{0})\subset B_{\rho}(x_{0})\). By Lemma \ref{Lemma: Shrinking lemma 2}, (\ref{Eq (Section 4) Determination of tau 2}), and the induction hypothesis (\ref{Eq (Section 4) Induction claim 1}), we compute
\begin{align*}
\Phi(x_{0},\,\rho_{k+1})&\le C_{\ast}\mleft[\tau^{2}\Phi(x_{0},\,\rho_{k})+\frac{\rho_{k}^{2\beta}}{\tau^{n}}\mu^{2}\mright]\\&\le C_{\ast}\tau^{2(1-\beta)}\cdot \tau^{2\beta (k+1)}\tau^{\frac{n+2}{\vartheta}}\mu^{2}+\frac{C_{\ast}\rho_{\star}^{2\beta}}{\tau^{n}}\cdot\tau^{2\beta k}\mu^{2}\\&\le \tau^{2\beta(k+1)}\cdot \tau^{\frac{n+2}{\vartheta}}\mu^{2},
\end{align*}
which means that (\ref{Eq (Section 4) Induction claim 1}) holds true for \(k+1\). Also, by the Cauchy--Schwarz inequality and the induction hypothesis (\ref{Eq (Section 4) Induction claim 1}), we have
\begin{align*}
\mleft\lvert(Du_{\varepsilon})_{x_{0},\,\rho_{k+1}}- (Du_{\varepsilon})_{x_{0},\,\rho_{k}}\mright\rvert&\le \fint_{B_{\rho_{k+1}}(x_{0})}\mleft\lvert Du_{\varepsilon}- (Du_{\varepsilon})_{x_{0},\,\rho_{k}}\mright\rvert\,{\mathrm d}x \\&\le \mleft(\fint_{B_{\rho_{k+1}}(x_{0})}\mleft\lvert Du_{\varepsilon}-(Du_{\varepsilon})_{x_{0},\,\rho_{k}}\mright\rvert^{2}\,{\mathrm d}x\mright)^{1/2}=\tau^{-n/2}\Phi(x_{0},\,\rho_{k})^{1/2}\\&\le \tau^{\beta k}\tau^{\frac{n+2}{\vartheta}-\frac{n}{2}}\mu\le 2^{-k}\cdot \frac{1}{8} \mu.
\end{align*}
Here we have also used (\ref{Eq (Section 4) Determination of tau 1}).
Therefore, by the induction hypothesis (\ref{Eq (Section 4) Induction claim 2}) and the triangle inequality, we get
\begin{align*}
\mleft\lvert(Du_{\varepsilon})_{x_{0},\,\rho_{k+1}}\mright\rvert&\ge\mleft\lvert(Du_{\varepsilon})_{x_{0},\,\rho_{k}}\mright\rvert-\mleft\lvert (Du_{\varepsilon})_{x_{0},\,\rho_{k+1}}-(Du_{\varepsilon})_{x_{0},\,\rho_{k}}\mright\rvert\\& \ge \delta+ \mleft[\frac{1}{2}-\frac{1}{8}\sum_{j=0}^{k-1}2^{-j}\mright]\mu-\frac{1}{8}\cdot 2^{-k}\mu,
\end{align*}
which implies that (\ref{Eq (Section 4) Induction claim 2}) is valid for \(k+1\). This completes the proof of (\ref{Eq (Section 4) Induction claim 2})--(\ref{Eq (Section 4) Induction claim 1}).

We define
\[\Psi_{2\delta,\,\varepsilon}(x_{0},\,r)\coloneqq \fint_{B_{r}(x_{0})}\mleft\lvert {\mathcal G}_{2\delta,\,\varepsilon}(Du_{\varepsilon})-\mleft({\mathcal G}_{2\delta,\,\varepsilon}(Du_{\varepsilon})\mright)_{x_{0},\,r}\mright\rvert^{2}\,{\mathrm d}x\]
for \(r\in(0,\,\rho\rbrack\), and we set a sequence of vectors \(\{\Gamma_{k}\}_{k=0}^{\infty}\subset {\mathbb R}^{Nn}\) by
\[\Gamma_{k}\coloneqq\mleft({\mathcal G}_{2\delta,\,\varepsilon}(Du_{\varepsilon})\mright)_{x_{0},\,\rho_{k}}\quad \textrm{for }k\in{\mathbb Z}_{\ge 0}.\]
Let \(c_{\dagger}>0\) be the constant satisfying (\ref{Eq (Section 2) Lipschitz bounds of the mapping G-2delta-epsilon}). For each \(k\in{\mathbb Z}_{\ge 0}\), we apply (\ref{Eq (Section 4) minimizing property on L2-average}), (\ref{Eq (Section 4) integrability up}) and (\ref{Eq (Section 4) Induction claim 1})--(\ref{Eq (Section 4) Phi estimate for the first step}) to get
\begin{align*}
\Psi_{2\delta,\,\varepsilon}(x_{0},\,\rho_{k})&\le \fint_{B_{\rho_{k}}(x_{0})}\mleft\lvert{\mathcal G}_{2\delta,\,\varepsilon}(Du_{\varepsilon})-{\mathcal G}_{2\delta,\,\varepsilon}\mleft((Du_{\varepsilon})_{x_{0},\,\rho_{k}} \mright)\mright\rvert^{2}\,{\mathrm d}x\\&\le c_{\dagger}^{2}\cdot \Phi(x_{0},\,\rho_{k})\\&\le c_{\dagger}^{2}\tau^{2\beta k}\tau^{\frac{n+2}{\vartheta}}\mu^{2}\le c_{\dagger}^{2}\tau^{2\beta k+2(n+2)}\mu^{2}.
\end{align*}
Here we let \(\tau\) satisfy
\begin{equation}\label{Eq (Section 4) Determination of tau 3}
\tau\le \frac{1}{\sqrt{2}c_{\dagger}}
\end{equation}
to get 
\begin{equation}\label{Eq (Section 4) Campanato-growth estimate for G-delta-epsilon}
\Psi_{2\delta,\,\varepsilon}\mleft(x_{0},\,\rho_{k}\mright)\le \tau^{2n+2}\tau^{2\beta k}\mu^{2}\quad \textrm{for all }k\in{\mathbb Z}_{\ge 0}.
\end{equation}
By (\ref{Eq (Section 4) Campanato-growth estimate for G-delta-epsilon}) and the Cauchy--Schwarz inequality, we have
\begin{align*}
\lvert \Gamma_{k+1}-\Gamma_{k}\rvert&\le \fint_{B_{k}}\mleft\lvert{\mathcal G}_{2\delta,\,\varepsilon}(Du_{\varepsilon})-\Gamma_{k}\mright\rvert\,{\mathrm d}x\\&\le \mleft(\fint_{B_{k}}\mleft\lvert{\mathcal G}_{2\delta,\,\varepsilon}(Du_{\varepsilon})-\Gamma_{k}\mright\rvert^{2}\,{\mathrm d}x\mright)^{1/2}=\tau^{-n/2}\Psi_{2\delta,\,\varepsilon}(x_{0},\,\rho_{k})^{1/2}\\&\le \tau^{n/2+1}\tau^{\beta k}\mu
\end{align*}
for all \(k\in{\mathbb Z}_{\ge 0}\). In particular, for all \(k,\,l\in{\mathbb Z}_{\ge 0}\) with \(k<l\), we have
\begin{align*}
\lvert \Gamma_{l}-\Gamma_{k}\rvert&\le \sum_{i=k}^{l-1}\lvert \Gamma_{i+1}-\Gamma_{i}\rvert\le \sum_{i=k}^{l-1}\tau^{n/2+1}\tau^{\beta i}\mu\\&\le \tau^{n/2+1}\mu\sum_{i=k}^{\infty}\tau^{\beta i}=\tau^{n/2+1}\frac{\tau^{\beta k}}{1-\tau^{\beta}}\mu.
\end{align*}
The setting (\ref{Eq (Section 4) Determination of tau 1}) clearly yields $\tau^{\beta}\le 1/2$. Hence, we have
\begin{equation}\label{Eq (Section 4) Cauchy-estimate}
\lvert \Gamma_{k}-\Gamma_{l}\rvert\le 2\tau^{n/2+1}\tau^{\beta k}\mu\quad \textrm{for all $k,l\in{\mathbb Z}_{\ge 0}$ with }k<l,
\end{equation}
which implies that \(\{\Gamma_{k}\}_{k=0}^{\infty}\) is a Cauchy sequence in \({\mathbb R}^{Nn}\). Therefore the limit \[\Gamma_{\infty}\coloneqq\lim_{k\to\infty}\Gamma_{k}\in{\mathbb R}^{Nn}\]
exists. Moreover, by letting \(l\to\infty\) in (\ref{Eq (Section 4) Cauchy-estimate}), we have
\[\lvert \Gamma_{k}-\Gamma_{\infty}\rvert\le 2\tau^{n/2+1}\tau^{\beta k}\mu\quad \textrm{for every }k\in{\mathbb Z}_{\ge 0}.\]
Combining this result with (\ref{Eq (Section 4) Campanato-growth estimate for G-delta-epsilon}), we obtain
\begin{align}\label{Eq (Section 4) Digital Campanato-Growth estimate in Perturbation}
\fint_{B_{\rho_{k}}(x_{0})}\mleft\lvert{\mathcal G}_{2\delta,\,\varepsilon}(Du_{\varepsilon})-\Gamma_{\infty} \mright\rvert^{2}\,{\mathrm d}x&\le 2\fint_{B_{\rho_{k}}(x_{0})}\mleft[\mleft\lvert {\mathcal G}_{2\delta,\,\varepsilon}(Du_{\varepsilon})-\Gamma_{k} \mright\rvert^{2}+\lvert\Gamma_{k}-\Gamma_{\infty} \rvert^{2} \mright] \,{\mathrm d}x\nonumber\\&\le 2\mleft(\tau^{2n+2}\tau^{2\beta k}\mu^{2}+4\tau^{n+2}\tau^{2\beta k}\mu^{2}\mright)\nonumber\\&\le  10\tau^{2(1-\beta)}\cdot \tau^{n+2\beta (k+1)}\mu^{2}\nonumber\\&\le \tau^{n+2\beta (k+1)}\mu^{2}.
\end{align}
Here we have used $\tau^{2(1-\beta)}\le 1/10$, which immediately follows from (\ref{Eq (Section 4) Determination of tau 1}).
For each \(r\in(0,\,\rho\rbrack\), there corresponds a unique \(k\in{\mathbb Z}_{\ge 0}\) such that \(\rho_{k+1}<r\le \rho_{k}\). Then by (\ref{Eq (Section 4) Digital Campanato-Growth estimate in Perturbation}), we have
\begin{align}\label{Eq (Section 4) Continuous Campanato-Growth estimate in Perturbation}
\fint_{B_{r}(x_{0})}\mleft\lvert{\mathcal G}_{2\delta,\,\varepsilon}(Du_{\varepsilon})-\Gamma_{\infty}\mright\rvert^{2}\,{\mathrm d}x&\le \tau^{-n}\fint_{B_{\rho_{k}}(x_{0})}\mleft\lvert{\mathcal G}_{2\delta,\,\varepsilon}(Du_{\varepsilon})-\Gamma_{\infty}\mright\rvert^{2}\,{\mathrm d}x \nonumber\\&\le \tau^{2\beta (k+1)}\mu^{2}\le \mleft(\frac{r}{\rho}\mright)^{2\beta}\mu^{2}
\end{align}
for all $r\in(0,\,\rho\rbrack$.
By the Cauchy--Schwarz inequality, we also have
\begin{align*}
\mleft\lvert\mleft({\mathcal G}_{2\delta,\,\varepsilon}(Du_{\varepsilon})\mright)_{x_{0},\,r}-\Gamma_{\infty}\mright\rvert&\le \fint_{B_{r}(x_{0})}\mleft\lvert{\mathcal G}_{2\delta,\,\varepsilon}(Du_{\varepsilon})-\Gamma_{\infty}\mright\rvert^{2}\,{\mathrm d}x\\&\le\mleft(\fint_{B_{r}(x_{0})}\mleft\lvert{\mathcal G}_{2\delta,\,\varepsilon}(Du_{\varepsilon})-\Gamma_{\infty}\mright\rvert^{2}\,{\mathrm d}x \mright)^{1/2}\le \mleft(\frac{r}{\rho}\mright)^{\beta}\mu.
\end{align*}
for all $r\in(0,\,\rho\rbrack$.
This result yields
\[\Gamma_{2\delta,\,\varepsilon}(x_{0})\coloneqq\lim_{r\to 0}\mleft({\mathcal G}_{2\delta,\,\varepsilon}(Du_{\varepsilon})\mright)_{x_{0},\,r}=\Gamma_{\infty},\]
and hence the desired estimate (\ref{Eq (Section 2) Campanato-type growth from Schauder}) clearly follows from (\ref{Eq (Section 4) Continuous Campanato-Growth estimate in Perturbation}). It is noted that (\ref{Eq (Section 2) esssup V-epsilon}) and (\ref{Eq (Section 2) Control of G-2delta-epsilon by G-delta-epsilon}) imply \(\mleft\lvert{\mathcal G}_{2\delta,\,\varepsilon}(Du_{\varepsilon})\mright\rvert\le \mu\) a.e. in \(B_{\rho}(x_{0})\), and therefore (\ref{Eq (Section 2) Bound of Gamma-2delta-epsilon}) is obvious.

Finally, we mention that we may choose a sufficiently small constant \(\tau=\tau(C_{\ast},\,\beta)\in(0,\,1/2)\) verifying (\ref{Eq (Section 4) Determination of tau 1}), (\ref{Eq (Section 4) Determination of tau 2}), and (\ref{Eq (Section 4) Determination of tau 3}). Corresponding to this \(\tau\), we take sufficient small numbers \(\nu\in(0,\,1/6),\,{\hat\rho}\in(0,\,1)\) as in Lemma \ref{Lemma: Shrinking lemma 2}, depending at most on $b$, $n$, $N$, $p$, $q$, $\gamma$, $\Gamma$, $F$, $M$, $\delta$, and $\theta=\tau^{\frac{n+2}{\vartheta}}$. Then, we are able to determine a sufficiently small radius \(\rho_{\star}=\rho_{\star}(C_{\ast},\,\beta,\,{\hat\rho})\in(0,\,1)\) verifying (\ref{Eq (Section 4) Determination of rho-star 1}) and (\ref{Eq (Section 4) Determination of rho-star 2}), and this completes the proof.
\end{proof}

\section{Appendix: Local Lipschitz bounds}\label{Section: Appendix}
In Section \ref{Section: Appendix}, we would like to provide the proof of Proposition \ref{Prop: Lipschitz bounds} for the reader's convenience.

Before showing Proposition \ref{Prop: Lipschitz bounds}, we recall two basic lemmata (see \cite[Lemma 4.3]{MR2777537} and \cite[Chapter 2, Lemma 4.7]{MR0244627} for the proofs).
\begin{lemma}\label{Lemma: Absorbing lemma}
Assume that a non-negative bounded function $H\colon \lbrack 0,\,1\rbrack\rightarrow (0,\,\infty)$ admits constants $\theta\in\lbrack 0,\,1)$ and $\alpha,\,A,\,B\in(0,\,\infty)$ such that
\[H(t)\le \theta H(s)+\frac{A}{(s-t)^{\alpha}}+B\]
holds whenever $0\le t<s\le 1$. Then we have
\[H(t)\le C(\alpha,\,\theta)\mleft[\frac{A}{(s-t)^{\alpha}}+B \mright]\]
for any $0\le t<s\le 1$.
\end{lemma}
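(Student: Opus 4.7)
The plan is to prove the iteration lemma by the standard geometric-sequence trick, which essentially converts the one-step absorption inequality into a convergent series of scale-decreasing applications. The key observation is that the factor $\theta<1$ in front of $H(s)$ leaves room to reiterate the hypothesis on a shrinking sequence of intermediate radii, provided the geometric loss is chosen to dominate the polynomial blow-up in $(s-t)^{-\alpha}$. Boundedness of $H$ is what lets the tail of the iteration vanish in the limit.

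More concretely, I would fix $0\le t<s\le 1$ and pick a parameter $\tau\in(0,1)$ to be chosen below. Define the increasing sequence $t_0\coloneqq t$ and $t_{k+1}\coloneqq t_k+(1-\tau)\tau^{k}(s-t)$, so that $s-t_k=\tau^{k}(s-t)$ and in particular $t_k\nearrow s$. Applying the hypothesis on the pair $(t_k,t_{k+1})$ gives
\[
H(t_k)\le \theta H(t_{k+1})+\frac{A}{(1-\tau)^{\alpha}\tau^{\alpha k}(s-t)^{\alpha}}+B.
\]
Iterating this inequality $N$ times yields
\[
H(t)\le \theta^{N}H(t_N)+\sum_{k=0}^{N-1}\theta^{k}\!\left[\frac{A}{(1-\tau)^{\alpha}\tau^{\alpha k}(s-t)^{\alpha}}+B\right].
\]

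The decisive step is the choice of $\tau$: I would pick $\tau\in(\theta^{1/\alpha},1)$, so that $\lambda\coloneqq \theta\tau^{-\alpha}<1$ and the geometric series $\sum_k \lambda^{k}$ converges. Since $H$ is bounded on $[0,1]$ and $\theta^{N}\to 0$, the first term disappears as $N\to\infty$. Passing to the limit then gives
\[
H(t)\le \frac{1}{(1-\lambda)(1-\tau)^{\alpha}}\cdot\frac{A}{(s-t)^{\alpha}}+\frac{B}{1-\theta},
\]
which is the claimed bound with $C(\alpha,\theta)$ depending only on $\alpha$ and $\theta$ through the admissible choice of $\tau$.

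There is no real obstacle here beyond bookkeeping; the only point that demands care is verifying that the hypothesis is applicable at each scale (it is, because $0\le t_k<t_{k+1}\le s\le 1$ for all $k$) and that the dependence of the final constant is indeed on $\alpha$ and $\theta$ alone, which follows by taking, e.g., $\tau\coloneqq (1+\theta^{1/\alpha})/2$ once $\theta$ and $\alpha$ are fixed. Boundedness of $H$ is used in exactly one place, namely to kill the residual $\theta^{N}H(t_N)$; this is why the assumption cannot be dropped.
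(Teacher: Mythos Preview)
Your proof is correct and is exactly the standard geometric iteration argument; the paper does not supply its own proof but simply refers to \cite[Lemma 4.3]{MR2777537} and \cite[Chapter 2, Lemma 4.7]{MR0244627}, where the same sequence-of-radii trick with $\tau\in(\theta^{1/\alpha},1)$ is carried out.
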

\begin{lemma}\label{Lemma: Geometric convergence lemma}
Assume that a sequence $\{a_{m}\}_{m=0}^{\infty}\subset (0,\,\infty)$ satisfies
\[a_{m+1}\le CB^{m}a_{m}^{1+\varsigma}\]
for all $m\in{\mathbb Z}_{\ge 0}$. Here $C,\,\varsigma\in(0,\,\infty)$ and $B\in(1,\,\infty)$ are constants. If $a_{0}$ satisfies
\[a_{0}\le C^{-1/\varsigma}B^{-1/\varsigma^{2}},\]
then $a_{m}\to 0$ as $m\to\infty$.
\end{lemma}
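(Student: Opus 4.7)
The plan is to prove by induction the stronger quantitative decay estimate
\[
a_m \le K\, B^{-m/\varsigma} \qquad \text{for all } m \in \mathbb{Z}_{\ge 0},
\]
where $K \coloneqq C^{-1/\varsigma} B^{-1/\varsigma^2}$ is exactly the threshold appearing in the hypothesis on $a_0$. Since $B>1$ and $\varsigma>0$, the factor $B^{-m/\varsigma}$ tends to $0$ as $m\to\infty$, so this estimate immediately yields the desired conclusion $a_m \to 0$.

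The base case $m=0$ is the assumption $a_0 \le K = K B^0$. For the inductive step, assuming $a_m \le K B^{-m/\varsigma}$, I would substitute into the recursion to compute
\[
a_{m+1} \le C B^m a_m^{1+\varsigma} \le C B^m K^{1+\varsigma} B^{-m(1+\varsigma)/\varsigma} = C K^{1+\varsigma} B^{-m/\varsigma}.
\]
The target inequality $a_{m+1} \le K B^{-(m+1)/\varsigma}$ then reduces to $C K^\varsigma \le B^{-1/\varsigma}$, which is exactly equivalent (after taking the $\varsigma$-th root) to the hypothesis $K \le C^{-1/\varsigma} B^{-1/\varsigma^2}$. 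This closes the induction.

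There is no real obstacle here: the lemma is a purely algebraic statement and the delicate point is merely to identify the correct ansatz $a_m \le K B^{-m/\varsigma}$ so that the exponents on $B$ in the induction step balance. Once this ansatz is chosen, the smallness condition on $a_0$ is precisely what is needed to propagate the bound, and no further analysis is required.
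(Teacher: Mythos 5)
Your induction with the ansatz $a_{m}\le K B^{-m/\varsigma}$, $K=C^{-1/\varsigma}B^{-1/\varsigma^{2}}$, is correct: the exponent bookkeeping checks out and the smallness hypothesis on $a_{0}$ is exactly what makes $CK^{\varsigma}\le B^{-1/\varsigma}$ hold (in fact with equality). The paper omits the proof and cites the literature, where this classical iteration lemma is proved by precisely the same argument, so there is nothing to add.
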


The proof of Proposition \ref{Prop: Lipschitz bounds} is based on De Giorgi's truncation. 
More sophisticated computations than ours concerning local Lipschitz bounds by De Giorgi's truncation are given in \cite[Theorem 1.13]{MR4078712}, where external force terms are assumed to be less regular than $L^{q}\,(n<q\le\infty)$. Compared with \cite{MR4078712}, our proof of Proposition \ref{Prop: Lipschitz bounds} is rather elementary, since we only deal with the case where the external force term $f_{\varepsilon}$ is in a Lebesgue space $L^{q}$ with $q\in(n,\,\infty\rbrack$. To control $f_{\varepsilon}$ by the $L^{q}$-norm, we appeal to standard absorbing arguments as in \cite[Theorem 4.1, Method 1]{MR2777537} (see also \cite[\S 4.3]{MR4201656} for scalar problems). 
\begin{proof}
For notational simplicity, we write $B_{r}\coloneqq B_{r}(x_{0})$ for $r\in(0,\,\rho\rbrack$, and $B\coloneqq B_{\rho}(x_{0})$.
We fix a constant $k\coloneqq 1+\lVert f_{\varepsilon}\rVert_{L^{q}(B)}^{1/(p-1)}\ge 1$, and define a superlevel set
\[A(l,\,r)\coloneqq \mleft\{x\in B_{r}\mathrel{} \middle|\mathrel{} \mleft(V_{\varepsilon}(x)-k\mright)_{+}^{p}>l\mright\}\]
for $l\in(0,\,\infty)$ and $r\in(0,\,\rho\rbrack$. Then, by (\ref{Eq (Section 3) Ellipticity of coefficients}) and $k\ge 1$, there holds
\begin{equation}\label{Eq (Appendix) Uniform ellipticity on Lipschitz bounds}
\gamma V_{\varepsilon}^{p-2}{\mathrm{id}}_{n}\leqslant {\mathcal C}_{\varepsilon}(Du_{\varepsilon})\leqslant {\hat\Gamma}V_{\varepsilon}^{p-2}{\mathrm{id}}_{n}\quad \textrm{a.e. in }A(0,\,\rho)
\end{equation}
with ${\hat\Gamma}\coloneqq b+3\Gamma$.

We first claim that there holds
\begin{equation}\label{Eq (Appendix) First claim on Lipschitz bounds}
\int_{B}\mleft\lvert \nabla(\eta W_{l})\mright\rvert^{2}\,{\mathrm{d}}x\le C(b,\,n,\,p,\,\gamma,\,\Gamma)\mleft[\int_{B}\lvert\nabla\eta\rvert^{2}W_{l}^{2}\,{\mathrm{d}}x+\int_{A(l,\,\rho)}f_{k}\mleft(W_{l}^{2}+l^{p}W_{l}+l^{2p}\mright)\,{\mathrm{d}}x\mright]
\end{equation}
for all $l\in\mleft(k^{p},\,\infty\mright)$ and for any non-negative function $\eta\in C_{c}^{1}(B)$. Here the non-negative functions $W_{l}$ and $f_{k}$ are respectively given by \(W_{l}\coloneqq\mleft(\mleft(V_{\varepsilon}-k\mright)_{+}^{p}-l\mright)_{+}\), and 
\[f_{k}\coloneqq \frac{\lvert f_{\varepsilon}\rvert^{2}}{V_{\varepsilon}^{2(p-1)}}\chi_{A(0,\,\rho)}\in L^{q/2}(B),\quad \textrm{so that}\quad f_{k}V_{\varepsilon}^{2(p-1)}=\lvert f_{\varepsilon}\rvert^{2}\chi_{A(0,\,\rho)}\]
holds a.e. in $B$.
To prove (\ref{Eq (Appendix) First claim on Lipschitz bounds}), we apply Lemma \ref{Lemma: Weak formulation for V-epsilon} with $\psi(\sigma)\coloneqq \mleft((\sigma-k)_{+}^{p}-l\mright)_{+}$ and $\zeta\coloneqq\eta^{2}$, so that $W_{l}=\psi(V_{\varepsilon})$ holds.
Under this setting, we discard a non-negative term $J_{3}$, and carefully compute the other integrals.
To compute $J_{1}$, we may use (\ref{Eq (Appendix) Uniform ellipticity on Lipschitz bounds}), since $W_{l}$ vanishes outside the set $A(l,\,\rho)$.
When estimating $J_{4},\,J_{5},\,J_{6}$, we use
\[\frac{V_{\varepsilon}}{V_{\varepsilon}-k}=1+\frac{k}{V_{\varepsilon}-k}\le 1+\frac{k}{l^{1/p}}\le 2\quad \textrm{a.e. in }A(l,\,\rho),\]
and
\[V_{\varepsilon}^{p}\le 2^{p-1}\mleft((V_{\varepsilon}-k)^{p}+k^{p}\mright)\le 2^{p}\mleft(W_{l}+l\mright)\quad \textrm{a.e. in }A(l,\,\rho),\]
which are easy to deduce by $l>k^{p}$. Here we also note that the identity \(W_{l}+l=(V_{\varepsilon}-k)^{p}\) holds a.e. in $A(l,\,\rho)$.
Combining these, we can compute
\begin{align*}
&p\gamma \int_{B}\eta^{2}\mleft\lvert \nabla V_{\varepsilon}\mright\rvert^{2}V_{\varepsilon}^{p-1}\mleft(V_{\varepsilon}-k\mright)^{p-1}\chi_{A(l,\,\rho)}\,{\mathrm{d}}x\\ &\le J_{2}\le 2\lvert J_{1}\rvert+\frac{1}{\gamma}(n\lvert J_{4}\rvert+\lvert J_{5}\rvert)+2\lvert J_{6}\rvert\\&\le 4{\hat\Gamma}\int_{B}\eta W_{l}V_{\varepsilon}^{p-1}\lvert\nabla V_{\varepsilon}\rvert\lvert\nabla\eta\rvert\,{\mathrm{d}}x\\&\quad+\frac{1}{\gamma}\mleft[n\int_{A(l,\,\rho)}f_{k}W_{l}V_{\varepsilon}^{p}\eta^{2}\,{\mathrm{d}}x+p\int_{A(l,\,\rho)}f_{k}\mleft(V_{\varepsilon}-k\mright)^{p}V_{\varepsilon}^{p}\frac{V_{\varepsilon}}{V_{\varepsilon}-k}{\mathrm{d}}x\mright]\\&\quad\quad+4\int_{A(l,\,\rho)}\lvert f_{\varepsilon}\rvert\lvert\nabla\eta\rvert W_{l}V_{\varepsilon}\eta\,{\mathrm{d}}x\\&\le \frac{p\gamma}{2}\int_{B}\eta^{2}\mleft\lvert \nabla V_{\varepsilon}\mright\rvert^{2}V_{\varepsilon}^{p-1}\mleft(V_{\varepsilon}-k\mright)^{p-1}\chi_{A(l,\,\rho)}\,{\mathrm{d}}x\\&\quad +C(b,\,n,\,p,\,\gamma,\,\Gamma)\mleft[\int_{B}W_{l}^{2}\lvert\nabla \eta\rvert^{2}\,{\mathrm{d}}x+\int_{A(l,\,\rho)}f_{k}\mleft(W_{l}^{2}+lW_{l}+l^{2}\mright)\,{\mathrm{d}}x\mright]
\end{align*}
by Young's inequality. By an absorbing argument, we obtain
\begin{align*}
&\int_{B}\eta^{2}\mleft\lvert\nabla W_{l}\mright\rvert^{2}\,{\mathrm{d}}x=p^{2}\int_{B}\eta^{2}\lvert \nabla V_{\varepsilon}\rvert^{2}\mleft(V_{\varepsilon}-k\mright)^{2(p-1)}\chi_{A_{l}}\,{\mathrm{d}}x \\&\le p^{2}\int_{B}\eta^{2}\mleft\lvert \nabla V_{\varepsilon}\mright\rvert^{2}V_{\varepsilon}^{p-1}\mleft(V_{\varepsilon}-k\mright)^{p-1}\chi_{A_{l}}\,{\mathrm{d}}x\\&\le C(b,\,n,\,p,\,\gamma,\,\Gamma)\mleft[\int_{B}W_{l}^{2}\lvert\nabla \eta\rvert^{2}\,{\mathrm{d}}x+\int_{A(l,\,\rho)}f_{k}\mleft(W_{l}^{2}+lW_{l}+l^{2}\mright)\,{\mathrm{d}}x\mright]
\end{align*}
from which (\ref{Eq (Appendix) First claim on Lipschitz bounds}) immediately follows.

Next, we would like to find a constant $C_{\clubsuit}=C_{\clubsuit}(b,\,n,\,N,\,p,\,q,\,\gamma,\,\Gamma)\in (0,\,\infty)$ and an exponent $\varsigma=\varsigma(n,\,q)\in(0,\,2/n\rbrack$ such that
\begin{equation}\label{Eq (Appendix) 1.5-th claim on Lipschitz bounds}
\int_{A({\hat l},\,{\hat r})}W_{{\hat l}}^{2}\,{\mathrm{d}}x\le C\mleft[\frac{1}{(r-{\hat r})^{2}}+\frac{{\hat l}^{2}}{({\hat l}-l)^{2}} \mright]\frac{1}{({\hat l}-l)^{2\varsigma}}\mleft(\int_{A(l,\,r)}W_{l}^{2}{\mathrm{d}}x \mright)^{1+\varsigma}
\end{equation}
holds for arbitrary numbers $l$, ${\hat l}$, $r$, ${\hat r}$, $\hat\rho$ enjoying $0<{\hat r}<r\le{\hat\rho} \le\rho$ and $0<l_{0}\coloneqq k^{p}+C_{\clubsuit}\lVert W_{k^{p}}\rVert_{L^{2}(B_{\hat\rho})}\le l<{\hat l}<\infty$. 
As a preliminary for proving (\ref{Eq (Appendix) 1.5-th claim on Lipschitz bounds}), we would like to verify that
\begin{equation}\label{Eq (Appendix) Second claim on Lipschitz bounds}
\int_{A(l,\,r)}(\eta W_{l})^{2}\,{\mathrm{d}}x\le C\mleft[\lvert A(l,\,r)\rvert^{\varsigma}\int_{A(l,\,r)}W_{l}^{2}\lvert \nabla \eta\rvert^{2}{\mathrm{d}}x+l^{2}\lvert A(l,\,r)\rvert^{1+\varsigma} \mright]
\end{equation}
holds for all $r\in(0,\,{\hat \rho}\rbrack$, $l\in\lbrack l_{0},\,\infty)$, and for any non-negative function $\eta\in C_{c}^{1}(B_{r})$ with $0\le\eta\le 1$. Here the constants $C\in(0,\,\infty)$ in (\ref{Eq (Appendix) 1.5-th claim on Lipschitz bounds})--(\ref{Eq (Appendix) Second claim on Lipschitz bounds}) depend at most on $b$, $n$, $p$, $q$, $\gamma$ and $\Gamma$.
To deduce (\ref{Eq (Appendix) Second claim on Lipschitz bounds}), we should mention that $\lVert f_{k}\rVert_{L^{q/2}(B)}\le 1$ is clear by the definitions of $k$ and $f_{k}$.
For simplicity, we consider the case $n\ge 3$, where we can use the Sobolev embedding $W_{0}^{1,\,2}(B_{r})\hookrightarrow L^{2^{\ast}}(B_{r})$ with $2^{\ast}=2n/(n-2)$. Combining with H\"{o}lder's inequality and Young's inequality, by (\ref{Eq (Appendix) First claim on Lipschitz bounds}) we get
\begin{align*}
\int_{A(l,\,r)}\eta^{2}f_{k}\mleft(W_{l}^{2}+lW_{l}+l^{2}\mright)\,{\mathrm{d}}x& \le \mleft(\sigma+C(n)\lvert A(l,\,r)\rvert^{2/n-2/q}\mright)\int_{A(l,\,r)}\mleft\lvert\nabla (\eta W_{l})\mright\rvert^{2}\,{\mathrm{d}}x\\&\quad+l^{2}\mleft(\frac{C(n)}{\sigma}\lvert A(l,\,r) \rvert^{1+2/n-4/q}+\lvert A(l,\,r)\rvert^{1-2/q}\mright)
\end{align*}
for any $\sigma>0$.
By the definition of $A(l,\,r)$ and the Cauchy--Schwarz inequality, we can easily check
\[\lvert A(l,\,r)\rvert\le \frac{1}{l-k^{p}}\int_{A(l,\,r)}W_{k^{p}}\,{\mathrm{d}}x\le \frac{\lvert A(l,\,r)\rvert^{1/2}}{l-k^{p}}\lVert W_{k^{p}}\rVert_{L^{2}(B_{\hat\rho})}.\]
Hence, it follows that
\[\lvert A(l,\,r)\rvert\le \mleft(\frac{\lVert W_{k^{p}}\rVert_{L^{2}(B_{\hat\rho})}}{l-k^{p}}\mright)^{2}.\]
Thus, we can choose and fix sufficiently small $\sigma\in(0,\,1)$ and suitably large $C_{\clubsuit}\in(0,\,\infty)$, both of which depend at most on $b$, $n$, $p$, $q$, $\gamma$ and $\Gamma$, so that an absorbing argument can be made. Moreover, by our choice of $C_{\clubsuit}$, we may let $\lvert A(l,\,r)\rvert\le 1$.
As a result, we are able to conclude (\ref{Eq (Appendix) Second claim on Lipschitz bounds}) with $\varsigma\coloneqq 2/n-2/q\in(0,\,2/n)$ when $n\ge 3$. In the remaining case $n=2$, we fix a sufficiently large constant $\kappa\in(2,\,\infty)$, and use the Sobolev embedding $W_{0}^{1,\,2}(B_{r})\hookrightarrow L^{\kappa}(B_{r})$. By similar computations, we can find a constant $C_{\clubsuit}$, depending at most on $b$, $n$, $p$, $q$, $\gamma$, $\Gamma$ and $\kappa$, such that (\ref{Eq (Appendix) Second claim on Lipschitz bounds}) holds for some exponent $\varsigma=\varsigma(n,\,q,\,\kappa)\in(0,\,1-2/q)$.
Now we would like to prove (\ref{Eq (Appendix) 1.5-th claim on Lipschitz bounds}).
Let $l$, ${\hat l}$ and $r$, ${\hat r}$, ${\hat\rho}$ satisfy respectively $l_{0}<l<{\hat l}<\infty$ and $0<{\hat r}<r\le{\hat\rho}\le\rho$.
Corresponding to the radii $r,\,{\hat r}$, we fix a non-negative function $\eta\in C_{c}^{1}(B_{r})$ such that
\[\eta\equiv 1\quad \textrm{on }B_{{\hat r}}\quad \textrm{and}\quad \lvert\nabla\eta\rvert\le \frac{2}{r-{\hat r}}\quad \textrm{in }B_{r}.\]
Since $W_{l}\ge {\hat l}-l$ holds a.e. in $A({\hat l},\,\rho)$, it is easy to check that
\[\lvert A({\hat l},\,\rho)\rvert\le \frac{1}{({\hat l}-l)^{2}}\int_{A(l,\,\rho)}W_{l}^{2} \,{\mathrm{d}}x.\]
Also, the inclusion $A({\hat l},\,{\hat r})\subset A(l,\,r)$ and the inequality $W_{{\hat l}}\le W_{l}$ yield
\[\int_{A({\hat l},\,{\hat r})}W_{\hat l}^{2}\,{\mathrm{d}}x\le\int_{A(l,\,r)}W_{l}^{2}\,{\mathrm{d}}x.\]
From these inequalities and (\ref{Eq (Appendix) Second claim on Lipschitz bounds}), we can easily conclude (\ref{Eq (Appendix) 1.5-th claim on Lipschitz bounds}).

From (\ref{Eq (Appendix) 1.5-th claim on Lipschitz bounds}), we would like to complete the proof of Proposition \ref{Prop: Lipschitz bounds}. 
We fix arbitrary $\theta\in(0,\,1)$, ${\hat\rho}\in(0,\,\rho\rbrack$. For each $m\in{\mathbb Z}_{\ge 0}$, we set
\[l_{m}\coloneqq l_{0}+L_{0}(1-2^{-m}),\quad \rho_{m}\coloneqq \mleft[\theta+2^{-m}(1-\theta)\mright]{\hat\rho},\quad a_{m}\coloneqq \lVert W_{l_{m}}\rVert_{L^{2}(B_{\rho_{m}})},\]
where the constant $L_{0}\in(0,\,\infty)$ is to be chosen later. Then, by (\ref{Eq (Appendix) 1.5-th claim on Lipschitz bounds}), we get
\[a_{m+1}\le C\mleft[\frac{2^{m+1}}{(1-\theta)r}+2^{m+1}\mright]\frac{2^{\varsigma(m+1)}}{L_{0}^{\varsigma}}a_{m}\le \frac{{\tilde C}}{(1-\theta)r}L_{0}^{-\varsigma}2^{(1+\varsigma)m}a_{m}^{1+\varsigma}\]
for every $m\in{\mathbb Z}_{\ge 0}$, where ${\tilde C}\in(0,\,\infty)$ depends at most on $b$, $n$, $p$, $q$, $\gamma$ and $\Gamma$. We set $L_{0}$ by
\[L_{0}\coloneqq C_{\diamondsuit}\lVert W_{k^{p}}\rVert_{L^{2}(B_{\hat\rho})}\quad \textrm{with}\quad C_{\diamondsuit}\coloneqq \mleft[\frac{{\tilde C}}{(1-\theta){\hat\rho}} \mright]^{1/\varsigma}2^{\frac{1+\varsigma}{\varsigma^{2}}},\]
so that we obtain
\[a_{0}=\lVert W_{l_{0}}\rVert_{L^{2}(B_{\hat\rho})}\le \lVert W_{k^{p}}\rVert_{L^{2}(B_{\hat\rho})}\le \mleft[\frac{{\tilde C}L_{0}^{-\varsigma}}{(1-\theta){\hat\rho}} \mright]^{-1/\varsigma}\mleft[2^{1+\varsigma}\mright]^{-1/\varsigma^{2}}.\]
By Lemma \ref{Lemma: Geometric convergence lemma}, we have $a_{m}\to 0$ as $m\to \infty$. In particular, it follows that
\[\int_{A(l_{0}+L_{0},\,\theta{\hat\rho})}W_{l_{0}+L_{0}}^{2}\,{\mathrm{d}}x=0,\]
which implies \(\lVert W_{k^{p}}\rVert_{L^{\infty}(B_{\theta \hat\rho})}\le\mleft(C_{\clubsuit}+C_{\diamondsuit}\mright)\lVert W_{k^{p}}\rVert_{L^{2}(B_{\hat\rho})}\). As a consequence, we obtain
\[\lVert W_{k^{p}}\rVert_{L^{\infty}(B_{\theta\hat\rho})}\le C(b,\,n,\,p,\,q,\,\gamma,\,\Gamma)\frac{\lVert W_{k^{p}}\rVert_{L^{2}(B_{\hat\rho})}}{\mleft[(1-\theta)\hat\rho\mright]^{pd/2}} \quad \textrm{for all }\theta\in(0,\,1),\,\hat\rho\in(0,\,\rho\rbrack\]
with $d\coloneqq 2/(p\varsigma)\in \lbrack n/p,\,\infty)$. By $\lVert W_{k^{p}}\rVert_{L^{2}(B_{\hat\rho})}\le \lVert W_{k^{p}}\rVert_{L^{1}(B)}^{1/2}\lVert W_{k^{p}}\rVert_{L^{\infty}(B_{\hat\rho})}^{1/2}$ and Young's inequality, we get
\[\lVert W_{k^{p}}\rVert_{L^{\infty}(B_{\theta\hat\rho})}\le \frac{1}{2}\lVert W_{k^{p}}\rVert_{L^{\infty}(B_{\hat\rho})}+C(b,\,n,\,p,\,q,\,\gamma,\,\Gamma)\frac{\lVert W_{k^{p}}\rVert_{L^{1}(B)}}{\mleft[(1-\theta)\hat\rho\mright]^{pd}}\]
for all $\theta\in(0,\,1)$, ${\hat\rho}\in(0,\,\rho\rbrack$. By applying Lemma \ref{Lemma: Absorbing lemma} with $H(s)\coloneqq \lVert W_{k^{p}}\rVert_{L^{\infty}(B_{s\rho})}$. As a result, we are able to deduce
\[\lVert W_{k^{p}}\rVert_{L^{\infty}(B_{\theta\rho})}\le C(b,\,n,\,p,\,q,\,\gamma,\,\Gamma)\frac{\lVert W_{k^{p}}\rVert_{L^{1}(B)}}{\mleft[(1-\theta)\rho\mright]^{pd}}\quad \textrm{for all }\theta\in(0,\,1).\]
By the definition of $W_{k^{p}}$, we get
\[\esssup_{B_{\theta\rho}}\,V_{\varepsilon}\le k+\esssup_{B_{\theta\rho}}\,\mleft(W_{k^{p}}+k^{p}\mright)^{1/p}\le 2k+C(b,\,n,\,p,\,q,\,\gamma,\,\Gamma)\frac{\lVert W_{k^{p}}\rVert_{L^{1}(B)}^{1/p}}{[(1-\theta)\rho]^{d}},\]
By $k=1+\lVert f_{\varepsilon}\rVert_{L^{q}(B)}^{1/(p-1)}$ and $\lVert W_{k^{p}}\rVert_{L^{1}(B)}\le \lVert V_{\varepsilon}\rVert_{L^{p}(B)}^{p}$, we complete the proof.
\end{proof}
\begin{Remark}[Higher regularity of regularized solutions]\label{Eq Higher W-2-2 and W-1-infty regularity}\upshape
In this paper, we have often used $u_{\varepsilon}\in W_{\mathrm{loc}}^{1,\,\infty}(\Omega;\,{\mathbb R}^{N})\cap W_{\mathrm{loc}}^{2,\,2}(\Omega;\,{\mathbb R}^{N})$. Following \cite{MR709038}, \cite{MR1634641} and \cite[Chapter 8]{MR1962933}, we briefly describe how to improve this regularity (see also \cite[Chapters 4--5]{MR0244627}). There it is noted that it is not restrictive to let $f_{\varepsilon}\in L^{\infty}(\Omega;\,{\mathbb R}^{N})$, or even $f_{\varepsilon}\in C^{\infty}(\Omega;\,{\mathbb R}^{N})$, since our approximation arguments work as long as (\ref{Eq (Section 2) Weak convergence of f}) holds. 

First, appealing to a standard difference quotient method as in \cite[Theorem 2]{MR1634641} or \cite[\S 8.2]{MR1962933}, we are able to get
\begin{equation}\label{Eq (Appendix) improved regularity from difference quotient}
\int_{\omega} V_{\varepsilon}^{p-2}\mleft\lvert D^{2}u_{\varepsilon}\mright\rvert^{2}\,{\mathrm{d}}x\le C\mleft(\varepsilon,\,\omega,\,\lVert f_{\varepsilon}\rVert_{L^{\infty}(\Omega)}\mright)<\infty\quad \textrm{for every }\omega\Subset \Omega,
\end{equation}
which is possible by (\ref{Eq (Section 3) Uniform ellipticity on approximated density}).
Secondly, we appeal to the Uhlenbeck structure to prove $Du_{\varepsilon}\in L_{\mathrm{loc}}^{\infty}(\Omega;\,{\mathbb R}^{Nn})$. We test
\[\phi\coloneqq \eta^{2}V_{\varepsilon}^{l}D_{\alpha}u_{\varepsilon}\quad \textrm{with}\quad\eta\in C_{c}^{1}(B_{\rho}(x_{0})),\quad 0\le l<\infty\]
into the weak formulation (\ref{Eq (Section 3) Weak formulation differentiated}) for each $\alpha\in\{\,1,\,\dots\,,\,n\,\}$.
We should note that when $l=0$ this test function is admissible by (\ref{Eq (Appendix) improved regularity from difference quotient}), and all of the computations in Lemma \ref{Lemma: Weak formulation for V-epsilon} make sense. By (\ref{Eq (Section 3) Resulting weak formulation}) and standard computations given in \cite[\S 3]{MR709038}, \cite[\S 8.3]{MR1962933}, we can improve local integrability of $V_{\varepsilon}$, and in particular we may test the same $\phi$ with larger $l>0$. By Moser's iteration arguments, we are able to conclude $V_{\varepsilon}\in L_{\mathrm{loc}}^{\infty}(\Omega)$, and hence $Du_{\varepsilon}\in L_{\mathrm{loc}}^{\infty}(\Omega;\,{\mathbb R}^{Nn})$ follows. Finally, $u_{\varepsilon}\in W_{\mathrm{loc}}^{2,\,2}(\Omega;\,{\mathbb R}^{N})$ is clear by (\ref{Eq (Appendix) improved regularity from difference quotient}) and $Du_{\varepsilon}\in L_{\mathrm{loc}}^{\infty}(\Omega;\,{\mathbb R}^{Nn})$.

These computations above are substantially dependent on an approximation parameter $\varepsilon\in(0,\,1)$. When it comes to local uniform $L^{\infty}$-bounds of $Du_{\varepsilon}$, the strategy based on Moser's iteration may not work, since the test function $\phi$ may intersect with the facet of $u_{\varepsilon}$. In contrast, for the scalar problem, local uniform $L^{\infty}$-bounds of gradients $\nabla u_{\varepsilon}=(\partial_{x_{1}}u_{\varepsilon},\,\dots\,,\,\partial_{x_{n}}u_{\varepsilon})$ for weak solutions to (\ref{Eq (Section 2) Approximated system}) is successfully deduced by Moser's iteration \cite[\S 4.2]{MR4201656}.
The strategy therein is to choose test functions far from facets, by truncating the term $\partial_{x_{\alpha}}u_{\varepsilon}$ in a place $\{\lvert \partial_{x_{\alpha}}u_{\varepsilon}\rvert\le k\}$ for some $k\ge 1$. This modification forces us to adapt another modulus that is different from $V_{\varepsilon}=\sqrt{\varepsilon^{2}+\lvert Du_{\varepsilon}\rvert^{2}}$ and that may lack spherical symmetry. For this reason, it appears that the proof of a priori Lipschitz bounds based on Moser's iteration works only in the scalar problem. In the proof of Proposition \ref{Prop: Lipschitz bounds}, we have appealed to De Giorgi's truncation, since we do not have to choose another non-symmetric modulus.
\end{Remark}
\newpage
\bibliographystyle{plain}

\begin{thebibliography}{10}

\bibitem{MR2424078}
R.~A. Adams and J.~J.~F. Fournier.
\newblock {\em Sobolev Spaces}, volume 140 of {\em Pure and Applied Mathematics
  (Amsterdam)}.
\newblock Elsevier/Academic Press, Amsterdam, second edition, 2003.

\bibitem{MR3887613}
L.~Ambrosio, A.~Carlotto, and A.~Massaccesi.
\newblock {\em Lectures on Elliptic Partial Differential Equations}, volume~18
  of {\em Appunti. Scuola Normale Superiore di Pisa (Nuova Serie) [Lecture
  Notes. Scuola Normale Superiore di Pisa (New Series)]}.
\newblock Edizioni della Normale, Pisa, 2018.

\bibitem{MR4078712}
L.~Beck and G.~Mingione.
\newblock Lipschitz bounds and nonuniform ellipticity.
\newblock {\em Comm. Pure Appl. Math.}, 73(5):944--1034, 2020.

\bibitem{BDGPdN}
V.~B{\"o}gelein, F.~Duzaar, R.~Giova, and A.~Passarelli~di Napoli.
\newblock Higher regularity in congested traffic dynamics.
\newblock {\em Mathematische Annalen}, 2022.

\bibitem{MR2651987}
L.~Brasco, G.~Carlier, and F.~Santambrogio.
\newblock Congested traffic dynamics, weak flows and very degenerate elliptic
  equations.
\newblock {\em J. Math. Pures Appl. (9)}, 93(6):652--671, 2010.

\bibitem{MR2759829}
H.~Brezis.
\newblock {\em Functional Analysis, {S}obolev Spaces and Partial Differential
  Equations}.
\newblock Universitext. Springer, New York, 2011.

\bibitem{MR2407018}
G.~Carlier, C.~Jimenez, and F.~Santambrogio.
\newblock Optimal transportation with traffic congestion and {W}ardrop
  equilibria.
\newblock {\em SIAM J. Control Optim.}, 47(3):1330--1350, 2008.

\bibitem{MR852362}
M.~Chipot and L.~C. Evans.
\newblock Linearisation at infinity and {L}ipschitz estimates for certain
  problems in the calculus of variations.
\newblock {\em Proc. Roy. Soc. Edinburgh Sect. A}, 102(3-4):291--303, 1986.

\bibitem{MR3133426}
M.~Colombo and A.~Figalli.
\newblock Regularity results for very degenerate elliptic equations.
\newblock {\em J. Math. Pures Appl. (9)}, 101(1):94--117, 2014.

\bibitem{MR709038}
E.~DiBenedetto.
\newblock {$C^{1+\alpha }$} local regularity of weak solutions of degenerate
  elliptic equations.
\newblock {\em Nonlinear Anal.}, 7(8):827--850, 1983.

\bibitem{MR1230384}
E.~DiBenedetto.
\newblock {\em Degenerate Parabolic Equations}.
\newblock Universitext. Springer-Verlag, New York, 1993.

\bibitem{MR1897317}
E.~DiBenedetto.
\newblock {\em Real Analysis}.
\newblock Birkh\"{a}user Advanced Texts: Basler Lehrb\"{u}cher. [Birkh\"{a}user
  Advanced Texts: Basel Textbooks]. Birkh\"{a}user Boston, Inc., Boston, MA,
  2002.

\bibitem{MR2566733}
E.~DiBenedetto.
\newblock {\em Partial Differential Equations}.
\newblock Cornerstones. Birkh\"{a}user Boston, Ltd., Boston, MA, second
  edition, 2010.

\bibitem{MR743967}
E.~DiBenedetto and A.~Friedman.
\newblock Regularity of solutions of nonlinear degenerate parabolic systems.
\newblock {\em J. Reine Angew. Math.}, 349:83--128, 1984.

\bibitem{MR814022}
E.~DiBenedetto and A.~Friedman.
\newblock Addendum to: ``{H}\"{o}lder estimates for nonlinear degenerate
  parabolic systems''.
\newblock {\em J. Reine Angew. Math.}, 363:217--220, 1985.

\bibitem{MR783531}
E.~DiBenedetto and A.~Friedman.
\newblock H\"{o}lder estimates for nonlinear degenerate parabolic systems.
\newblock {\em J. Reine Angew. Math.}, 357:1--22, 1985.

\bibitem{MR0521262}
G.~Duvaut and J.-L. Lions.
\newblock {\em Inequalities in Mechanics and Physics}, volume 219 of {\em
  Grundlehren der Mathematischen Wissenschaften}.
\newblock Springer-Verlag, Berlin-New York, 1976.
\newblock Translated from the French by C. W. John.

\bibitem{MR1634641}
L.~Esposito and G.~Mingione.
\newblock Some remarks on the regularity of weak solutions of degenerate
  elliptic systems.
\newblock {\em Rev. Mat. Complut.}, 11(1):203--219, 1998.

\bibitem{MR672713}
L.~C. Evans.
\newblock A new proof of local {$C^{1,\alpha }$} regularity for solutions of
  certain degenerate elliptic p.d.e.
\newblock {\em J. Differential Equations}, 45(3):356--373, 1982.

\bibitem{MR1625845}
L.~C. Evans.
\newblock {\em Partial Differential Equations}, volume~19 of {\em Graduate
  Studies in Mathematics}.
\newblock American Mathematical Society, Providence, RI, 1998.

\bibitem{MR3099262}
M.~Giaquinta and L.~Martinazzi.
\newblock {\em An Introduction to the Regularity Theory for Elliptic Systems,
  Harmonic Maps and Minimal Graphs}, volume~11 of {\em Appunti. Scuola Normale
  Superiore di Pisa (Nuova Serie) [Lecture Notes. Scuola Normale Superiore di
  Pisa (New Series)]}.
\newblock Edizioni della Normale, Pisa, second edition, 2012.

\bibitem{giga2021continuity}
Y.~Giga and S.~Tsubouchi.
\newblock Continuity of derivatives of a convex solution to a perturbed
  one-laplace equation by $p$-laplacian.
\newblock {\em Archive for Rational Mechanics and Analysis}, 2022.

\bibitem{MR1962933}
E.~Giusti.
\newblock {\em Direct Methods in the Calculus of Variations}.
\newblock World Scientific Publishing Co., Inc., River Edge, NJ, 2003.

\bibitem{MR2777537}
Q.~Han and F.~Lin.
\newblock {\em Elliptic Partial Differential Equations}, volume~1 of {\em
  Courant Lecture Notes in Mathematics}.
\newblock Courant Institute of Mathematical Sciences, New York; American
  Mathematical Society, Providence, RI, second edition, 2011.

\bibitem{MR3289366}
R.~V. Kohn.
\newblock Surface relaxation below the roughening temperature: some recent
  progress and open questions.
\newblock In {\em Nonlinear partial differential equations}, volume~7 of {\em
  Abel Symp.}, pages 207--221. Springer, Heidelberg, 2012.

\bibitem{MR0244627}
O.~A. Ladyzhenskaya and N.~N. Ural'tseva.
\newblock {\em Linear and Quasilinear Elliptic Equations}.
\newblock Academic Press, New York-London, 1968.
\newblock Translated from the Russian by Scripta Technica, Inc, Translation
  editor: Leon Ehrenpreis.

\bibitem{MR721568}
J.~L. Lewis.
\newblock Regularity of the derivatives of solutions to certain degenerate
  elliptic equations.
\newblock {\em Indiana Univ. Math. J.}, 32(6):849--858, 1983.

\bibitem{MR2635642}
J.~J. Manfredi.
\newblock {\em Regularity of the Gradient for a Class of Nonlinear Possibly
  Degenerate Elliptic Equations}.
\newblock ProQuest LLC, Ann Arbor, MI, 1986.
\newblock Thesis (Ph.D.)--Washington University in St. Louis.

\bibitem{MR2291779}
G.~Mingione.
\newblock Regularity of minima: an invitation to the dark side of the calculus
  of variations.
\newblock {\em Appl. Math.}, 51(4):355--426, 2006.

\bibitem{MR4258810}
G.~Mingione and V.~R\v{a}dulescu.
\newblock Recent developments in problems with nonstandard growth and
  nonuniform ellipticity.
\newblock {\em J. Math. Anal. Appl.}, 501(1):Paper No. 125197, 41, 2021.

\bibitem{MR2728558}
F.~Santambrogio and V.~Vespri.
\newblock Continuity in two dimensions for a very degenerate elliptic equation.
\newblock {\em Nonlinear Anal.}, 73(12):3832--3841, 2010.

\bibitem{spohn1993surface}
H.~Spohn.
\newblock Surface dynamics below the roughening transition.
\newblock {\em Journal de Physique I}, 3(1):69--81, 1993.

\bibitem{MR727034}
P.~Tolksdorf.
\newblock Regularity for a more general class of quasilinear elliptic
  equations.
\newblock {\em J. Differential Equations}, 51(1):126--150, 1984.

\bibitem{MR4201656}
S~Tsubouchi.
\newblock Local {L}ipschitz bounds for solutions to certain singular elliptic
  equations involving the one-{L}aplacian.
\newblock {\em Calc. Var. Partial Differential Equations}, 60(1):Paper No. 33,
  35, 2021.

\bibitem{T-scalar}
S.~Tsubouchi.
\newblock Continuous differentiability of weak solutions to very singular
  elliptic equations involving anisotropic diffusivity.
\newblock {\em arXiv preprint arXiv:2208.14640}, 2022.

\bibitem{MR474389}
K.~Uhlenbeck.
\newblock Regularity for a class of non-linear elliptic systems.
\newblock {\em Acta Math.}, 138(3-4):219--240, 1977.

\bibitem{MR0244628}
N.~N. Ural'ceva.
\newblock Degenerate quasilinear elliptic systems.
\newblock {\em Zap. Nau\v{c}n. Sem. Leningrad. Otdel. Mat. Inst. Steklov.
  (LOMI)}, 7:184--222, 1968.

\bibitem{MR2173373}
A.~Zatorska-Goldstein.
\newblock Very weak solutions of nonlinear subelliptic equations.
\newblock {\em Ann. Acad. Sci. Fenn. Math.}, 30(2):407--436, 2005.

\end{thebibliography}

\end{document}